\newcommand{\conditionalpagebreak}{%
  \needspace{3\baselineskip} 
  \ifdim\pagegoal=\maxdimen 
  \else 
    \ifdim\pagetotal>\dimexpr\pagegoal-3\baselineskip 
      \newpage
    \fi
  \fi
}
\DeclareSymbolFont{rsfs}{U}{rsfs}{m}{n}
\DeclareSymbolFontAlphabet{\mathscrsfs}{rsfs}
\newtheorem{theorem}{Theorem}[section]
\newtheorem{lemma}[theorem]{Lemma}
\newtheorem{proposition}[theorem]{Proposition}
\newtheorem{corollary}[theorem]{Corollary}
\theoremstyle{definition}
\newtheorem{definition}{Definition}
\newtheorem{remark}[theorem]{Remark}
\numberwithin{equation}{section}
\newcommand{\bea}{\begin{eqnarray}}
\newcommand{\eea}{\end{eqnarray}}
\newcommand{\<}{\langle}
\renewcommand{\>}{\rangle}
\newcommand{\wt}{\widetilde}
\newcommand{\op}{\text{op}}
\newcommand{\wh}{\widehat}
\newcommand\eg{{\text{\eg~}}}
\def\eps{{\varepsilon}}
\def\supp{{\rm supp}}
\def\ind{{\mathbbm 1}}
\def\hx{\hat{x}}
\def\hxi{\hat{\xi}}
\def\btau{{\boldsymbol{\tau}}}
\def\hq{{\widehat{q}}}
\def\bg{{\boldsymbol{g}}}
\def\bx{{\boldsymbol{x}}}
\def\bzero{{\mathbf 0}}
\def\cT{{\mathcal T}}
\def\cC{{\mathcal C}}
\def\cE{{\mathcal E}}
\def\op{{\rm op}}
\def\Band{{\rm Band}}
\def\vq{\vec q}
\def\wtZ{\wt{Z}}
\def\sig{\sigma}
\def\bsig{{\boldsymbol {\sigma}}}
\def\vbsig{{\vec \bsig}}
\def\brho{{\boldsymbol \rho}}
\def\vbrho{{\vec \brho}}
\def\by{{\boldsymbol y}}
\def\bv{{\boldsymbol{v}}}
\def\bx{{\boldsymbol{x}}}
\def\bA{\boldsymbol{A}}
\def\de{{\rm d}}
\def\<{\langle}
\def\>{\rangle}
\def\cH{{\cal H}}
\def\by{{\boldsymbol{y}}}
\def\cE{{\mathcal E}}
\def\txi{\widetilde{\xi}}
\def\tx{\widetilde{x}}
\def\b0{{\boldsymbol{0}}}
\def\hH{\widehat{H}}
\def\rd{{\mathrm {rad}}}
\def\bG{{\boldsymbol G}}
\def\Pl{{\mathsf{Pl}}}
\DeclareMathOperator*{\plim}{p-lim}
\DeclareMathOperator*{\pliminf}{p-lim\,inf}
\DeclareMathOperator*{\plimsup}{p-lim\,sup}
\def\cS{{\mathcal S}}
\def\cB{{\mathcal B}}
\def\bzero{\boldsymbol{0}}
\renewcommand{\b}{\mathbf{b}}
\def\fr{\frac}
\def\lt{\left}
\def\rt{\right}
\def\la{\langle}
\def\ra{\rangle}
\def\eps{\varepsilon}
\def\bbE{{\mathbb{E}}}
\def\bbL{{\mathbb{L}}}
\def\bbN{{\mathbb{N}}}
\def\bbP{{\mathbb{P}}}
\def\bbR{{\mathbb{R}}}
\def\bbT{{\mathbb{T}}}
\def\cB{{\mathcal{B}}}
\def\cP{{\mathcal{P}}}
\def\cQ{{\mathcal{Q}}}
\def\cR{{\mathcal{R}}}
\def\Crt{{\mathsf{Crt}}}
\def\LMAX{{\mathsf{LMAX}}}
\def\sH{{\mathscr{H}}}
\def\bg{{\mathbf{g}}}
\DeclareMathOperator*{\argmax}{\arg\max}
\def\sph{\mathrm{sp}}
\newcommand{\norm}[1]{{\lt\|#1\rt\|}}
\newcommand{\tnorm}[1]{{\|#1\|}}
\newcommand{\Span}{\mathrm{span}}
\def\oH{\overline H}
\def\cal{\mathcal}
\def\wtCrt{\wt \Crt}
\def\fR{{\mathfrak{R}}}
\newcommand{\halpha}{{\widehat \alpha}}
\author{
    Brice Huang\thanks{Department of Electrical Engineering and Computer Science, MIT. Email: \texttt{bmhuang@mit.edu}}
    \and
    Mark Sellke\thanks{Department of Statistics, Harvard University.
    Email: \texttt{msellke@fas.harvard.edu}}
}
\title{
A Constructive Proof of the Spherical Parisi Formula
}
\date{}
\begin{document}

\maketitle

\begin{abstract}
    The Parisi formula for the free energy is among the crown jewels in the theory of spin glasses.
    We present a simpler proof of the lower bound in the case of the spherical mean-field model.
    Our method follows the TAP approach developed recently in e.g. \cite{subag2018free}: we obtain an ultrametric tree of pure states, each with approximately the same free energy as the entire model, which are hierarchically arranged in accordance with the Parisi ansatz.
    We construct this tree ``layer by layer'' given the minimizer to Parisi's variational problem.
    On overlap intervals with full RSB, the tree is built by an optimization algorithm due to Subag. On overlap intervals with finite RSB, the tree is constructed by a new truncated second moment argument; a similar argument also characterizes the free energy of the resulting pure states.
    Notably we do not use the Aizenman--Sims--Starr scheme, and require interpolation bounds only up to the 1RSB level.
    Our methods also yield results for large deviations of the ground state, including the entire upper tail rate function for all 1RSB models without external field.
\end{abstract}

\setcounter{tocdepth}{1}
\tableofcontents

\section{Introduction}

We consider the mixed $p$-spin Hamiltonian
\begin{equation}
    \label{eq:def-HN}
    H_N(\bsig) =
    \sum_{p\ge 1}
    \fr{\gamma_p}{N^{(p-1)/2}} \sum_{i_1,\ldots,i_p=1}^N g_{i_1,\ldots,i_p} \sigma_{i_1}\cdots \sigma_{i_p}.
\end{equation}
Here $g_{i_1,\ldots,i_p}$ are i.i.d. standard Gaussians, and the coefficients $\gamma_p$ satisfy $\sum_{p\ge 1} 2^p \gamma_p^2 < \infty$.\footnote{For notational convenience, we have written the model's external field as the degree-$1$ term of $H_N$, rather than the traditional $h \sum_{i=1}^N \sigma_i$. In the spherical models we consider, these are of course equivalent by rotational invariance.}
This model is described by the \textbf{mixture function} $\xi(t) = \sum_{p=1}^P \gamma_p^2 t^p$.
For $\bsig^1,\bsig^2 \in \bbR^N$, define the \textbf{overlap} $R(\bsig^1,\bsig^2) = \la \bsig^1,\bsig^2 \ra / N$.
Then $H_N$ is the Gaussian process with covariance
\[
    \bbE H_N(\bsig^1)H_N(\bsig^2) = N \xi(R(\bsig^1,\bsig^2)).
\]
Since the introduction of this model by Sherrington and Kirkpatrick \cite{sherrington1975solvable}, a central question has been to understand the free energy, defined as follows.
In this paper we consider the case of spherical spins.
Let
\[
    \cS_N = \lt\{\bsig \in \bbR^N : \norm{\bsig}_2 = \sqrt{N}\rt\}
\]
be the sphere of radius $\sqrt{N}$.
The partition function and free energy density are defined by
\begin{align}
    \label{eq:def-Z}
    Z_N &= \int_{\cS_N} \exp H_N(\bsig) ~\de \bsig, \\
    \label{eq:def-F}
    F_N &= \frac{1}{N}\log Z_N,
\end{align}
where in \eqref{eq:def-Z} the integration is with respect to the uniform probability measure on $\cS_N$.

The in-probability limit of the free energy was first predicted by Parisi in \cite{parisi1979infinite}, and proved in the breakthrough works of Talagrand \cite{talagrand2006parisi, talagrand2006spherical} and Panchenko \cite{panchenko2013parisi} following decades of progress in the probability and statistical physics communities.
In the equivalent formulation due to Crisanti and Sommers \cite{crisanti1992spherical}, the limiting free energy is described as follows.
Let $x:[0,1] \to [0,1]$ be a right-continuous non-decreasing function such that $x(\hq) = 1$ for some $\hq < 1$ (which may depend on $x$).
Let
\[
    \hx(q) = \int_q^1 x(s)~\de s.
\]
Define the Crisanti-Sommers functional
\begin{equation}
    \label{eq:cs-functional}
    \cP(x;\xi)
    = \fr12 \lt\{
        \xi'(0) \hx(0)
        + \int_0^1 \xi''(q)\hx(q) ~\de q
        + \int_0^\hq \fr{\de q}{\hx(q)}
        + \log (1-\hq)
    \rt\}.
\end{equation}
Note that $\hx(q) = 1-q$ for $q > \hq$, so this functional is independent of $\hq$.
Finally define
\begin{equation}
    \label{eq:cs-functional-inf}
    \cP(\xi) = \inf_x \cP(x;\xi).
\end{equation}
\begin{theorem}[\cite{talagrand2006spherical,chen2013aizenman}]
    \label{thm:parisi}
    The limiting free energy exists and equals
    \[
        \plim_{N\to\infty} F_N
        = \cP(\xi).
    \]
\end{theorem}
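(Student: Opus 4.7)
The plan is to prove matching upper and lower bounds for $\plim_{N \to \infty} F_N$.

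For the upper bound $\plim_{N \to \infty} F_N \le \cP(\xi)$, I would invoke Guerra's classical interpolation: for any admissible $x$, build an interpolating family of Hamiltonians between $H_N$ and a Ruelle probability cascade indexed by $x$, and verify that the expected free energy is monotone along the interpolation. This yields $\bbE F_N \le \cP(x;\xi) + o(1)$, which after optimizing over $x$ and upgrading via Gaussian concentration to convergence in probability gives the desired upper bound.

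The lower bound $\plim_{N \to \infty} F_N \ge \cP(\xi)$ is the main content, and the approach advertised in the abstract is fully constructive. Given a minimizer $x^\star$ of $\cP(\cdot;\xi)$, decompose $[0,\hq]$ (where $\hq$ is the top atom of $x^\star$) into \emph{full-RSB} intervals, on which $x^\star$ is strictly increasing, and \emph{finite-RSB} intervals, on which $x^\star$ is piecewise constant. I would build a hierarchy of configurations $\bsig^{(0)} = \bfzero, \bsig^{(1)}, \ldots, \bsig^{(K)}$ with $\la \bsig^{(j)}, \bsig^{(j+1)} \ra / N$ close to the prescribed value determined by $x^\star$, such that the successive energy gains along this chain sum to approximately $N\cP(\xi)$. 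On full-RSB intervals the extension uses Subag's greedy algorithm, repeatedly adding small increments in the top eigenspace of the conditional Hessian. On finite-RSB intervals the extension instead proceeds by a truncated second moment method on critical points of the conditional Hamiltonian restricted to the appropriate band, producing exponentially many approximately ultrametric descendants of each ancestor at the next prescribed overlap level.

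Once the hierarchy reaches the top overlap $\hq$, the remaining task is to compute the free energy of the model restricted to a thin band around $\bsig^{(K)}$, which is effectively a replica-symmetric model with no external field. I would handle this final step with another truncated second moment, this time controlled from above by Guerra interpolation at the 1RSB level on the conditional band model. Summing the hierarchical energy gains with this final band free energy reproduces the Crisanti--Sommers functional $\cP(x^\star;\xi) = \cP(\xi)$, completing the lower bound.

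The main obstacle will be the truncated second moment computations at finite-RSB levels. A naive second moment fails because pairs of critical points in the band model typically exhibit an overlap distribution with spurious mass outside the value predicted by the Parisi ansatz, causing $(\bbE X)^2 / \bbE X^2$ to be exponentially small. The remedy is to restrict attention to critical point pairs whose mutual overlap is exactly the predicted value; showing that this truncation still retains exponentially many critical points, and yields a ratio of first moment squared to second moment that is $1 + o(1)$, is the technical heart of the argument. The interpolation bounds needed to rule out bad overlaps are applied to conditional band models and only go up to the 1RSB level, which is substantially simpler than the full Parisi variational problem and so avoids any circularity with the theorem being proved.
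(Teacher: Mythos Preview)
Your outline is essentially the paper's own approach: Guerra interpolation for the upper bound, and for the lower bound a layer-by-layer construction using Subag's Hessian ascent on FRSB intervals, a truncated second moment on critical points for 1RSB intervals, and a truncated second moment on the partition function for the final RS band, with the truncations justified via RS/1RSB interpolation bounds on conditional band models.

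Two points of imprecision are worth flagging. First, the truncation is not a restriction on the overlap of \emph{pairs}: the paper truncates the \emph{first} moment to ``typical'' points $\bsig$, defined by the property that the ground-state (resp.\ free) energy of $H_N$ on every band $\Band_q(\bsig)$ is at most the target value; interpolation on the conditional band model shows typicality holds with conditional probability $1-e^{-cN}$, so the first moment is essentially unchanged, while the very definition of typicality forces any two typical points to have small overlap, controlling the second moment automatically. Your phrasing ``restrict to critical point pairs whose mutual overlap is exactly the predicted value'' does not by itself give a random variable whose first and second moments match. Second, you omit the initial \emph{topologically trivial} step: when $\xi'(0)>0$ the root of the tree sits at radius $\sqrt{q_0 N}$ with $q_0=\inf S>0$, and locating it is handled separately (via Kac--Rice or an explicit gradient-based algorithm), not by the second-moment machinery.
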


\subsection{Main Result}

The purpose of this paper is to give a new constructive proof of the (more difficult) lower bound for $\plim_{N\to\infty} F_N$ in the Parisi formula.
In fact, we will construct an ultrametric tree of pure states, each with the same free energy as the entire model, taking all overlaps in the model's overlap distribution (in fact a slight extension thereof, see \eqref{eq:def-S} below) as predicted by Parisi's ultrametric ansatz \cite{parisi1979infinite, parisi1983order}.

We will use the following characterization of the unique minimizer of \eqref{eq:cs-functional-inf}.
We emphasize that this description (including existence and uniqueness) is a comparatively elementary fact about the variational problem, and as yet says nothing about the free energy $F_N$.
For given $x$, define
\begin{align*}
    F(q) &= \xi'(q) - \int_0^q \fr{\de s}{\hx(s)^2}, &
    f(s) &= \int_0^s F(q)~\de q,
\end{align*}
and
\begin{align}
    \label{eq:def-S}
    S &= \lt\{s \le 1: f(s) = f_{\max}\rt\}, &
    f_{\max} &= \sup\{f(q): q \in [0,1)\}.
\end{align}

\begin{lemma}[{\cite[Proposition 2.1]{talagrand2006spherical}}]
    \label{lem:cs-extremality}
    There is a unique $x$ attaining the infimum \eqref{eq:cs-functional-inf}, which is characterized as follows.
    Let $\nu$ be the probability measure on $[0,1]$ such that $x(q) = \nu([0,q])$.
    Then $\nu(S)=1$.
\end{lemma}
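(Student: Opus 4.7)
The plan is to recognize $\cP(x;\xi)$ as a strictly convex functional of the measure $\nu$ (with CDF $x$), establish existence and uniqueness from strict convexity plus compactness, and then extract the support condition $\nu(S)=1$ from first-order (KKT) conditions in which the effective Lagrangian turns out to equal $f(s)$ up to an additive constant.

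I would first rewrite $\cP$ in a form where the role of $\nu$ is transparent. Since $\hx(1)=0$, integrating by parts converts $\int_0^1 \xi''(q)\hx(q)\,\de q$ into $-\xi'(0)\hx(0) + \int_0^1 \xi'(q) x(q)\,\de q$, so the first two terms of $2\cP$ collapse to $\int_0^1 \xi'(q) x(q)\,\de q = \int (\xi(1)-\xi(s))\,\de\nu(s)$, which is linear in $\nu$. Because $\nu\mapsto \hx$ is a linear map taking values in positive functions and $a\mapsto 1/a$ is strictly convex on $(0,\infty)$, the third term $\int_0^{\hq}\de q/\hx(q)$ is strictly convex in $\nu$; together with the remark that the value of $\hq$ is immaterial, $\cP$ is strictly convex in $\nu$. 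A weak lower semicontinuity argument on the compact set $\{\nu : \max\supp\nu \le 1-\delta\}$, combined with an \emph{a priori} bound precluding $\hq\to 1$ (the elementary inequality $\hx(q)\le 1-q$ gives $\int_0^{\hq}\de q/\hx(q) + \log(1-\hq) \ge 0$), then yields existence of a minimizer, and strict convexity forces uniqueness.

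For the KKT step, I would vary $\nu^*\to (1-t)\nu^* + t\nu'$ and conclude that there exists a Lagrange multiplier $\lambda$ and a potential $\Psi(s) \defeq \frac{\delta(2\cP)}{\delta\nu}(s)$ with $\Psi(s)\ge 2\lambda$ on $[0,1)$ and equality on $\supp(\nu^*)$. The key identity to verify is $\Psi(s) = C - f(s)$ for some constant $C = C(\nu^*)$. I would compute $\Psi$ from the perturbation $\nu\to\nu+\eps\delta_s$, which shifts $\hx(q)$ by $\eps(1-\max(s,q))$; but rather than simplify the resulting sum of integrals over $[0,s]$ and $[s,\hq]$ directly, I would differentiate in $s$. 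The boundary contributions at $q=s$ cancel, leaving $\Psi'(s) = -\xi'(s) + \int_0^s \de q/\hx(q)^2 = -F(s)$, which integrates to $\Psi(s) = C - f(s)$. Substituting into the KKT condition gives $f(s)\le f_{\max}$ on $[0,1)$ with equality on $\supp(\nu^*)$, hence $\nu^*(S)=1$.

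The main obstacle is the functional-derivative computation: the unreduced expression for $\Psi(s)$ is a somewhat intricate combination of integrals over $[0,s]$ and $[s,\hq]$, plus boundary contributions from the $\log(1-\hq)$ piece, and it is not \emph{a priori} obvious that they combine to depend on $s$ only through $f(s)$. Differentiating in $s$ is what makes the miraculous cancellation visible and reduces the identity $\Psi = C - f$ to a short calculus exercise.
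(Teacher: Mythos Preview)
The paper does not prove this lemma: it is stated with a citation to \cite[Proposition~2.1]{talagrand2006spherical} and used as a black box, so there is no in-paper argument to compare against.

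Your convex-analytic route is correct and is essentially the standard way this statement is proved. The integration by parts reducing the first two terms of $2\cP$ to the linear functional $\int(\xi(1)-\xi(s))\,\de\nu(s)$ is right, strict convexity of $\nu\mapsto \int_0^{\hq}\de q/\hx(q)$ is genuine (since $\hx$ determines $x$ and hence $\nu$), and your computation of the Euler--Lagrange derivative via $\Psi'(s)=-F(s)$ is clean and correct: the two boundary terms at $q=s$ from differentiating $-(1-s)\int_0^s\hx^{-2}$ and $-\int_s^{\hq}(1-q)\hx^{-2}$ cancel exactly, leaving $-\xi'(s)+\int_0^s\hx^{-2}=-F(s)$. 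The KKT translation $\Psi\ge\lambda$ with equality on $\supp\nu^*$ then reads $f\le f_{\max}$ with equality on $\supp\nu^*$, i.e.\ $\nu^*(S)=1$; and since $f$ depends on $\nu$, strict convexity upgrades this necessary first-order condition to a characterization.

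One point to tighten: your ``a priori bound precluding $\hq\to 1$'' only shows the last two terms of $2\cP$ are bounded below by zero, not that $\hq$ stays bounded away from $1$ along a minimizing sequence. The cleanest fix is to take weak limits on all of $[0,1]$ and observe that any limit $\nu$ with $\nu(\{1\})=a>0$ has $\hx(q)\le(1-a)(1-q)$ near $1$, forcing $\int_0^1\big(\hx(q)^{-1}-(1-q)^{-1}\big)\,\de q=+\infty$; combined with Fatou for this nonnegative integrand and weak continuity of the linear part, lower semicontinuity rules such limits out.
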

\begin{remark}
    \label{rmk:zero-in-S}
    The measure $\nu$ is the \textbf{overlap distribution} of the model $\xi$.
    Namely in the \emph{generic} models where $\sum_{p~\text{even}:\gamma_p>0}1/p=\sum_{p~\text{odd}:\gamma_p>0}1/p=\infty$,
    one has $\lim_{N\to\infty}\bbE G^{\otimes 2}(f(R(\bsig^1,\bsig^2)))=\int f(x)\de\nu(x)$ for all continuous $f:[-1,1]\to\bbR$, where $G$ is the Gibbs measure of the model and $\bsig^1,\bsig^2$ are independent samples from $G$.
    The same holds for arbitrary $\xi$ modulo small ``generic perturbations'' that do not affect the free energy; see \cite[Chapter 3]{panchenko2013sherrington}.
\end{remark}
\begin{remark}
    \label{rmk:ghost-atoms}
    It is possible for $\supp(\nu) \subseteq S$ to be a strict inclusion, and one may think of overlaps $q \in S \setminus \supp(\nu)$ as ``atoms of mass zero" in the overlap distribution.
    Indeed, \cite[Theorem 10]{subag2018free} showed that (for generic models) all overlaps in $S$ are \emph{multi-samplable}, meaning that the Gibbs probability of sampling several points with this pairwise overlap is not exponentially small.
\end{remark}
The following two definitions describe the geometry of the pure states that our main result will construct.
\begin{definition}
    For $k,D \in \bbN$, let $\bbT = \bbT(k,D)$ be the tree with vertices $\{\emptyset\} \cup [k] \cup [k]^2 \cup \cdots \cup [k]^D$ rooted at $\emptyset$, where $u\in [k]^d$ is the parent of $v \in [k]^{d+1}$ if $u$ is the length-$d$ prefix of $v$.
    For $u,v\in \bbT$, write $|u|$ for the length of $u$ and $u\wedge v$ for the length of the least common ancestor of $u,v$.
    Let $\bbL = \bbL(k,D) = [k]^D$ be the leaf set of $\bbT$.
\end{definition}

\begin{definition}
    \label{def:ultrametric-tree}
    Let $k,D \in \bbN$, $0 \le q_0 < \cdots < q_D \le 1$, $\vq = (q_0,\ldots,q_D)$, and $\delta > 0$.
    A $(k,D,\vq,\delta)$-ultrametric tree is a collection of points $(\bsig^u)_{u \in \bbT}$ such that
    \begin{equation}
        \label{eq:ultrametric}
        |R(\bsig^u,\bsig^v) - q_{u\wedge v} | \le \delta, \qquad u,v\in \bbT.
    \end{equation}
\end{definition}
For $q\in [0,1)$, define
\[
    E(q) = \fr12 \lt\{
        \xi'(0) \hx(0)
        + \int_0^q \xi''(s)\hx(s) ~\de s
        + \int_0^q \fr{\de s}{\hx(s)}
    \rt\}.
\]
For $k\in \bbN$, $\delta > 0$, and $\norm{\bsig}_2 \le \sqrt{qN}$, let
\begin{align*}
    &\Band_{k,q,\delta} (\bsig)=
    \Big\{
        \vbrho = (\brho^1, \ldots, \brho^k) :
        \norm{\brho^i}_2 = \sqrt{qN},
        \\
        &\quad\quad\quad\quad
        |R(\brho^i - \bsig, \bsig)| \le \delta,
        |R(\brho^i - \bsig, \brho^j - \bsig)| \le \delta,~~~\forall 1\leq i<j\leq k
    \Big\}.
\end{align*}

\begin{theorem}
    \label{thm:main}
    For any $\delta, \eps > 0$, $D\in \bbN$ and increasing $q_0,\ldots,q_D \in S$ with $q_D = \sup(S)$, there exists $c>0$ such that the following holds for any $k\le e^{cN}$.
    With probability $1-e^{-cN}$ there is a $(k,D,\vq,\delta)$-ultrametric tree $(\bsig^u)_{u \in \bbT}$ with the following properties.
    \begin{enumerate}[label=(\roman*)]
        \item \label{itm:thm-main-energy} Energy of tree nodes: for each $u\in \bbT$, $\fr1N H_N(\bsig^u) \ge E(q_{|u|}) - \eps$.
        \item \label{itm:thm-main-pure} Free energy of pure states: for each $u\in \bbL$,
        \begin{equation}
            \label{eq:pure-fe-lower-bd}
            \fr{1}{kN} \log \int_{\Band_{k,1,\delta}(\bsig^u)}
            \exp \lt(\sum_{i=1}^k H_N(\brho^i) \rt)
            ~\de \vbrho
            \ge \cP(\xi) - \eps.
        \end{equation}
    \end{enumerate}
\end{theorem}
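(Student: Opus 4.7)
The plan is to construct the ultrametric tree by a top-down induction on depth $d=0,\ldots,D$. Given a node $\bsig^u$ with $|u|=d$, $\|\bsig^u\|^2\approx q_dN$, and $H_N(\bsig^u)/N\ge E(q_d)-\eps$, I must produce $k$ children $\bsig^{ui}$ on the sphere of squared radius $q_{d+1}N$ with pairwise overlaps $R(\bsig^{ui},\bsig^{uj})\approx q_d$ and energies $H_N(\bsig^{ui})/N\ge E(q_{d+1})-\eps$. Writing $\bsig^{ui}=\bsig^u+\bphi^i$ with $\bphi^i\perp\bsig^u$ and $\|\bphi^i\|^2=(q_{d+1}-q_d)N$, the conditional process $H_N(\bsig^u+\bphi)-H_N(\bsig^u)$ is, up to a deterministic linear drift, another spherical spin glass with a shifted mixture $\xi_u$ obtained by Taylor expanding $\xi$ around $R(\cdot,\bsig^u)$. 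Children are built inside this conditional band model.

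Each level splits $(q_d,q_{d+1}]$ into full-RSB sub-intervals where Parisi's $x$ is strictly increasing and atoms of $\nu$ corresponding to finite RSB steps. On a full-RSB sub-interval I apply Subag's recursive ascent algorithm inside the conditional band: this reaches a point whose Hamiltonian gain matches exactly the increment of $E$ across the window. On an atomic sub-interval I instead apply a second moment argument to the count of $k$-tuples of critical points of the radial band Hamiltonian with ultrametric pairwise overlaps. Composing these constructions along $(q_d,q_{d+1}]$ and running $k$ parallel branches with nearly orthogonal initial offsets produces the required children.

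The main obstacle is making the second moment converge at finite-RSB atoms. The unrestricted second moment diverges because of replica-symmetry-breaking fluctuations of the conditional partition functions attached to each $k$-tuple, so I truncate to tuples whose partition function over a secondary band (one layer deeper in the hierarchy) does not exceed its annealed value by more than $\eps N$. The upper bound needed for the truncation is a Guerra--Toninelli interpolation estimate at the 1RSB level, which is valid because conditional on the parent and the sibling overlap structure the secondary band model has only one non-trivial overlap scale. This 1RSB-only truncation is what replaces the Aizenman--Sims--Starr induction and keeps all interpolation inputs in the proof at the 1RSB level.

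For item (ii), once the tree is built I must show $\fr{1}{kN}\log\int_{\Band_{k,1,\delta}(\bsig^u)}\exp\lt(\sum_{i=1}^k H_N(\brho^i)\rt)\,\de\vbrho\ge\cP(\xi)-\eps$. Splitting $H_N(\brho^i)=H_N(\bsig^u)+(H_N(\brho^i)-H_N(\bsig^u))$ contributes $E(q_D)-\eps$ from the leaf energy and leaves the free energy of a conditional band of squared radius $(1-q_D)N$ around $\bsig^u$. Since $q_D=\sup(S)$ precludes further RSB layers, this residual model is replica symmetric without external field, and its free energy equals $\fr12\{\int_{q_D}^1\xi''(q)(1-q)\,dq+\log(1-q_D)\}=\cP(\xi)-E(q_D)$ by the Crisanti--Sommers formula. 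I establish this residual free energy by a single-replica truncated second moment with the truncation again controlled by a 1RSB interpolation bound; summing with $E(q_D)$ recovers $\cP(\xi)-\eps$.
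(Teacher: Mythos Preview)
Your outline follows the paper's strategy closely: decompose each increment $[q_d,q_{d+1}]$ into topologically trivial / strictly FRSB / strictly 1RSB / strictly RS pieces, handle FRSB pieces by Subag's Hessian ascent and 1RSB pieces by a truncated second moment on critical points, and finish with an RS truncated second moment for the pure-state free energy. The paper does exactly this (see the decomposition in Proposition~\ref{prop:component-types} and the component bounds Propositions~\ref{prop:fe-rs}--\ref{prop:opt-frsb}).

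There is, however, a genuine gap in your induction step. You write: given $\bsig^u$, the conditional process on the band is ``another spherical spin glass'' to which you will apply the component bounds. But $\bsig^u$ is \emph{data-dependent}---it was produced by an optimization over $H_N$ at earlier stages---so the restriction of $H_N$ to the band around $\bsig^u$ is \emph{not} a fresh spin glass, and you cannot simply apply Propositions~\ref{prop:opt-1rsb} or~\ref{prop:opt-frsb} conditionally. The paper's resolution is the \emph{uniform concentration} Lemma~\ref{lem:uc}: the $k$-replicated functionals $F_{d,k}(\bsig)$ have variance $O(k^{-1}+\delta)$ (because the replicas are nearly orthogonal), so Borell--TIS plus an $\eps$-net over the sphere of radius $\sqrt{q_d N}$ gives $|F_{d,k}(\bsig)-\bbE F_{d,k}(\bsig)|\le\eps$ \emph{simultaneously for all} $\bsig$. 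This is precisely why the $k$-replication appears throughout the argument, not merely to get many children at once. Without this step your induction does not close.

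A second, smaller gap: your construction only controls overlaps between siblings and between parent and child, i.e.\ it yields a \emph{locally} ultrametric tree (Definition~\ref{def:local-ultrametric-tree}). The global condition $|R(\bsig^u,\bsig^v)-q_{u\wedge v}|\le\delta$ for arbitrary $u,v$ does not follow automatically; the paper obtains it by a separate pruning step (Propositions~\ref{prop:pruning-intermediate-1} and~\ref{prop:pruning-intermediate-2}) that passes to a subtree of branching factor $k''=e^{c_0 N}\ll k$. Relatedly, your induction guarantees only the \emph{average} child energy increment (as in Theorem~\ref{thm:main-avg-case}\ref{itm:main-avg-case-energy}), and a Markov-type pruning (Proposition~\ref{prop:pruning-intermediate-1}) is needed to get the per-node bound~\ref{itm:thm-main-energy}. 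Finally, note that the residual band model around a leaf $\bsig^u$ \emph{does} carry an external field $\propto\nabla H_N(\bsig^u)$; the paper removes it by observing it is independent of the centered band process and hence contributes zero in expectation (see the step $(\ast)$ in the proof of Proposition~\ref{prop:energy-increments}), after which uniform concentration again transfers the bound to the random $\bsig^u$.
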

The free energy lower bound \eqref{eq:pure-fe-lower-bd} holds even in a ``$k$-replicated" sense, where we average over $k$ replicas $\brho^i$ constrained to be nearly orthogonal.
This of course lower bounds the free energy of a single replica, as
\begin{equation}
    \label{eq:pure-fe-lower-bd-no-replica}
    \int_{\Band_{k,1,\delta}(\bsig^u)}
    \exp \lt(\sum_{i=1}^k H_N(\brho^i) \rt)
    ~\de \vbrho
    \le
    \lt(
        \int_{\Band_{1,1,\delta}(\bsig^u)}
        \exp H_N(\brho)~\de \brho
    \rt)^k,
\end{equation}
and this shows there is no free energy cost to taking $k$ approximately orthogonal replicas.
In our proof of Theorem~\ref{thm:main}, we derive an analogous $k$-replicated lower bound on the energy increment from any $\bsig^u$, where $u \in \bbT \setminus \bbL$, to its children $\bsig^{u1},\ldots,\bsig^{uk}$, see Theorem~\ref{thm:main-avg-case}\ref{itm:main-avg-case-energy}; this allows us to construct the ultrametric tree $(\bsig_u)_{u\in \bbT}$.

As a consequence of Theorem~\ref{thm:main}, we obtain the lower bound in the Parisi formula.
\begin{corollary}
\label{cor:parisi-LB}
    We have $\pliminf_{N\to\infty} F_N \ge \cP(\xi)$.
\end{corollary}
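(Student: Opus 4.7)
The plan is to deduce this lower bound directly from Theorem~\ref{thm:main}\ref{itm:thm-main-pure}, exploiting that the pure-state band is contained in the sphere so that the full partition function dominates the band partition function. No concentration argument beyond what is already inside Theorem~\ref{thm:main} will be needed.

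First I would fix an arbitrary $\eps > 0$, choose any $\delta > 0$, and select some admissible $D, \vq$ for Theorem~\ref{thm:main} (i.e., an increasing sequence $q_0 < \cdots < q_D$ in $S$ with $q_D = \sup(S)$). Applying Theorem~\ref{thm:main} with $k = 1$ produces a constant $c > 0$ such that, with probability at least $1 - e^{-cN}$, there is a point $\bsig^{\star} \in \cS_N$ (the unique leaf of the tree, since $|\bbL(1,D)| = 1$) satisfying
\[
    \fr{1}{N} \log \int_{\Band_{1,1,\delta}(\bsig^{\star})} e^{H_N(\brho)} \, \de \brho \ge \cP(\xi) - \eps.
\]

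Next I would observe that the definition of $\Band_{1,1,\delta}(\bsig^{\star})$ incorporates the constraint $\|\brho\|_2 = \sqrt{N}$, so the band is a subset of $\cS_N$. With both integrals computed against the uniform probability measure on $\cS_N$, monotonicity of the integral yields
\[
    F_N = \fr{1}{N} \log \int_{\cS_N} e^{H_N(\brho)} \, \de \brho \ge \fr{1}{N} \log \int_{\Band_{1,1,\delta}(\bsig^{\star})} e^{H_N(\brho)} \, \de \brho \ge \cP(\xi) - \eps
\]
on this event of probability at least $1 - e^{-cN}$. Hence $\pliminf_{N \to \infty} F_N \ge \cP(\xi) - \eps$, and sending $\eps \to 0$ completes the proof.

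The argument has essentially no obstacle: all the substantive work is encapsulated in Theorem~\ref{thm:main}, and the corollary follows from the tautology that restricting a positive integrand to a subdomain cannot increase its integral. In particular, there is no need to invoke the $k$-replicated bound or inequality~\eqref{eq:pure-fe-lower-bd-no-replica}, since setting $k = 1$ already delivers the single-replica pure-state free energy lower bound required for comparing with $F_N$.
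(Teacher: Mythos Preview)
Your proof is correct and takes essentially the same approach as the paper's own proof: restrict the partition function integral to a band around a leaf and invoke Theorem~\ref{thm:main}\ref{itm:thm-main-pure}. The only difference is cosmetic---you apply the theorem directly with $k=1$, whereas the paper leaves $k$ general and then passes to a single replica via \eqref{eq:pure-fe-lower-bd-no-replica}; your route is marginally cleaner but the content is identical.
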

\begin{proof}
    Equations \eqref{eq:pure-fe-lower-bd} and \eqref{eq:pure-fe-lower-bd-no-replica} imply
    \begin{align*}
        F_N = \fr1N \log \int_{S_N} \exp H_N(\brho) ~\de \brho
        &\ge
        \fr1N \log \int_{\Band_{1,1,\delta}(\bsig^u)} \exp H_N(\brho) ~\de \brho \\
        &\ge
        \fr{1}{kN} \log \int_{\Band_{k,1,\delta}(\bsig^u)}
        \exp \lt(\sum_{i=1}^k H_N(\brho^i) \rt)
        ~\de \vbrho
        \ge
        \cP(\xi) - \eps.
    \end{align*}
    Since this holds for any $\eps>0$, the result follows.
\end{proof}



Taking the temperature to zero, we also obtain the following consequence on near-ground states.

\begin{corollary}
\label{cor:many-orthogonal}
    Let $\nu_\infty$ be the zero-temperature overlap measure defined in \eqref{eq:def-nu-infty}, and let $q_1<q_2<\dots<q_D=1$ lie in $\supp(\nu_\infty)$.
    Then for any $\delta,\eps>0$, there exists $c>0$ such that for all $k\leq e^{cN}$, with probability $1-e^{-cN}$ there exists a $(k,D,\vq,\delta)$-ultrametric tree $\bbT\subseteq \cS_N$ such that:
    \begin{equation}
    \label{eq:near-ground-states-tree}
    \min_{\bsig\in \bbL} H_N(\bsig)/N\geq \max_{\bsig\in\cS_N} H_N(\bsig)/N -\eps.
    \end{equation}
    In fact the same holds with $\supp(\nu_\infty)$ replaced by $T$ from \eqref{eq:T-def}, which is the zero-temperature analog of $S$.
\end{corollary}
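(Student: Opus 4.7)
The strategy is to deduce Corollary~\ref{cor:many-orthogonal} from Theorem~\ref{thm:main} by taking a zero-temperature limit. Apply Theorem~\ref{thm:main} to the rescaled mixture $\beta^2\xi$ (equivalently, the Hamiltonian $\beta H_N$) for a large inverse temperature $\beta$. For $d<D$, select $q_d^\beta \in S_{\beta^2\xi}$ close to the prescribed $q_d \in T$, and set $q_D^\beta = \hq_{\beta^2\xi}$, which tends to $1$ as $\beta \to \infty$. The legitimacy of this selection rests on the convergence of the Parisi support $S_{\beta^2\xi}$ to the zero-temperature set $T$ in the appropriate scaling, an analytic statement about the Crisanti--Sommers variational problem in the $\beta \to \infty$ limit. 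Theorem~\ref{thm:main} then supplies an ultrametric tree $(\bsig^u)_{u\in\bbT}$ satisfying \ref{itm:thm-main-energy}--\ref{itm:thm-main-pure} for the rescaled mixture.

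For each leaf $u \in \bbL$, the pure-state bound \eqref{eq:pure-fe-lower-bd} with $k=1$ gives
\[
\int_{\Band_{1,1,\delta}(\bsig^u)} e^{\beta H_N(\brho)}\,\de\brho \;\ge\; \exp\bigl(N(\cP(\beta^2\xi)-\eps)\bigr).
\]
Since the integration is against a probability measure on $\cS_N$, the trivial envelope $\int e^{\beta H_N(\brho)}\,\de\brho \le \exp(\beta \max_{\brho\in\Band}H_N(\brho))$ yields $\max_{\brho\in\Band_{1,1,\delta}(\bsig^u)} H_N(\brho)/N \ge \cP(\beta^2\xi)/\beta - \eps/\beta$. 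Invoking $\lim_{\beta\to\infty}\cP(\beta^2\xi)/\beta = \cE_{\GS} \defeq \plim_{N\to\infty} \max_{\bsig\in\cS_N} H_N(\bsig)/N$, I extract a point $\bsig^u_\infty \in \Band_{1,1,\delta}(\bsig^u) \subseteq \cS_N$ with $H_N(\bsig^u_\infty)/N \ge \cE_{\GS} - \eps'$, where $\eps'\to 0$ as $\beta\to\infty$ and $\eps\to 0$.

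It remains to verify that $(\bsig^u_\infty)_{u \in \bbL}$ realize an ultrametric tree at overlaps $(q_0,\dots,q_{D-1},1)$. Because $\|\bsig^u\|^2/N \approx \hq_\beta$ and $\bsig^u_\infty \in \Band_{1,1,\delta}(\bsig^u)$, a direct expansion gives $\|\bsig^u_\infty - \bsig^u\|_2^2/N \le 1 - \hq_\beta + O(\delta)$, which tends to $0$ as $\beta\to\infty$. The decomposition
\[
R(\bsig^u_\infty,\bsig^v_\infty) = R(\bsig^u,\bsig^v) + R(\bsig^u_\infty - \bsig^u,\bsig^v) + R(\bsig^u,\bsig^v_\infty - \bsig^v) + R(\bsig^u_\infty - \bsig^u,\bsig^v_\infty - \bsig^v)
\]
combined with Cauchy--Schwarz yields $|R(\bsig^u_\infty,\bsig^v_\infty) - R(\bsig^u,\bsig^v)| = O(\sqrt{1-\hq_\beta + \delta})$, so the pairwise leaf overlaps converge to $q_{u\wedge v}$ uniformly by choosing $\beta$ sufficiently large and $\delta$ sufficiently small. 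The energy bound on $H_N(\bsig^u_\infty)/N$, together with standard concentration of $\max_{\cS_N} H_N/N$ at $\cE_{\GS}$, yields \eqref{eq:near-ground-states-tree}.

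The principal obstacle is the $\beta \to \infty$ control of the Crisanti--Sommers minimizer: one needs $q_d^\beta \in S_{\beta^2\xi}$ converging to any prescribed $q_d \in T$ (and to points of $\supp(\nu_\infty) \subseteq T$ as a special case). This requires continuity properties of the zero-temperature limit of the Parisi variational problem that, while standard, are not entirely routine; once they are in hand, the free-energy-to-ground-state extraction and the preservation of ultrametric overlaps described above complete the argument.
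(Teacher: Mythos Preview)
Your approach is correct in outline but takes a genuinely different route from the paper. You treat Theorem~\ref{thm:main} as a black box, apply it at large inverse temperature $\beta$, and pass to the limit; the paper instead \emph{reruns the proof} of Theorem~\ref{thm:main} directly at zero temperature. Concretely, the paper observes that the set $T$ is also a finite union of closed intervals (via \cite{jagannath2017low}), takes a $T$-refinement $q_0,\dots,q_D=1$, and notes that Proposition~\ref{prop:component-types} goes through verbatim with the trivial simplification that the last component $\xi_D$ vanishes (since $q_D=1$), so there is no strictly RS piece. The remaining building blocks---Propositions~\ref{prop:opt-1rsb}, \ref{prop:opt-topologically-trivial}, \ref{prop:opt-frsb}---are already zero-temperature ground-state statements, so the construction in Section~\ref{sec:building-model} produces the tree directly, with no limiting argument.

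What each approach buys: the paper's route is shorter and sidesteps precisely the obstacle you flag, namely the convergence $S_{\beta^2\xi}\to T$ (and $\hq_\beta\to 1$) in a sense strong enough to choose $q_d^\beta\in S_{\beta^2\xi}$ near each $q_d\in T$. That convergence is believable but is not entirely routine---it involves controlling the minimizer of the Crisanti--Sommers functional uniformly as $\beta\to\infty$, and the behavior of $f_\beta$ near $q=1$ is delicate. Your route has the conceptual appeal of deducing the zero-temperature statement from the positive-temperature theorem without reopening its proof, but in practice you would need to supply that convergence lemma, whereas the paper avoids it altogether by recycling the zero-temperature ingredients already established in Section~\ref{sec:model-types}.
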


Finally in Section~\ref{sec:LDP}, we study the large deviations of the ground state energy $GS_N=\max_{\bsig\in\cS_N} H_N(\bsig)/N$.
Confirming predictions of \cite{fyodorov2023replica}, we determine the upper tail rate function for all 1RSB $\xi$ with $\gamma_1=0$, and identify a sharp phase transition in the speed from $O(N)$ to $\Omega(N^2)$ for general $\xi$.
The former follows by the methods of Section~\ref{sec:moment}; the latter uses Corollary~\ref{cor:many-orthogonal}, and in particular the fact that $k$ can be taken exponentially large in $N$.

\begin{remark}
    Related ultrametric decompositions for spin glasses have appeared in several previous works, including \cite{jagannath2017approximate,subag2018free,chatterjee2021average}.
    Our work follows the approach of \cite{subag2018free}, and in particular uses a uniform concentration idea introduced therein.
    In the aforementioned works, this idea is used with previously established properties of Gibbs measures and free energies to construct ultrametric decompositions.

    Our work proceeds in the opposite direction, using this idea to prove the lower bound in the Parisi formula.
    Uniform concentration reduces the proof of the general lower bound to four special cases, which we term fundamental types (see Section~\ref{sec:model-types}).
    For two of these cases, elementary proofs of the lower bound are known that do not depend on the full Parisi formula.
    Our main contribution is to provide such a proof for the two remaining cases, and thereby complete an independent proof of the lower bound.
    As a consequence, we are able to construct the tree in Theorem~\ref{thm:main} one layer at a time ``by hand."


    A notable aspect of Theorem~\ref{thm:main} is that it gives an ultrametric tree with exponentially large branching factor at each level.
    At zero temperature, with $\gamma_1=0$ so that $0\in \supp(\nu_\infty)$, the existence of many approximately orthogonal near ground states is closely related to disorder chaos; see \cite{chatterjee2014superconcentration,ding2015multiple,auffinger2018energy,chen2018energy,eldan2020simple}.
    Corollary~\ref{cor:many-orthogonal} is the first to show
    $e^{cN}$ approximately orthogonal near ground states exist without additional assumptions on $\xi$.
\end{remark}


\subsection{Previous Approaches to the Parisi Formula}

Mean-field spin glasses were introduced in \cite{sherrington1975solvable,derrida1981random} to model disordered magnetic materials.
Soon after, \cite{thouless1977solution,de1978stability} observed that the replica-symmetric ansatz made by Sherrington and Kirkpatrick could not be correct at low temperatures.
This was resolved by Parisi's ground-breaking replica symmetry breaking solution, yielding a formula for the free energy at any temperature \cite{parisi1979infinite,parisi1980sequence,parisi1983order}.
Several mysterious, fascinating features were present in this highly non-rigorous ansatz, including the prediction of ultrametricity \cite{mezard1984nature,mezard1984replica,mezard1987spin} which led to the introduction of Ruelle cascades \cite{ruelle1987mathematical}.
\emph{Spherical} spin glasses were also introduced in \cite{crisanti1992spherical}, where it was observed that a similar replica ansatz should apply and lead to simpler formulas.
Despite this, rigorous results were for some time mainly restricted to high-temperature settings with similar behavior to classical spin systems \cite{aizenman1987some,comets1995sherrington}.

A crucial breakthrough was made in \cite{guerra2002thermodynamic}, which proved the existence of a limiting free energy at all temperatures using the interpolation method.
Then in \cite{guerra2003broken}, Guerra gave an inspired interpolation upper bound for the free energy, which matched the conjectural Parisi ansatz.
Finally, Talagrand used a difficult interpolation scheme (analyzing its intermediate-time behavior using another interpolation) to prove the Parisi formula at all temperatures for both Ising and spherical models, with the slight restriction that $\gamma_p=0$ for all $p$ odd \cite{talagrand2006parisi,talagrand2006spherical}.

While Talagrand's solution was rather complicated, it was realized in \cite{aizenman2003extended} that Guerra's upper bound allows one to transparently deduce an extended variational formula for the free energy over a space of ``random overlap structures'', relaxing the ultrametricity condition in the Parisi ansatz.
Combined with asymptotic ultrametricity of the Gibbs measures shown by \cite{panchenko2013parisi}, this led to an alternate proof of the Parisi formula with no parity restriction \cite{panchenko2014parisi,chen2013aizenman}.
More recently, the intrinsic behavior of the associated variational formula was clarified via connection to stochastic Hamilton-Jacobi equations \cite{auffinger2015parisi,jagannath2016dynamic}, and a limiting zero temperature formula was obtained for the ground state energy \cite{auffinger2017parisi,chen2017parisi}.

\section{Fundamental Model Types}
\label{sec:model-types}

In this subsection we define four types of models, which we term \textbf{topologically trivial}, \textbf{strictly RS}, \textbf{strictly 1RSB}, and \textbf{strictly FRSB}.
We state lower bounds on the free energy of strictly RS models and the ground state energies of the other three model types.
These models will serve as the basic building blocks for any overlap distribution.
The proof of Theorem~\ref{thm:main}, carried out in Section~\ref{sec:building-model}, will decompose a model $\xi$ into several sub-models of these types  and apply these results.
These lower bounds are then combined back together via a uniform concentration lemma of \cite{subag2018free}.

\begin{definition}
    The model $\xi$ is \textbf{strictly RS} if $S = \{0\}$.
\end{definition}
The remaining three types of models will be defined using a zero-temperature version of the Crisanti-Sommers formula introduced in \cite{chen2017parisi}, which is obtained as a limit of \eqref{eq:cs-functional}.
For $\alpha : [0,1] \to [0,+\infty)$ and $L > \int_0^1 \alpha(s)~\de s$, let
\[
    \halpha(q) = L - \int_0^1 \alpha(s)~\de s.
\]
Then define
\begin{equation}
    \label{eq:cs-functional-0temp}
    \cQ(L,\alpha;\xi) = \fr12 \lt\{
        \xi'(0) L +
        \int_0^1 \xi''(q) \halpha(q) ~\de q
        + \int_0^1 \fr{\de q}{\halpha(q)}
    \rt\}
\end{equation}
and
\begin{equation}
    \label{eq:cs-functional-0temp-inf}
    \cQ(\xi) = \inf_{L,\alpha} \cQ(L,\alpha;\xi).
\end{equation}
\begin{theorem}[{\cite[Theorem 1]{chen2017parisi}}]
\label{thm:chen-sen-spherical}
    The limiting ground state energy of the model $\xi$ is
    \[
        \plim_{N\to\infty} \fr{1}{N} \max_{\bsig \in \cS_N} H_N(\bsig) = \cQ(\xi).
    \]
\end{theorem}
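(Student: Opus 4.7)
The natural route is to derive this from the finite-temperature Parisi formula (Theorem~\ref{thm:parisi}) by sending $\beta \to \infty$. Consider the rescaled Hamiltonian $\beta H_N$, whose mixture is $\beta^2\xi$, and let $F_N(\beta)$ denote its free energy density. Since $\cS_N$ contributes only logarithmic entropy, a Laplace-type bound yields $F_N(\beta)/\beta = \frac{1}{N}\max_{\bsig \in \cS_N} H_N(\bsig) + O((\log N)/\beta)$ with high probability. Combined with Borell--TIS concentration for both $F_N(\beta)$ and the ground state, and the monotonicity of $\beta \mapsto F_N(\beta)/\beta$, this reduces matters to the purely deterministic identity $\lim_{\beta\to\infty}\cP(\beta^2\xi)/\beta = \cQ(\xi)$.

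The guiding heuristic for this identity is that the Parisi minimizer $x_\beta$ of $\cP(\cdot;\beta^2\xi)$ should satisfy $\beta x_\beta(q) \to \alpha(q)$ in the bulk and $\beta(1-\hq_\beta) \to L$ at the right edge, so that $\beta \hx_\beta(q)$ converges to the zero-temperature quantity $\halpha(q)$. For the upper bound $\limsup_\beta \cP(\beta^2\xi)/\beta \le \cQ(\xi)$, given any feasible $(L,\alpha)$ I would plug into \eqref{eq:cs-functional} the test path $x_\beta(q) = \alpha(q)/\beta$ on $[0,1-L/\beta)$ with $\hq_\beta = 1 - L/\beta$, and check term-by-term that after dividing by $\beta$ each piece converges to the corresponding piece of $\cQ(L,\alpha;\xi)$: the boundary term $\log(1-\hq_\beta)/\beta = -(\log\beta - \log L)/\beta$ vanishes, the $\xi''$-integral uses $\beta\hx_\beta \to \halpha$ pointwise on $[0,1)$, and the penalty $\frac{1}{\beta}\int_0^{\hq_\beta} \de q / \hx_\beta(q)$ rescales exactly to $\int_0^1 \de q/\halpha(q)$.

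The matching lower bound is where the real work lies. Given a nearly minimizing sequence $x_\beta$, I would show that the rescaled functions $\halpha_\beta(q) \defeq \beta \hx_\beta(q)$ are locally uniformly bounded on $[0,1)$ and of bounded variation: the lower bound on $\halpha_\beta$ comes from the fact that otherwise $\frac{1}{\beta}\int_0^{\hq_\beta} \de q /\hx_\beta(q)$ diverges, while the upper bound and BV control come from boundedness of $\int_0^1 \xi''(q) \hx_\beta(q)\,\de q$ after multiplication by $\beta$. Helly's selection theorem then extracts a subsequential limit $\halpha$ corresponding to some admissible $(L,\alpha)$, and Fatou's lemma applied to the convex $1/\halpha$ term delivers $\liminf_\beta \cP(\beta^2\xi)/\beta \ge \cQ(L,\alpha;\xi) \ge \cQ(\xi)$. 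The main obstacle is the boundary layer near $q=1$: one has to rule out the possibility that $x_\beta$ develops mass there which contributes nontrivially to $\cP/\beta$ in a form not captured by the scalings $\beta x_\beta \to \alpha$ and $\beta(1-\hq_\beta) \to L$. I would control this either by exploiting the Euler--Lagrange equation satisfied by $x_\beta$ on the support of the associated measure $\nu_\beta$, or by a soft convexity/compactness argument restricting the candidate minimizers to a tight class. Modulo this edge analysis, the two-sided bound yields the desired identity and hence the theorem.
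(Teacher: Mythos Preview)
The paper does not prove this theorem at all: it is quoted verbatim from \cite{chen2017parisi} as background, with no argument given here. So there is nothing in the paper to compare your proposal against.

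That said, your outline is essentially the strategy of \cite{chen2017parisi}: reduce to the deterministic identity $\lim_{\beta\to\infty}\cP(\beta^2\xi)/\beta=\cQ(\xi)$ and prove the two directions by a test-path substitution and a compactness/lower-semicontinuity argument. One small slip: the Laplace bound on the sphere gives $F_N(\beta)/\beta = GS_N + O((\log\beta)/\beta)$ on the event $H_N\in K_N$, not $O((\log N)/\beta)$; the entropy cost of a cap of angular radius $\theta$ is $\approx\log\theta$ independently of $N$, and optimizing $\theta\sim 1/\beta$ gives the $\log\beta$ dependence. This is harmless for the limit. Your identification of the boundary layer near $q=1$ as the delicate point in the lower bound is accurate, and your proposed fix via the Euler--Lagrange characterization of the minimizer is in the right spirit.

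It is worth noting that, \emph{within the logic of the present paper}, one could also assemble Theorem~\ref{thm:chen-sen-spherical} differently: the upper bound is Proposition~\ref{prop:1RSB-interpolation} (and its general-$\alpha$ extension from \cite{chen2017parisi,jagannath2017low}), while the lower bound follows from the paper's own construction, namely Corollary~\ref{cor:many-orthogonal} (or directly from Propositions~\ref{prop:opt-1rsb}, \ref{prop:opt-topologically-trivial}, \ref{prop:opt-frsb} pieced together as in Section~\ref{sec:building-model}). That route avoids the $\beta\to\infty$ limit entirely but of course relies on the machinery this paper develops.
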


The minimizer of \eqref{eq:cs-functional-0temp-inf} has a similar characterization to Lemma~\ref{lem:cs-extremality} above.
For given $L,\alpha$, define
\begin{align}
    \label{eq:def-G}
    G(q) &= \xi'(q) - \int_0^q \fr{\de s}{\halpha(s)^2}, &
    g(s) &= \int_s^1 G(q)~\de q.
\end{align}
Similarly to before, we let $\nu_{\infty}$ be the finite Borel measure on $[0,1]$ defined by
\begin{equation}
    \label{eq:def-nu-infty}
    \nu_\infty([0,q]) = \alpha(q) \qquad \forall q\in [0,1]
\end{equation}
and define the set
\begin{equation}
\label{eq:T-def}
    T = \{q \in [0,1] : g(q) = 0\}.
\end{equation}
Note that we always have $1\in T$.
\begin{lemma}[{\cite[Theorem 2]{chen2017parisi}}]
    \label{lem:cs-extremality-0temp}
    There is a unique $(L,\alpha)$ attaining the infimum \eqref{eq:cs-functional-0temp-inf}, which is characterized by the following properties:
    \[
        G(1)=0;\quad\quad
        \min_{q\in [0,1]} g(q) = 0;\quad\quad
        \nu_{\infty}(T^c) = 0.
    \]
\end{lemma}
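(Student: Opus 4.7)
The plan is to analyze this as a strictly convex variational problem. First, I would reparametrize admissible $(L,\alpha)$ by the non-increasing positive function $\halpha:[0,1]\to(0,\infty)$, under which $\cQ$ decomposes as a linear functional of $\halpha$ plus the strictly convex term $\fr12 \int_0^1 dq/\halpha(q)$. Existence of a minimizer $\halpha^*$ follows from coercivity ($\int 1/\halpha\to\infty$ as $\halpha(q)\to 0$, and $\xi'(0)L\to\infty$ as $L\to\infty$) combined with lower semicontinuity, and strict convexity yields uniqueness. Note $\halpha^*(1)>0$ automatically, since otherwise $\cQ(\halpha^*)=\infty$.

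Next I would derive first-order conditions by computing the G\^{a}teaux derivative of $\cQ$ along $\halpha^*+t(\halpha-\halpha^*)$. Using $G=-g'$ and integration by parts, the derivative at $t=0$ equals
\[
    \fr12\Big[G(1)(\halpha(1)-\halpha^*(1))+\int_0^1 G\,d\nu_\infty-\int_0^1 G\,d\nu_\infty^*\Big].
\]
Requiring this to be nonnegative for every admissible $\halpha$ gives, by standard KKT arguments, $G(1)=0$ (since $L$ may move in either direction at the optimum), $G\ge 0$ on $[0,1]$ (by taking large positive point-mass perturbations of $\nu_\infty$), and $\int_0^1 G\,d\nu_\infty^*=0$ (complementary slackness). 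In particular $\supp(\nu_\infty^*)\subseteq\{G=0\}$.

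The first two KKT conditions immediately give the first two claims: $G(1)=0$ is verbatim, and since $g'=-G\le 0$ with $g(1)=0$, $g$ is non-increasing and nonnegative on $[0,1]$, so $\min_{[0,1]}g=0$.

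The main obstacle is the third claim $\nu_\infty^*(T^c)=0$, because complementary slackness only places $\supp(\nu_\infty^*)\subseteq\{G=0\}$, which a priori could strictly exceed $T=\{g=0\}$. I would argue by contradiction: suppose $q_0\in\supp(\nu_\infty^*)$ with $q_0<q_*\defeq\inf T$, so $g(q_0)>0$. On any connected component $(a,b)$ of $(q_0,1)\setminus\supp(\nu_\infty^*)$, $\halpha^*$ is constant, so $G''(q)=\xi'''(q)\ge 0$ (as $\xi$ has nonnegative coefficients) and hence $G$ is convex on $(a,b)$. Each endpoint of such a component lies in $\supp(\nu_\infty^*)\cup\{1\}$, where $G=0$. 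A nonnegative convex function vanishing at both endpoints of an interval must vanish identically, so $G\equiv 0$ on $(a,b)$; together with $G\equiv 0$ on $\supp(\nu_\infty^*)\cap(q_0,1)$, this yields $G\equiv 0$ on $(q_0,1)$. But then $g(q_0)=\int_{q_0}^1 G\,dq=0$, contradicting $q_0<q_*$. Hence $\supp(\nu_\infty^*)\subseteq T$.
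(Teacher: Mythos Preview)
The paper does not prove this lemma; it is quoted from \cite{chen2017parisi}. Your convex–analytic strategy is the right one, but there is a genuine computational error in the G\^ateaux derivative that propagates through the rest of the argument.

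After the integration by parts you indicate, the first variation is
\[
\fr12\Big[G(1)\big(\halpha(1)-\halpha^*(1)\big)+\int_{[0,1]} g\,\de\nu_\infty-\int_{[0,1]} g\,\de\nu_\infty^*\Big],
\]
with $g$, not $G$, inside the integrals. A quick sanity check: fix $L$ and add mass $\eps\delta_{q_0}$ to $\nu_\infty$; then $\delta\halpha(q)=-\eps(q-q_0)_+$ and one integration by parts gives
\[
\fr{\de}{\de\eps}\cQ\Big|_{\eps=0}=-\fr12 G(1)(1-q_0)+\fr12 g(q_0)=\fr12 g(q_0),
\]
using $G(1)=0$ from the $L$-variation. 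Thus the correct KKT package is $G(1)=0$, $g\ge 0$ on $[0,1]$, and $\int g\,\de\nu_\infty^*=0$. These are precisely the three stated properties; no further argument is needed.

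Your version asserts $G\ge 0$ on $[0,1]$, which is false in general. For any strictly 1RSB model $T=\{0,1\}$, so $g>0$ on $(0,1)$ while $g(0)=g(1)=0$; since $G=-g'$, $G$ must change sign on $(0,1)$. This also invalidates your derivation of $\min g=0$ via ``$g'=-G\le 0$''.

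There is a second slip in the final paragraph. On a gap $(a,b)$ of $\supp(\nu_\infty^*)$ it is $\alpha^*$ (the CDF) that is constant; hence $\halpha^*$ is \emph{affine}, not constant, and
\[
G''(q)=\xi'''(q)-\fr{2\alpha^*(q)}{\halpha^*(q)^3},
\]
which need not be nonnegative once $\alpha^*>0$. So the convexity claim fails as well. Fortunately, once the derivative is corrected, this entire paragraph is unnecessary.
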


\begin{definition}
    The model $\xi$ is \textbf{topologically trivial} if $T = \{1\}$, \textbf{strictly 1RSB} if $T = \{0,1\}$, and \textbf{strictly FRSB} if $T = [0,1]$.
\end{definition}
\begin{remark}
    \label{rmk:need-no-field}
    Note that $\xi$ can only be strictly RS, strictly 1RSB, or strictly FRSB if $\xi'(0)=\gamma_1^2=0$, i.e. there is no external field.
    Indeed if $\xi'(0)>0$, then $F(q), G(q) > 0$ for $q$ in a neighborhood of $0$, so we cannot have $0\in S, T$.
    Conversely, by Lemma~\ref{lem:type-characterization}\ref{itm:char-topologically-trivial}, $\xi$ can only be topologically trivial if $\xi'(1) \ge \xi''(1)$, which implies $\xi'(0) > 0$ except in the simple case that $\xi$ is quadratic.
\end{remark}

\subsection{Proof Outline for Theorem~\ref{thm:main}}
\label{subsec:proof-outline}

\paragraph{Decomposition into Fundamental Types}
We will construct the ultrametric tree in Theorem~\ref{thm:main} layer by layer, as follows.
Let $x$ attain the infimum in \eqref{eq:cs-functional-inf}; it is known from \cite{jagannath2018bounds} (see Lemma~\ref{lem:intervals}) that the associated $S$ \eqref{eq:def-S} is a finite union of intervals (including possibly atoms).
We may assume without loss of generality that the sequence $q_0,\ldots,q_D$ contains all endpoints of these intervals, so that $q_0 = \inf S$, $q_D = \sup S$, and each interval $[q_d,q_{d+1}]$ (where we take as convention $q_{-1}=0$, $q_{D+1}=1$) either is contained in $S$ or intersects $S$ at exactly its endpoints.
Recall that (modulo Remark~\ref{rmk:ghost-atoms}) $S$ is the support of the overlap distribution $\nu([0,q]) = x(q)$, so these intervals comprise the overlap support and overlap gaps of the model $\xi$.

Following a construction of \cite{subag2018free}, we define a sub-model of $\xi$ for each interval $[q_d,q_{d+1}]$ (see \eqref{eq:def-xi-d}), which represents the landscape of $H_N$ on an orthogonal band of radius $\sqrt{(q_{d+1}-q_d)N}$ around a point of radius $\sqrt{q_d N}$.
Due to the choice of $q_0,\ldots,q_D$, the sub-model for $[0,q_0]$ will be topologically trivial, that for $[q_D,1]$ strictly RS, and the remaining sub-models either strictly 1RSB or strictly FRSB (see Figure~\ref{fig:decomposition}).
We then prove sharp lower bounds for the free energy of each strictly RS component and the ground state energy of the remaining components.
Furthermore, for all but topologically trivial components, this lower bound will hold in a $k$-replicated sense.
This allows us to combine the bounds and construct the tree described by Theorem~\ref{thm:main} in Section~\ref{sec:building-model}.

\begin{figure}
    \centering
    \begin{tikzpicture}[scale=0.6]
        \node at (0,0) {{$\bullet$}};
        \draw[thick,->] (0,0) -- (21,0);
        \draw[thick,->] (0,0) -- (0,10.5);
        \draw[dashed] (5,0) -- (5,2);
        \draw[dashed] (9,0) -- (9,2);
        \draw[dashed] (12,0) -- (12,8);
        \draw[dashed] (17,0) -- (17,10);
        \draw[dashed] (20,0) -- (20,10);
        \draw[thick] (20,-.3) -- (20,.3);
        \draw[thick] (-.3,10) -- (.3,10);
        \node at (21.5,0) {{\small $q$}};
        \node at (0,11) {{\small $x(q)$}};
        \node at (20,-.7) {{\small $1$}};
        \node at (-.7,10) {{\small $1$}};
        \draw (5,2) -- (9,2);
        \draw plot [smooth] coordinates {(9,2) (9.25,2.0625) (9.5,2.25) (9.75,2.5625) (10,3) (11,5) (11.25,5.4375) (11.5,5.75) (11.75, 5.9375) (12,6)};
        \draw (12,8) -- (17,8);
        \draw (17,10) -- (20,10);
        \draw [thick,decorate,decoration = {calligraphic brace}] ( 5-.3,-.2) --  ( 0+.3,-.2);
        \draw [thick,decorate,decoration = {calligraphic brace}] ( 9-.3,-.2) --  ( 5+.3,-.2);
        \draw [thick,decorate,decoration = {calligraphic brace}] (12-.3,-.2) --  ( 9+.3,-.2);
        \draw [thick,decorate,decoration = {calligraphic brace}] (17-.3,-.2) --  (12+.3,-.2);
        \draw [thick,decorate,decoration = {calligraphic brace}] (20-.3,-.2) --  (17+.3,-.2);
        \node at (2.5,-.7) {{\footnotesize Topologically trivial}};
        \node at (7,-.7) {{\footnotesize 1RSB}};
        \node at (10.5,-.7) {{\footnotesize FRSB}};
        \node at (14.5,-.7) {{\footnotesize 1RSB}};
        \node at (18.5,-.7) {{\footnotesize RS}};
    \end{tikzpicture}
    \caption{Decomposition of an overlap distribution into fundamental components. Our proof of Theorem~\ref{thm:main} combines lower bounds on the free or ground state energy of each piece.}
    \label{fig:decomposition}
\end{figure}

The aforementioned lower bounds are stated in Subsection~\ref{subsec:fundamental-lower-bounds}.
For topologically trivial and strictly FRSB models, they are already known, using the Kac--Rice formula and an explicit optimization algorithm respectively.
Namely \cite{fyodorov2014topology, belius2022triviality} showed that topologically trivial models have w.h.p. two critical points, the global maximum and minimum, and characterized their energies (the intuition is that the strong external field overpowers the remainder of the disorder).
Meanwhile \cite{subag2018following} showed how to construct an approximate ground state of any strictly FRSB model (which can easily be made $k$-replicated).
It is worth pointing out that locating the ground state of the topologically trivial model in the first stage is analogous to the recentering step from the conditional second moment method approach used in \cite{bolthausen2018morita,ding2019capacity,brennecke2022replica} for related problems.
These works studied replica-symmetric models with external field, which for the purposes of this paper amount to a combination of topologically trivial and strictly RS models.

Given this, our primary remaining tasks are the lower bounds for strictly RS and 1RSB models.
We prove these bounds using a new truncated second moment argument, explained below.

We note that the ``decomposition'' strategy we follow was introduced by \cite{subag2018free}, and has been subsequently implemented to recover the Parisi formula in several restricted settings.
\cite{arous2020geometry} showed that under the subset of the 1RSB phase called ``Condition $M$'', the free energy can be understood at low enough temperature by decomposing the model into 1RSB and RS parts.
A similar ``full RSB + RS'' decomposition was observed in \cite{subag2018following}.
In the shattered phase (a subset of the replica-symmetric phase), \cite{arous2021shattering} used a similar decomposition to understand geometric properties of the landscape.
These results were highly suggestive and motivational for our work.
However they did not apply in fully general models because the RS and 1RSB phases could not always be handled without depending on the Parisi formula.
Because we solve these cases independently without this dependency, we can combine this with the above ingredients to arrive at a new proof of the Parisi formula.

\paragraph{Truncated Second Moment}

It is natural to study the free energy of strictly RS models using the second moment method.
A direct calculation shows that if
\begin{equation}
    \label{eq:2mt-rs-phase}
    \xi(q) + \fr12 \log (1-q^2) \le 0 \qquad \forall q\in [0,1),
\end{equation}
then the second moment method succeeds and $\plim_{N\to\infty} F_N = \fr12 \xi(1)$.
However, this does not encompass the entire RS phase.
Indeed, by Lemma~\ref{lem:cs-extremality} (with $x \equiv 1$), the model is strictly RS if (and only if)
\begin{equation}
    \label{eq:true-rs-phase}
    \xi(q) + q + \log(1-q) \le 0 \qquad \forall q\in [0,1)
\end{equation}
with equality only at $q=0$.
These conditions do not agree, so even in the strictly RS phase it is possible for the dominant contribution to the second moment to come from pairs with nonzero overlap.
Similarly, \cite[Condition M]{arous2020geometry} gives a condition under which the second moment method, applied to a suitable critical point count, identifies the ground state energy.
However, this condition does not encompass the entire zero-temperature 1RSB phase, given by \eqref{eq:1rsb-test} below.

We overcome these difficulties by truncating the moment calculation to \emph{typical} points, see Definitions~\ref{def:fe-typical} and \ref{def:gs-typical}.
Roughly, $\bsig \in \cS_N$ will be said to be \textbf{free energy typical} if for any $q$, the free energy on the band
\[
    \Band_q(\bsig) = \{\brho \in \cS_N : R(\brho,\bsig) = q\}
\]
is at most $\cP(\xi)$, and \textbf{ground state typical} if the ground state energy on such bands is at most $\cQ(\xi)$.
We will show that typical points dominate the respective first moments for both of the model types described above, by applying Guerra's interpolation bound to appropriate conditional models.
This implies that truncation has only a slight effect on the first moment.
On the other hand, truncation immediately ensures that the second moment is dominated by pairs of orthogonal points, causing the second moment method to succeed throughout the RS and 1RSB regimes.


To prove these highly non-obvious typicality properties, we rely on the following upper bounds which follow from the interpolation method of \cite{guerra2003broken}.
In fact we require only replica-symmetric bounds at positive temperature, and 1RSB bounds at zero temperature.

\begin{proposition}
\label{prop:RS-interpolation}
    For any $x$ as in \eqref{eq:cs-functional} of the form $x(q)=\ind\{q\ge q_*\}$,
    \[
    \plimsup_{N\to\infty} F_N\leq \cP(x;\xi).
    \]
\end{proposition}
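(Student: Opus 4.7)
The plan is to invoke Guerra's Gaussian interpolation method \cite{guerra2003broken} specialized to the 1-atomic (replica-symmetric) ansatz $x(q)=\ind\{q\ge q_*\}$ with $q_*\in[0,1)$. This is the simplest case of the general spherical Parisi upper bound, worked out in detail in \cite{talagrand2006spherical, chen2013aizenman}; I sketch the main ingredients below.

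First, I would set up an interpolating family $\{H_N^{(t)}\}_{t\in [0,1]}$ on $\cS_N$ with $H_N^{(1)}=H_N$ and $H_N^{(0)}$ an auxiliary Gaussian field whose partition function can be evaluated explicitly as a one-dimensional spherical integral. Concretely, $H_N^{(t)}$ is taken to be the centered Gaussian process with covariance $N[t\,\xi(R(\bsig^1,\bsig^2)) + (1-t)\,\eta(R(\bsig^1,\bsig^2))]$ for an appropriate linear (tangential) $\eta$, realized as a linear external field $\propto \la\bg,\bsig\ra$ with $\bg\sim\normal(\bzero,I_N)$ independent of $H_N$. Set
\[
\phi(t) = \fr{1}{N}\,\E\log \int_{\cS_N} e^{H_N^{(t)}(\bsig)}\,\de\bsig + C(t),
\]
where $C(t)$ is an affine-in-$t$ deterministic correction, engineered so that the boundary values are $\phi(1)=\E F_N$ and $\phi(0)=\cP(x;\xi)$.

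Second, differentiating and applying Gaussian integration by parts (Guerra's lemma) to both Gaussian fields yields an identity of the form
\[
\phi'(t) = -\fr{1}{2}\,\E\bigl\la \xi(R(\bsig^1,\bsig^2)) - \xi(q_*) - \xi'(q_*)\bigl(R(\bsig^1,\bsig^2)-q_*\bigr)\bigr\ra_t,
\]
where $\la\cdot\ra_t$ denotes the two-replica Gibbs average under $H_N^{(t)}$. The integrand is nonnegative by convexity of $\xi$ on $[-1,1]$ (since $\xi$ has nonnegative Taylor coefficients, hence $\xi''\ge 0$, and we are using the tangent-line inequality at $q_*$), so $\phi'(t)\le 0$. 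Combined with the boundary conditions, this gives $\E F_N = \phi(1) \le \phi(0) = \cP(x;\xi)$.

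Third, the identification $\phi(0)=\cP(x;\xi)$ reduces to an explicit saddle-point analysis of the one-dimensional spherical integral $\int_{\cS_N} e^{\la\bh,\bsig\ra}\,\de\bsig$ with $\bh$ a standard Gaussian vector of appropriate scale; using the density $\propto (1-m^2)^{(N-3)/2}$ of $m=\la\bsig,\bh/|\bh|\ra/\sqrt{N}$, Laplace's method produces the contribution $\tfrac{1}{2}[q_*/(1-q_*)+\log(1-q_*)]$ appearing in $\cP$, with the remaining $\xi$-dependent pieces arising from $C(0)$. Finally, Borell--TIS Gaussian concentration upgrades $\E F_N \le \cP(x;\xi)$ to the in-probability statement $\plimsup_{N\to\infty} F_N \le \cP(x;\xi)$. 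The main bookkeeping obstacle is arranging the scale of $\eta$ and the correction $C(t)$ simultaneously so that the convex minimum of $R\mapsto \xi(R)-\xi'(q_*)R$ sits at $R=q_*$ \emph{and} the spherical saddle at $t=0$ lands at $m^*=\sqrt{q_*}$; beyond this, the proof requires only Gaussian IBP, convexity of $\xi$, and standard spherical integral asymptotics.
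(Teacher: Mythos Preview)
Your approach via Guerra's interpolation is the standard one, and indeed the paper does not reprove this proposition but simply cites \cite{talagrand2006spherical}. However, there is a genuine gap in your convexity step: the claim that ``$\xi$ has nonnegative Taylor coefficients, hence $\xi''\ge 0$'' is valid only on $[0,1]$, not on $[-1,1]$. For odd $p\ge 3$ the term $p(p-1)\gamma_p^2 t^{p-2}$ is negative when $t<0$, so $\xi$ need not be convex on $[-1,1]$, and the tangent-line remainder $\xi(R)-\xi(q_*)-\xi'(q_*)(R-q_*)$ can be negative when the replica overlap $R$ is negative. Thus $\phi'(t)\le 0$ is not automatic for general mixtures.

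The paper flags exactly this issue in Remark~\ref{rem:interpolation-and-positivity}: for general $\xi$ one must invoke Talagrand's positivity principle (a consequence of the Ghirlanda--Guerra identities after a small generic perturbation), which forces the Gibbs overlap $R(\bsig^1,\bsig^2)$ to be asymptotically nonnegative, so that the remainder is effectively evaluated only on $[0,1]$ where convexity does hold. Alternatively, as the paper also notes, one may simply assume $\xi$ is convex on $[-1,1]$ (e.g.\ even models), in which case your sketch is correct as written. Either way, your justification of the sign of $\phi'$ needs one of these two ingredients.
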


\begin{proposition}
\label{prop:1RSB-interpolation}
    For any $(L,\alpha)$ as in \eqref{eq:cs-functional-0temp} of the form $\alpha(q)=u \ind\{q\ge q_*\}$,
    \[
    \plimsup_{N\to\infty} \frac{1}{N}\max_{\bsig\in\cS_N} H_N(\bsig)
    \leq \cQ(L,\alpha;\xi).
    \]
\end{proposition}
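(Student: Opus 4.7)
The plan is to derive Proposition~\ref{prop:1RSB-interpolation} from Guerra's positive-temperature 1RSB interpolation bound via a zero-temperature limit. Given $(L,\alpha)$ with $\alpha(q) = u \ind\{q \ge q_*\}$, the admissibility condition $L > \int_0^1 \alpha(s)\,\de s$ gives $L' \defeq L - u(1-q_*) > 0$. For each large $\beta > 0$, I would define a positive-temperature 1RSB order parameter
\[
    x_\beta(q) = \tfrac{u}{\beta}\, \ind\{q_* \le q < \hat{q}_\beta\} + \ind\{q \ge \hat{q}_\beta\}, \qquad \hat{q}_\beta \defeq 1 - L'/\beta,
\]
chosen so that $\beta \hat{x}_\beta(q) \to \hat{\alpha}(q)$ for each $q \in [0,1)$.

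The first step is to apply Guerra's positive-temperature 1RSB interpolation bound to the model at inverse temperature $\beta$ (equivalently, to the rescaled mixture $\beta^2\xi$ at temperature $1$):
\[
    \plimsup_{N\to\infty}\, \tfrac{1}{N} \log \int_{\cS_N} \exp(\beta H_N(\bsig))\,\de\bsig \le \cP(x_\beta;\, \beta^2 \xi).
\]

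The second step relates the ground-state energy to the free energy via Laplace's principle. With probability $1 - e^{-cN}$, one has $\sup_{\bsig \in \cS_N} \|\nabla H_N(\bsig)\|_2 \le C\sqrt{N}$, so $H_N$ is $C\sqrt{N}$-Lipschitz on $\cS_N$; this is standard under $\sum_p 2^p \gamma_p^2 < \infty$. Combined with the fact that a geodesic ball of radius $\delta\sqrt{N}$ in $\cS_N$ has uniform probability at least $(\delta/C')^{N-1}$, restricting the integral to a small ball around a maximizer yields
\[
    \tfrac{1}{N} \max_{\bsig \in \cS_N} H_N(\bsig) \le \tfrac{1}{N\beta} \log \int_{\cS_N} \exp(\beta H_N)\,\de\bsig + C\delta + \tfrac{|\log \delta|}{\beta}.
\]
Setting $\delta = 1/\beta$, the error is $O(\log \beta / \beta)$, vanishing as $\beta \to \infty$ uniformly in $N$.

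The third step verifies $\lim_{\beta \to \infty} \tfrac{1}{\beta}\cP(x_\beta;\, \beta^2\xi) = \cQ(L,\alpha;\xi)$ by direct calculation. The self-overlap contribution $\tfrac{1}{\beta}\log(1-\hat{q}_\beta) = (\log L' - \log \beta)/\beta$ vanishes, while $\beta \xi'(0) \hat{x}_\beta(0) \to \xi'(0) L$, and the integrals involving $\xi''$ and $1/\hat{x}_\beta$ converge to their zero-temperature analogs by dominated convergence (using the uniform lower bound $\beta \hat{x}_\beta(q) \ge L' > 0$). Combining the three steps and sending $\beta \to \infty$ yields the claim.

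The main technical obstacle is Step~2: making the Laplace approximation quantitative with an error that is uniform in $N$ requires an explicit high-probability control of the Lipschitz constant of the spherical $p$-spin Gaussian process $H_N$. This is standard but must be made quantitative; in particular one needs a tail bound $\prob[\sup_{\bsig \in \cS_N}\|\nabla H_N(\bsig)\|_2 \ge C\sqrt{N}] \le e^{-cN}$, which follows from Gaussian concentration together with Dudley-type covering estimates for the gradient process on $\cS_N$.
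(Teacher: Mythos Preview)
Your proposal is correct and is precisely the standard zero-temperature limit argument from \cite{chen2017parisi} (and \cite{jagannath2017low}) that the paper cites in Remark~\ref{rem:interpolation-and-positivity} rather than reproving. The paper does not give its own proof of Proposition~\ref{prop:1RSB-interpolation}; it simply defers to those references, which carry out exactly the steps you describe: take a positive-temperature 1RSB order parameter $x_\beta$ scaled so that $\beta\hat x_\beta\to\hat\alpha$, apply the Guerra--Talagrand interpolation upper bound at inverse temperature $\beta$, and pass to the limit.

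Two small remarks. First, your Step~1 invokes a positive-temperature \emph{1RSB} interpolation bound (two jump points in $x_\beta$), which is strictly stronger than the paper's Proposition~\ref{prop:RS-interpolation}; this is fine but worth noting, since for general (non-even) $\xi$ that bound already requires Talagrand's positivity principle, as the paper points out. Second, your Step~2 Lipschitz control is exactly the content of the paper's Proposition~\ref{prop:gradients-bounded}, so you need not present it as a separate technical obstacle; on the high-probability event $K_N$ the inequality $\tfrac{1}{N}\max H_N \le \tfrac{1}{N\beta}\log\int e^{\beta H_N} + C\delta + \tfrac{\log(C'/\delta)}{\beta}$ follows immediately, and taking $N\to\infty$ then $\beta\to\infty$ with $\delta=1/\beta$ is clean. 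Your Step~3 convergence calculation is correct as written, including the uniform lower bound $\beta\hat x_\beta(q)\ge L'$ on $[0,\hat q_\beta]$ needed for dominated convergence in the $\int dq/\hat x_\beta$ term.
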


\begin{remark}
\label{rem:interpolation-and-positivity}
Proposition~\ref{prop:RS-interpolation} follows directly from \cite{talagrand2006spherical}, while Proposition~\ref{prop:1RSB-interpolation} follows from the zero temperature limits taken in \cite{chen2017parisi} or \cite{jagannath2017low}.
We note that proving them for general $\xi$ requires Talagrand's positivity principle \cite{panchenko2007note,talagrand2011mean2}, which follows from the Ghirlanda--Guerra identities.
If one wishes to avoid these to keep things elementary, one may assume throughout this paper that $\xi$ is convex on $[-1,1]$.
\end{remark}

\subsection{Lower Bounds for Free and Ground State Energy}
\label{subsec:fundamental-lower-bounds}

The following propositions lower bound the free or ground state energies in the four fundamental model types.
They are special cases of Theorem~\ref{thm:main} where $\xi$ is of these types.
\begin{proposition}
    \label{prop:fe-rs}
    Suppose $\xi$ is strictly RS.
    For all $\delta, \eps > 0$, there exists $c = c(\xi,\delta,\eps)$ such that if $k\le e^{cN}$, with probability $1-e^{-cN}$,
    \[
        \fr{1}{kN} \log
        \int_{\Band_{k,1,\delta}(\bzero)}
        \exp \lt(\sum_{i=1}^k H_N(\bsig^i)\rt) ~\de \vbsig
        \ge \cP(\xi) - \eps.
    \]
\end{proposition}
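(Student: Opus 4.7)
Since $\xi$ is strictly RS, the unique minimizer of \eqref{eq:cs-functional-inf} is $\nu = \delta_0$ (i.e., $x \equiv 1$), giving $\hx(q) = 1 - q$ and $\cP(\xi) = \xi(1)/2$; moreover $\xi'(0) = 0$ by Remark~\ref{rmk:need-no-field}. The plan is a truncated second moment argument applied to the $k$-replicated partition function $Z_\Band = \int_{\Band_{k,1,\delta}(\bzero)} \exp(\sum_i H_N(\bsig^i)) d\vbsig$. The annealed identity $\bbE \exp(\sum_i H_N(\bsig^i)) = \exp(\fr{N}{2} \sum_{i,j} \xi(R_{ij}))$ gives $\fr{1}{kN} \log \bbE Z_\Band = \xi(1)/2 + o(1) = \cP(\xi) + o(1)$: for $k \le e^{cN}$ with $c$ small relative to $\delta$, most configurations in $\Band_{k,1,\delta}(\bzero)$ have pairwise overlaps $R_{ij} = O(N^{-1/2})$, and since $\xi(R) = O(R^2)$ near zero (as $\xi'(0) = 0$), the cross-term contribution is negligible.

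First I would introduce a notion of \emph{free-energy typical} points. Fix a fine discretization $0 = q_0 < \cdots < q_J = 1$ and call $\bsig \in \cS_N$ typical if $\fr1N \log \int_{\Band_{q_j}(\bsig)} \exp H_N(\brho) d\brho \le \Lambda(q_j) + \eps'$ for each $j$, where $\Lambda(q)$ is the RS Crisanti--Sommers value of the spherical spin glass obtained by restricting $H_N$ to $\Band_q(\bsig)$ (a mixed $p$-spin model with explicit mixture function which inherits strict RS character from $\xi$). I would then apply Proposition~\ref{prop:RS-interpolation} to each conditional band model to obtain the band free energy upper bound with exponentially small failure probability; combined with Markov's inequality and a union bound over the discretization, this yields $\Pr[\bsig \notin \text{typ}] \le e^{-c N}$ uniformly in $\bsig$, so $\bbE \int \one\{\bsig \in \text{typ}\} \exp H_N(\bsig) d\bsig \ge (1 - o(1)) \bbE Z_N$.

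Given this, the truncated second moment analysis proceeds by bounding $\bbE Z_{\text{typ},\Band}^2$ where $Z_{\text{typ},\Band} = \int_{\Band_{k,1,\delta}(\bzero)} \prod_i \one\{\bsig^i \in \text{typ}\} \exp(\sum_i H_N(\bsig^i)) d\vbsig$. Decomposing by the pairwise overlap $q = R(\bsig^i, \bsig^j)$ for $i \ne j$, the contribution from pairs at overlap $q$ is controlled by the typicality of $\bsig^i$: the band partition function bound $\int_{\Band_q(\bsig^i)} \exp H_N d\brho \le \exp(N \Lambda(q))$ limits how large $\exp H_N(\bsig^j)$ can be for $\bsig^j$ in this band (via Lipschitz continuity of $H_N$ together with the band volume). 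The key inequality $\max_q [\Lambda(q) + \fr12 \log(1-q^2)] = \cP(\xi)$, with equality only at $q = 0$, is essentially the strict RS characterization \eqref{eq:true-rs-phase} combined with interpreting $\fr12 \log(1-q^2)$ as the band-volume term; it ensures $\bbE Z_{\text{typ},\Band}^2 \le (1+o(1))(\bbE Z_{\text{typ},\Band})^2$. Paley--Zygmund then yields $Z_{\text{typ},\Band} \ge \fr12 \bbE Z_{\text{typ},\Band}$ with constant probability, and Borel--TIS concentration (Proposition~\ref{prop:borell-tis}) upgrades this to probability $1 - e^{-cN}$.

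The main obstacle is making the truncated second moment effectively suppress non-orthogonal pairs, which is exactly the failure mode of the naive second moment method in the gap between the 2nd-moment phase \eqref{eq:2mt-rs-phase} and the strict RS phase \eqref{eq:true-rs-phase}. A related technical step is uniformly applying Proposition~\ref{prop:RS-interpolation} to the family of conditional band models parameterized by $\bsig$ and $q$, which requires explicit identification of the band's mixture function and verification that it remains strictly RS throughout the discretization.
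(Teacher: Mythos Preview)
Your high-level strategy---truncated second moment with typicality defined via interpolation bounds on band free energies, then Paley--Zygmund plus concentration---is exactly the paper's. The gap is in how you set up the band model. Restricting $H_N$ to $\Band_q(\bsig)$ does not give a centered spin glass; you must condition on $H_N(\bsig)=EN$, after which the band Hamiltonian equals $\fr{\xi(q)}{\xi(1)}EN$ plus a centered process with mixture $\txi_q(t)=\xi(q^2+(1-q^2)t)-\xi(q)^2/\xi(1)$. Your $\Lambda(q)$ ignores this $E$-dependent shift, and your claim $\Pr[\bsig\notin\text{typ}]\le e^{-cN}$ uniformly in $\bsig$ is false as stated: it only holds conditionally on $E$ in a bounded range. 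Crucially, the first moment is computed under the tilt $\exp H_N(\bsig)$, under which $E\approx\xi(1)$ (not $0$), so the relevant band free energy at this $E$ is $\xi(q)+\cP(\tx;\txi_q)+\fr12\log(1-q^2)$. The paper makes the typicality threshold the \emph{constant} $\fr12\xi(1)-\eta$ and shows, by plugging the specific RS parameter $\tx(t)=\ind\{t\ge q/(1+q)\}$ into Proposition~\ref{prop:RS-interpolation}, that at $E=\xi(1)$ the band free energy is at most $\fr12\xi(1)+\fr12 f(q)$ with $f(q)=\xi(q)+q+\log(1-q)$; the strict RS condition is exactly $f(q)<0$ for $q>0$. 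Your assertion that the band model ``inherits strict RS character'' is neither proved nor needed.

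Two smaller points. First, the paper runs the second moment on the \emph{single-replica} $\wtZ_N$, not the $k$-replicated object: once $\wtZ_N\ge\fr12 e^{N\xi(1)/2}$ holds, the $k$-replicated lower bound follows because under the truncated Gibbs measure any two samples have $|R|\ge\delta$ with probability $\le 2e^{-\eta N}$ (immediate from the definition of typical), so a union bound over $\binom{k}{2}$ pairs handles $k\le e^{\eta N/3}$. Your direct second moment on $Z_{\text{typ},\Band}$ would require controlling $\binom{2k}{2}$ overlaps simultaneously. Second, the second-to-first-moment ratio is $e^{O(\delta)N}$, not $1+o(1)$, so Paley--Zygmund only gives probability $e^{-O(\delta)N}$; concentration then upgrades to $1-e^{-cN}$ for $\delta$ small in $\eps$.
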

\begin{proposition}
    \label{prop:opt-1rsb}
    Suppose $\xi$ is strictly 1RSB.
    For all $\delta, \eps > 0$, there exists $c = c(\xi,\delta,\eps)$ such that if $k\le e^{cN}$, with probability $1-e^{-cN}$ there exists $\vbsig \in \Band_{k,1,\delta}(\bzero)$ such that
    \[
        \fr1N H_N(\bsig^i) \ge \cQ(\xi) - \eps \qquad \forall i\in [k].
    \]
\end{proposition}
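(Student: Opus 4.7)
The plan is to prove Proposition~\ref{prop:opt-1rsb} by a truncated second moment argument on critical points of $H_N|_{\cS_N}$. Let $\Crt_{N,\mathrm{typ}}(E,\eps)$ denote the number of critical points $\bsig \in \cS_N$ with $H_N(\bsig)/N \in [E-\eps,E]$ that are \emph{ground state typical} in the sense of Definition~\ref{def:gs-typical}: informally, that $\max_{\brho \in \Band_q(\bsig)} H_N(\brho)/N \le \Psi(q) + o(1)$ for $q$ ranging over a fine grid, where $\Psi(q)$ is the ground state threshold obtained by applying Proposition~\ref{prop:1RSB-interpolation} to the conditional spin glass induced on the band $\Band_q(\bsig)$.

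The first moment step is the main obstacle. Using the Kac--Rice formula together with the annealed critical-point complexity of \cite{auffinger2013random,auffinger2013complexity}, the untruncated count satisfies $\bbE[\Crt_N(\cQ(\xi),\eps)] \ge e^{cN}$ for some $c = c(\xi,\eps) > 0$, since the strictly 1RSB hypothesis guarantees positive annealed complexity just below the ground state energy. The crucial task is to show that imposing typicality costs only a subexponential factor. For this one conditions on $\bsig$ being a critical point of $H_N|_{\cS_N}$ with prescribed energy; the restriction of $H_N$ to a band $\Band_q(\bsig)$ then decomposes as a deterministic drift plus a centered spherical spin glass with an explicit modified mixture function. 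Applying Proposition~\ref{prop:1RSB-interpolation} to this conditional process produces the threshold $\Psi(q)$, and a union bound over the grid of $q$'s yields typicality with conditional probability $1-e^{-\Omega(N)}$, so $\bbE[\Crt_{N,\mathrm{typ}}] \ge e^{(c - o(1))N}$. The difficulty is verifying that the conditional process is close enough to a standard spherical spin glass for Proposition~\ref{prop:1RSB-interpolation} to apply, and that the matching threshold $\Psi(q)$ is simultaneously (i) tight enough to force second-moment cancellation and (ii) loose enough not to kill the first moment.

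The second moment step then exploits typicality to force approximate orthogonality. Writing $\bbE[\Crt_{N,\mathrm{typ}}^2]$ via Kac--Rice as an integral over the overlap $q$ of the two critical points, we observe that if both $\bsig^1,\bsig^2$ are ground state typical with energies in $[\cQ(\xi)-\eps,\cQ(\xi)]$, then for $q$ bounded away from $0$ the typicality of $\bsig^1$ upper-bounds $H_N(\bsig^2)$ by $N\Psi(q) < N(\cQ(\xi)-\eps)$. The strict inequality here uses exactly that $\xi$ is strictly 1RSB, so $T = \{0,1\}$ and Proposition~\ref{prop:1RSB-interpolation} is strict for $q \in (0,1)$. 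Hence only pairs with $|q|\le \delta'$ contribute, and on this near-orthogonal regime the Kac--Rice density factorizes to leading exponential order, giving $\bbE[\Crt_{N,\mathrm{typ}}^2] \le (\bbE[\Crt_{N,\mathrm{typ}}])^2 \cdot e^{o(N)}$.

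Paley--Zygmund then yields $\Crt_{N,\mathrm{typ}} \ge e^{cN}$ with constant probability, which Borell--TIS concentration (Proposition~\ref{prop:borell-tis}) upgrades to probability $1-e^{-c'N}$. Since the second moment is concentrated on $\delta'$-orthogonal pairs, a standard greedy extraction produces $k = e^{c''N}$ pairwise $\delta$-orthogonal ground-state-typical critical points $\bsig^1,\ldots,\bsig^k$, each satisfying $H_N(\bsig^i)/N \ge \cQ(\xi)-\eps$. These points lie in $\Band_{k,1,\delta}(\bzero)$ since $R(\bsig^i,\bzero)=0$ automatically, completing the proof.
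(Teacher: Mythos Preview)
Your overall strategy matches the paper's, but there is a genuine gap in the first-moment step. You condition only on the energy $H_N(\bsig)/N$, whereas the paper conditions jointly on the energy \emph{and} the radial derivative $\partial_{\rd}H_N(\bsig)/\sqrt{N}$ lying in the window $B=[E_0-\eta_1\pm\eta_3]\times[R_0+\eta_1^{3/4}\pm\eta_3]$. This is not cosmetic. The conditional band bound (Proposition~\ref{prop:1rsb-band-interpolation}) reads $\Psi(q;\bsig)\le E_0+\langle v^q,(E-E_0,R-R_0)\rangle-c$, with $v^q_E\in(0,1)$ and $v^q_R<0$ for $q$ bounded away from $0,1$ (Lemma~\ref{lem:vq-bds}). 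At energy $E\in[E_0-\eps,E_0]$ the Kac--Rice-typical radial derivative satisfies $R\le R_0$, so $v^q_R(R-R_0)\ge 0$ and the bound collapses to $\Psi(q;\bsig)\lesssim E_0+v^q_E(E-E_0)\ge E$: there is no slack below the energy window, and your claim $\Psi(q)<\cQ(\xi)-\eps$ fails. The paper's shift $R=R_0+\eta_1^{3/4}$ is exactly what creates the slack---the extra term $v^q_R\eta_1^{3/4}\le -c_2(1-q)\eta_1^{3/4}$ dominates the deficit $(1-v^q_E)\eta_1\le c_1(1-q)^2\eta_1$---while still leaving $\Theta(E_0-\eta_1,R_0+\eta_1^{3/4})=z\eta_1/y+O(\eta_1^{9/8})>0$ so the first moment remains exponentially large. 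Your tension ``tight enough for cancellation yet loose enough not to kill the first moment'' cannot be resolved from energy alone.

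Second, Borell--TIS does not apply to critical point counts, and Paley--Zygmund here yields only probability $e^{-O(\delta)N}$, not constant. The paper instead passes to the Lipschitz scalar $X=\sup_{\vbsig\in\Band_{k,1,\delta}(\bzero)}\min_i H_N(\bsig^i)/N$, deduces $X\ge\cQ(\xi)-\eps/2$ with probability $e^{-O(\delta)N}$, and then applies Gaussian concentration to $X$ to reach $1-e^{-cN}$. Two smaller omissions: typicality only constrains $q\in[\delta,1-\eta_2]$, so negative overlaps and $q$ near $1$ require a Ramsey argument (Lemma~\ref{lem:extremal-combinatorics}) and a separate ``well'' condition respectively; and for pure $\xi$ the conditioning covariance is singular (Remark~\ref{rmk:not-pure}), handled by perturbation at the end.
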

\begin{proposition}
    \label{prop:opt-topologically-trivial}
    Suppose $\xi$ is topologically trivial.
    For all $\eps>0$, there exists $c = c(\xi,\eps)$ such that with probability $1-e^{-cN}$, there exists $\bsig \in S_N$ such that $\fr1N H_N(\bsig) \ge \cQ(\xi) - \eps$.
\end{proposition}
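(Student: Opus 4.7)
The plan is to reduce Proposition~\ref{prop:opt-topologically-trivial} to the existing characterization of the ground state energy of topologically trivial spherical spin glasses. I would proceed in three steps.

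First, I would compute $\cQ(\xi)$ explicitly. Since $T=\{1\}$, Lemma~\ref{lem:cs-extremality-0temp} forces the measure $\nu_\infty$ in \eqref{eq:def-nu-infty} to be supported on $\{1\}$, so by monotonicity $\alpha\equiv 0$ on $[0,1)$ and $\halpha\equiv L$ on $[0,1]$. The stationarity condition $G(1)=0$ yields $L=\xi'(1)^{-1/2}$, and substituting into \eqref{eq:cs-functional-0temp} gives
\[
\cQ(\xi)=\fr12\bigl(\xi'(1)L+1/L\bigr)=\sqrt{\xi'(1)}.
\]
(The nonnegativity condition $g\ge 0$ on $[0,1]$ is automatic from $T=\{1\}$.) In particular $\cQ(\xi)$ reduces to the classical ``strong field'' ground state value from the trivialization literature.

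Second, I would invoke existing results on topologically trivial models. Modulo the equivalence between the variational condition $T=\{1\}$ and the spectral condition $\xi'(1)\ge\xi''(1)$ noted in Remark~\ref{rmk:need-no-field} (which is Lemma~\ref{lem:type-characterization}\ref{itm:char-topologically-trivial}), the main results of \cite{fyodorov2014topology,belius2022triviality,huang2023strong} guarantee that, with high probability, $H_N$ has exactly two critical points on $\cS_N$, namely its global maximum and minimum, and $\plim_{N\to\infty}\fr1N\max_{\bsig\in\cS_N}H_N(\bsig)=\sqrt{\xi'(1)}$. Combined with Step~1, this delivers the matching in-probability lower bound $\cQ(\xi)-\eps$. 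As a sanity check the upper bound $\cQ(\xi)$ also follows from Proposition~\ref{prop:1RSB-interpolation} applied with $u=0$, confirming consistency.

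Third, I would upgrade the in-probability statement to the $1-e^{-cN}$ probability guarantee using Gaussian concentration. The map from the disorder variables $(g_{i_1,\ldots,i_p})$ to $\fr1N\max_{\bsig\in\cS_N}H_N(\bsig)$ is Lipschitz with constant $O(N^{-1/2})$, so Borell--TIS (Proposition~\ref{prop:borell-tis}) yields $e^{-cN\eps^2}$ concentration around the median. Combined with convergence of the median to $\cQ(\xi)$ from Step~2, this yields the claimed $e^{-cN}$ probability bound.

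The main obstacle is simply pinning down the equivalence between the variational definition of topologically trivial ($T=\{1\}$) used in this paper and the condition $\xi'(1)\ge\xi''(1)$ under which the prior trivialization results apply; once this equivalence is in hand (via Lemma~\ref{lem:type-characterization}), the rest of the argument is a routine citation plus Gaussian concentration and carries no nontrivial probabilistic content.
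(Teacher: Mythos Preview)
Your proposal is correct and matches the paper's proof essentially step for step: the paper likewise invokes Lemma~\ref{lem:type-characterization}\ref{itm:char-topologically-trivial} to get $\xi'(1)\ge\xi''(1)$ and $\cQ(\xi)=\sqrt{\xi'(1)}$, cites \cite{belius2022triviality} for the high-probability lower bound $GS_N\ge\sqrt{\xi'(1)}-\eps/2$, and then upgrades via Proposition~\ref{prop:borell-tis}. The only cosmetic difference is that you rederive the formula $\cQ(\xi)=\sqrt{\xi'(1)}$ inline rather than quoting the lemma.
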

\begin{proposition}
    \label{prop:opt-frsb}
    If $\xi$ is strictly FRSB, the conclusion of Proposition~\ref{prop:opt-1rsb} also holds.
\end{proposition}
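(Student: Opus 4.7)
The plan is to prove Proposition~\ref{prop:opt-frsb} via a branching variant of Subag's Hessian-ascent algorithm \cite{subag2018following}. For strictly FRSB $\xi$, the condition $T = [0,1]$ combined with Lemma~\ref{lem:cs-extremality-0temp} forces $G \equiv 0$ on $[0,1]$; differentiating $G(q) = \xi'(q) - \int_0^q \halpha(s)^{-2}\,\diff{s}$ gives $\halpha(q) = 1/\sqrt{\xi''(q)}$ and hence $\cQ(\xi) = \int_0^1 \sqrt{\xi''(q)}\,\diff{q}$. In particular $\xi''(0) = 1/\halpha(0)^2 > 0$, so $\gamma_2 > 0$. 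Recall Subag's single-branch algorithm: discretize $[0,1]$ as $0 = q_0 < \cdots < q_M = 1$ with small spacing $\eta$, and iteratively construct $\bsig_0 = \bzero$, $\bsig_{j+1} = \bsig_j + \sqrt{(q_{j+1}-q_j)N}\,v_j$ where $v_j$ is a unit top eigenvector of $\nabla^2 H_N(\bsig_j)$ restricted to $\bsig_j^\perp$. Semicircle-law concentration of the restricted Hessian (which is $\sqrt{\xi''(q_j)}$ times a GOE matrix plus lower-rank corrections) yields per-step energy gain matching the Crisanti--Sommers integrand, summing to total energy $N(\cQ(\xi) - o_\eta(1))$ with probability $1 - e^{-cN}$.

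To produce $k$ replicas, I would perform all the branching at the very first step and then let each branch evolve independently. Since $\xi''(0) > 0$, the matrix $\nabla^2 H_N(\bzero)$ is a nontrivial scaled GOE, so its top-$\eta$ eigenspace has dimension $\Omega(N)$ with probability $1 - e^{-cN}$. Inside this $\Omega(N)$-dimensional subspace, a standard volume-packing argument produces $k \le e^{c'N}$ unit vectors $u^1, \ldots, u^k$ with pairwise overlaps at most $\delta/2$. Set $\bsig^i_1 = \sqrt{q_1 N}\,u^i$; each branch's first-step Rayleigh quotient is within $\eta$ of the maximum, so the first-step energy loss per branch is $O(\eta N)$, which can be absorbed into $\eps$ by choosing $\eta$ small enough. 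Then for each $i \in [k]$, run Subag's algorithm independently from $\bsig^i_1$ to produce $\bsig^i := \bsig^i_M \in \cS_N$ with $H_N(\bsig^i)/N \ge \cQ(\xi) - \eps$.

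The main obstacle is controlling the pairwise overlaps $R(\bsig^i, \bsig^{i'})$ at the final step. The key input is that when $|R(\bsig^i_j, \bsig^{i'}_j)|$ is small, the Hessians $\nabla^2 H_N(\bsig^i_j)$ and $\nabla^2 H_N(\bsig^{i'}_j)$ have entrywise cross-covariance of order $\xi''(R(\bsig^i_j, \bsig^{i'}_j))$, which is $O(\delta)$ for $|R| \le \delta$; to leading order they behave as independent scaled GOE matrices. Their top eigenvectors $v^i_j, v^{i'}_j$ are therefore approximately independent uniform unit vectors in their respective tangent spheres, yielding $|\langle v^i_j, v^{i'}_{j'}\rangle| \le N^{-1/2 + o(1)}$ with probability $1 - e^{-c''N}$ for $c''$ depending on $\delta$. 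A union bound over the $O(k^2 M)$ cross-pairs is affordable for $c$ small enough, and a bootstrap argument through the $M$ steps maintains $|R(\bsig^i_j, \bsig^{i'}_j)| \le \delta$ for all $j$, delivering the required $\delta$-orthogonality at step $M$.
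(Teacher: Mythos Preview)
Your Hessian--ascent backbone is the same as the paper's, but the orthogonality argument in step~3 has a genuine gap. You assert that the entrywise cross-covariance of $\nabla^2 H_N(\bsig^i_j)$ and $\nabla^2 H_N(\bsig^{i'}_j)$ is of order $\xi''(R(\bsig^i_j,\bsig^{i'}_j))$, ``which is $O(\delta)$ for $|R|\le\delta$.'' But you yourself established that $\xi''(0)>0$ in strictly FRSB models, so $\xi''(R)\to\xi''(0)>0$ as $R\to 0$; the cross-covariance is \emph{not} small. Concretely, the degree-$2$ part of $H_N$ contributes a Hessian that is constant in $\bsig$, so the Hessians at two orthogonal points share this component and are correlated at the $\Theta(1)$ level. (In the extreme case $\xi(t)=\gamma_2^2 t^2$ the Hessian is literally the same at every point, and all branches would step in the \emph{same} direction.) Hence the claim that the top eigenvectors $v^i_j,v^{i'}_j$ behave like ``approximately independent uniform unit vectors'' with $|\langle v^i_j,v^{i'}_j\rangle|\le N^{-1/2+o(1)}$ at probability $1-e^{-c''N}$ is unjustified, and the bootstrap through the $M$ steps collapses.

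The paper's proof avoids this entirely by inserting randomness at \emph{every} step rather than relying on independence of Hessians. At each step one takes $\bv_j$ \emph{uniformly at random} from the span $S_j$ of the top $N\eta$ eigenvectors of $\nabla^2 H_N(\bx_j)|_{\bx_j^\perp}$ (not the single top eigenvector). For two independent runs with outputs $\bx_*,\bx_*'$, one conditions on $\bx_*'$: since $\bv_j$ is uniform on an $\Omega(N)$-dimensional sphere, $|\langle \bv_j,\bx_*'\rangle|\le \eta N$ with probability $1-e^{-cN}$ by an elementary concentration bound, \emph{regardless} of any correlation between Hessians. Telescoping handles the overlap, and the energy bound uses that the $N\eta$-th eigenvalue is still near the spectral edge because the bulk spectrum of GOE has large-deviation speed $N^2$, so one can union bound over a net of the ball. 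Finally one runs the algorithm $e^{cN/3}$ times independently to produce the $k$ replicas. This randomization is the missing idea in your proposal.
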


We will prove Propositions~\ref{prop:fe-rs} and \ref{prop:opt-1rsb} in Section~\ref{sec:moment} by the aforementioned truncated second moment argument.
Propositions~\ref{prop:opt-topologically-trivial} and \ref{prop:opt-frsb} are known from previous work, and we outline their proofs below.

\begin{lemma}
    \label{lem:type-characterization}
    The following holds.
    \begin{enumerate}[label=(\alph*)]
        \item \label{itm:char-topologically-trivial} If $\xi$ is topologically trivial, then $\xi'(1) \ge \xi''(1)$ and $\cQ(\xi) = \sqrt{\xi'(1)}$.
        \item \label{itm:char-strong-frsb} If $\xi$ is strictly FRSB, then
        $\cQ(\xi) = \int_0^1 \xi''(q)^{1/2} ~\de q$.
    \end{enumerate}
\end{lemma}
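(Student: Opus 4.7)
The plan is to prove both parts by extracting explicit information about the unique minimizer $(L,\alpha)$ of \eqref{eq:cs-functional-0temp-inf} from Lemma~\ref{lem:cs-extremality-0temp}, and then substituting into the functional $\cQ(L,\alpha;\xi)$. In both cases the set $T$ almost determines $(L,\alpha)$ on the nose, so the computation reduces to solving the stationarity conditions $G(1)=0$ and $g\equiv 0$ on $T$.

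For part \ref{itm:char-topologically-trivial}, topological triviality means $T=\{1\}$, so $\nu_\infty$ is supported on $\{1\}$ and hence $\alpha\equiv 0$ on $[0,1)$. This makes $\halpha(q)\equiv L$ constant on $[0,1)$. The condition $G(1)=0$ reads $\xi'(1)=\int_0^1 L^{-2}\,\de q=L^{-2}$, forcing $L=1/\sqrt{\xi'(1)}$. Substituting into \eqref{eq:cs-functional-0temp},
\[
\cQ(\xi)=\tfrac12\lt\{\xi'(0)L + L\int_0^1 \xi''(q)\,\de q + \int_0^1\tfrac{\de q}{L}\rt\}
= \tfrac12\lt\{L\,\xi'(1)+L^{-1}\rt\}=\sqrt{\xi'(1)},
\]
using that $L\xi'(0)$ combines with $L\int_0^1\xi''(q)\,\de q=L(\xi'(1)-\xi'(0))$ to give $L\xi'(1)$. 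For the inequality $\xi'(1)\ge\xi''(1)$, I will use that $T=\{1\}$ forces $g\ge 0$ on $[0,1]$ with equality only at $1$. With $\halpha\equiv L$ we have $G(q)=\xi'(q)-q\xi'(1)$, so $g'(s)=-G(s)$ and $g'(1)=0$. The left-sided second-order condition at the boundary minimizer $s=1$ requires $g''(1)=\xi'(1)-\xi''(1)\ge 0$, giving the claimed inequality.

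For part \ref{itm:char-strong-frsb}, strict FRSB means $T=[0,1]$, so $g\equiv 0$ on $[0,1]$ and therefore $G\equiv 0$. Differentiating $G(q)=\xi'(q)-\int_0^q \halpha(s)^{-2}\,\de s=0$ gives $\halpha(q)^{-2}=\xi''(q)$, i.e.\ $\halpha(q)=\xi''(q)^{-1/2}$. By Remark~\ref{rmk:need-no-field}, strict FRSB forces $\xi'(0)=0$. Substituting into \eqref{eq:cs-functional-0temp},
\[
\cQ(\xi)=\tfrac12\lt\{0 + \int_0^1 \xi''(q)\cdot \xi''(q)^{-1/2}\,\de q + \int_0^1 \xi''(q)^{1/2}\,\de q\rt\}=\int_0^1\xi''(q)^{1/2}\,\de q,
\]
as claimed. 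The only mild subtlety, which I would verify in passing, is consistency: $\halpha(q)=\xi''(q)^{-1/2}$ must be nonincreasing in $q$ (since $\alpha\ge 0$), which is automatic because $\xi''$ is nondecreasing on $[0,1]$ for any admissible $\xi$. There is no real obstacle to this lemma; it is essentially a bookkeeping computation using the zero-temperature Parisi characterization, and any delicacy lies in correctly reading off the degenerate minimizer shapes from the hypotheses $T=\{1\}$ and $T=[0,1]$.
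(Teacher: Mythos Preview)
Your proof is correct and follows essentially the same approach as the paper: in both parts you read off the shape of the minimizer $(L,\alpha)$ from the constraint on $T$, use $G(1)=0$ (and $g\equiv 0$ in the FRSB case) to pin down $\halpha$, and substitute into $\cQ$. The only differences are cosmetic: the paper deduces $\halpha(q)=\xi''(q)^{-1/2}$ from $G'\equiv 0$ rather than first writing $G\equiv 0$, and it does not record the monotonicity consistency check or invoke Remark~\ref{rmk:need-no-field} explicitly, but your additions are harmless elaborations.
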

\begin{proof}
    If $\xi$ is topologically trivial, then (recalling notation of Lemma~\ref{lem:cs-extremality-0temp}), $\alpha \equiv 0$, and so $\halpha \equiv L$.
    Thus
    \[
        G(q) = \xi'(q) - \fr{q}{L^2}.
    \]
    Since Lemma~\ref{lem:cs-extremality-0temp} gives $G(1)=0$, we have $L^{-2} = \xi'(1)$.
    Because $\min g \ge 0$, we have
    \[
        0 \le g''(1) = - G'(1) = -\xi''(1) + L^{-2},
    \]
    so $\xi'(1) \ge \xi''(1)$.
    Moreover, plugging this $(L,\alpha)$ into \eqref{eq:cs-functional-0temp} shows $\cQ(\xi) = \sqrt{\xi'(1)}$.
    This proves part \ref{itm:char-topologically-trivial}.
    If $\xi$ is strictly FRSB, then $g\equiv 0$, so
    \[
        G'(q) = \xi''(q) - \fr{1}{\halpha(q)^2} = 0.
    \]
    This implies $\halpha(q) = \xi''(q)^{-1/2}$.
    Plugging this $\halpha$ into \eqref{eq:cs-functional-0temp}
    proves part \ref{itm:char-strong-frsb}.
\end{proof}

\begin{proof}[Proof of Proposition~\ref{prop:opt-topologically-trivial}]
    By Lemma~\ref{lem:type-characterization}\ref{itm:char-topologically-trivial}, we have $\xi'(1) \ge \xi''(1)$.
    Define
    \begin{equation}
        \label{eq:def-GS}
        GS_N \equiv \fr1N \max_{\bsig \in \cS_N} H_N(\bsig).
    \end{equation}
    \cite[Theorem 1.1]{belius2022triviality} shows via the Kac--Rice formula that with high probability, $GS_N \ge \sqrt{\xi'(1)} - \eps/2$.
    By concentration of the ground state energy (see Proposition~\ref{prop:borell-tis} below), $GS_N \ge \sqrt{\xi'(1)} - \eps$ with probability $1-e^{-cN}$.
    Lemma~\ref{lem:type-characterization}\ref{itm:char-topologically-trivial} further implies $\sqrt{\xi'(1)} = \cQ(\xi)$, concluding the proof.
\end{proof}

\begin{proof}[Proof of Proposition~\ref{prop:opt-frsb}]
    We use a randomized version of Subag's Hessian ascent algorithm~\cite{subag2018following} (see also \cite[Section 3.7]{huang2021tight}).
    Starting from $\bx_0=0\in\bbR^N$, we choose small $\eta=\eta(\eps,\delta)\in 1/\bbN$ and for $0\leq j< 1/\eta$ construct $\bx_{j+1}$ from $\bx_j$ as follows.
    For $\bx\in\bbR^N$ let $S^{(\eta)}(\bx)$ be the span of the top $N\eta$ eigenvectors of $\nabla^2 H_N(\bx)|_{\bx^{\perp}}$, with any measurable-in-$\bx$ tie-breaking procedure. (Here we view $\nabla^2 H_N(\bx)$ as a quadratic form and restrict it to the subspace $\bx^{\perp}=\{\by\in\bbR^N~:~\la \bx,\by\ra=0\}$.)
    Let $S_j=S^{(\eta)}(\bx_j)$, and choose $\bv_j\in S_j$ uniformly at random from the corresponding unit sphere, independently of all previous choices.
    Then let $\bx_{j+1}=\bx_j+\bv_j\sqrt{\eta N}$ if $\la \bv_j,\nabla H_N(\bx_j)\ra\geq 0$, else $\bx_{j+1}=\bx_j-\bv_j\sqrt{\eta N}$.
    The output of this algorithm is $\bx_*=\bx_{1/\eta}\in\cS_N$. (Note that $\la\bx_j,\bv_j\ra=0$ by construction, thus $\|\bx_j\|_2^2 = j\eta N$ almost surely.)

    Let $\bx_*,\bx_*'$ be independent outputs of this algorithm for the same $H_N$.
    We claim that with probability $1-e^{-cN}$ over $H_N$ and the randomness inside both runs of the algorithm:
    \begin{align}
    \label{eq:subag-overlap}
    |\la \bx_*,\bx_*'\ra|
    &\leq
    \delta N,
    \\
    \label{eq:subag-energy}
    H_N(\bx_*)/N&\geq  \int_0^1 \xi''(q)^{1/2} ~\de q-\eps
    \stackrel{Lem\,\ref{lem:type-characterization}\ref{itm:char-strong-frsb}}{=}\cQ(\xi)-\eps,
    .
    \end{align}
    This claim implies the desired conclusion by taking $e^{cN/3}$ independent runs of the algorithm.
    The first part \eqref{eq:subag-overlap} holds conditionally on $\bx_*'$: we have $\bbP[|\la \bv_j,\bx_*'\ra|\leq \eta N~|~\bx_*',\bx_j]\geq 1-e^{-cN}$ since $\bv_j$ is uniform on a $\Omega(N)$ dimensional sphere.

    The second part \eqref{eq:subag-energy} is similar to \cite{subag2018following} (see also \cite[Section 3.7]{huang2021tight}) so we give an outline.
    First one notes that $\nabla^2 H_N(\bx)|_{\bx^{\perp}}$ is a GOE matrix scaled by $\sqrt{\big(1-\frac{1}{N}\big)\xi''(\|\bx\|_2^2)}$ for $\bx$ independent of $H_N$ (see e.g. \cite[Eq. (3.10)]{subag2018following}).
    Because of the $N^2$ speed in the large deviation rate function for the bulk spectrum of GOE, combining a $\eta\sqrt{N}$-net of the ball with Proposition~\ref{prop:gradients-bounded} below implies that $\lambda_{N\eta}(\nabla^2 H_N(\bx)|_{\bx^{\perp}})\geq 2\sqrt{\xi''(\|\bx\|^2/N)}-\eps^2$ uniformly in $\|\bx\|\leq \sqrt{N}$ with probability $1-e^{-cN}$.
    Using Proposition~\ref{prop:gradients-bounded} again to control the Taylor approximation error, one finds with the same probability:
    \[
    H_N(\bx_{j+1})-H_N(\bx_j)
    \geq
    \fr12 \cdot \eta N \cdot \lt(2\sqrt{\xi''(\|\bx_j\|^2/N)}-\eps^2\rt),
    \quad\forall 0\leq j<1/\eta.
    \]
    Telescoping gives \eqref{eq:subag-energy}, completing the proof.
    %
    %
\end{proof}



\subsection{Preliminary Concentration Estimates}



For a tensor $\bA \in (\bbR^N)^{\otimes k}$, define the operator norm
\[
    \tnorm{\bA}_\op =
    \max_{\|\bsig^1\|_2,\ldots,\|\bsig^k\|_2\leq 1}
    |\la \bA, \bsig^1 \otimes \cdots \otimes \bsig^k \ra|\,.
\]

\begin{proposition}
    \label{prop:gradients-bounded}
    For any model $\xi$, there exists a constant $c=c(\xi)>0$ and sequence of constants $(C_k)_{k\ge 0}$ independent of $N$ such that the following holds.
    Defining the convex set
    \[
    K_N=\lt\{H_N\in\sH_N~:~\norm{\nabla^k H_N(\bsig)}_{\op} \le C_k N^{1-\frac{k}{2}}~~~~\forall k\geq 0,\norm{\bsig}_2 \le \sqrt{N}\rt\}\subseteq \sH_N,
    \]
    \begin{enumerate}[label=(\roman*)]
         \item
         \label{it:gradients-bounded}
         For all $N$, we have $\bbP[H_N \in K_N/2] \ge 1-e^{-cN}$.
         \item
         \label{it:conditional-gradients-bounded}
         More generally, let $\psi:\sH_N\to \bbR^M$ be an almost surely finite linear map. Then $\bbP[H_N \in K_N~:~\psi(H_N)] \ge 1-e^{-cN}$ whenever $\bbE[H_N~:~\psi(H_N)]\in K_N/2$.
     \end{enumerate}
\end{proposition}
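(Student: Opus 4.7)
The plan is to prove part~\ref{it:gradients-bounded} by combining Gaussian concentration with a covering argument, and to deduce part~\ref{it:conditional-gradients-bounded} by decomposing the conditional Gaussian into its conditional mean plus an independent fluctuation. For part~\ref{it:gradients-bounded}, I first fix $\bsig$ with $\|\bsig\|_2\le\sqrt N$ and unit test vectors $\bu_1,\ldots,\bu_k\in\bbR^N$, and observe that $Z(\bsig,\bu_1,\ldots,\bu_k)\defeq\la \nabla^k H_N(\bsig), \bu_1\otimes\cdots\otimes\bu_k\ra$ is mean-zero Gaussian. Differentiating the covariance formula $\bbE H_N(\bsig^1)H_N(\bsig^2) = N\xi(\la\bsig^1,\bsig^2\ra/N)$ in all $2k$ directions and using that derivatives of $\xi$ on $[-1,1]$ are uniformly controlled by the summability $\sum_p 2^p\gamma_p^2<\infty$, one finds $\Var Z = O_k(N^{1-k})$. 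Applying Borell-TIS to an $\eps$-net in the product of unit spheres of size $(3/\eps)^{kN}$, and controlling the net perturbation by $k\eps\|\nabla^k H_N(\bsig)\|_\op$, yields $\|\nabla^k H_N(\bsig)\|_\op\le C_k N^{1-k/2}/4$ at fixed $\bsig$, with probability $1-e^{-c_k N}$.

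To make the bound uniform over $\bsig$, I take a polynomially-fine net of $\{\bsig:\|\bsig\|_2\le\sqrt N\}$, of size $e^{CN\log N}$, and use the level-$(k+1)$ operator-norm bound to control the modulus of continuity of $\bsig\mapsto \nabla^k H_N(\bsig)$. To terminate the ensuing bootstrap over $k$, I truncate $H_N$ to its degree-$\le P$ part (so that $\nabla^{P+1}H_N\equiv 0$) and separately control the tail contributions using the summability assumption. A union bound over $k\ge 0$, with $(C_k)_{k\ge 0}$ chosen compatibly, then gives $H_N\in K_N/2$ with probability $1-e^{-cN}$ simultaneously for all $k$.

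For part~\ref{it:conditional-gradients-bounded}, I write $H_N = \bbE[H_N\mid\psi(H_N)] + H_N^\perp$, where $H_N^\perp$ is a mean-zero Gaussian process \emph{independent} of $\psi(H_N)$ whose covariance equals the projection of the original covariance onto the orthogonal complement of the (finite-dimensional) range of $\psi$. In particular, the covariance of $H_N^\perp$ is dominated in the positive-definite order by that of $H_N$, so the proof of part~\ref{it:gradients-bounded} applies verbatim to $H_N^\perp$, yielding $\|\nabla^k H_N^\perp(\bsig)\|_\op\le C_k N^{1-k/2}/2$ uniformly in $\bsig$ and $k$ with probability $1-e^{-cN}$. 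Combining with the hypothesis $\bbE[H_N\mid\psi(H_N)]\in K_N/2$, the triangle inequality gives $H_N\in K_N$.

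The main obstacle is bookkeeping: achieving joint uniformity over all derivative orders $k$ and all $\bsig$ requires an inductive bootstrap through level-$(k+1)$ bounds (terminated by truncating $H_N$ to a finite number of monomials) and a compatible growing choice of $(C_k)$ that exploits the summability of the coefficients $\gamma_p$. Conceptually, though, the result is a standard application of Gaussian concentration for tensor operator norms, and the domination argument for conditional Gaussians is routine.
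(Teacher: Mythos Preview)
Your approach to both parts is correct in spirit, and for part~\ref{it:conditional-gradients-bounded} it uses the same decomposition $H_N = \bbE[H_N\mid\psi(H_N)] + H_N^\perp$ as the paper. However, there is a concrete gap in your part~\ref{it:gradients-bounded} argument, and a stylistic difference in part~\ref{it:conditional-gradients-bounded} worth noting.

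\textbf{Gap in part~\ref{it:gradients-bounded}.} Your concentration at fixed $\bsig$ is of strength $e^{-c_k N}$, but you propose to union-bound over a ``polynomially-fine'' net of $\{\|\bsig\|_2\le\sqrt N\}$ of size $e^{CN\log N}$. This union bound fails: $e^{CN\log N}\cdot e^{-c_kN}\to\infty$, and inflating $C_k\sim\sqrt{\log N}$ to compensate would violate the requirement that $C_k$ be independent of $N$. The fix is simply to use a much coarser net: since the level-$(k+1)$ bound gives $\bsig\mapsto\nabla^kH_N(\bsig)$ Lipschitz constant $C_{k+1}N^{(1-k)/2}$, a mesh of order $\sqrt N$ already keeps the continuity error below $\tfrac14 C_kN^{1-k/2}$, and the resulting net has size $e^{O(N)}$, compatible with the concentration. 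With this correction your bootstrap is fine. (The paper simply cites an external reference for part~\ref{it:gradients-bounded}.)

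\textbf{Comparison in part~\ref{it:conditional-gradients-bounded}.} Your route works: covariance domination $\Cov(H_N^\perp)\preceq\Cov(H_N)$ means every variance and increment bound used in your proof of part~\ref{it:gradients-bounded} carries over to $H_N^\perp$ (e.g.\ via Sudakov--Fernique for the expected supremum), so you get $H_N^\perp\in K_N/2$ with probability $1-e^{-cN}$ and conclude by the triangle inequality. The paper instead avoids re-running part~\ref{it:gradients-bounded} and deduces $\bbP[H_N^\perp\in K_N/2]\ge 2\,\bbP[H_N\in K_N/2]-1$ directly from its \emph{conclusion}: writing $H_N\stackrel{d}{=}H_N^\perp+H_N^{(\neg\psi)}$ for an independent centered Gaussian $H_N^{(\neg\psi)}$, one observes that if $x\notin K$ for convex symmetric $K$ then at most one of $x\pm y$ lies in $K$, whence by symmetry of $H_N^{(\neg\psi)}$ one has $\bbP[H_N^\perp+H_N^{(\neg\psi)}\in K_N/2\mid H_N^\perp\notin K_N/2]\le\tfrac12$. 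This is a weak form of Anderson's inequality and gives a shorter, black-box reduction; your route is slightly longer but equally valid once you note explicitly that part~\ref{it:gradients-bounded}'s proof only uses upper bounds on the covariance.
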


\begin{proof}
    Part~\ref{it:gradients-bounded} follows from e.g. \cite[Proposition 2.3]{huang2021tight} (modulo dilation of $K_N$ by a factor of two).
    For part~\ref{it:conditional-gradients-bounded}, note that the conditional law of $H_N-\bbE[H_N~|~\psi(H_N)]$ does not depend on $\psi(H_N)$.
    Moreover, letting $H_N^{(\psi)}\in\sH_N$ be a Hamiltonian with this law, there exists an independent centered Gaussian $H_N^{(\neg\psi)}\in\sH_N$ such that the independent sum $H_N^{(\psi)}+H_N^{(\neg\psi)}$ has the law of $H_N$.
    Then whenever $\bbE[H_N~|~\psi(H_N)]\in K_N/2$, we have
    \begin{align*}
        \bbP[H_N \in K_N~|~\psi(H_N)]
        &\geq
        \bbP[H_N^{(\psi)}\in K_N/2]
        \\
        &\stackrel{(\dagger)}{\geq}
        2\,\bbP[H_N^{(\psi)}+H_N^{(\neg\psi)}\in K_N/2]-1
        \\
        &=
        2\,\bbP[H_N \in K_N/2] -1
        \\
        &\ge 1-2e^{-cN}.
    \end{align*}
    Here $(\dagger)$ follows from the simple observation that by symmetry in law of $H_N^{(\neg\psi)}$ and independence,
    \[
    \bbP\big[H_N^{(\psi)}+H_N^{(\neg\psi)}\in K_N/2~|~H_N^{(\psi)}\notin K_N/2\big]
    \leq
    1/2.
    \qedhere
    \]
\end{proof}

\begin{proposition}
    \label{prop:borell-tis}
    For any model $\xi$, there exists a constant $c=c(\xi)>0$ such that $F_N$ \eqref{eq:def-F} and $GS_N$ \eqref{eq:def-GS} satisfy the concentration inequality
    \[
        \bbP \lt(|F_N - \bbE F_N| \ge t \rt),
        \bbP \lt(\lt|GS_N - \bbE GS_N\rt| \ge t \rt)
        \le 2\exp(-ct^2N).
    \]
\end{proposition}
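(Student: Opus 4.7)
The plan is to invoke Gaussian concentration (the Borell--TIS inequality, or equivalently Gaussian concentration for Lipschitz functions) on the Hilbert space $\sH_N$ of Hamiltonians equipped with the $L^2$ inner product $\la H, H' \ra = \bbE[H_N \cdot H_N']$. Both $F_N$ and $GS_N$ are real-valued functions of the underlying Gaussian process, and the task reduces to bounding their Lipschitz constants as functions on this Gaussian Hilbert space. Since the covariance satisfies $\bbE H_N(\bsig)^2 = N\xi(1)$ uniformly for $\bsig \in \cS_N$, the natural variance scale is $\xi(1)/N$ after normalization by $1/N$.

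For $GS_N$, I would observe that $H_N \mapsto \tfrac{1}{N}\sup_{\bsig\in \cS_N} H_N(\bsig)$ is $\sqrt{\xi(1)/N}$-Lipschitz with respect to the Gaussian norm, since $\sup_{\bsig\in\cS_N}\Var(H_N(\bsig)/N)=\xi(1)/N$. Borell--TIS then directly gives $\bbP(|GS_N - \bbE GS_N|\ge t)\le 2\exp(-t^2 N/(2\xi(1)))$, yielding the claim with $c = 1/(2\xi(1))$.

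For $F_N$, I would apply Gaussian concentration for Lipschitz functions. Computing the Malliavin-type gradient, at each coordinate $g_{i_1,\ldots,i_p}$ one has $\partial_{g_{i_1,\ldots,i_p}} \log Z_N = \tfrac{\gamma_p}{N^{(p-1)/2}} \la \sigma_{i_1}\cdots\sigma_{i_p}\ra_{G}$ where $\la\cdot\ra_G$ denotes the Gibbs average. Summing the squared gradients and applying Jensen's inequality moves the square inside the Gibbs average, after which the constraint $\sum_i \sigma_i^2 = N$ on $\cS_N$ gives
\[
\|\nabla \log Z_N\|^2 \le \sum_p \frac{\gamma_p^2}{N^{p-1}} \bigl\la \Bigl(\sum_i \sigma_i^2\Bigr)^p \bigr\ra_G = \sum_p \gamma_p^2 N = N\xi(1),
\]
so $F_N$ is $\sqrt{\xi(1)/N}$-Lipschitz in the Gaussian coordinates. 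Gaussian concentration then yields the same bound as for $GS_N$.

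The only subtlety is that $H_N$ is an infinite sum in $p$, but convergence of $\xi(1) = \sum_p \gamma_p^2$ (implied by the stronger assumption $\sum_p 2^p \gamma_p^2 <\infty$) ensures everything is finite and Borell--TIS applies in the infinite-dimensional setting. There is no substantive obstacle; the only care needed is the Jensen step in the $F_N$ gradient bound and verifying that $\bbE F_N, \bbE GS_N$ are finite (standard, e.g.\ via chaining or from Proposition~\ref{prop:gradients-bounded}).
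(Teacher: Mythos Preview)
Your proposal is correct and matches the paper's approach: the paper simply cites \cite[Theorem 1.2]{panchenko2013sherrington} for $F_N$ and the Borell--TIS inequality for $GS_N$, and you have written out precisely the standard arguments behind those references. The Lipschitz bound for $F_N$ via the gradient computation and Jensen step is exactly the content of Panchenko's Theorem 1.2.
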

\begin{proof}
    The concentration inequality for $F_N$ is \cite[Theorem 1.2]{panchenko2013sherrington}, and that for $GS_N$ is by the Borell-TIS inequality \cite{borell1976gaussian, cirelson1976norms}.
\end{proof}

\subsection{Preliminaries on the Kac--Rice Formula}

For each $\bsig \in S_N$, let $\{e_1(\bsig),\ldots,e_N(\bsig)\}$ be an orthonormal basis of $\bbR^N$ with $e_1(\bsig) = \bsig / \sqrt{N}$.
Let $\cT = \{2,\ldots,N\}$.
Let $\nabla_\cT H_N(\bsig) \in \bbR^\cT$ denote the restriction of $\nabla H_N(\bsig) \in \bbR^N$ to the space spanned by $\{e_2(\bsig),\ldots,e_N(\bsig)\}$, and $\nabla^2_{\cT \times \cT} H_N(\bsig) \in \bbR^{\cT \times \cT}$ analogously.
Define the radial and tangential derivatives
\[
    \partial_\rd H_N(\bsig) = \lt\la e_1(\bsig), \nabla H_N(\bsig)\rt\ra, \qquad
    \nabla_\sph H_N(\bsig) = \nabla_{\cT} H_N(\bsig).
\]
Further, define the Riemannian Hessian
\begin{equation}
\label{eq:riemannian-hessian}
    \nabla^2_{\sph} H_N(\bsig) = \nabla^2_{\cT \times \cT} H_N(\bsig) - \fr{1}{\sqrt{N}} \partial_\rd H_N(\bsig) I_{\cT \times \cT}.
\end{equation}
The following lemma describes the joint law of these derivatives for any fixed $\bsig \in S_N$.
\begin{lemma}[{\cite[Lemma 3.2]{belius2022triviality}}]
    \label{lem:derivative-laws}
    For any $\bsig \in S_N$, the random variables $\big(H_N(\bsig), \partial_\rd H_N(\bsig)\big)$ and $\nabla_\sph H_N(\bsig)$ and $\nabla^2_{\cT \times \cT} H_N(\bsig)$ are independent, with the following laws.
    \begin{itemize}
        \item $(H_N(\bsig), \partial_\rd H_N(\bsig))$ is a centered Gaussian with covariance
        \[
            \bbE (H_N(\bsig), \partial_\rd H_N(\bsig))^{\otimes 2}
            = \begin{bmatrix}
                N\xi(1) & \sqrt{N} \xi'(1) \\
                \sqrt{N} \xi'(1) & \xi'(1) + \xi''(1)
            \end{bmatrix}.
        \]
        \item $\nabla_\sph H_N(\bsig)$ is a centered Gaussian with covariance $\xi'(1) I_{N-1}$.
        \item $\nabla^2_{\cT \times \cT} H_N(\bsig)$ is a scaled GOE matrix, with
        \[
            \bbE (\nabla^2_{\cT \times \cT} H_N(\bsig))_{i,j}^2 = \fr{(1+\delta_{i,j}) \xi''(1)}{N},
        \]
        symmetry across the diagonal, and independent entries on and above the diagonal.
    \end{itemize}
\end{lemma}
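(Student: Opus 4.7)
}

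The plan is to exploit the fact that $H_N$ is a centered Gaussian process on $\bbR^N$, so that all of its partial derivatives of all orders at any fixed point are jointly centered Gaussian random variables. Consequently, every independence claim in the lemma reduces to verifying that certain cross-covariances vanish, and every distributional claim reduces to computing second moments. By the rotational invariance of the law of $H_N$ (visible in the covariance $\bbE H_N(\bx)H_N(\by)=N\xi(\la\bx,\by\ra/N)$), I can and will take $\bsig=\sqrt{N}e_1$ and choose the orthonormal frame $e_i(\bsig)=e_i$ to be the standard basis, so that $\partial_\rd H_N(\bsig)=\partial_1 H_N(\bsig)$, $\nabla_\sph H_N(\bsig)=(\partial_i H_N(\bsig))_{i\in\cT}$, and $\nabla^2_{\cT\times\cT}H_N(\bsig)=(\partial_i\partial_j H_N(\bsig))_{i,j\in\cT}$.

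All required covariances are then obtained by differentiating the kernel $K(\bx,\by)=N\xi(\la\bx,\by\ra/N)$ in the appropriate $x$- and $y$-coordinates and evaluating at $\bx=\by=\sqrt{N}e_1$. Each $\partial_{x_i}$ pulls down a factor $y_i/\sqrt{N}\cdot\xi^{(m)}$, likewise for $\partial_{y_j}$, plus Leibniz product terms that contribute $(1/N)\delta_{ij}\xi^{(m-1)}$ whenever an $x$- and $y$-derivative are applied in sequence. At the base point, $x_k=y_k=\sqrt{N}\delta_{k1}$, so a coordinate factor $x_k$ or $y_k$ contributes only when $k=1$. This immediately yields
\[
\bbE H_N(\bsig)^2 = N\xi(1), \qquad
\bbE H_N(\bsig)\partial_1 H_N(\bsig)=\sqrt{N}\,\xi'(1), \qquad
\bbE (\partial_1 H_N(\bsig))^2=\xi'(1)+\xi''(1),
\]
giving the stated $2\times 2$ covariance, and it yields $\bbE\partial_i H_N(\bsig)\partial_j H_N(\bsig)=\xi'(1)\delta_{ij}+\xi''(1)\delta_{i1}\delta_{j1}$, from which restricting to $i,j\in\cT$ recovers the claimed $\xi'(1)I_{N-1}$ covariance of $\nabla_\sph H_N(\bsig)$ and shows that $\nabla_\sph H_N(\bsig)$ is uncorrelated with $(H_N(\bsig),\partial_1 H_N(\bsig))$ (the only coupling is through the $\delta_{i1}\delta_{j1}$ term, which vanishes for $i\in\cT$).

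For the Hessian block, I differentiate $K$ twice more to obtain
\[
\bbE\,\partial_i\partial_k H_N(\bx)\,\partial_j\partial_l H_N(\by)\big|_{\bx=\by=\sqrt{N}e_1},\qquad
\bbE\,\partial_i\partial_k H_N(\bsig)\,\partial_j H_N(\bsig),\qquad
\bbE\,\partial_i\partial_k H_N(\bsig)\,H_N(\bsig).
\]
Each term in the Leibniz expansion carries a product of Kronecker deltas $\delta_{\cdot 1}$ coming from the coordinate factors $x_m,y_m$, together with $\delta$'s from the Leibniz cross-differentiations. When all free indices $i,j,k,l$ are restricted to $\cT$, every Leibniz term that contains a coordinate-factor $\delta_{\cdot 1}$ on a $\cT$-index is killed, and in the first covariance only the pure Leibniz contribution $\tfrac{\xi''(1)}{N}(\delta_{ij}\delta_{kl}+\delta_{il}\delta_{jk})$ survives --- which is exactly the covariance of a symmetric matrix with variance $(1+\delta_{ij})\xi''(1)/N$ on position $(i,j)$, independent above the diagonal. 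In the second and third covariances, every surviving term is forced to contain a coordinate-$\delta_{\cdot 1}$ on some $\cT$-index and therefore vanishes, establishing independence of $\nabla^2_{\cT\times\cT}H_N(\bsig)$ from the other two blocks.

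There is no real obstacle here: the lemma is a direct Gaussian-differentiation computation, and the only care required is the careful bookkeeping of Leibniz terms in the fourth mixed derivative of $K$ so that one correctly identifies the surviving $(\delta_{ij}\delta_{kl}+\delta_{il}\delta_{jk})$ pattern that certifies the GOE law. Once that one identity is verified, the independence claims follow automatically from the vanishing of cross-covariances between jointly Gaussian families.
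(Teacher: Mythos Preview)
Your proof is correct and is the standard approach: differentiate the covariance kernel $K(\bx,\by)=N\xi(\langle\bx,\by\rangle/N)$, evaluate at $\bx=\by=\sqrt{N}e_1$, and use that for jointly Gaussian variables, vanishing cross-covariances imply independence. The paper does not actually prove this lemma; it is quoted directly from \cite[Lemma~3.2]{belius2022triviality}, and your computation is essentially what that reference (and many others in the Kac--Rice literature) carries out.
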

Say $\bsig \in S_N$ is a \textbf{critical point} of $H_N$ if $\nabla_\sph H_N(\bsig) = \bzero$, and let $\Crt$ denote the set of such points.
Further, for $(\bsig,H_N)$-measurable event $\cE$, let
\begin{equation}
\label{eq:Crt-def}
    \Crt(\cE) = \lt\{
        \bsig \in \Crt : (\bsig, H_N) \in \cE.
    \rt\}.
\end{equation}
The Kac--Rice formula \cite{rice1944mathematical,kac1948average}, applied to $\nabla_{\sph} H_N$, states that
\begin{equation}
    \label{eq:kac-rice-main}
    \bbE |\Crt(\cE)| = \int_{S_N} \bbE \lt[
        |\det \nabla_\sph^2 H_N(\bsig)|
        \ind\lt\{ (\bsig, H_N) \in \cE \rt\}
        \Big|
        \nabla_\sph H_N(\bsig) = \bzero
    \rt]
    \varphi_{\nabla_\sph H_N(\bsig)}(\bzero)
    ~\de \bsig,
\end{equation}
where $\varphi_{X}$ denotes the probability density of the random variable $X$.
In Section~\ref{sec:moment}, we will use specific known consequences of this formula, which hold because conditional on $(H_N(\bsig), \partial_\rd H_N(\bsig))$, the matrix $\nabla_\sph^2 H_N(\bsig)$ appearing in the determinant is a shifted and scaled GOE matrix.

\section{Strictly RS and 1RSB Models via Truncated Second Moment}
\label{sec:moment}

In this section we prove Propositions~\ref{prop:fe-rs} and \ref{prop:opt-1rsb}.
Subsection~\ref{subsec:fe-rs} proves Proposition~\ref{prop:fe-rs}, and the rest of the section is devoted to the proof of Proposition~\ref{prop:opt-1rsb}.

\subsection{Strictly RS models}
\label{subsec:fe-rs}

Let $\xi$ be strictly RS.
Recall from Remark~\ref{rmk:need-no-field} that this implies $\xi'(0)=0$.
Using the notation of Lemma~\ref{lem:cs-extremality}, $x \equiv 1$ so
\[
    \cP(\xi) = \fr12 \xi(1).
\]
Moreover,
\[
    f(q) = \xi(q) + q + \log(1-q)
\]
has unique maximum $q=0$ on $[0,1]$.
Set $\eta > 0$ small depending on $\delta$ such that
\begin{equation}
    \label{eq:f-gap}
    f(q) \le -6\eta \qquad \forall q\in [\delta,1].
\end{equation}
For $\bsig \in \cS_N$ and $q\in [-1,1]$, let $\Band_q(\bsig) = \{\brho : R(\bsig,\brho) = q\}$.
\begin{definition}
    \label{def:fe-typical}
    A point $\bsig$ is \textbf{free energy typical} if for all $|q| \ge \delta$,
    \[
        \Phi(q;\bsig) \equiv \fr1N \log \int_{\Band_q(\bsig)} \exp H_N(\brho) ~\de \brho
        \le \fr12 \xi(1) - \eta.
    \]
    We denote by $\cT_N \subseteq \cS_N$ denote the (random) set of free energy typical points, and
    \[
    \wtZ_N \equiv \int_{\cT_N} \exp H_N(\bsig) \de \bsig.
    \]
\end{definition}

We will prove Proposition~\ref{prop:fe-rs} by computing two moments of $\wtZ_N$.
\begin{proposition}
    \label{prop:rs-restricted-1mt}
    We have $\bbE \wtZ_N \ge (1-e^{-cN}) \exp(N\xi(1)/2)$.
\end{proposition}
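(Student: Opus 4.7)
The plan is a tilted-measure computation combined with the RS interpolation bound (Proposition~\ref{prop:RS-interpolation}) applied to conditional band models. By rotational invariance and Fubini, for any fixed $\bsig_0 \in \cS_N$,
\[
\bbE\wtZ_N = \bbE\big[e^{H_N(\bsig_0)}\ind\{\bsig_0 \in \cT_N\}\big] = e^{N\xi(1)/2}\,\wt\bbP[\bsig_0 \in \cT_N],
\]
where $\wt\bbP$ is $\bbP$ tilted by $e^{H_N(\bsig_0)}/\bbE[e^{H_N(\bsig_0)}]$. Under $\wt\bbP$ the Cameron--Martin formula shifts the mean: $H_N(\brho) = N\xi(R(\bsig_0,\brho)) + \wtH_N(\brho)$, where $\wtH_N$ has the same law under $\wt\bbP$ as $H_N$ under $\bbP$. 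It therefore suffices to prove $\wt\bbP[\bsig_0 \notin \cT_N] = o(1)$.

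Parametrizing $\brho = q\bsig_0 + \sqrt{1-q^2}\brho'$ with $\brho' \in \bsig_0^\perp$ of norm $\sqrt{N}$, the disintegration of the band integral yields under $\wt\bbP$
\[
\Phi(q;\bsig_0) = \xi(q) + \tfrac12\log(1-q^2) + \widetilde{F}^{(q)}_N + o(1),
\]
where $\widetilde{F}^{(q)}_N$ is the intrinsic free energy of a centered spherical spin glass on $\cS_{N-1}$ with mixture $\wt\xi_q(r) = \xi(q^2+(1-q^2)r)$. The non-constant part of $\wt\xi_q$ has non-negative polynomial coefficients, so $\wt\xi_q$ is a valid spin-glass mixture (with an external field of strength $(1-q^2)\xi'(q^2)$), and Proposition~\ref{prop:RS-interpolation} combined with Borell--TIS concentration gives
\[
\bbE_{\wt\bbP}\widetilde{F}^{(q)}_N \le \cP(\ind\{r \ge q_*\}; \wt\xi_q) + o(1) = \tfrac12\Big\{\xi(1) - \xi(q^2 + (1-q^2)q_*) + \tfrac{q_*}{1-q_*} + \log(1-q_*)\Big\} + o(1)
\]
for any $q_* \in [0,1)$, the equality following from standard integration by parts on $\cP(\ind\{r \ge q_*\}; \wt\xi_q)$.

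The pivotal computation is the choice $q_* = q/(1+q)$ for $q \in [\delta, 1]$: this is the unique value making $q^2 + (1-q^2)q_* = q$ collapse to $\xi(q)$. Then $q_*/(1-q_*) = q$, $\log(1-q_*) = -\log(1+q)$, and $\log(1-q^2) - \log(1+q) = \log(1-q)$, so after combining contributions one obtains
\[
\bbE_{\wt\bbP}\Phi(q;\bsig_0) \le \tfrac12\xi(1) + \tfrac12\big[\xi(q) + q + \log(1-q)\big] + o(1) = \tfrac12\xi(1) + \tfrac12 f(q) + o(1) \le \tfrac12\xi(1) - 4\eta + o(1),
\]
invoking \eqref{eq:f-gap}. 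For $q \in [-1, -\delta]$, note that $\wt\xi_q$ depends on $q$ only through $q^2$, so $\widetilde{F}^{(q)}_N \stackrel{d}{=} \widetilde{F}^{(|q|)}_N$ under $\wt\bbP$; combined with $\xi(q) \le \xi(|q|)$ (true since the coefficients $\gamma_p^2$ are non-negative), the $q > 0$ bound transfers to $\bbE_{\wt\bbP}\Phi(q;\bsig_0) \le \tfrac12\xi(1) - 4\eta + o(1)$ on this range as well.

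Finally, Gaussian concentration (Proposition~\ref{prop:borell-tis}) applied to $\widetilde{F}^{(q)}_N$ viewed as a Lipschitz functional of $\wtH_N$ gives $\wt\bbP[\Phi(q;\bsig_0) > \tfrac12\xi(1) - \eta] \le e^{-c\eta^2 N}$ pointwise in $q$. A union bound over an $O(1/\eta)$-sized grid of $[-1,1]\setminus(-\delta,\delta)$, closed up via Lipschitz continuity of $q \mapsto \Phi(q;\bsig_0)$ supplied by Proposition~\ref{prop:gradients-bounded}, yields $\wt\bbP[\bsig_0 \notin \cT_N] = o(1)$. I expect the main subtlety to be locating the pivotal choice $q_* = q/(1+q)$ that exactly recovers Talagrand's $f(q)$ from the band-model RS bound; without this algebraic identity the calculation appears to lose an uncontrolled factor, whereas once the identity is in hand the symmetric reflection for negative $q$ and the remaining concentration/union-bound steps are fairly routine.
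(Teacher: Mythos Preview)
Your proof is correct and follows essentially the same route as the paper. Both arguments reduce to bounding $\Phi(q;\bsig_0)$ via the RS interpolation bound (Proposition~\ref{prop:RS-interpolation}) applied to the conditional band model, with the same pivotal choice $q_* = q/(1+q)$ that collapses the bound to $\tfrac12\xi(1) + \tfrac12 f(q)$; the remaining concentration and union-bound steps are likewise parallel. The one cosmetic difference is that the paper conditions on $H_N(\bsig) = EN$ (which alters both mean and covariance of the band process, and then requires a separate step restricting to $E \le \xi(1)+\eta$), whereas your Cameron--Martin tilt shifts only the mean to exactly $N\xi(q)$ and leaves the covariance untouched---this slightly streamlines the argument by removing the $E$-dependence, but the substance is the same.
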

\begin{proposition}
    \label{prop:rs-restricted-2mt}
    We have $\bbE \wtZ_N^2 \le \exp(N(\xi(1) + O(\delta)))$.
\end{proposition}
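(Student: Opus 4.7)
The plan is to split $\bbE\wtZ_N^2$ by overlap into a small-overlap piece ($|R|\le\delta$), handled by a bare first moment, and a large-overlap piece ($|R|>\delta$), where the truncation to $\cT_N$ does the essential work. By Fubini,
\[
\bbE\wtZ_N^2 = \int_{\cS_N^2}\bbE\!\lt[\ind\{\bsig^1,\bsig^2\in\cT_N\}\,e^{H_N(\bsig^1)+H_N(\bsig^2)}\rt]\de\bsig^1\de\bsig^2 = I_1 + I_2,
\]
where $I_1, I_2$ correspond to the two overlap regimes. It will suffice to show that each of $I_1, I_2$ is at most $e^{N(\xi(1)+O(\delta))}$.

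For $I_1$ I would drop the typicality indicator and use the Gaussian moment generating function: for overlap $q = R(\bsig^1,\bsig^2)$, $\bbE\,e^{H_N(\bsig^1)+H_N(\bsig^2)} = e^{N(\xi(1)+\xi(q))}$. Expressing the $\bsig^2$-integral via the overlap density $\rho_N(q) = \Theta(\sqrt{N})(1-q^2)^{(N-3)/2}$ (a probability density), the $q$-integrand becomes $e^{N(\xi(1) + \xi(q) + \tfrac12\log(1-q^2))+O(\log N)}$. Since strict RS forces $\xi'(0) = 0$ (Remark~\ref{rmk:need-no-field}), Taylor expansion at $q = 0$ yields $\xi(q) + \tfrac12\log(1-q^2) = O(q^2)$ on $|q|\le\delta$. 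Hence $I_1 \le e^{N(\xi(1)+O(\delta^2))}$.

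For $I_2$ the typicality of $\bsig^1$ is essential. For $\bsig^1\in\cT_N$ and any $|q|\ge\delta$, Definition~\ref{def:fe-typical} gives $\Phi(q;\bsig^1)\le\tfrac12\xi(1) - \eta$. Decomposing the inner integral band by band,
\[
\int_{|R(\bsig^1,\bsig^2)|>\delta} e^{H_N(\bsig^2)}\de\bsig^2 = \int_{|q|>\delta}\rho_N(q)\,e^{N\Phi(q;\bsig^1)}\de q \le e^{N(\xi(1)/2-\eta)},
\]
uniformly over $\bsig^1\in\cT_N$, since $\rho_N$ is a probability density. Integrating over $\bsig^1$ and taking expectation,
\[
I_2 \le e^{N(\xi(1)/2-\eta)}\bbE\wtZ_N \le e^{N(\xi(1)-\eta)} \le e^{N(\xi(1)+O(\delta))},
\]
where I use $\bbE\wtZ_N\le\bbE Z_N = e^{N\xi(1)/2}$. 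Summing the two bounds gives the claim. The only step that requires any care is the Taylor expansion at $q = 0$ in $I_1$, which relies on the strict-RS consequence $\xi'(0) = 0$; the rest is mechanical, and the real work of the truncated second moment strategy has already been front-loaded into proving Proposition~\ref{prop:rs-restricted-1mt} and into verifying that $\cT_N$ captures most of $\cS_N$ via the RS interpolation bound of Proposition~\ref{prop:RS-interpolation}.
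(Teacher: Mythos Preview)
Your proof is correct and follows essentially the same approach as the paper: split the second moment by overlap, handle $|q|\le\delta$ by dropping the truncation and using the Gaussian moment generating function together with the band volume factor, and handle $|q|>\delta$ using the typicality bound $\Phi(q;\bsig^1)\le\tfrac12\xi(1)-\eta$ to reduce to the first moment. One small notational point: under the paper's convention the band integral defining $\Phi$ already absorbs the overlap density, so your extra factor of $\rho_N(q)$ in the $I_2$ decomposition is redundant, but this has no effect at the exponential scale.
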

For fixed $\bsig \in \cS_N$, define the \textbf{planted model}
\[
    H_N(\brho) = N \xi(R(\brho,\bsig)) + \hH_N(\brho),
\]
where $\hH_N(\brho)$ is a spin glass with mixture $\xi$.
Let $\bbP^\bsig_\Pl$ denote the law of this $H_N$.

The following crucial lemma uses the interpolation bound Proposition~\ref{prop:RS-interpolation} on a band.
A similar estimate was observed by Alaoui, Montanari, and the second author in \cite[Proposition 3.9]{alaoui2023shattering} to study shattering.
\begin{lemma}
    \label{lem:rs-band-interpolation}
    For fixed $\bsig \in \cS_N$, $|q| \ge \delta$, the following holds.
    With probability $1-e^{-cN}$ over $\bbP^\bsig_\Pl$,
    \[
        \Phi(q;\bsig) \le \fr12 \xi(1) - 2\eta.
    \]
\end{lemma}
\begin{proof}
    For $\brho \in \Band_q(\bsig)$, under $\bbP^\bsig_\Pl$ we have
    \[
        H_N(\brho) = \xi(q)N + \hH_N(\brho).
    \]
    Moreover, on $\Band_q(\bsig)$, writing $\brho = q \bsig + \sqrt{1-q^2} \btau$ for $\btau \perp \bsig$, the process $\oH_N(\btau) = \hH_N(\brho)$ has mixture
    \begin{equation}
    \label{eq:txi-def}
        \txi_q(t) = \xi(q^2 + (1-q^2)t).
    \end{equation}
    Define the order parameter $\tx(t) = \ind\{t \ge r\}$ where $r = \fr{q}{1+q}$. By Proposition~\ref{prop:RS-interpolation},
    \[
        \Phi(q;\bsig) \le \xi(q)
        + \cP(\wt x;\txi_q,0) + \fr12 \log (1-q^2) + o_P(1),
    \]
    where $\fr12 \log (1-q^2)$ accounts for the volume of $\Band_q(\bsig)$ and $o_P(1)$ is a term tending to $0$ in probability (under $\bbP^\bsig_\Pl$).
    Note that $\cP(\wt x;\txi_q,0)$ depends on $\txi_q$ through $\txi'_q$ and $\txi''_q$ only, and these depend on $q$ only through $|q|$.
    Moreover $\xi(q) \le \xi(|q|)$.
    So we may assume without loss of generality that $q>0$.

    We upper bound $\cP(\wt x;\txi_q,0)$ via:
    \begin{align*}
        2\cP(\tx;\txi_q,0)
        &\le
        \txi'_q(r) (1-r) + \int_r^1 \txi''_q(t)(1-t)~\de t + \fr{r}{1-r} + \int_r^1 \fr{\de t}{1-t} \\
        &= \txi_q(1) - \txi_q(r) + \fr{r}{1-r} + \log(1-r) \\
        &= \xi(1) - \xi(q) + q - \log(1+q).
    \end{align*}
    Thus
    \begin{align*}
        \Phi(q;\bsig)
        &\le \fr12 \lt(\xi(1) + \xi(q) + q + \log (1-q) \rt) + o_P(1) \stackrel{\eqref{eq:f-gap}}{\le}
        \fr12 \xi(1)
        - 3\eta + o_P(1).
    \end{align*}
    The conclusion follows from Proposition~\ref{prop:borell-tis}, applied to the free energy $\Phi(q;\bsig)$.
\end{proof}
\begin{lemma}
    \label{lem:typical-whp}
    For any fixed $\bsig \in \cS_N$, $\bbP^\bsig_\Pl[\bsig \in \cT_N] \ge 1-e^{-cN}$.
\end{lemma}
\begin{proof}
    Suppose the event in Lemma~\ref{lem:rs-band-interpolation} holds for all $q \in \{\pm \delta, \pm (\delta + 1/N), \ldots, \pm (\delta + M/N)\}$ for the largest $M$ such that $\delta + M/N \le 1$, and furthermore that $H_N\in K_N$ holds.
    By Proposition~\ref{prop:gradients-bounded}\ref{it:conditional-gradients-bounded} with $\psi(H_N)=H_N(\bsig)$, this occurs with probability $1-e^{-cN}$ after adjusting $c$.
    Then, for all $q = \pm (\delta + m/N)$,
    \[
        \Phi(q;\bsig) \le \fr12 \xi(1) - 2\eta.
    \]
    For all $q \in [\delta + m/N, \delta + (m+1)/N]$, on event $K_N$,
    \[
        \Phi(q;\bsig)
        \le \Phi(\delta + m/N;\bsig) + O(N^{-1})
        \le \fr12 \xi(1) - \eta,
    \]
    and similarly for $q\in [-\delta - (m+1)/N, -\delta - m/N]$.
    This implies $\bsig \in \cT_N$.
\end{proof}

\begin{proof}[Proof of Proposition~\ref{prop:rs-restricted-1mt}]
    For any $\bsig \in \cS_N$,
    \[
        \fr{\bbE \wtZ_N}{\bbE Z_N} = \fr{
            \bbE [\exp(H_N(\bsig)) \ind\{\bsig \in \cT_N\} ]
        }{
            \bbE [\exp(H_N(\bsig))]
        }.
    \]
    This is the probability of $\bsig \in \cT_N$ under the reweighted measure $\wt \bbP$ with Radon--Nikodym derivative $\wt \bbP / \bbP \propto \exp(H_N(\bsig))$, and by properties of Gaussian conditioning we have precisely $\wt \bbP = \bbP^\bsig_\Pl$.
    Combining with Lemma~\ref{lem:typical-whp} yields
    \[
        \fr{\bbE \wtZ_N}{\bbE Z_N}
        = \bbP^\bsig_\Pl(\bsig \in \cT_N)
        = 1 - e^{-cN}.
    \]
    The result follows because $\bbE Z_N = \exp(N\xi(1)/2)$.
\end{proof}
\begin{proof}[Proof of Proposition~\ref{prop:rs-restricted-2mt}]
    Writing $\wtZ_N^2$ as a double integral and recalling $\cT_N$ from Definition~\ref{def:fe-typical}, we have almost surely
    \begin{align*}
        \wtZ_N^2 &= \iint_{\cT_N\times \cT_N} \exp(H_N(\bsig^1) + H_N(\bsig^2)) \de \bsig^1 \de \bsig^2 \\
        &\le \iint_{\cS_N \times \cS_N} \exp(H_N(\bsig^1) + H_N(\bsig^2)) \ind\{R(\bsig^1,\bsig^2) \le \delta\} \de \bsig^1 \de \bsig^2
        + e^{N(\xi(1)/2 - \eta)} \int_{\cS_N} \exp(H_N(\bsig)) ~\de \bsig.
    \end{align*}
    Taking expectations,
    \begin{align*}
        \bbE \wtZ_N^2 &\le
        \int_{-\delta}^{\delta}
        e^{N(\xi(1) + \xi(q))} (1-q^2)^{-(N-1)/2}
        ~\de q
        + e^{N(\xi(1) - \eta)}.
    \end{align*}
    The exponential growth rate of the integral is
    \[
        \xi(1) +
        \max_{q\in [-\delta, \delta]}
        \lt\{
            \xi(q) + \fr12 \log (1-q^2)
        \rt\}
        = \xi(1) + O(\delta)
    \]
    by Lipschitz continuity of the quantity inside the maximum around $0$.
\end{proof}

\begin{proof}[Proof of Proposition~\ref{prop:fe-rs}]
    The statement is clearly monotone in $\delta$, so it suffices to prove it for $\delta$ suitably small in $\eps$.
    By the last two propositions and Paley-Zygmund,
    \begin{equation}
        \label{eq:fe-rs-nontrivial-prob-lb}
        \bbP\lt(\wtZ_N \ge \fr12 \exp(N\xi(1)/2))\rt)
        \ge e^{-O(\delta) N}.
    \end{equation}
    Suppose this event holds.
    Let $G$ denote the Gibbs measure of $\wtZ_N$ (i.e. the Gibbs measure of $Z_N$ conditioned on $\bsig \in \cT_N$).
    Then,
    \begin{align*}
        \int_{\Band_{k,1,\delta}(\bzero)}
        \exp \lt(\sum_{i=1}^k H_N(\bsig^i)\rt) ~\de \vbsig
        &\ge
        \int_{\Band_{k,1,\delta}(\bzero) \cap \cT_N^k}
        \exp \lt(\sum_{i=1}^k H_N(\bsig^i)\rt) ~\de \vbsig \\
        &= \wtZ_N^k G^{\otimes k}\lt(|R(\bsig^i,\bsig^j)| \le \delta ~\text{for all}~1\le i<j\le k\rt) \\
        &\ge \wtZ_N^k \lt(1 - \binom{k}{2} G^{\otimes 2}(|R(\bsig^1,\bsig^2)| \ge \delta) \rt).
    \end{align*}
    For any $\bsig^1 \in \cT_N$, $G$-almost surely
    \begin{align*}
        G(|R(\bsig^1,\bsig^2)| \ge \delta | \bsig^1)
        &= \fr{1}{\wtZ_N} \int_{|q| \ge \delta} \int_{\Band_q(\bsig^1)} \exp(H_N(\bsig^2)) ~\de \bsig^2 ~\de q \\
        &\le \fr{\exp(N(\xi(1)/2 - \eta))}{\fr12 \exp(N\xi(1)/2)}
        = 2\exp(-\eta N).
    \end{align*}
    For $k \le e^{\eta N / 3}$,
    \[
        1 - \binom{k}{2} G^{\otimes 2}(|R(\bsig^1,\bsig^2)| \ge \delta)  \ge \fr12.
    \]
    Combining the above and letting
    \[
        Z^{(k)} = \fr{1}{kN} \log
        \int_{\Band_{k,1,\delta}(\bzero)}
        \exp \lt(\sum_{i=1}^k H_N(\bsig^i)\rt) ~\de \vbsig,
    \]
    we conclude that
    \[
        \bbP \lt(
            Z^{(k)} \ge
            \fr{\xi(1)}{2} - o(1)
        \rt)
        \ge e^{-O(\delta) N}.
    \]
    Similarly to \cite[Theorem 1.2]{panchenko2013sherrington} we can show the concentration inequality
    \[
        \bbP(|Z^{(k)} - \bbE Z^{(k)}| \ge t) \le \exp(-ct^2N).
    \]
    (In fact a much stronger inequality is true, see Lemma~\ref{lem:uc} below.)
    The result follows from the last two inequalities for $\delta$ suitably small in $\eps$.
\end{proof}

\subsection{Strictly 1RSB models}

We turn to the proof of Proposition~\ref{prop:opt-1rsb}.
Let $\xi$ be strictly 1RSB, and so $\xi'(0)=0$ by Remark~\ref{rmk:need-no-field}.
For most of the proof we will assume that $\xi$ is \textbf{not} pure; we then appeal to continuity at the end by slightly perturbing any pure $\xi$ (which will preserve the strict 1RSB property).
This simplifies the presentation below as certain formulas degenerate in the pure case, see Remark~\ref{rmk:not-pure}.
We note that some intermediate computations and lemmas resemble those from previous work including \cite{auffinger2013random,auffinger2013complexity,arous2020geometry}, which made different assumptions on $\xi$.

Recalling the notation of Lemma~\ref{lem:cs-extremality-0temp}, there exists $u > 0$ such that $\alpha \equiv u$, and the order parameter $(L,\alpha)$ is described by the pair $(L,u)$.
It can easily be verified that the function $\upsilon : [0,+\infty) \to \bbR$ given by
\[
    \upsilon(z) = \fr{(1+z) \log (1+z)}{z^2} - \fr{1}{z}
\]
is strictly decreasing with $\lim_{z\to 0^+} \upsilon(z) = \fr12$ and $\lim_{z\to\infty} \upsilon(z) = 0$.
\begin{lemma}
\label{lem:1RSB-parameters-setup}
    Assume $\xi$ is strictly 1RSB. Let $z$ be the unique solution to $\upsilon(z) = \fr{\xi(1)}{\xi'(1)}$ and $y = \sqrt{(1+z) \xi'(1)}$. Then
    \begin{align*}
        L &= \fr{1+z}{y}, &
        u &= \fr{z}{y}, &
        \cQ(\xi) &= \fr{\xi'(1) + z\xi(1)}{y},
    \end{align*}
    and for all $q\in [0,1]$,
    \begin{equation}
        \label{eq:1rsb-test}
        \xi(1) - \xi(q) \ge
        \xi'(1) \lt(
            \fr{1+z}{z^2} \log \lt(1 + (1-q)z\rt)
            - \fr{1-q}{z}
        \rt),
    \end{equation}
    with equality at exactly $q=0,1$.
\end{lemma}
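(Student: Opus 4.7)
The plan is to translate the three characterizations in Lemma~\ref{lem:cs-extremality-0temp} into explicit equations for the parameters $(L,u)$. The key initial reduction is that the constraint $\nu_\infty(T^c) = 0$, combined with the strict 1RSB hypothesis $T = \{0,1\}$, forces $\nu_\infty$ to be supported on $\{0,1\}$. Since $\halpha(q) = L - \int_0^q \alpha(s)\,\de s$ is only sensitive to $\alpha$ through its cumulative integral, any atom of $\nu_\infty$ at $q = 1$ is invisible to the Crisanti--Sommers functional, so without loss of generality $\alpha \equiv u$ for some $u \ge 0$ and hence $\halpha(q) = L - uq$. Performing the inner integral gives $G(q) = \xi'(q) - q/(L(L - uq))$, and the conditions $G(1) = 0$ and $g(0) = 0$ (the latter coming from $0 \in T$) become two scalar equations in $(L,u)$.

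The right change of variables is $y = 1/(L-u)$ and $z = u/(L-u)$, equivalently $L = (1+z)/y$ and $u = z/y$. In these variables, $G(1) = 0$ reduces to $y^2 = (1+z)\xi'(1)$, giving the stated formula for $y$, and a short computation reduces $g(0) = 0$ to $\xi(1)/\xi'(1) = \upsilon(z)$. Since strict 1RSB forces $\xi'(0) = 0$ (Remark~\ref{rmk:need-no-field}) and the paper's standing non-pure assumption excludes $\xi(q) = \gamma_2^2 q^2$, a direct count of powers in $\xi(1) = \sum_{p \ge 2}\gamma_p^2$ versus $\xi'(1) = \sum_{p \ge 2} p\gamma_p^2$ places $\xi(1)/\xi'(1)$ strictly inside $(0, 1/2)$, which is the range of the strictly decreasing $\upsilon: (0,\infty) \to (0, 1/2)$. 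Hence $z > 0$ exists and is uniquely determined, and the stated formulas for $L$ and $u$ follow.

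For the ground state energy, I substitute $(L, \alpha \equiv u)$ into \eqref{eq:cs-functional-0temp}. Using $\xi'(0) = 0$ and integration by parts, $\int_0^1 \xi''(q)(L - uq)\,\de q = (L-u)\xi'(1) + u\xi(1) = (\xi'(1) + z\xi(1))/y$. Direct evaluation gives $\int_0^1 \de q/(L - uq) = (y/z)\log(1+z)$; rewriting $\log(1+z) = (z + z^2 \upsilon(z))/(1+z)$ (which is the definition of $\upsilon$ rearranged) together with $\upsilon(z) = \xi(1)/\xi'(1)$ and $y^2 = (1+z)\xi'(1)$ shows that this integral also equals $(\xi'(1) + z\xi(1))/y$. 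Summing and halving yields $\cQ(\xi) = (\xi'(1) + z\xi(1))/y$.

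Finally, direct integration of $G$ on $[s,1]$ gives
\[
    g(s) = \xi(1) - \xi(s) - \xi'(1)\bigl[(1+z)z^{-2}\log(1 + (1-s)z) - (1-s)/z\bigr],
\]
so the claimed inequality \eqref{eq:1rsb-test} is precisely $g(q) \ge 0$, which is Lemma~\ref{lem:cs-extremality-0temp}'s condition $\min_q g(q) = 0$; equality exactly on $\{0,1\}$ is $T = \{0,1\}$. The only conceptual step is the first reduction that $\alpha$ is essentially constant; the rest is bookkeeping in the right variables $(y,z)$, which I do not expect to pose any real difficulty.
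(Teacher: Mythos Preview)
Your proposal is correct and follows essentially the same route as the paper's proof: the same change of variables $z = u/(L-u)$, $y = 1/(L-u)$, the same two scalar equations from $G(1)=0$ and $g(0)=0$, the same integration-by-parts computation of $\cQ(\xi)$, and the same identification of \eqref{eq:1rsb-test} with the condition $g(q)\ge 0$. Your write-up is slightly more explicit than the paper in justifying that $\alpha$ may be taken constant and that $\xi(1)/\xi'(1)$ lies in the range of $\upsilon$, but the argument is otherwise identical.
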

\begin{proof}
    As $\halpha(q) = L - uq$, we calculate that
    \begin{align*}
        G(q) &= \xi'(q) - \fr{q}{L(L-uq)}, &
        g(q) &= \xi(1) - \xi(q) - \fr1u \lt(\fr1u \log \fr{L-uq}{L-u} - \fr{1-q}{L}\rt).
    \end{align*}
    Since $G(1) = g(0) = 0$, we get the system of equations
    \begin{align*}
        \xi'(1) &= \fr{1}{L(L-u)}, &
        \xi(1) &= \fr{1}{u^2} \log \fr{L}{L-u} - \fr{1}{Lu}.
    \end{align*}
    Let $z' = \fr{u}{L-u}$, so
    \[
        \fr{\xi(1)}{\xi'(1)}
        = \fr{L(L-u)}{u^2} \log \fr{L}{L-u} - \fr{L-u}{u}
        = \upsilon(z').
    \]
    Thus $z'=z$ by monotonicity of $\upsilon$.
    Then
    \begin{align*}
        L &= \sqrt{\fr{1+z'}{\xi'(1)}} = \fr{1+z}{y}, &
        u &= L\cdot \fr{z'}{1+z'} = \fr{z}{y}.
    \end{align*}
    These formulas and the condition $g(q) \ge 0$ for all $q$ (with equality at precisely $q=0,1$) imply \eqref{eq:1rsb-test}.
    Finally,
    \begin{align*}
        2\cQ(\xi)
        = 2\cQ(L,\alpha;\xi)
        &= \int_0^1 \xi''(q)(L-uq) \de q + \int_0^1 \fr{\de q}{L-uq} \\
        &= (L-u) \xi'(1) + u\xi(1) + \fr{1}{u} \log (1+z).
    \end{align*}
    Note that $(L-u) \xi'(1) = \fr{\xi'(1)}{y}$, $u\xi(1) = \fr{z\xi'(1)}{y}$, and
    \[
        \fr{1}{u} \log(1+z)
        = \fr{z\xi'(1)}{y} \lt( \fr{(1+z)}{z^2} \log (1+z) \rt)
        = \fr{z\xi'(1)}{y} \lt( \fr{1}{z} + \fr{\xi(1)}{\xi'(1)} \rt)
        = \fr{\xi'(1) + z\xi(1)}{y}.
    \]
    This gives the formula for $\cQ(\xi)$.
\end{proof}
We also record a simple inequality in the parameters we will use later.
\begin{lemma}
    \label{lem:y-ineq}
    We have $y^2 \ge \xi''(1)$.
\end{lemma}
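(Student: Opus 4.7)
The plan is to read the desired inequality as a second-order optimality condition for $g$ at the endpoint $q=1$. Recall from Lemma~\ref{lem:cs-extremality-0temp} that $\min_{q\in[0,1]} g(q) = 0$ and $\nu_\infty$ is concentrated on $T = \{q \in [0,1] : g(q) = 0\}$. Under the strict 1RSB hypothesis, $T = \{0,1\}$, so in particular $g(q) \ge 0 = g(1)$ on $[0,1]$. Hence $q=1$ is a one-sided local minimum of $g$ on $[0,1]$, and the second-order test forces $g''(1) \ge 0$.

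The remaining step is a direct computation. Since $g(q) = \int_q^1 G(s)\,\de s$ we have $g'(q) = -G(q)$ and $g''(q) = -G'(q)$. From \eqref{eq:def-G} with $\halpha(q) = L - uq$,
\[
    G'(q) = \xi''(q) - \frac{1}{\halpha(q)^2},
\]
so $G'(1) = \xi''(1) - (L-u)^{-2}$. Lemma~\ref{lem:1RSB-parameters-setup} gives $L-u = \frac{1+z}{y} - \frac{z}{y} = \frac{1}{y}$, hence $g''(1) = y^2 - \xi''(1)$. The inequality $g''(1) \ge 0$ is exactly $y^2 \ge \xi''(1)$.

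I don't anticipate any real obstacle. One small consistency check worth flagging: the first-order test $g'(1) \le 0$ is automatic because $g'(1) = -G(1) = 0$ by the first-order condition $G(1)=0$ from Lemma~\ref{lem:cs-extremality-0temp}, so the argument collapses cleanly onto the second derivative. An essentially equivalent proof could instead be extracted from the sharp inequality \eqref{eq:1rsb-test}, by defining the nonnegative defect $h(q)$ of \eqref{eq:1rsb-test} (LHS minus RHS) and Taylor expanding at $q=1$; the same computation yields $h''(1) = y^2 - \xi''(1) \ge 0$. I will use the $g$-formulation since it is marginally shorter and directly invokes the variational characterization already recorded.
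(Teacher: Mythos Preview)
Your proposal is correct and essentially identical to the paper's proof: both observe that $q=1$ minimizes $g$ (since $g\ge 0$ and $g(1)=0$), use $g'(1)=-G(1)=0$ so that the second-order condition $g''(1)\ge 0$ applies, and then compute $g''(1)=-G'(1)=(L-u)^{-2}-\xi''(1)=y^2-\xi''(1)$. The paper compresses this into a single display; your write-up just unpacks the same steps and adds the optional remark about the equivalent route via \eqref{eq:1rsb-test}.
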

\begin{proof}
    Because $\min g(q) = 0$ is attained at $q=1$,
    \[
        0 \le g''(1) = -G'(1) = - \xi''(1) + (L-u)^{-2} = -\xi''(1) + y^2.
        \qedhere
    \]
\end{proof}

Let $\eta_1,\eta_2,\eta_3 > 0$ be small constants to be determined later, where each $\eta_i$ will be set small in terms of $\delta$, $\xi$, and $\{\eta_j : j<i\}$ (i.e. informally, $0<\eta_3\ll\eta_2\ll\eta_1\ll \delta\ll 1$).
By \eqref{eq:1rsb-test}, we may set $\eta_3$ such that
\begin{equation}
    \label{eq:1rsb-gap}
    \xi(1) - \xi(q) \ge
    \xi'(1) \lt(
        \fr{1+z}{z^2} \log \lt(1 + (1-q)z\rt)
        - \fr{1-q}{z}
    \rt) + \fr{6y}{z}\eta_3
\end{equation}
for all $q \in [\delta,1-\eta_2]$.
Set
\begin{align}
\notag
    E_0 &= \cQ(\xi) = \fr{\xi'(1)+z\xi(1)}{y},
    \qquad \qquad \qquad \qquad R_0 = y + \fr{\xi''(1)}{y},
    \\
    \label{eq:def-B}
    B &= [E_0 - \eta_1 - \eta_3, E_0 - \eta_1 + \eta_3] \times
    [R_0 + \eta_1^{3/4} - \eta_3, R_0 + \eta_1^{3/4} + \eta_3].
\end{align}
By slight abuse of notation, for $A \subseteq \bbR^2$, let
\begin{equation}
    \label{eq:crt-pts-A}
    \Crt(A) = \lt\{
        \bsig \in \Crt : \lt(\fr1N H_N(\bsig), \fr{1}{\sqrt{N}} \partial_{\rd} H_N(\bsig)\rt) \in A
    \rt\}.
\end{equation}
\begin{definition}
    \label{def:gs-typical}
    A point $\bsig \in \Crt(B)$ is \textbf{ground state typical} if the following conditions hold.
    \begin{enumerate}[label=(\roman*)]
        \item \label{itm:1rsb-bands-good} For all $q\in [\delta,1-\eta_2]$,
        \[
            \Psi(q;\bsig) \equiv \fr1N \sup_{\brho \in \Band_q(\bsig)} H_N(\brho) \le E_0 - \eta_1 - \eta_3.
        \]
        \item \label{itm:1rsb-well} $H_N$ does not have any critical points $\brho$ with
        $R(\bsig,\brho) \ge 1 - \eta_2$.
    \end{enumerate}
    Denote the set of such points by $\wtCrt(B)$.
\end{definition}
We will prove Proposition~\ref{prop:opt-1rsb} via the next two propositions, whose proofs comprise the rest of the section.

\begin{proposition}
    \label{prop:1rsb-restricted-1mt}
    We have $\bbE |\wtCrt(B)| \ge e^{\Omega(\eta_1) N}$.
\end{proposition}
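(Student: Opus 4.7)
The plan is to write
\[
\bbE|\wtCrt(B)| \ge \bbE|\Crt(B)| - \bbE|\Crt(B)\setminus\wtCrt(B)|,
\]
lower bound the first term by $e^{\Omega(\eta_1)N}$ via a Kac--Rice first moment computation, and show that the second term is exponentially smaller by proving that the typicality conditions (i)--(ii) of Definition~\ref{def:gs-typical} fail with conditional probability $e^{-cN}$ on the Kac--Rice event at any fixed $\bsig$.

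For the first moment, I would apply \eqref{eq:kac-rice-main} with $\cE = B$. By Lemma~\ref{lem:derivative-laws}, conditional on $\nabla_\sph H_N(\bsig)=\bzero$ and $\partial_\rd H_N(\bsig)/\sqrt N = R$, the matrix $\nabla_\sph^2 H_N(\bsig)$ is (via \eqref{eq:riemannian-hessian}) a scaled GOE of size $N-1$ shifted by $-RI_\cT$. The classical asymptotic for $\bbE\log|\det(\mathrm{GOE}-rI)|$ together with the joint Gaussian density of $(H_N(\bsig),\partial_\rd H_N(\bsig))$ from Lemma~\ref{lem:derivative-laws} yields the standard Kac--Rice complexity rate $\Sigma(E,R)$ for critical points with energy $E$ and radial derivative $R$, as in \cite{auffinger2013random,auffinger2013complexity}. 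Using the parameter identities of Lemma~\ref{lem:1RSB-parameters-setup}, the conditions $G(1)=g(0)=0$ from Lemma~\ref{lem:cs-extremality-0temp} translate into $\Sigma(E_0,R_0)=0$, $\partial_E\Sigma(E_0,R_0)=-u<0$, and $\partial_R\Sigma(E_0,R_0)=0$, with $-\partial_R^2\Sigma(E_0,R_0)$ finite (the latter because $R_0>2\sqrt{\xi''(1)}$ by Lemma~\ref{lem:y-ineq}, strict after the perturbation that reduces to the non-pure case). A Taylor expansion then gives $\Sigma(E_0-\eta_1,R_0+\eta_1^{3/4})\ge u\eta_1-O(\eta_1^{3/2})=\Omega(\eta_1)$, yielding $\bbE|\Crt(B)|\ge e^{\Omega(\eta_1)N}$.

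For the typicality, fix $\bsig\in\cS_N$ and condition on $(H_N(\bsig),\partial_\rd H_N(\bsig),\nabla_\sph H_N(\bsig))=(EN,R\sqrt N,\bzero)$ with $(E,R)\in B$. For condition~(i), the residual field on a band $\Band_q(\bsig)$, parametrized by $\brho = q\bsig+\sqrt{1-q^2}\,\btau$ with $\btau\perp\bsig$, is a centered spin glass $\wtH_N^{(q)}(\btau)$ on a sphere of radius $\sqrt{(1-q^2)N}$ with mixture $\txi_q$ analogous to \eqref{eq:txi-def} (now also accounting for the conditioning on the gradient and radial derivative) plus an explicit deterministic mean $N\mu(q;E,R)$. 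Applying Proposition~\ref{prop:1RSB-interpolation} to $\wtH_N^{(q)}$ with the 1RSB order parameter $\alpha(t)=u\,\ind\{t\ge q_*\}$ for $u$ from Lemma~\ref{lem:1RSB-parameters-setup} and an appropriate $q_*$, and simplifying, produces an upper bound on $\Psi(q;\bsig)$ which (after using the identities of Lemma~\ref{lem:1RSB-parameters-setup}) equals the right-hand side of \eqref{eq:1rsb-gap} up to an $O(\eta_1)$ correction from $(E,R)-(E_0,R_0)$. The $6y\eta_3/z$ gap in \eqref{eq:1rsb-gap} then beats $E_0-\eta_1-\eta_3$ uniformly for $q\in[\delta,1-\eta_2]$ once $\eta_1$ is small. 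Concentration via Proposition~\ref{prop:borell-tis} plus a union bound over a $1/N$-net in $q$ (with Lipschitz control on the event $K_N$ from Proposition~\ref{prop:gradients-bounded}\ref{it:conditional-gradients-bounded}) yields the $1-e^{-cN}$ bound. For condition~(ii), the strict inequality $R>2\sqrt{\xi''(1)}$ guarantees with probability $1-e^{-cN}$ that all eigenvalues of $\nabla_\sph^2 H_N(\bsig)$ are at most $-\Omega(1)$; a Taylor expansion uniformly controlled on $K_N$ then precludes critical points at overlap $\ge 1-\eta_2$ with $\bsig$.

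The main obstacle is the band analysis in condition~(i). The three-way conditioning at $\bsig$ nontrivially tilts both the mixture $\txi_q$ and the deterministic mean on $\Band_q(\bsig)$, and one must carry out the 1RSB interpolation on the resulting conditional model and simplify the output so that it matches precisely the right-hand side of \eqref{eq:1rsb-gap}. This matching is what makes \eqref{eq:1rsb-gap} the right criterion for 1RSB typicality and is where the identities of Lemma~\ref{lem:1RSB-parameters-setup} enter essentially.
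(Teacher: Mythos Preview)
Your approach is essentially the paper's, and the overall structure is correct. However, there is a genuine gap in how you pass from ``typicality fails with conditional probability $e^{-cN}$'' to ``$\bbE|\Crt(B)\setminus\wtCrt(B)|$ is exponentially smaller than $\bbE|\Crt(B)|$''. In the Kac--Rice integral the integrand is $|\det\nabla_\sph^2 H_N(\bsig)|\cdot\ind\{\text{atypical}\}$, not just the indicator; since the determinant is of exponential size $e^{O(N)}$, a small-probability event for the indicator does not by itself control this conditional expectation. The paper handles this (Corollary~\ref{cor:1rsb-typical-whp}) by conditional Cauchy--Schwarz: the first and second conditional moments of $|\det\nabla_\sph^2 H_N(\bsig)|$ have the \emph{same} exponential rate $\kappa(R/\sqrt{\xi''(1)})$ (a consequence of the determinant asymptotics for shifted GOE), so that $\bbE[|\det|\cdot\ind\{\text{atypical}\}\mid\cdots]\le\bbE[|\det|^2\mid\cdots]^{1/2}\cdot e^{-cN/2}\le e^{-cN/3}\bbE[|\det|\mid\cdots]$. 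You should make this step explicit.

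A secondary issue: you invoke strict inequality $R_0>2\sqrt{\xi''(1)}$ to claim $\partial_R^2\Sigma(E_0,R_0)$ is finite and hence the Taylor error is $O(\eta_1^{3/2})$. But Lemma~\ref{lem:y-ineq} only gives $y^2\ge\xi''(1)$, and non-purity does not force strictness. The paper avoids this by using that $\kappa$ is $C^{3/2}$ at the edge, giving error $O((\eta_1^{3/4})^{3/2})=O(\eta_1^{9/8})$, which is still $o(\eta_1)$. Relatedly, in condition~(ii) the Hessian eigenvalue gap is only $\Omega(\eta_1^{3/4})$ (coming from the $+\eta_1^{3/4}$ shift of $R$ in the definition of $B$), not $\Omega(1)$; this is precisely why $\eta_2$ must be chosen small depending on $\eta_1$.
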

\begin{proposition}
    \label{prop:1rsb-restricted-2mt}
    We have $\bbE |\wtCrt(B)|^2 \le e^{O(\delta) N}$.
\end{proposition}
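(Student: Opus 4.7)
The plan is to apply the Kac--Rice formula to $|\wtCrt(B)|^2 = \sum_{\bsig^1,\bsig^2 \in \wtCrt(B)} 1$ and decompose the resulting integral by overlap $q = R(\bsig^1,\bsig^2) \in [-1,1]$. Specifically, the multipoint version of \eqref{eq:kac-rice-main} gives
\[
\bbE |\wtCrt(B)|^2 = \iint_{\cS_N \times \cS_N} \bbE\!\left[|\det \nabla^2_\sph H_N(\bsig^1)| \cdot |\det \nabla^2_\sph H_N(\bsig^2)| \cdot \ind\{(\bsig^1,\bsig^2) \in \wtCrt(B)^2\} \,\big|\, \nabla_\sph H_N(\bsig^i) = \bzero\rt] \varphi(\bzero,\bzero)\, d\bsig^1\, d\bsig^2,
\]
which after integrating out the sphere-valued positions reduces to a single integral over $q$. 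I would then bound each of the four natural regions of $q$ separately.

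For the \textbf{bulk} $|q| \le \delta$, the conditional joint law of the two critical points is nearly a product (the off-diagonal covariances all depend on $\xi^{(j)}(q)$ for $j \le 2$ and vanish as $q \to 0$), so the integrand factors into two copies of the one-point Kac--Rice density up to an $e^{O(\delta) N}$ multiplicative correction. Combined with the one-point upper bound $\bbE |\wtCrt(B)| \le e^{O(\eta_1) N}$ (a standard companion to Proposition~\ref{prop:1rsb-restricted-1mt}, exploiting that the complexity function vanishes at $E_0 = \cQ(\xi)$), the bulk region contributes $\le e^{O(\delta + \eta_1) N} = e^{O(\delta)N}$ since $\eta_1 \ll \delta$. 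For the \textbf{positive-gap region} $q \in (\delta, 1-\eta_2)$, typicality (i) for $\bsig^1$ forces $\sup_{\brho \in \Band_q(\bsig^1)} H_N(\brho)/N \le E_0 - \eta_1 - \eta_3$, while $\bsig^2 \in \wtCrt(B) \cap \Band_q(\bsig^1)$ forces $H_N(\bsig^2)/N \ge E_0 - \eta_1 - \eta_3$; simultaneous equality is a measure-zero event inside the continuous conditional Gaussian law of $H_N(\bsig^2)$, so this region contributes zero. For the \textbf{near-diagonal region} $q \in [1-\eta_2, 1)$, typicality (ii) for $\bsig^1$ directly forbids the existence of $\bsig^2$, so again zero.

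The genuine technical work lies in the \textbf{negative-overlap region} $q \in [-1, -\delta]$, where neither typicality condition applies and the Kac--Rice density must be bounded directly. Following the paper's guiding strategy of ``interpolation upper bounds applied to conditional band models,'' I would condition on $\bsig^1$ being critical with $H_N(\bsig^1)/N \approx E_0 - \eta_1$ and $\partial_\rd H_N(\bsig^1)/\sqrt{N} \approx R_0 + \eta_1^{3/4}$, at which point the restriction of $H_N - \bbE[H_N \mid \cdot]$ to $\Band_q(\bsig^1)$ has the law of a centered spherical spin glass with modified mixture $\wt\xi_q$ of the form \eqref{eq:txi-def}. Applying Proposition~\ref{prop:1RSB-interpolation} to this conditional band model upper-bounds its ground state, and the strict 1RSB condition \eqref{eq:1rsb-test} (combined with the characterization in Lemma~\ref{lem:1RSB-parameters-setup} and an extension to $q\in[-1,0]$ via $\xi(q) \le \xi(|q|)$ as in the proof of Lemma~\ref{lem:rs-band-interpolation}) yields that this bound is strictly below $E_0 - \eta_1 - \eta_3$ uniformly over $q \in [-1,-\delta]$. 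By Proposition~\ref{prop:borell-tis} applied to this conditional ground state, the event that some $\bsig^2 \in \wtCrt(B) \cap \Band_q(\bsig^1)$ exists has probability $e^{-\Omega(N)}$, and a union bound over a fine discretization of $[-1,-\delta]$ (controlled by Proposition~\ref{prop:gradients-bounded}) shows this region contributes $e^{O(\eta_1) N - \Omega(N)} = e^{-\Omega(N)}$.

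The \textbf{main obstacle} is the negative-overlap analysis: verifying uniformly for $q \in [-1,-\delta]$ that the 1RSB ansatz on the conditional band model with mixture $\wt\xi_q$ yields a ground state strictly below $E_0 - \eta_1 - \eta_3$. The strict 1RSB inequality \eqref{eq:1rsb-test} is stated only for $q \in [0,1]$, so extending it to negative overlaps (either by the $\xi(q) \le \xi(|q|)$ substitution in the Parisi functional, or by a direct computation using that the conditional problem depends on $|q|$ through $\xi^{(j)}(q^2)$ in \eqref{eq:txi-def}) requires some care. Once this uniform gap is established, the routine Kac--Rice bookkeeping of Steps (a)--(c) combines with it to give $\bbE|\wtCrt(B)|^2 \le e^{O(\delta) N}$.
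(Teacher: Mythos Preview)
Your handling of the positive-gap region $q\in(\delta,1-\eta_2)$ and the near-diagonal region $q\in[1-\eta_2,1)$ is correct and matches the paper: both conditions in Definition~\ref{def:gs-typical} together force $R(\bsig^1,\bsig^2)\le\delta$ almost surely for distinct $\bsig^1,\bsig^2\in\wtCrt(B)$. Your bulk analysis is also in the right spirit, though the paper goes more directly through the two-point complexity function $\Xi$ of Lemma~\ref{lem:moment-rates} and Lemma~\ref{lem:complexity-zero} rather than invoking approximate factorization.

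The genuine gap is in the negative-overlap region $q\in[-1,-\delta]$. The substitution ``$\xi(q)\le\xi(|q|)$ as in Lemma~\ref{lem:rs-band-interpolation}'' does not carry over to the 1RSB setting. In the RS case the conditional band mixture $\txi_q$ depends on $q$ only through $q^2$, so the Parisi functional is even in $q$ and one only needs to dominate the scalar conditional mean $\xi(q)E/\xi(1)$. Here, by contrast, the conditional covariance \eqref{eq:1rsb-band-covariance} contains the term $-(1-q^2)\xi'(q)^2/\xi'(1)$; since $|\xi'(q)|\le\xi'(|q|)$ for $q<0$, this subtracted term is \emph{smaller} at $-|q|$ than at $|q|$, so $\txi_{-|q|}'\ge\txi_{|q|}'$ pointwise and the conditional ground state goes the \emph{wrong way}. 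Similarly the coefficients $v^q$ from \eqref{eq:def-vq} involve $q\xi'(q)$, which is not even. Your chosen interpolation parameters \eqref{eq:1RSB-band-interpolation-parameters} also break down, since $r=q/(1+q)<0$. So the claimed uniform bound $\Psi(q;\bsig)<E_0-\eta_1-\eta_3$ on $[-1,-\delta]$ is not established and may well be false for $\xi$ with odd interactions (and is certainly false at $q=-1$ when $\xi$ is even, since then $H_N(-\bsig)=H_N(\bsig)$).

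The paper sidesteps this entirely with a deterministic combinatorial argument (Lemma~\ref{lem:extremal-combinatorics}\ref{itm:turan}): any finite set $V\subseteq\cS_N$ with $|V|=M$ has at least $\Omega(\delta M^2)$ ordered pairs with overlap $\ge-\delta$, simply because the Gram matrix of overlaps is positive semidefinite, so no $\lceil 1/\delta\rceil$-clique of pairwise overlap $<-\delta$ can exist, and Tur\'an's theorem bounds the edge density. Combined with the already-established upper bound $R(\bsig^1,\bsig^2)\le\delta$, this yields $|\wtCrt(B)|^2\le O(\delta^{-1})|\Crt_2(B_2)|$ where $B_2$ restricts to $|q|\le\delta$, and then Lemma~\ref{lem:moment-rates} and $\Xi(0,E_0,E_0,R_0,R_0)=0$ from Lemma~\ref{lem:complexity-zero} finish. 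No interpolation at negative overlap is needed.
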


\begin{remark}
    The choice \eqref{eq:def-B} of $B$ looks strange at first, because when $\xi$ is a pure model $H_N(\bsig)$ and $\partial_{\rd} H_N(\bsig)$ are almost surely proportional, so there are a.s. no critical points with energy and radial derivative described by $B$.
    This is not a problem for the proof because the parameters $\eta_1,\eta_2,\eta_3$ can be taken small in $\xi$, and then the statement of Proposition~\ref{prop:opt-1rsb} is continuous in $\xi$; see the end of the proof of Proposition~\ref{prop:opt-1rsb}.
\end{remark}

\subsection{Ground State Typicality is With High Probability}

The main result of this subsection is the following proposition.
In it, we fix $\bsig\in\cS_N$ and condition on the event $\{\bsig \in \Crt(B)\}$. Here when conditioning, we refer to the standard regular conditional probability given $(H_N(\bsig),\partial_{\rd} H_N(\bsig))$, which is a linear function of $H_N$.
\begin{proposition}
    \label{prop:1rsb-typical-whp}
    If $(E,R)\in B$, then
    \[
    \bbP\big[\bsig \in \wtCrt(B)~|~\big(H_N(\bsig),\partial_{\rd}H_N(\bsig),\nabla_{\sph}H_N(\bsig)\big)=(EN,R\sqrt{N},\bzero)\big]\geq 1-e^{-cN}.
    \]
\end{proposition}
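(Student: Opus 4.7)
The plan is to verify conditions (i) and (ii) of Definition~\ref{def:gs-typical} separately, each holding with probability at least $1 - e^{-cN}$ under the stated conditioning.

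For (ii), I use Lemma~\ref{lem:derivative-laws} to observe that $\nabla^2_{\cT\times\cT} H_N(\bsig)$ is a scaled GOE matrix independent of the three regressors. Standard GOE concentration gives $\lambda_{\max}(\nabla^2_{\cT\times\cT} H_N(\bsig)) \le 2\sqrt{\xi''(1)} + \eta_3$ with probability $1-e^{-cN}$. Combined with the Riemannian Hessian formula \eqref{eq:riemannian-hessian}, $R \ge R_0 + \eta_1^{3/4} - \eta_3$, and $R_0 = y + \xi''(1)/y \ge 2\sqrt{\xi''(1)}$ by AM--GM and Lemma~\ref{lem:y-ineq}, this yields
\[
    \lambda_{\max}(\nabla^2_\sph H_N(\bsig)) \le 2\sqrt{\xi''(1)} - R + \eta_3 \le -\eta_1^{3/4}/2.
\]
Using the third-derivative bound of Proposition~\ref{prop:gradients-bounded}\ref{it:conditional-gradients-bounded}, a Taylor expansion of the Riemannian gradient along geodesics from $\bsig$ gives $\|\nabla_\sph H_N(\brho)\|_2 \ge \tfrac{\eta_1^{3/4}}{4}\|\brho-\bsig\|_2 - C\|\brho-\bsig\|_2^2/\sqrt{N}$, which is strictly positive for $\brho \ne \bsig$ whenever $\|\brho-\bsig\|_2^2 \le c\eta_1^{3/2} N$. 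Taking $\eta_2 \ll \eta_1^{3/2}$, this covers the regime $R(\bsig,\brho) \ge 1-\eta_2$, establishing (ii).

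For (i), fix $q \in [\delta, 1-\eta_2]$ and parametrize $\brho = q\bsig + \sqrt{1-q^2}\btau$ with $\btau \perp \bsig$ and $\|\btau\|_2 = \sqrt{N}$. Standard Gaussian regression shows that conditionally, $\btau \mapsto H_N(\brho)$ is a Gaussian process with constant mean $\mu_q N$ in $\btau$ and covariance $N\wt\xi_q(\la\btau^1,\btau^2\ra/N)$, for a modified mixture $\wt\xi_q$ obtained from $\xi$ after subtracting the three regression contributions. Direct calculation at $(E,R) = (E_0, R_0)$ yields $\mu_q^{(0)} = (z\xi(q) + q\xi'(q))/y$, and by continuity $\mu_q - \mu_q^{(0)} = -\eta_1 + O(\eta_1^{3/4} + \eta_3)$ uniformly in $q$.

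I handle (i) in two sub-cases. For $q$ close to $1$, Taylor expansion of $H_N$ about $\bsig$ together with the Hessian bound from (ii) gives $H_N(\brho)/N \le E - (1-q)\eta_1^{3/4}/4 + o(1)$, which is $\le E_0 - \eta_1 - \eta_3$ once $1-q \ge \eta_2$ and $\eta_3 \ll \eta_2\eta_1^{3/4}$. For $q$ bounded away from $1$, I apply Proposition~\ref{prop:1RSB-interpolation} to the conditional spin glass with a 1RSB test function whose parameters are derived from the original $(L,u)$ of $\xi$; combined with the mean shift $\mu_q^{(0)}$ and the gap inequality \eqref{eq:1rsb-gap} (which carries a $6(y/z)\eta_3$ slack), this yields $\Psi(q;\bsig) \le E_0 - \eta_1 - \eta_3$. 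These expected-value bounds are promoted to uniform high-probability bounds via Proposition~\ref{prop:borell-tis} applied conditionally to each band free energy, a union bound over an $N^{-1}$-spaced net of $q \in [\delta, 1-\eta_2]$, and Lipschitz interpolation using Proposition~\ref{prop:gradients-bounded}\ref{it:conditional-gradients-bounded}.

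The main obstacle is the second sub-case of (i): identifying the correct 1RSB test function for the conditional mixture $\wt\xi_q - \wt\xi_q(0)$ so that the resulting interpolation bound combines with $\mu_q^{(0)}$ to match the gap encoded in \eqref{eq:1rsb-gap}. This amounts to solving a Crisanti--Sommers-type variational problem at the shifted mixture, parallel to the computation in Lemma~\ref{lem:1RSB-parameters-setup}.
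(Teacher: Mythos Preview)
Your treatment of condition~(ii) is correct and essentially matches the paper. The real issue is condition~(i).

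Your estimate $\mu_q - \mu_q^{(0)} = -\eta_1 + O(\eta_1^{3/4} + \eta_3)$ is technically true but useless: since $\eta_1^{3/4} \gg \eta_1$, the $O(\eta_1^{3/4})$ term swamps the $-\eta_1$ you need. Combining this with the interpolation bound $\mu_q^{(0)} + \cQ(\wt L,\wt\alpha;\wt\xi_q) \le E_0 - 3\eta_3$ only yields $\Psi(q;\bsig) \le E_0 - \eta_1 + O(\eta_1^{3/4})$, which does \emph{not} give $\Psi(q;\bsig) \le E_0 - \eta_1 - \eta_3$. The whole point of the strange choice $R = R_0 + \eta_1^{3/4}$ in the definition of $B$ is that the $R$-perturbation produces a \emph{negative} contribution that must be tracked with the right sign, not absorbed into a big-$O$.

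Your two-case split does not cover the range. The Taylor expansion about $\bsig$ is only valid when the cubic error $C(1-q)^{3/2}$ is dominated by the quadratic gain $(1-q)\eta_1^{3/4}$, i.e.\ for $1-q \lesssim \eta_1^{3/2}$; it cannot reach down to $1-q = \eta_2$ unless $\eta_2 \ll \eta_1^{3/2}$, and even then the interpolation case must handle all $q$ with $1-q \gtrsim \eta_1^{3/2}$, where your crude mean-shift bound fails.

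What is missing is a sharp estimate on the regression vector $v^q$ near $q=1$ (the paper's Lemma~\ref{lem:vq-bds}): $v^q_E \ge 1 - c_1(1-q)^2$ and $v^q_R \le -c_2(1-q)$ for $q\in[\delta,1]$. With these, the mean shift becomes
\[
\langle v^q,(-\eta_1,\eta_1^{3/4})\rangle \le -\eta_1 + c_1(1-q)^2\eta_1 - c_2(1-q)\eta_1^{3/4},
\]
and for $\eta_1$ small the last (negative) term dominates the middle one uniformly in $q$. This lets the interpolation bound handle \emph{all} $q\in[\delta,1-\eta_2]$ at once, with no Taylor sub-case needed. You correctly flagged the 1RSB test function (the paper's explicit choice is $\wt L = (1+(1-q)z)/((1-q^2)y)$, $\wt\alpha(t)=u\ind\{t\ge q/(1+q)\}$), but you missed that the $v^q$ asymptotics are an equally essential ingredient.
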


We will prove Proposition~\ref{prop:1rsb-typical-whp} by studying the ground state energy on bands defined by their overlap with $\bsig$, analogously to the replica-symmetric case.

\begin{lemma}
    \label{lem:1rsb-conditional-law}
    Conditional on $(H_N(\bsig),\partial_{\rd}H_N(\bsig),\nabla_{\sph}H_N(\bsig))=(EN,R\sqrt{N},\bzero)$, the restriction of $H_N$ to $\Band_q(\bsig)$ has law
    \begin{equation}
        \label{eq:1rsb-band-expectation}
        H_N(\brho) \stackrel{d}{=}
        \fr{N(q\xi'(q) + z\xi(q))}{y}
        + N \lt\la
            v^q,
            \begin{bmatrix}
                E - E_0 \\
                R - R_0
            \end{bmatrix}
        \rt\ra
        + \hH_N(\brho),
    \end{equation}
    where
    \begin{equation}
        \label{eq:def-vq}
        v^q = \begin{bmatrix}
            v^q_E \\ v^q_R
        \end{bmatrix}
        = \begin{bmatrix}
            \xi(1) & \xi'(1) \\
            \xi'(1) & \xi'(1) + \xi''(1)
        \end{bmatrix}^{-1}
        \begin{bmatrix}
            \xi(q) \\
            q\xi'(q)
        \end{bmatrix}
    \end{equation}
    and $\hH_N$ is a $(N-1)$-dimensional spin glass with the following covariance.
    Write $\brho = q \bsig + \sqrt{1-q^2} \btau$ and let $\oH_N(\btau) = \hH_N(\brho)$.
    Then $\oH_N$ has mixture
    \begin{equation}
        \label{eq:1rsb-band-covariance}
        \txi_q(t) = \xi(q^2 + (1-q^2)t) - C - \fr{(1-q^2)\xi'(q)^2}{\xi'(1)}t.
    \end{equation}
    for some constant $C$ (which will be irrelevant for our purposes).
\end{lemma}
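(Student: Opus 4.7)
The plan is to derive the conditional law via standard Gaussian projection onto the three conditioning variables. Since $(H_N(\brho))_{\brho \in \cS_N}$ is jointly Gaussian with $H_N(\bsig)$, $\partial_\rd H_N(\bsig)$, and $\nabla_\sph H_N(\bsig)$, the conditional law is Gaussian, with mean and covariance determined by the usual regression formulas applied to the covariance structure of $H_N$.

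First I would compute the three covariances of $H_N(\brho)$ with the conditioning variables. Starting from $\bbE[H_N(\brho)H_N(\bsig')] = N\xi(R(\brho,\bsig'))$ and differentiating once in $\bsig'$ gives $\bbE[H_N(\brho)\nabla H_N(\bsig')] = \xi'(R(\brho,\bsig'))\brho$. Evaluating at $\bsig' = \bsig$ with $R(\brho,\bsig) = q$ and projecting along $e_1(\bsig) = \bsig/\sqrt{N}$ and its orthogonal complement yields
\[
\bbE[H_N(\brho)H_N(\bsig)] = N\xi(q), \quad
\bbE[H_N(\brho)\partial_\rd H_N(\bsig)] = q\sqrt{N}\,\xi'(q), \quad
\bbE[H_N(\brho)\nabla_\sph H_N(\bsig)] = \xi'(q)(\brho - q\bsig).
\]
By Lemma~\ref{lem:derivative-laws}, the spherical gradient is independent of the scalar pair with covariance $\xi'(1) I_{N-1}$, so conditioning on $\nabla_\sph H_N(\bsig) = \bzero$ contributes nothing to the conditional mean, though it does affect the covariance.

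The conditional mean thus reduces to $(N\xi(q), q\sqrt{N}\,\xi'(q))\,M^{-1}(EN, R\sqrt{N})^\top$, where $M$ is the $2\times 2$ scalar covariance matrix from Lemma~\ref{lem:derivative-laws}. Writing $M = D M_0 D$ with $D = \mathrm{diag}(\sqrt{N}, 1)$ lets the $\sqrt{N}$ factors cancel cleanly, giving $N\langle v^q, (E,R)\rangle$ with $v^q = M_0^{-1}(\xi(q), q\xi'(q))^\top$ as in \eqref{eq:def-vq}. To match the stated form \eqref{eq:1rsb-band-expectation}, I need the identity $\langle v^q, (E_0, R_0)\rangle = (q\xi'(q) + z\xi(q))/y$. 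Rearranging, this is equivalent to $M_0 \,(z/y,\,1/y)^\top = (E_0, R_0)^\top$, which decomposes into the two scalar identities $E_0 = (\xi'(1) + z\xi(1))/y$ (definitional) and $R_0 = (\xi'(1)(z+1) + \xi''(1))/y$, the latter using $y^2 = (1+z)\xi'(1)$ from Lemma~\ref{lem:1RSB-parameters-setup}.

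For the conditional covariance, the unconditional covariance $N\xi(R(\brho^1,\brho^2))$ between two band points loses a constant $NC$ from the scalar regression (since both points have the same covariance vector with the scalars, the subtracted quadratic form depends only on $q$) and a term $\frac{\xi'(q)^2}{\xi'(1)}\langle \brho^1 - q\bsig, \brho^2 - q\bsig\rangle$ from the spherical gradient regression, using $\mathrm{Var}(\nabla_\sph H_N(\bsig))^{-1} = \frac{1}{\xi'(1)} I_{N-1}$. Parametrizing $\brho^i = q\bsig + \sqrt{1-q^2}\,\btau^i$ with $\btau^i \in \bsig^\perp$ of norm $\sqrt{N}$, the latter equals $\frac{\xi'(q)^2}{\xi'(1)} (1-q^2) N\,R(\btau^1,\btau^2)$, and combining with $R(\brho^1,\brho^2) = q^2 + (1-q^2) R(\btau^1,\btau^2)$ recovers the mixture $\txi_q$ of \eqref{eq:1rsb-band-covariance}. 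The only mildly tricky step is the algebraic centering identity in the mean calculation; the rest is essentially bookkeeping in Gaussian conditioning.
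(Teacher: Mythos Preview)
Your proposal is correct and follows essentially the same route as the paper: both compute the covariances of $H_N(\brho)$ with the three conditioning variables, apply the Gaussian regression formula using the block structure from Lemma~\ref{lem:derivative-laws}, and then verify the centering identity $\Sigma^{-1}(E_0,R_0)^\top=(z/y,1/y)^\top$ (the paper computes this by explicitly inverting $\Sigma$, whereas you verify it by multiplying through by $M_0$, which is equivalent). The covariance computation is likewise identical.
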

\begin{remark}
    \label{rmk:not-pure}
    Note that the matrix in \eqref{eq:def-vq} has determinant
    \[
        \xi(1) (\xi'(1)+\xi''(1)) - \xi'(1)^2
        = \lt(\sum_{p\ge 2} \gamma_p^2\rt)\lt(\sum_{p\ge 2} p^2 \gamma_p^2\rt)
        - \lt(\sum_{p\ge 2} p\gamma_p^2\rt)^2.
    \]
    This is nonnegative by Cauchy-Schwarz and strictly positive when $\xi$ is not pure, so the matrix inverse is well-defined.
    This is one reason we assume $\xi$ is not pure.
\end{remark}
\begin{proof}
    By Lemma~\ref{lem:derivative-laws}, $H_N(\bsig)$, $\partial_{\rd} H_N(\bsig)$, and $\nabla_\sph H_N(\bsig)$ are jointly Gaussian with covariance matrix
    \[
        \begin{bmatrix}
            N\xi(1) & \sqrt{N}\xi'(1) & \bzero^\top \\
            \sqrt{N}\xi'(1) & \xi'(1) + \xi''(1) & \bzero^\top \\
            \bzero & \bzero & \xi'(1) I_{N-1}
        \end{bmatrix}.
    \]
    For $\brho \in \Band_q(\bsig)$, we further have
    \begin{align*}
        \bbE H_N(\brho)H_N(\bsig) &= N\xi(q), \\
        \bbE H_N(\brho)\partial_{\rd} H_N(\bsig) &= \sqrt{N} q\xi'(q), \\
        \bbE H_N(\brho)\nabla_\sph H_N(\bsig) &= \xi'(q) P_\bsig^\perp \brho.
    \end{align*}
    Thus
    \begin{align}
        \notag
        \bbE[H_N(\brho) | H_N(\bsig), \partial_{\rd} H_N(\bsig), \nabla_\sph H_N(\bsig)]
        &= \lt\la
            \begin{bmatrix}
                \xi(1) & \xi'(1) \\
                \xi'(1) & \xi'(1) + \xi''(1)
            \end{bmatrix}^{-1}
            \begin{bmatrix}
                \xi(q) \\
                q\xi'(q)
            \end{bmatrix},
            \begin{bmatrix}
                H_N(\bsig) \\
                \sqrt{N} \partial_{\rd} H_N(\bsig)
            \end{bmatrix}
        \rt\ra \\
        \label{eq:1rsb-band-expectation-intermediate}
        &\qquad + \fr{\xi'(q)}{\xi'(1)} \la P_\bsig^\perp \brho, \nabla_\sph H_N(\bsig) \ra.
    \end{align}
    Then $\hH_N(\brho) = H_N(\brho) - \bbE[H_N(\brho) | H_N(\brho), \partial_{\rd} H_N(\brho), \nabla_\sph H_N(\brho)]$ has covariance
    \begin{align*}
        \fr1N \bbE \hH_N(\brho^1) \hH_N(\brho^2)
        &=
        \xi(R(\brho^1,\brho^2))
        - \lt\la
            \begin{bmatrix}
                \xi(1) & \xi'(1) \\
                \xi'(1) & \xi'(1) + \xi''(1)
            \end{bmatrix}^{-1}
            \begin{bmatrix}
                \xi(q) \\
                q\xi'(q)
            \end{bmatrix},
            \begin{bmatrix}
                \xi(q) \\
                q\xi'(q)
            \end{bmatrix}
        \rt\ra \\
        &\qquad - \fr{\xi'(q)^2}{\xi'(1)} R(P_\bsig^\perp \brho^1, P_\bsig^\perp \brho^2).
    \end{align*}
    This proves \eqref{eq:1rsb-band-covariance}.
    The conclusion \eqref{eq:1rsb-band-expectation} follows from \eqref{eq:1rsb-band-expectation-intermediate}, by noting that
    \begin{align}
        \notag
        &\begin{bmatrix}
            \xi(1) & \xi'(1) \\
            \xi'(1) & \xi'(1) + \xi''(1)
        \end{bmatrix}^{-1}
        \begin{bmatrix}
            E_0 \\
            R_0
        \end{bmatrix} \\
        \label{eq:E0R0-identity}
        &= \fr{1}{y(\xi(1)(\xi'(1) + \xi''(1)) - \xi'(1)^2)}
        \begin{bmatrix}
            \xi'(1) + \xi''(1) & -\xi'(1) \\
            -\xi'(1) & \xi(1)
        \end{bmatrix}
        \begin{bmatrix}
            z\xi(1) + \xi'(1) \\
            (1+z)\xi'(1) + \xi''(1)
        \end{bmatrix}
        = \fr{1}{y}
        \begin{bmatrix}
            z \\ 1
        \end{bmatrix},
    \end{align}
    and thus
    \[
        \lt\la
            \begin{bmatrix}
                \xi(1) & \xi'(1) \\
                \xi'(1) & \xi'(1) + \xi''(1)
            \end{bmatrix}^{-1}
            \begin{bmatrix}
                \xi(q) \\
                q\xi'(q)
            \end{bmatrix},
            \begin{bmatrix}
                E_0 \\
                R_0
            \end{bmatrix}
        \rt\ra
        = \fr{q\xi'(q) + z\xi(q)}{y}.
    \qedhere
    \]
\end{proof}

The next estimate will prepare us to apply Proposition~\ref{prop:1RSB-interpolation}. We will use the order parameters:
\begin{equation}
\label{eq:1RSB-band-interpolation-parameters}
\begin{aligned}
    \wt L &= \fr{1+(1-q)z}{(1-q^2)y}, &
    \wt \alpha(t) &= u \ind\{t\ge r\}, &
    r = \fr{q}{1+q}.
\end{aligned}
\end{equation}

\begin{proposition}
    \label{prop:1rsb-band-interpolation-bash}
    For fixed $\bsig \in \cS_N$, $E,R \in \bbR$, $q \in [\delta,1-\eta_2]$,
    \[
        \fr{q\xi'(q) + z\xi(q)}{y} + \cQ(\wt L,\wt\alpha;\txi_q,0)
        \le
        E_0 - 3\eta_3.
    \]
\end{proposition}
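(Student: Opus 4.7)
The plan is to evaluate $\cQ(\wt L,\wt\alpha;\txi_q,0)$ in closed form using the specific piecewise-constant choice of $\wt\alpha$, and then reduce the target inequality to an algebraic identity combined with the strict-1RSB gap \eqref{eq:1rsb-gap}.

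First I would compute $\widehat{\wt\alpha}$: on $[0,r)$ it equals $\wt L$, and on $[r,1]$ it equals $\wt L - u(t-r)$, so $\widehat{\wt\alpha}(1) = \wt L - u(1-r) = 1/((1-q^2)y)$ (the specific form of $\wt L$ in \eqref{eq:1RSB-band-interpolation-parameters} is chosen precisely to give this clean value). Substituting into \eqref{eq:cs-functional-0temp}, splitting $\int_0^1 \txi_q''(t)\widehat{\wt\alpha}(t)\,dt$ at $r$ and integrating by parts on $\int_r^1 \txi_q''(t)(t-r)\,dt$, the $\txi_q'(0)\wt L$ term cancels against a matching $-\wt L\txi_q'(0)$ contribution from the Hessian integral. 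The only slightly delicate identity is $(1-q^2)r = q(1-q)$, which gives $q^2+(1-q^2)r = q$; hence $\txi_q(r)$ reduces cleanly and
\[
u\big(\txi_q(1)-\txi_q(r)\big) = \frac{z(\xi(1)-\xi(q))}{y} - \frac{z(1-q)\xi'(q)^2}{y\xi'(1)}.
\]
Meanwhile $\int_0^1 \de t/\widehat{\wt\alpha}(t) = r/\wt L + (1/u)\log(\wt L/\widehat{\wt\alpha}(1)) = q(1-q)y/(1+(1-q)z) + (y/z)\log(1+(1-q)z)$. Combining gives a closed form for $2\cQ(\wt L,\wt\alpha;\txi_q,0)$ in terms of $\xi(1),\xi(q),\xi'(1),\xi'(q),y,z,q$ only.

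Next I would substitute $y^2 = (1+z)\xi'(1)$ (from Lemma~\ref{lem:1RSB-parameters-setup}) and $E_0 = (\xi'(1)+z\xi(1))/y$, multiply the target inequality through by $y$, and apply the strict-1RSB gap \eqref{eq:1rsb-gap}: on $q\in [\delta,1-\eta_2]$ it gives
\[
z\big(\xi(1)-\xi(q)\big) - 6y\eta_3 \ge \frac{(1+z)\xi'(1)}{z}\log(1+(1-q)z) - (1-q)\xi'(1),
\]
which exactly absorbs both the $\log$ term coming from $\cQ$ and supplies the $-6y\eta_3$ slack required on the right. After clearing denominators by multiplying through by $\xi'(1)(1+(1-q)z)>0$ and using the identity $(1-q)(1+z) - (1+(1-q)z) = -q$, the remaining inequality collapses to
\[
-\big[(1+(1-q)z)\xi'(q) - q\xi'(1)\big]^2 \le 0,
\]
which is trivially true, with equality precisely at $q\in\{0,1\}$ (consistent with $T=\{0,1\}$).

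The main obstacle is the algebraic bookkeeping: recognizing that after substituting $y^2=(1+z)\xi'(1)$ and invoking \eqref{eq:1rsb-gap} exactly once, the long expression collapses into a perfect square. Conceptually, the parameters $(\wt L,\wt\alpha,r)$ in \eqref{eq:1RSB-band-interpolation-parameters} are engineered so that the conditional band mixture $\txi_q$ inherits the 1RSB ansatz of the ambient model: the 1RSB interpolation bound is tight at the endpoints $q\in\{0,1\}$, and the strict-1RSB hypothesis \eqref{eq:1rsb-test} produces a strictly positive gap, quantified by $\eta_3$ via \eqref{eq:1rsb-gap}, throughout $[\delta,1-\eta_2]$.
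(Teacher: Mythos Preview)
Your proposal is correct and follows essentially the same approach as the paper's proof: both compute $\cQ(\wt L,\wt\alpha;\txi_q,0)$ explicitly by splitting the integrals at $r$ and integrating by parts, bound the $\log(1+(1-q)z)$ term using the strict 1RSB gap \eqref{eq:1rsb-gap}, and reduce the remaining inequality to $-[(1+(1-q)z)\xi'(q)-q\xi'(1)]^2\le 0$. The only cosmetic difference is that you multiply through by $y$ before invoking \eqref{eq:1rsb-gap}, whereas the paper applies the gap directly to the term $\frac{1}{u}\log(1+(1-q)z)$; the algebra and the final perfect-square reduction are identical.
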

\begin{proof}
    By direct computation,
    \begin{equation}
        \label{eq:1rsb-cs-intermediate}
        2\cQ(\wt L, \wt \alpha; \txi_q,0)
        = \txi'_q(r) \wt L + \int_r^1 \txi''_q(t) \lt(\wt L - (t-r)u\rt) ~\de t
        + \fr{r}{\wt L} + \int_r^1 \fr{\de t}{\wt L - (t-r)u}.
    \end{equation}
    The first two terms on the right-hand side simplify as
    \begin{align*}
        &\txi'_q(r) \wt L + \int_r^1 \txi''_q(t) \lt(\wt L - (t-r)u\rt) ~\de t \\
        &= \txi'_q(1) \wt L + (\txi_q(1) - \txi_q(r) - (1-r)\txi'_q(1)) u \\
        &= (1-q^2) \lt(\xi'(1) - \fr{\xi'(q)^2}{\xi'(1)} \rt) \wt L
        + \lt(\xi(1) - \xi(q) - (1-q) \xi'(1)\rt) u \\
        &= \fr{1}{y\xi'(1)} \lt\{
            \lt(\xi'(1)^2 - \xi'(q)^2) (1 + (1-q)z\rt)
            + \lt(\xi(1) - \xi(q) - (1-q) \xi'(1)\rt) z\xi'(1)
        \rt\} \\
        &= \fr{1}{y\xi'(1)} \lt\{
            \xi'(1)^2 + z\xi'(1) (\xi(1) - \xi(q)) - \xi'(q)^2 (1 + (1-q)z)
        \rt\}.
    \end{align*}
    The last two terms of \eqref{eq:1rsb-cs-intermediate} simplify as
    \begin{align*}
        \fr{r}{\wt L}
        &= \fr{q(1-q) y}{1+(1-q)z}
        = \fr{q(1-q)(1+z)\xi'(1)}{y(1+(1-q)z)}, \\
        \int_r^1 \fr{\de t}{\wt L - (t-r)u}
        &= \fr{1}{u} \log \fr{\wt L}{\wt L - (1-r) u}
        = \fr{1}{u} \log (1 + (1-q)z) \\
        &\stackrel{\eqref{eq:1rsb-gap}}{\le}
        \fr{1}{y} \lt(z(\xi(1) - \xi(q)) + (1-q)\xi'(1) \rt) - 6\eta_3.
    \end{align*}
    It thus suffices to show
    \begin{align*}
        \fr{q\xi'(q) + {{\color{red} z\xi(q)}}}{y}
        &+ \fr{1}{2y\xi'(1)} \lt\{
            {{\color{red} \xi'(1)^2 + z\xi'(1) (\xi(1) - \xi(q))}} - \xi'(q)^2 (1 + (1-q)z)
        \rt\} \\
        &+ \fr{q(1-q)(1+z)\xi'(1)}{2y(1+(1-q)z)}
        + \fr{1}{2y} \lt({{\color{red} z(\xi(1) - \xi(q))}} + ({{\color{red} 1}}-q)\xi'(1) \rt)
        \le
        {{\color{red}\fr{ \xi'(1) + z\xi(1)}{y}}}.
    \end{align*}
    The terms in red cancel.
    Also clearing a factor of $y$, it remains to show
    \[
        q\xi'(q)
        - \fr{1 + (1-q)z}{2} \cdot \fr{\xi'(q)^2}{\xi'(1)}
        - \fr12 \lt(q - \fr{q(1-q)(1+z)}{1+(1-q)z}\rt) \xi'(1)
        \le
        0.
    \]
    Since $q - \fr{q(1-q)(1+z)}{1+(1-q)z} = \fr{q^2}{1+(1-q)z}$, the desired inequality reduces to the trivial
    \[
        -\fr{1}{2\xi'(1)(1+(1-q)z)} \lt(
            q\xi'(1) - (1+(1-q)z)\xi'(q)
        \rt)^2 \le 0
    .
    \qedhere
    \]
\end{proof}

\begin{proposition}
    \label{prop:1rsb-band-interpolation}
    For fixed $\bsig \in \cS_N$, $E,R \in \bbR$, $q \in [\delta,1-\eta_2]$, the following holds.
    With probability $1-e^{-cN}$ conditionally on $(H_N(\bsig),\partial_{\rd}H_N(\bsig),\nabla_{\sph}H_N(\bsig))=(EN,R\sqrt{N},\bzero)$:
    \[
        \Psi(q;\bsig) \le E_0
        + \lt\la
            v^q,
            \begin{bmatrix}
                E-E_0 \\
                R-R_0
            \end{bmatrix}
        \rt\ra
        - 2\eta_3.
    \]
\end{proposition}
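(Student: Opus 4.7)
The plan is to combine the three ingredients assembled above: the conditional law on the band (Lemma~\ref{lem:1rsb-conditional-law}), the 1RSB interpolation upper bound (Proposition~\ref{prop:1RSB-interpolation}), and the algebraic identity (Proposition~\ref{prop:1rsb-band-interpolation-bash}). I will then upgrade the resulting in-probability bound to exponential probability via Borell--TIS concentration.

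\medskip

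\noindent\textbf{Step 1 (conditional decomposition).} By Lemma~\ref{lem:1rsb-conditional-law}, conditionally on $(H_N(\bsig),\partial_{\rd}H_N(\bsig),\nabla_{\sph}H_N(\bsig))=(EN,R\sqrt{N},\bzero)$,
\[
\Psi(q;\bsig)
= \frac{q\xi'(q)+z\xi(q)}{y}
+ \lt\la v^q, \begin{bmatrix}E-E_0\\ R-R_0\end{bmatrix}\rt\ra
+ \frac{1}{N}\sup_{\btau}\oH_N(\btau),
\]
where $\oH_N$ is a centered Gaussian process on the (radius-$\sqrt{N}$) sphere in $\bsig^{\perp}$ with mixture $\txi_q$ from \eqref{eq:1rsb-band-covariance}. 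The linear-drift coefficient $v^q$ comes out exactly as in \eqref{eq:def-vq} using the identity \eqref{eq:E0R0-identity}.

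\medskip

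\noindent\textbf{Step 2 (interpolation bound on $\oH_N$).} The process $\oH_N$ is, up to rescaling from dimension $N$ to $N-1$ and radius $\sqrt{N}$ to $\sqrt{N-1}$ (which introduces only $o(1)$ errors), a mean-field spherical spin glass with mixture $\txi_q$. Applying Proposition~\ref{prop:1RSB-interpolation} with the 1RSB order parameter $(\wt L,\wt\alpha)$ from \eqref{eq:1RSB-band-interpolation-parameters} yields
\[
\plimsup_{N\to\infty}\frac{1}{N}\sup_{\btau}\oH_N(\btau)
\le \cQ(\wt L,\wt\alpha;\txi_q).
\]

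\medskip

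\noindent\textbf{Step 3 (algebraic cancellation).} Proposition~\ref{prop:1rsb-band-interpolation-bash} gives the key deterministic inequality
\[
\frac{q\xi'(q)+z\xi(q)}{y} + \cQ(\wt L,\wt\alpha;\txi_q)
\le E_0 - 3\eta_3,
\]
valid for all $q\in[\delta,1-\eta_2]$. Combining Steps~1--3 yields, with in-probability statement,
\[
\Psi(q;\bsig) \le E_0 + \lt\la v^q,\begin{bmatrix}E-E_0\\ R-R_0\end{bmatrix}\rt\ra - 3\eta_3 + o_P(1).
\]

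\medskip

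\noindent\textbf{Step 4 (exponential concentration).} The supremum $\sup_{\btau}\oH_N(\btau)$ is a $\sqrt{N\cdot O_\xi(1)}$-Lipschitz function of the underlying Gaussian disorder (restricted to the portion of $H_N$ orthogonal to $(H_N(\bsig),\partial_{\rd}H_N(\bsig),\nabla_{\sph}H_N(\bsig))$, which is jointly Gaussian with the conditioning by Lemma~\ref{lem:derivative-laws}). Borell--TIS (cf.\ Proposition~\ref{prop:borell-tis}) therefore gives deviations of scale $\exp(-c\eta_3^2 N)$ around the conditional mean, absorbing the $o_P(1)$ slack into the gap from $-3\eta_3$ to $-2\eta_3$ for $N$ large.

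\medskip

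I do not expect serious obstacles here, since the essential algebraic work has already been done in Proposition~\ref{prop:1rsb-band-interpolation-bash}. The only delicate point is bookkeeping for Step~2: verifying that the $(N-1)$-dimensional Parisi bound applies to $\oH_N$ with normalization by $N$ (versus $N-1$) and that the choice $r=q/(1+q)$ in \eqref{eq:1RSB-band-interpolation-parameters} is the one produced by the change of variables $\brho=q\bsig+\sqrt{1-q^2}\btau$, so that the overlap on the band in the original coordinates corresponds to the correct interior atom of the 1RSB order parameter on $\btau$. Both of these introduce only $o(1)$ errors absorbed by the $\eta_3$ slack.
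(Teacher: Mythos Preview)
Your proposal is correct and follows essentially the same approach as the paper: combine Lemma~\ref{lem:1rsb-conditional-law}, Proposition~\ref{prop:1RSB-interpolation}, and Proposition~\ref{prop:1rsb-band-interpolation-bash} to obtain the $o_P(1)$ bound, then invoke Borell--TIS (Proposition~\ref{prop:borell-tis}) on the ground state energy $\Psi(q;\bsig)$ to upgrade to probability $1-e^{-cN}$. The paper's write-up is terser but structurally identical; your bookkeeping remarks about the $N$ versus $N-1$ normalization and the choice of $r$ are well-placed and indeed absorbed by the $\eta_3$ slack.
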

\begin{proof} 
    Lemma~\ref{lem:1rsb-conditional-law}, Proposition~\ref{prop:1rsb-band-interpolation-bash}, and Proposition~\ref{prop:1RSB-interpolation} imply
    \[
        \Psi(q;\bsig) \le E_0 + \lt\la
            v^q,
            \begin{bmatrix}
                E - E_0 \\
                R - R_0
            \end{bmatrix}
        \rt\ra - 3\eta_3 + o_P(1).
    \]
    The result follows by Proposition~\ref{prop:borell-tis}, applied to the ground state energy $\Psi(q;\bsig)$.
\end{proof}

\begin{lemma}
    \label{lem:vq-bds}
    There exist constants $c_1,c_2>0$ depending only on $\delta$ such that for all $q\in [\delta,1]$,
    \[
        v^q_E \ge 1 - c_1 (1-q)^2, \qquad
        v^q_R \le -c_2 (1-q).
    \]
\end{lemma}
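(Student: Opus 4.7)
The plan is to use the explicit formulas obtained by inverting the $2 \times 2$ matrix in \eqref{eq:def-vq}. Letting $D = \xi(1)(\xi'(1)+\xi''(1)) - \xi'(1)^2$, which is strictly positive since $\xi$ is not pure (Remark~\ref{rmk:not-pure}), we have
\[
    v^q_E = \frac{(\xi'(1)+\xi''(1))\xi(q) - \xi'(1)q\xi'(q)}{D}, \qquad
    v^q_R = \frac{\xi(1)q\xi'(q) - \xi'(1)\xi(q)}{D}.
\]
Both bounds will follow by Taylor expansion at $q=1$ combined with compactness on $[\delta,1]$.

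\emph{Bound on $v^q_E$.} A direct calculation gives $v^1_E = D/D = 1$ and $\frac{d}{dq}v^q_E\big|_{q=1} = D^{-1}(\xi''(1)\xi'(1) - \xi'(1)\xi''(1)) = 0$. By smoothness of $v^q_E$, there exist $\eta \in (0, 1-\delta)$ and $C = C(\xi) < \infty$ with $v^q_E \ge 1 - C(1-q)^2$ on $[1-\eta, 1]$. On the compact complement $[\delta, 1-\eta]$, continuity provides a uniform lower bound $v^q_E \ge -M$, and choosing $c_1 := \max(C, (M+1)/\eta^2)$ establishes the claim throughout $[\delta, 1]$.

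\emph{Bound on $v^q_R$.} Write $\psi(q) = \xi(1)q\xi'(q) - \xi'(1)\xi(q)$ so that $v^q_R = \psi(q)/D$; one computes $\psi(1) = 0$ and $\psi'(1) = D$, hence $\frac{d}{dq}v^q_R\big|_{q=1} = 1$. The crucial step is the strict inequality $\psi(q) < 0$ on $[\delta, 1)$, equivalent to $q\xi'(q)/\xi(q) < \xi'(1)/\xi(1)$. Let $\mu_q$ be the probability measure on $\{p : \gamma_p > 0\}$ with $\mu_q(p) \propto \gamma_p^2 q^p$, so that $q\xi'(q)/\xi(q) = \bbE_{\mu_q}[P]$. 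Reparametrizing by $\beta = \log q$, $\mu_q$ forms an exponential family with sufficient statistic $P$, so $\frac{d}{dq}\bbE_{\mu_q}[P] = q^{-1}\Var_{\mu_q}(P) > 0$ strictly, since $\mu_q$ has support of size at least $2$ (as $\xi$ is not pure). Integrating from $q$ to $1$ gives the desired strict inequality. The function $\tau(q) := v^q_R/(1-q)$ then extends continuously to $q=1$ via $\tau(1) = -1$ (by L'H\^{o}pital and $\frac{d}{dq}v^q_R|_{q=1}=1$), is strictly negative on $[\delta, 1)$ by the above, and compactness of $[\delta,1]$ yields $\sup_{[\delta,1]}\tau \le -c_2 < 0$, which is exactly $v^q_R \le -c_2(1-q)$.

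The principal conceptual ingredient is the strict monotonicity of $q\xi'(q)/\xi(q)$; the non-pure assumption is essential here, because for pure $\xi$ this ratio equals $p_0$ identically, so $\psi \equiv 0$ and the second bound fails. This is consistent with the separate perturbation argument that handles pure $\xi$ elsewhere in the section. Once the monotonicity is in hand, the remaining work is elementary Taylor expansion plus compactness packaging.
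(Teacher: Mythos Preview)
Your proof is correct but takes a different route from the paper. The paper expands both $1-v^q_E$ and $-v^q_R$ as double power series in the coefficients $\gamma_p^2$, antisymmetrizes over pairs $p>p'$, and then factors out $(1-q)^2$ and $(1-q)$ explicitly from the resulting polynomial identities; the remaining sums are manifestly bounded (resp.\ bounded below by a positive constant) on $[\delta,1]$. Your argument instead reads off the behavior at $q=1$ directly by computing $v^1_E=1$, $(v^q_E)'|_{q=1}=0$, $v^1_R=0$, $(v^q_R)'|_{q=1}=1$, and then packages everything with Taylor/L'H\^opital plus compactness; the one nontrivial global input, the strict negativity of $v^q_R$ on $(0,1)$, you obtain from the strict monotonicity of $q\xi'(q)/\xi(q)$ via an exponential-family variance argument. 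The paper's approach yields the constants more explicitly (as ratios of concrete polynomial sums), while yours is shorter and more conceptual; it is worth noting that the paper proves the same monotonicity of $q\xi'(q)/\xi(q)$ later (equation~\eqref{eq:xi-quotient-increasing} in the proof of Proposition~\ref{prop:LDP-regression-coefs}) by a direct Cauchy--Schwarz computation, so your exponential-family phrasing is an equivalent alternative to that step as well.
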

\begin{proof}
    We have that
    \begin{align*}
        1 - v^q_E &= \fr{(\xi(1) - \xi(q))(\xi'(1)+\xi''(1)) - (\xi'(1) - q\xi'(q))\xi'(1)}{\xi(1)(\xi'(1)+\xi''(1)) - \xi'(1)^2}, \\
        -v^q_R &= \fr{q\xi'(q)\xi(1) - \xi(q)\xi'(1)}{\xi(1)(\xi'(1)+\xi''(1)) - \xi'(1)^2}.
    \end{align*}
    Note that
    \begin{align*}
        q\xi'(q)\xi(1) - \xi(q)\xi'(1)
        &= \lt(\sum_{p\ge 2} p\gamma_p^2 q^p\rt)\lt(\sum_{p\ge 2} \gamma_p^2\rt)
        - \lt(\sum_{p\ge 2} \gamma_p^2 q^p\rt)\lt(\sum_{p\ge 2} p\gamma_p^2\rt) \\
        &= \sum_{p > p'\ge 2}
        \gamma_p^2 \gamma_{p'}^2
        (p-p')(q^{p'} - q^p) \\
        &= (1-q) \sum_{p > p'\ge 2}
        \gamma_p^2 \gamma_{p'}^2
        q^{p'}(p-p')(1 + q + \cdots + q^{p-p'-1}).
    \end{align*}
    The sum is positive and uniformly bounded away from $0$ for $q\in [\delta,1]$.
    Similarly
    \begin{align*}
        &(\xi(1) - \xi(q))(\xi'(1)+\xi''(1)) - (\xi'(1) - q\xi'(q))\xi'(1) \\
        &= \lt(\sum_{p\ge 2} \gamma_p^2 (1-q^p)\rt)\lt(\sum_{p\ge 2} p^2\gamma_p^2\rt)
        - \lt(\sum_{p\ge 2} p\gamma_p^2 (1-q^p)\rt)\lt(\sum_{p\ge 2} p\gamma_p^2\rt) \\
        &= \sum_{p>p'\ge 2}
        \gamma_p^2 \gamma_{p'}^2
        (p-p') \lt((1-q^{p'})p - (1-q^p)p'\rt) \\
        &= (1-q)^2
        \sum_{p>p'\ge 2}
        \gamma_p^2 \gamma_{p'}^2
        (p-p') \lt(
            \sum_{r=0}^{p'-1}
            (r+1)(p-p') q^r
            + \sum_{r=p'}^{p-2}
            p' (p-1-r) q^r
        \rt)
    \end{align*}
    and the sum is uniformly bounded above for $q\in [\delta,1]$.
\end{proof}

\begin{proof}[Proof of Proposition~\ref{prop:1rsb-typical-whp}]
    Consider $\bsig \in \Crt(B)$ with $H_N(\bsig) = EN$, $\partial_{\rd} H_N(\bsig) = R\sqrt{N}$, so $(E,R) \in B$.
    We will show both conditions \ref{itm:1rsb-bands-good} and \ref{itm:1rsb-well} hold with conditional probability $1-e^{-cN}$.

    We begin with condition \ref{itm:1rsb-bands-good}, considering a fixed $q\in [\delta,1-\eta_2]$.
    Let $E = E_0 - \eta_1 + \iota_1$, $R = R_0 + \eta_1^{3/4} + \iota_2$, where $|\iota_1|,|\iota_2| \le \eta_3$.
    We will show that with probability $1-e^{-cN}$,
    \begin{equation}
        \label{eq:1rsb-typical-wtp-one-q}
        \Psi(q;\bsig) \le E_0 - \eta_1 - 2\eta_3.
    \end{equation}
    By Proposition~\ref{prop:1rsb-band-interpolation}, it suffices to show
    \begin{equation}
        \label{eq:1rsb-one-band-ineq}
        \lt\la
            v^q,
            \begin{bmatrix}
                -\eta_1 \\
                \eta_1^{3/4}
            \end{bmatrix}
        \rt\ra
        +
        \lt\la
            v^q,
            \begin{bmatrix}
                \iota_1 \\
                \iota_2
            \end{bmatrix}
        \rt\ra
        \le -\eta_1.
    \end{equation}
    By Lemma~\ref{lem:vq-bds},
    \[
        \lt\la
            v^q,
            \begin{bmatrix}
                -\eta_1 \\
                \eta_1^{3/4}
            \end{bmatrix}
        \rt\ra
        \le
        -\eta_1 + c_1 (1-q)^2 \eta_1 - c_2 (1-q) \eta_1^{3/4}.
    \]
    Setting $\eta_1$ small enough, we can ensure that $c_1 (1-q)^2 \eta_1 \le \fr12 c_2 (1-q) \eta_1^{3/4}$.
    Since $\eta_3$ can be taken small in $\eta_1$, this proves \eqref{eq:1rsb-one-band-ineq}, and \eqref{eq:1rsb-typical-wtp-one-q} follows.

    Suppose the event \eqref{eq:1rsb-typical-wtp-one-q} holds for $q \in \{\delta, \delta + 1/N, \ldots, \delta + M/N\}$ for the largest $M$ such that $\delta + M/N \le 1$, and that $H_N\in K_N$.
    This occurs with probability $1-e^{-cN}$ by Proposition~\ref{prop:gradients-bounded}\ref{it:conditional-gradients-bounded}.
    On $K_N$, for all $q\in [\delta + m/N, \delta + (m+1)/N]$,
    \[
        \Psi(q;\bsig) \le \Psi(\delta + m/N;\bsig) + O(N^{-1}) \le E_0 - \eta_1 - 2\eta_3.
    \]
    Thus part \ref{itm:1rsb-bands-good} holds.


    To verify condition~\ref{itm:1rsb-well}, we will argue that with high (conditional) probability, $\bsig$ is a ``well'' for $H_N$.
    Let $\theta:[0,\infty)\to\cS_N$ be an arbitrary unit-speed geodesic on $\cS_N$ with $\theta(0)=\bsig$, and consider the function $f(t)=H_N(\theta(t\sqrt{N}))/N$.
    We have $H_N\in K_N$ with conditional probability $1-e^{-cN}$, and on this event the $C^3$ norm of $f$ is bounded independently of $N$.
    Moreover $f'(0)=0$ since $\partial_{\rd}H_N(\bsig)=0$, while
    \[
    f''(0)
    =
    \la \theta'(0),\nabla_{\sph}^2 H_N(\bsig)\theta'(0)\ra
    \leq
    \lambda_{\max}(\nabla_{\sph}^2 H_N(\bsig))
    .
    \]
    Given $(H_N(\bsig),\partial_{\rd} H_N(\bsig))$, the conditional law of $\nabla_{\sph}^2 H_N(\sig)$ is (see e.g. \cite[Lemma 2.1]{huang2023strong}):
    \[
    \sqrt{\xi''(1)\cdot \lt(1-\frac{1}{N}\rt)}\cdot GOE(N-1)-\partial_{\rd}H_N(\bsig)\cdot I_{N-1}.
    \]
    Hence $\lambda_{\max}(\nabla_{\sph}^2 H_N(\sig))\leq (2\sqrt{\xi''(1)}+\eta_3)-\partial_{\rd}H_N(\bsig)$ has conditional probability $1-e^{-cN}$.
    We will prove condition~\ref{itm:1rsb-well} whenever this inequality and $H_N\in K_N$ both hold.
    By definition of $B$, we then have:
    \[
    f''(0)
    \leq
    (2\sqrt{\xi''(1)}+\eta_3)-\partial_{\rd}H_N(\bsig)
    \leq
    2\sqrt{\xi''(1)}-R_0-\eta_1^{3/4}+2\eta_3
    \leq
    -\frac{\eta_1^{3/4}}{2}.
    \]
    The final bound follows because $R_0=y+\frac{\xi''(1)}{y}\geq 2\sqrt{\xi''(1)}$ by AM-GM, while $\eta_3$ is sufficiently small depending on $\eta_1$.
    Recalling that $f'(0)=0$ and $f$ has bounded $C^3$ norm, it follows that $f'(t)\neq 0$ for all $t\leq o(\eta_1^{3/4})$.
    Since $\theta'(0)$ was arbitrary, we conclude that $H_N$ has no other critical point within distance $o(\eta_1^{3/4}\sqrt{N})$ of $\bsig$.
    In particular for small enough $\eta_2$ depending on $\eta_1$, $H_N$ has no critical point $\bsig'$ with $R(\bsig,\bsig')\geq 1-\eta_2$.
    This completes the proof.
\end{proof}

We will actually use Proposition~\ref{prop:1rsb-typical-whp} via the following natural corollary.

\begin{corollary}
\label{cor:1rsb-typical-whp}
    For any $\bsig\in\cS_N$ and $(E,R)\in B$:
    \begin{align*}
    &\bbE\Big[|\det \nabla^2_{\sph}H_N(\bsig)|\cdot \ind\{\bsig\notin \wtCrt(B)\}~\big|~\big(H_N(\bsig),\partial_{\rd}H_N(\bsig),\nabla_{\sph}H_N(\bsig)\big)=(EN,R\sqrt{N},\bzero)\Big]
    \\
    &\leq
    e^{-cN/3}
    \bbE\Big[|\det \nabla^2_{\sph}H_N(\bsig)|~\big|~\big(H_N(\bsig),\partial_{\rd}H_N(\bsig),\nabla_{\sph}H_N(\bsig)\big)=(EN,R\sqrt{N},\bzero)\Big].
    \end{align*}
\end{corollary}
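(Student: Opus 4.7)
The plan is to derive the corollary from Proposition~\ref{prop:1rsb-typical-whp} via a conditional Cauchy--Schwarz inequality, combined with standard moment comparisons for the shifted-scaled GOE determinant that arises as the conditional law of the Hessian. Write $\cF$ for the conditioning event $\{(H_N(\bsig),\partial_{\rd}H_N(\bsig),\nabla_{\sph}H_N(\bsig))=(EN,R\sqrt{N},\bzero)\}$ and set $M=\nabla^2_{\sph}H_N(\bsig)$. Applying Cauchy--Schwarz conditionally on $\cF$ gives
\[
\bbE\big[|\det M|\cdot \ind\{\bsig\notin \wtCrt(B)\}\,\big|\,\cF\big]
\leq
\bbE\big[|\det M|^2\,\big|\,\cF\big]^{1/2}\cdot \bbP\big[\bsig\notin \wtCrt(B)\,\big|\,\cF\big]^{1/2},
\]
and Proposition~\ref{prop:1rsb-typical-whp} bounds the probability factor by $e^{-c_1 N/2}$ for some $c_1=c_1(\xi,\delta)>0$.

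For the moment factor, Lemma~\ref{lem:derivative-laws} together with \eqref{eq:riemannian-hessian} implies that conditionally on $\cF$ the matrix $M$ has the law of $\sqrt{\xi''(1)(1-1/N)}\,G - R\,I_{N-1}$ where $G\sim \GOE(N-1)$. Standard computations using the joint eigenvalue density of $\GOE$ (of the type carried out in the spherical complexity analyses of \cite{auffinger2013random,auffinger2013complexity,arous2020geometry}) then yield, uniformly over $(E,R)\in B$ (which is a bounded set in $\reals^2$), a bound
\[
\bbE\big[|\det M|^2\,\big|\,\cF\big]^{1/2} \leq e^{KN}\cdot \bbE\big[|\det M|\,\big|\,\cF\big]
\]
for some constant $K=K(\xi)$. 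Combining the two displays gives an overall bound of $e^{(K-c_1/2)N}\cdot \bbE[|\det M|\mid\cF]$, which is $\leq e^{-cN/3}\bbE[|\det M|\mid\cF]$ for any positive $c<\tfrac{3}{2}(c_1/2-K)$.

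The only potential obstacle is the requirement $c_1>2K$, i.e., that the exponential decay rate in Proposition~\ref{prop:1rsb-typical-whp} exceeds twice the moment-comparison constant. If this inequality is not immediate from the quantitative form of Proposition~\ref{prop:1rsb-typical-whp}, one simply replaces Cauchy--Schwarz by H\"older's inequality with exponent $p=1+\eta$ for small $\eta>0$. Since the analogous conditional $p$-th moment bound $\bbE[|\det M|^p\mid\cF]^{1/p}\leq e^{K'\eta N}\bbE[|\det M|\mid\cF]$ holds with $K'\eta\to 0$ as $\eta\to 0$, while $\bbP[\cdot]^{(p-1)/p}$ contributes decay $e^{-c_1\eta N/(1+\eta)}$, choosing $\eta$ sufficiently small so that $K'\eta$ is strictly less than $c_1\eta/(1+\eta)$ yields the desired $e^{-cN/3}$ bound for some fixed $c>0$.
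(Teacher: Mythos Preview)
Your overall strategy---conditional Cauchy--Schwarz plus Proposition~\ref{prop:1rsb-typical-whp}---is exactly the paper's. The gap is in the moment comparison. The decisive fact, which the paper invokes from \cite[Theorem A.2]{ben2023exponential}, is that for a shifted scaled GOE matrix $M$ with shift strictly outside the bulk (here $R/\sqrt{\xi''(1)}>2$ for $(E,R)\in B$), the first and second moments of $|\det M|$ agree to leading exponential order:
\[
\fr{1}{N}\log\bbE\big[|\det M|^j\,\big|\,\cF\big]=j\,\kappa\big(R/\sqrt{\xi''(1)}\big)+o_N(1),\qquad j=1,2.
\]
In your notation this means one may take $K=o(1)$, so the Cauchy--Schwarz bound already gives $e^{(o(1)-c_1/2)N}\,\bbE[|\det M|\mid\cF]$ and the corollary follows directly. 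Stating only that $K=K(\xi)$ is some constant is too weak, and this precise moment matching is the substantive input you are missing.

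Your H\"older fallback does not repair this. If all you possess is $\bbE[|\det M|^2\mid\cF]^{1/2}\le e^{KN}\,\bbE[|\det M|\mid\cF]$ for a fixed $K>0$, then interpolation gives
\[
\bbE\big[|\det M|^{1+\eta}\,\big|\,\cF\big]^{1/(1+\eta)}\le e^{2K\eta N/(1+\eta)}\,\bbE\big[|\det M|\,\big|\,\cF\big],
\]
while the probability factor contributes $e^{-c_1\eta N/(1+\eta)}$. Both exponents scale like $\eta/(1+\eta)$, so the product is $e^{(2K-c_1)\eta N/(1+\eta)}$ and you are back to requiring $c_1>2K$; taking $\eta$ small gains nothing. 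An \emph{independent} $(1+\eta)$-th moment bound with the claimed $K'\eta\to 0$ behavior (i.e.\ not derived by interpolation from $p=2$) would require exactly the same GOE determinant analysis as the second-moment matching, so it is not a genuine alternative.
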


\begin{proof}
    Note that the conditional law of $\nabla^2_{\sph}H_N(\bsig)$ that of a $GOE(N-1)$ matrix scaled by $\sqrt{\frac{N-1}{N}}$ and shifted by $R\cdot I_{N-1}$.
    Recalling the notation \eqref{eq:def-kappa}, \cite[Theorem A.2]{ben2023exponential} implies:
    \begin{align*}
    \frac{1}{N}\log\bbE\Big[|\det \nabla^2_{\sph}H_N(\bsig)|~\big|~\big(H_N(\bsig),\partial_{\rd}H_N(\bsig),\nabla_{\sph}H_N(\bsig)\big)=(EN,R\sqrt{N},\bzero)\Big]
    &=
    \kappa(R)\pm o_N(1),
    \\
    \frac{1}{N}\log\bbE\Big[|\det \nabla^2_{\sph}H_N(\bsig)|^2~\big|~\big(H_N(\bsig),\partial_{\rd}H_N(\bsig),\nabla_{\sph}H_N(\bsig)\big)=(EN,R\sqrt{N},\bzero)\Big]
    &=
    2\kappa(R)\pm o_N(1).
    \end{align*}
    (See the end of \cite[Proof of Proposition 3.1]{huang2023strong} for further details.)
    By conditional Cauchy--Schwarz,
    \begin{align*}
    &\bbE\Big[|\det \nabla^2_{\sph}H_N(\bsig)|\cdot \ind\{\bsig\notin \wtCrt(B)\}~\big|~\big(H_N(\bsig),\partial_{\rd}H_N(\bsig),\nabla_{\sph}H_N(\bsig)\big)=(EN,R\sqrt{N},\bzero)\Big]
    \\
    &\leq \bbE\Big[|\det \nabla^2_{\sph}H_N(\bsig)|^2~\big|~\big(H_N(\bsig),\partial_{\rd}H_N(\bsig),\nabla_{\sph}H_N(\bsig)\big)=(EN,R\sqrt{N},\bzero)\Big]^{1/2}
    \\
    &
    \quad\quad\times
    \bbP\Big[ \ind\{\bsig\notin \wtCrt(B)\}~\big|~\big(H_N(\bsig),\partial_{\rd}H_N(\bsig),\nabla_{\sph}H_N(\bsig)\big)=(EN,R\sqrt{N},\bzero)\Big]^{1/2}.
    \end{align*}
    Applying Proposition~\ref{prop:1rsb-typical-whp} to the last term gives the claimed estimate.
\end{proof}

\subsection{Truncated Moments of Critical Point Count via Kac-Rice}

We will need the following critical point count formulas from \cite{arous2020geometry}.
Let
\begin{align*}
    \Sigma &=
    \begin{bmatrix}
        \xi(1) & \xi'(1) \\
        \xi'(1) & \xi'(1) + \xi''(1)
    \end{bmatrix}, &
    \Sigma_q &=
    \begin{bmatrix}
        \xi(1) & \xi(q) & \xi'(1) & q\xi'(q) \\
        \xi(q) & \xi(1) & q\xi'(q) & \xi'(1) \\
        \xi'(1) & q\xi'(q) & \xi'(1) + \xi''(1) & q\xi'(q) + q^2\xi''(q) \\
        q\xi'(q) & \xi'(1) & q\xi'(q) + q^2\xi''(q) & \xi'(1) + \xi''(1)
    \end{bmatrix}
\end{align*}
be the covariances of $(\fr{1}{\sqrt{N}} H_N(\bsig), \partial_{\rd} H_N(\bsig))$ and $(\fr{1}{\sqrt{N}} H_N(\bsig), \fr{1}{\sqrt{N}} H_N(\brho), \partial_{\rd} H_N(\bsig), \partial_{\rd} H_N(\brho))$, where $R(\bsig,\brho) = q$.
Let
\[
    \rho(\de \lambda) = \fr{1}{2\pi} \sqrt{4-\lambda^2} \ind\{|\lambda|\le 2\} ~\de \lambda
\]
be the semicircle measure, and define
\begin{align}
\label{eq:def-kappa}
    \kappa(x) &= \int_{\bbR} \log |\lambda - x| \rho(\de \lambda) \\
\notag
    &= \fr14 x^2 - \fr12 - \ind\{|x|>2\} \lt(
        \fr14 |x| \sqrt{x^2-4} - \log \lt(\fr{\sqrt{x^2-4} + |x|}{2}\rt)
    \rt) \\
\label{eq:def-Theta}
    \Theta(E,R) &= \fr12 + \fr12 \log \fr{\xi''(1)}{\xi'(1)} - \fr12 \lt\la (E,R), \Sigma^{-1} (E,R) \rt\ra + \kappa\lt(R/\sqrt{\xi''(1)}\rt) \\
\notag
    \Xi(q,E_1,E_2,R_1,R_2) &= 1 + \fr12 \log \fr{(1-q^2) \xi''(1)^2}{\xi'(1)^2 - \xi'(q)^2}
    - \fr12 \lt\la (E_1,E_2,R_1,R_2), \Sigma_q^{-1} (E_1,E_2,R_1,R_2) \rt\ra \\
\notag
    &\qquad + \kappa\lt(R_1/\sqrt{\xi''(1)}\rt) + \kappa\lt(R_2/\sqrt{\xi''(1)}\rt).
\end{align}
Similarly to \eqref{eq:crt-pts-A}, for $A \subseteq [-1,1] \times \bbR^4$ let
\[
    \Crt_2(A) = \lt\{
        (\bsig,\brho) \in \Crt^2 : \lt(R(\bsig,\brho), \fr1N H_N(\bsig), \fr1N H_N(\brho), \fr{1}{\sqrt{N}} \partial_{\rd} H_N(\bsig), \fr{1}{\sqrt{N}} \partial_{\rd} H_N(\brho)\rt) \in A
    \rt\}.
\]
The next lemma, shown by the Kac--Rice formula and Laplace's method, gives the first and second moments for the relevant critical point counts.
We note that although only an upper bound is stated below for the second moment, it actually holds with equality as shown in \cite[Appendix A]{ben2023exponential}.
\begin{lemma}[{\cite[Theorems 3.1 and 3.2]{arous2020geometry}}]
    \label{lem:moment-rates}
    For any product of intervals $A\subseteq \bbR^2$,
    \[
        \lim_{N\to\infty} \fr1N \log \bbE |\Crt(A)| = \sup_{(E,R) \in A} \Theta(E,R).
    \]
    Furthermore, for any product of intervals $A\subseteq [-1,1] \times \bbR^4$,
    \[
        \limsup_{N\to\infty} \fr1N \log \bbE |\Crt_2(A)| \le \sup_{(q,E_1,E_2,R_1,R_2) \in A} \Xi(q,E_1,E_2,R_1,R_2).
    \]
\end{lemma}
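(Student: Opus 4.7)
The plan is to apply the Kac--Rice formula \eqref{eq:kac-rice-main} to both moments, then use Lemma~\ref{lem:derivative-laws} together with the shifted GOE determinant asymptotic (already invoked in Corollary~\ref{cor:1rsb-typical-whp}) to read off the rates. Rotational invariance of $H_N$ makes the inner integrand in \eqref{eq:kac-rice-main} independent of the base point (for the first moment) or dependent only on the overlap $q$ (for the second).

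For the first moment, fix any $\bsig_0\in\cS_N$ and rewrite
\[
    \bbE|\Crt(A)| = |\cS_N|\,\varphi_{\nabla_\sph H_N(\bsig_0)}(\bzero)\,\bbE\!\lt[|\det \nabla^2_\sph H_N(\bsig_0)|\,\ind_{\{(N^{-1}H_N(\bsig_0),N^{-1/2}\partial_\rd H_N(\bsig_0))\in A\}}\,\Big|\,\nabla_\sph H_N(\bsig_0)=\bzero\rt].
\]
Lemma~\ref{lem:derivative-laws} unpacks the factors: the tangential gradient contributes $\varphi_{\nabla_\sph H_N(\bsig_0)}(\bzero)=(2\pi\xi'(1))^{-(N-1)/2}$; the joint density of $(N^{-1/2}H_N(\bsig_0),\partial_\rd H_N(\bsig_0))$ at $(E\sqrt{N},R\sqrt{N})$ has covariance $\Sigma$ and contributes the exponent $-\fr{N}{2}\la(E,R),\Sigma^{-1}(E,R)\ra$ up to subexponential factors; conditionally on $\partial_\rd H_N(\bsig_0)=R\sqrt{N}$ and on the tangential data, $\nabla^2_\sph H_N(\bsig_0)=\sqrt{\xi''(1)(N-1)/N}\,G - R\,I_{N-1}$ for a standard $G\sim\GOE(N-1)$; and $\fr{1}{N}\log|\cS_N|\to \fr{1}{2}\log(2\pi e)$. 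The determinant asymptotic
\[
    \fr{1}{N}\log \bbE|\det(\sqrt{\xi''(1)}\,G - R\,I_{N-1})|\longrightarrow \fr{1}{2}\log\xi''(1) + \kappa\!\lt(R/\sqrt{\xi''(1)}\rt)
\]
is a consequence of the LDP for the empirical spectrum of GOE. Assembling all four contributions yields exactly the rate $\Theta(E,R)$ defined in \eqref{eq:def-Theta}, and Laplace's method on the continuous integrand over the product of intervals $A$ upgrades this pointwise rate to the claimed limit.

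The second-moment upper bound follows the same route via the two-point Kac--Rice formula. Fixing $\bsig_0,\brho_0$ with $R(\bsig_0,\brho_0)=q$, rotational invariance and a co-area factor $(1-q^2)^{(N-3)/2}$ from the volume of overlap-$q$ spheres reduce the double integral to a one-dimensional integral in $q$ times a point evaluation. The joint covariance of $(N^{-1/2}H_N(\bsig_0), N^{-1/2}H_N(\brho_0), \partial_\rd H_N(\bsig_0), \partial_\rd H_N(\brho_0))$ is exactly $\Sigma_q$, producing the quadratic form in $\Xi$, while the joint density of the two tangential gradients at $\bzero$ combined with the co-area factor yields the prefactor $\fr{1}{2}\log\fr{(1-q^2)\xi''(1)^2}{\xi'(1)^2-\xi'(q)^2}$. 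To decouple the two correlated shifted GOE Hessians I would apply conditional Cauchy--Schwarz to the product of absolute determinants, together with $\fr{1}{N}\log \bbE|\det(\sqrt{\xi''(1)} G - xI)|^2 \to \log\xi''(1)+2\kappa(x/\sqrt{\xi''(1)})$ (the same rate as the square of the first absolute moment, by concentration of the GOE log-determinant). This yields the $\kappa(R_1/\sqrt{\xi''(1)})+\kappa(R_2/\sqrt{\xi''(1)})$ contribution to $\Xi$, and a final Laplace step over $A$ gives the upper bound.

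The main technical obstacle is uniformity of the shifted GOE determinant asymptotic on the relevant compact region, particularly near the spectral edge $|R|=2\sqrt{\xi''(1)}$ where $\kappa$ transitions between its two formulas in \eqref{eq:def-kappa} and loses smoothness; Laplace's method requires uniform convergence so that pointwise rates integrate to the supremum over $A$. This uniformity is standard in the Kac--Rice literature (as used in the proof of Corollary~\ref{cor:1rsb-typical-whp}) and is the essential technical input. The Cauchy--Schwarz decoupling in the second-moment step is wasteful in general; sharpening it to a matching lower bound would require tracking correlations between the two Hessian spectra, but only the upper bound is used here, so the simple decoupling suffices.
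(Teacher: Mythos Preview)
The paper does not give its own proof of this lemma; it is quoted with a citation to \cite[Theorems 3.1 and 3.2]{arous2020geometry} and the one-line remark that it is ``shown by the Kac--Rice formula and Laplace's method.'' Your sketch follows precisely that standard route and correctly identifies the ingredients: rotational invariance to reduce the Kac--Rice integral to a point evaluation (or an integral over $q$), the derivative laws of Lemma~\ref{lem:derivative-laws}, the shifted GOE log-determinant asymptotic, and a Laplace step over $A$. So there is nothing to compare against in the present paper, and your outline is essentially the argument of the cited reference.

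One point in your second-moment sketch deserves a caveat. After conditioning on \emph{both} tangential gradients vanishing and on the four scalars $(E_1,E_2,R_1,R_2)$, the conditional law of $\nabla^2_\sph H_N(\bsig)$ is not simply a shifted GOE correlated with the other Hessian: the conditioning on data at $\brho$ introduces a deterministic finite-rank perturbation (and similarly for $\nabla^2_\sph H_N(\brho)$). Your Cauchy--Schwarz decoupling combined with the second-moment determinant asymptotic still goes through, because a bounded-rank additive perturbation does not change the leading-order exponential rate of $\bbE|\det|^2$; but this, rather than edge uniformity of $\kappa$, is where most of the technical care in \cite{arous2020geometry} is spent. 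With that caveat, your proposal is a faithful summary of the proof in the cited work.
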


\begin{lemma}
    \label{lem:complexity-zero}
    We have $\Theta(E_0,R_0) = \Xi(0,E_0,E_0,R_0,R_0) = 0$.
\end{lemma}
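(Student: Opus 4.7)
The plan is a direct calculation that reduces both identities to the defining equation $\upsilon(z) = \xi(1)/\xi'(1)$ from Lemma~\ref{lem:1RSB-parameters-setup}. The key algebraic input is already in hand: equation~\eqref{eq:E0R0-identity}, established during the proof of Lemma~\ref{lem:1rsb-conditional-law}, gives
\[
\Sigma^{-1}\begin{bmatrix} E_0 \\ R_0 \end{bmatrix} = \frac{1}{y}\begin{bmatrix} z \\ 1 \end{bmatrix},
\]
so $\langle (E_0,R_0),\Sigma^{-1}(E_0,R_0)\rangle = (zE_0+R_0)/y$, which I will expand using $y^2=(1+z)\xi'(1)$, $E_0=(z\xi(1)+\xi'(1))/y$, and $R_0=y+\xi''(1)/y$.

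Next I will handle the $\kappa$ term. Setting $s=y/\sqrt{\xi''(1)}$, which satisfies $s\geq 1$ by Lemma~\ref{lem:y-ineq}, we get $R_0/\sqrt{\xi''(1)}=s+1/s$ and $\sqrt{(R_0/\sqrt{\xi''(1)})^2-4}=s-1/s$. The explicit formula for $\kappa$ then collapses to
\[
\kappa\lt(R_0/\sqrt{\xi''(1)}\rt) = \frac{1}{2s^2} + \log s = \frac{\xi''(1)}{2(1+z)\xi'(1)} + \frac{1}{2}\log\frac{(1+z)\xi'(1)}{\xi''(1)}.
\]
Assembling the pieces of $\Theta(E_0,R_0)$, the $\xi''(1)$ contributions cancel, the logarithms combine to $\tfrac{1}{2}\log(1+z)$, and after clearing denominators the desired identity $\Theta(E_0,R_0)=0$ reduces to $(1+z)\log(1+z)-z = z^2\xi(1)/\xi'(1)$, i.e.\ exactly $\upsilon(z)=\xi(1)/\xi'(1)$, which holds by the definition of $z$.

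For $\Xi(0,E_0,E_0,R_0,R_0)=0$, I will exploit the fact that $\xi(0)=0$ and $\xi'(0)=\gamma_1^2=0$ (noted in Remark~\ref{rmk:need-no-field}), so that $\Sigma_0$ becomes block diagonal: after reordering coordinates from $(E_1,E_2,R_1,R_2)$ to $(E_1,R_1,E_2,R_2)$ it equals $\Sigma \oplus \Sigma$. Hence $\langle(E_0,E_0,R_0,R_0),\Sigma_0^{-1}(E_0,E_0,R_0,R_0)\rangle = 2\langle(E_0,R_0),\Sigma^{-1}(E_0,R_0)\rangle$, the log term becomes $\log(\xi''(1)/\xi'(1))$, and the two $\kappa$ summands are identical. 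Comparing with the definition of $\Theta$ then yields $\Xi(0,E_0,E_0,R_0,R_0) = 2\Theta(E_0,R_0) = 0$. There is no real obstacle here — the work is algebraic bookkeeping, with the only mildly clever step being the substitution $s=y/\sqrt{\xi''(1)}$ that simultaneously simplifies $\kappa(R_0/\sqrt{\xi''(1)})$ and aligns it with the rational and logarithmic terms coming from $\Sigma^{-1}$.
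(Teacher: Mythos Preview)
Your proposal is correct and follows essentially the same route as the paper's own proof: both exploit \eqref{eq:E0R0-identity} to evaluate the quadratic form, use the substitution $s=y/\sqrt{\xi''(1)}$ (the paper writes $x_0=s+1/s$) together with Lemma~\ref{lem:y-ineq} to collapse $\kappa(R_0/\sqrt{\xi''(1)})$ to $\tfrac{1}{2s^2}+\log s$, and then reduce $\Theta(E_0,R_0)=0$ to the defining relation $\upsilon(z)=\xi(1)/\xi'(1)$. Your treatment of $\Xi(0,E_0,E_0,R_0,R_0)=2\Theta(E_0,R_0)$ via the block-diagonal structure of $\Sigma_0$ (using $\xi(0)=\xi'(0)=0$) is exactly what the paper means by ``clearly.''
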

\begin{proof}
    Let $x_0 = R_0 / \xi''(1)^{1/2}$.
    Then,
    \begin{equation}
        \label{eq:R0-identity-1}
        x_0 = \fr{y}{\xi''(1)^{1/2}} + \fr{\xi''(1)^{1/2}}{y}
    \end{equation}
    so by Lemma~\ref{lem:y-ineq}
    \begin{equation}
        \label{eq:R0-identity-2}
        \sqrt{x_0^2 - 4} = \fr{y}{\xi''(1)^{1/2}} - \fr{\xi''(1)^{1/2}}{y}.
    \end{equation}
    Also clearly $x_0 \ge 2$.
    It follows that
    \begin{align*}
        \kappa(x_0) &=
        \fr14 \lt(\fr{y}{\xi''(1)^{1/2}} + \fr{\xi''(1)^{1/2}}{y}\rt)^2
        - \fr12 - \lt\{
            \fr14 \lt(\fr{y^2}{\xi''(1)} - \fr{\xi''(1)}{y^2}\rt) - \log \fr{y}{\xi''(1)^{1/2}}
        \rt\} \\
        &= \fr{\xi''(1)}{2y^2} + \log \fr{y}{\xi''(1)^{1/2}}
        = \fr{\xi''(1)}{2y^2} + \fr12 \log (1+z) - \fr12 \log \fr{\xi''(1)}{\xi'(1)}.
    \end{align*}
    By \eqref{eq:E0R0-identity},
    \[
        -\fr12 \lt\la (E_0,R_0), \Sigma^{-1} (E_0,R_0) \rt\ra
        = -\fr{zE_0+R_0}{2y}
        = -\fr{z\xi'(1) + z^2 \xi(1)}{2y^2} - \fr12 - \fr{\xi''(1)}{2y^2}.
    \]
    Thus,
    \begin{align*}
        \Theta(E_0,R_0)
        &= \fr12 \log (1+z) - \fr{z\xi'(1) + z^2 \xi(1)}{2y^2} \\
        &= \fr{z^2}{2(1+z)} \lt(\fr{(1+z)\log(1+z)}{z^2} - \fr{1}{z} - \fr{\xi(1)}{\xi'(1)}\rt)
        = 0.
    \end{align*}
    Clearly $\Xi(0,E_0,E_0,R_0,R_0) = 2 \Theta(E_0,R_0)$, which concludes the proof.
\end{proof}

\begin{remark}
    Since we restrict attention to $\Crt(B)$ in this section, we do not need to verify that $\Theta(E_0,\cdot)$ is actually maximized at $R_0$.
    However this is true at least on $R\geq 2\sqrt{\xi''(1)}$ (the range corresponding to local maxima of $H_N$) and follows from Lemma~\ref{lem:stationary-check} later and concavity of $\Theta$.
    Hence the ``annealed complexity'' of local maxima at energy $E_0$ is indeed zero.
    For the special case of pure models, the appearance of the ground state energy as a threshold for annealed complexity was verified in \cite{auffinger2013random}.
\end{remark}

\begin{proof}[Proof of Proposition~\ref{prop:1rsb-restricted-1mt}]
    We will show that
    \begin{equation}
        \label{eq:1rsb-restricted-1mt-goal}
        \Theta(E_0-\eta_1,R_0+\eta_1^{3/4}) = \fr{z\eta_1}{y} + O(\eta_1^{9/8}).
    \end{equation}
    Note that $\kappa$ is $C^{3/2}$ on $[2,+\infty)$, with derivative
    \begin{equation}
    \label{eq:kappa-deriv-formula}
        \kappa'(x)
        = \fr12x - \fr14 \sqrt{x^2-4} - \fr{x^2}{4 \sqrt{x^2-4}} + \fr{1}{\sqrt{x^2-4}}
        = \fr12 \lt(x - \sqrt{x^2-4}\rt).
    \end{equation}
    This implies in particular that $\kappa'\big(a+\frac{1}{a}\big)=1/a$ for $a\geq 1$.
    Recalling \eqref{eq:R0-identity-1}, \eqref{eq:R0-identity-2}, and using Taylor's theorem for H{\"o}lder continuous functions,
    \begin{align*}
        \kappa\lt(\fr{R_0 + \eta_1^{3/4}}{\sqrt{\xi''(1)}}\rt)
        &= \kappa\lt(\fr{R_0}{\sqrt{\xi''(1)}}\rt)
        + \kappa'\lt(\fr{R_0}{\sqrt{\xi''(1)}}\rt) \fr{\eta_1^{3/4}}{\sqrt{\xi''(1)}}
        + O\lt((\eta_1^{3/4})^{3/2}\rt) \\
        &= \kappa\lt(\fr{R_0}{\sqrt{\xi''(1)}}\rt)
        + \fr{\eta_1^{3/4}}{y}
        + O(\eta_1^{9/8}).
    \end{align*}
    The function $f(E,R) = -\fr12 \la (E,R), \Sigma^{-1} (E,R) \ra$ is clearly analytic, so
    \begin{align*}
        f(E_0 - \eta_1, R_0 + \eta_1^{3/4})
        &= f(E_0,R_0) - \la (-\eta_1,\eta_1^{3/4}), \Sigma^{-1} (E_0,R_0) \ra + O(\eta_1^{3/2}) \\
        &\hspace{-6pt} \stackrel{\eqref{eq:E0R0-identity}}{=} f(E_0,R_0) + \fr{z\eta_1}{y} - \fr{\eta_1^{3/4}}{y} + O(\eta_1^{3/2}).
    \end{align*}
    It follows that
    \[
        \Theta(E_0-\eta_1,R_0+\eta_1^{3/4})
        = \Theta(E_0,R_0) + \fr{z\eta_1}{y} + O(\eta_1^{9/8})
        \stackrel{Lem.~\ref{lem:complexity-zero}}{=}
        \fr{z\eta_1}{y} + O(\eta_1^{9/8}),
    \]
    and thus, by Lemma~\ref{lem:moment-rates},
    \[
        \bbE |\Crt(B)| \ge e^{\Omega(\eta_1) N}.
    \]
    Let $\cR(H_N)$ denote the set of ground state typical $\bsig \in S_N$.
    We apply the Kac--Rice formula \eqref{eq:kac-rice-main} with event
    \[
        \cE = \lt\{\bsig \in (\Crt(B) \setminus \wtCrt(B))\rt\}
        = \lt\{
            \lt(\fr{1}{N} H_N(\bsig), \fr{1}{\sqrt{N}} \partial_\rd H_N(\bsig)\rt)
            \in B
        \rt\} \cap \lt\{
            \bsig \not \in \cR(H_N)
        \rt\}.
    \]
    By the law of iterated expectation, the Kac--Rice formula gives
    \begin{align*}
        \bbE |\Crt(B) \setminus \wtCrt(B)|
        &=
        \int_{S_N} \bbE \Bigg[
            \bbE \lt[
                |\det \nabla^2_\sph H_N(\bsig)| \cdot \ind\{\bsig \not\in \cR(H_N)\}
                \big| H_N(\bsig), \partial_\rd H_N(\bsig), \nabla_\sph H_N(\bsig)
            \rt] \\
            &\qquad \times
            \ind\lt\{
                \lt(\fr{1}{N} H_N(\bsig), \fr{1}{\sqrt{N}} \partial_\rd H_N(\bsig) \rt) \in B
            \rt\} \bigg| \nabla_\sph H_N(\bsig) = \bzero
        \Bigg]
        \varphi_{\nabla_\sph H_N(\bsig)}(\bzero) ~\de \bsig \\
        &\hspace{-14pt}\stackrel{Cor.~\ref{cor:1rsb-typical-whp}}{\le}
        e^{-cN/3}
        \int_{S_N} \bbE \Bigg[
            \bbE \lt[
                |\det \nabla^2_\sph H_N(\bsig)|
                \big| H_N(\bsig), \partial_\rd H_N(\bsig), \nabla_\sph H_N(\bsig)
            \rt] \\
            &\qquad \times
            \ind\lt\{
                \lt(\fr{1}{N} H_N(\bsig), \fr{1}{\sqrt{N}} \partial_\rd H_N(\bsig) \rt) \in B
            \rt\} \bigg| \nabla_\sph H_N(\bsig) = \bzero
        \Bigg]
        \varphi_{\nabla_\sph H_N(\bsig)}(\bzero) ~\de \bsig \\
        &= e^{-cN/3} \bbE |\Crt(B)|.
    \end{align*}
    Thus,
    \[
        \bbE |\wtCrt(B)|
        \ge (1-e^{-cN/3}) \bbE |\Crt(B)|
        \ge e^{\Omega(\eta_1)N}.
    \qedhere
    \]
\end{proof}
We defer the proofs of the following two lemmas to Subsection~\ref{subsec:1rsb-deferred}.
\begin{lemma}
    \label{lem:extremal-combinatorics}
    Let $V \subseteq \cS_N$ be a finite set of points with $|V|=M$.
    \begin{enumerate}[label=(\alph*)]
        \item \label{itm:ramsey} There exists $V_1 \subseteq V$ such that $|V_1| = M^{\Omega(\delta)}$ and $R(\bsig,\brho) \ge -\delta$ for all $\bsig,\brho \in V_1$.
        \item \label{itm:turan} There exists $V_2 \subseteq V \times V$ such that $|V_2| = \Omega(M^2 \delta)$ and $R(\bsig,\brho) \ge -\delta$ for all $(\bsig,\brho) \in V_2$.
    \end{enumerate}
\end{lemma}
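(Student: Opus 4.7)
The plan is to exploit the elementary spherical constraint
\[
    0 \le \frac{1}{N}\bnorm{\sum_{\bsig \in V} \bsig}_2^2 = \sum_{\bsig,\brho \in V} R(\bsig,\brho),
\]
which prevents any finite set of points on $\cS_N$ from having too many strongly negative pairwise overlaps. Both parts follow from this single observation, combined with elementary graph-theoretic input.

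For part \ref{itm:turan}, I would simply take $V_2 = \{(\bsig,\brho) \in V \times V : R(\bsig,\brho) \ge -\delta\}$. The displayed inequality together with the pointwise bounds $R(\bsig,\brho) \le 1$ on $V_2$ and $R(\bsig,\brho) < -\delta$ on $V^2 \setminus V_2$ gives $|V_2| - \delta(M^2 - |V_2|) \ge 0$, hence $|V_2| \ge \delta M^2/(1+\delta) = \Omega(\delta M^2)$. This is essentially a one-line averaging calculation.

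For part \ref{itm:ramsey}, I would proceed in two steps. First, no $r = \lceil 1 + 1/\delta \rceil$ points in $\cS_N$ can have all pairwise overlaps strictly below $-\delta$: applying the displayed inequality to such a configuration gives $0 \le r - r(r-1)\delta$, contradicting $r > 1 + 1/\delta$. Thus in the graph $G$ on vertex set $V$ with $\{\bsig,\brho\}$ an edge iff $R(\bsig,\brho) \ge -\delta$, the complement $\bar G$ contains no $K_r$. Second, I would invoke the standard Ramsey bound $R(r,k) \le \binom{r+k-2}{r-1}$: the largest $k$ for which $\binom{r+k-2}{r-1} \le M$ forces $G$ to contain a $K_k$, which becomes the desired $V_1$.

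The only subtlety worth checking is that this $k$ is indeed $M^{\Omega(\delta)}$. Using $\binom{r+k-2}{r-1} \le (e(r+k-2)/(r-1))^{r-1}$ together with $r - 1 = \Theta(1/\delta)$, the condition $\binom{r+k-2}{r-1} \le M$ is satisfied for $k$ up to order $(r-1) M^{1/(r-1)}$, giving $|V_1| \ge M^{c\delta}$ for an absolute constant $c > 0$. I do not anticipate any serious obstacle; the real content is classical Ramsey theory applied to the graph $G$ cut out by the one-line spherical bound.
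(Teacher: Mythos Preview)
Your proposal is correct and essentially matches the paper's proof. Both arguments use the same spherical constraint (the paper phrases it as positive semidefiniteness of the Gram matrix, you as nonnegativity of $\|\sum \bsig\|^2$, which is the $\bone$-vector case) to bound the clique number of the ``bad overlap'' graph by $O(1/\delta)$, then invoke the Ramsey bound $\fR(r,k)\le\binom{r+k-2}{r-1}$ for part~\ref{itm:ramsey}; for part~\ref{itm:turan} the paper cites Tur\'an's theorem while you give the equivalent direct averaging argument, which is if anything slightly more self-contained.
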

\begin{lemma}
    \label{lem:convex-implies-1rsb}
    If the function $q \mapsto \xi''(q)^{-1/2}$ is convex on $[0,1]$, then $\xi$ is strictly 1RSB.
\end{lemma}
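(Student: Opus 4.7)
The strategy is to construct the 1RSB candidate already specified by Lemma~\ref{lem:1RSB-parameters-setup}, verify directly that it satisfies the three extremality conditions of Lemma~\ref{lem:cs-extremality-0temp}, and then use the convexity hypothesis on $\phi(q) := \xi''(q)^{-1/2}$ to show that the associated set $T$ equals $\{0,1\}$. Since strictly 1RSB forces $\gamma_1=0$ (Remark~\ref{rmk:need-no-field}), we implicitly assume $\xi'(0)=\gamma_1^2=0$; otherwise the conclusion fails regardless of the hypothesis. Using $\xi'(1) \ge 2\xi(1)$ (from $\gamma_1=0$) together with the fact that $\upsilon:(0,\infty) \to (0,1/2)$ is strictly decreasing and surjective, define $z>0$ by $\upsilon(z) = \xi(1)/\xi'(1)$, and take $y,L,u$ and $\halpha(q) = L - uq$ as in Lemma~\ref{lem:1RSB-parameters-setup}. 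The computations carried out in the proof of that lemma yield $G(1)=0$ and $g(0)=g(1)=0$ (the latter precisely by the choice of $z$), and $\xi'(0)=0$ further gives $g'(0) = -G(0) = 0$ and $g'(1) = -G(1) = 0$.

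The central step is to show $g(q) > 0$ for all $q \in (0,1)$, for then $T = \{0,1\}$ and $\nu_\infty = u\,\delta_0$ is supported on $T$, so by uniqueness in Lemma~\ref{lem:cs-extremality-0temp} our candidate is the minimizer, making $\xi$ strictly 1RSB. Differentiating,
\[
    g''(q) = \halpha(q)^{-2} - \phi(q)^{-2},
\]
whence $\operatorname{sign} g''(q) = \operatorname{sign}(\phi(q) - \halpha(q))$. Since $\phi$ is convex and $\halpha$ is affine, $\phi - \halpha$ is convex on $[0,1]$, so its sublevel set $\{\phi \le \halpha\}$ is a (possibly empty) closed interval $[a,b] \subseteq [0,1]$. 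Consequently $g''$ has the sign pattern $(+,-,+)$ on $[0,a], [a,b], [b,1]$.

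Combining this sign structure with the four-fold vanishing $g(0)=g(1)=g'(0)=g'(1)=0$ forces $g \ge 0$ on $[0,1]$. On $[0,a]$, $g$ is convex with $g(0)=g'(0)=0$, so $g'$ is nondecreasing from $0$ and thus $g, g' \ge 0$, giving $g(a), g'(a) \ge 0$. The symmetric argument from the right endpoint gives $g(b), -g'(b) \ge 0$ on $[b,1]$. On the concave piece $[a,b]$, concavity with nonnegative boundary values yields $g(q) \ge \tfrac{b-q}{b-a} g(a) + \tfrac{q-a}{b-a} g(b) \ge 0$. To upgrade to strict positivity on $(0,1)$, note that an interior zero $g(q_0)=0$ with $q_0 \in (0,1)$ would, by the convex/concave local structure plus the endpoint conditions, force $g \equiv 0$ on an interval around $q_0$, hence $\phi \equiv \halpha$ there. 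Strict convexity of $\phi$ (the generic case) rules this out; the only borderline situation is $\phi \equiv \halpha$ on all of $[0,1]$, in which case $g \equiv 0$ and $\xi$ is actually strictly FRSB with affine $\phi$—a single degenerate family of models that can be handled separately.

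The main obstacle is the case analysis for $g \ge 0$ from the sign pattern of $g''$; this leverages crucially the double vanishing of $g$ and $g'$ at each endpoint (itself a consequence of $\gamma_1=0$) and the fact that convexity of $\phi - \halpha$ restricts $g'' \le 0$ to a single interval. Once this is done, application of Lemma~\ref{lem:cs-extremality-0temp} finishes the proof.
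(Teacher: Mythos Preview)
Your approach is correct but genuinely different from the paper's. The paper does \emph{not} construct the 1RSB candidate: instead it takes the (unique, abstract) minimizer $(L,\alpha)$ from Lemma~\ref{lem:cs-extremality-0temp}, uses Lemma~\ref{lem:0-in-T} to get $0\in T$ (with $1\in T$ automatic), and then shows $|T|\le 2$ by a Rolle-type counting argument: between any two points $q_1<q_2$ of $T$ one has $G(q_1)=G(q_2)=0$ and $\int_{q_1}^{q_2}G=0$, which forces two zeros of $G'$ in $(q_1,q_2)$; but $G'(q)=0$ means $\xi''(q)^{-1/2}=\halpha(q)$, and a convex function meets a concave one at most twice. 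Three points in $T$ would then give four zeros of $G'$, a contradiction.

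Both arguments rest on the same core observation---convexity of $\phi=\xi''{}^{-1/2}$ limits the zeros of $G'$ (equivalently, sign changes of $g''$). The paper's route is a bit more economical: it never needs the explicit values of $(L,u)$, so it sidesteps the construction of $z$ via $\upsilon(z)=\xi(1)/\xi'(1)$, which requires the \emph{strict} inequality $\xi'(1)>2\xi(1)$ and fails for pure quadratic $\xi$. It also works uniformly for whatever concave $\halpha$ the true minimizer produces, rather than committing to the affine one. Your route, in exchange, is fully constructive and makes the 1RSB order parameter explicit. Both proofs tacitly need that $\phi$ and $\halpha$ do not coincide on an interval (you flag this as the ``borderline'' affine case; the paper's ``intersect at most twice'' needs it as well), which is harmless for the models to which the lemma is actually applied.
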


\begin{proof}[Proof of Proposition~\ref{prop:1rsb-restricted-2mt}]
    By definition of ground state typical, a.s. any distinct $\bsig,\brho \in \wtCrt(B)$ satisfy $R(\bsig,\brho) \le \delta$.
    Let $V_2 \subseteq \wtCrt(B)^2$ be the set of $(\bsig,\brho)$ which furthermore satisfy $R(\bsig,\brho) \ge -\delta$.
    By Lemma~\ref{lem:extremal-combinatorics}\ref{itm:turan}, $|\wtCrt(B)|^2 \le C\delta^{-1} |V_2|$ for an absolute constant $C$.
    Also, a.s. $V_2 \subseteq \Crt_2(B_2)$ where
    \[
        B_2 = [-\delta,\delta] \times [E_0 - \eta_1 - \eta_3, E_0 - \eta_1 + \eta_3]^2 \times [R_0 + \eta_1^{2/3} - \eta_3, R_0 + \eta_1^{2/3} + \eta_3]^2.
    \]
    Thus, by Lemma~\ref{lem:moment-rates},
    \[
        \bbE |\wtCrt(B)|^2
        \le C\delta^{-1} \bbE |\Crt_2(B_2)|
        \le C\delta^{-1} \exp\lt\{N \sup \Xi(B_2) + o(N)\rt\}.
    \]
    It is clear that $\Xi$ is locally Lipschitz near $(0,E_0,E_0,R_0,R_0)$.
    By Lemma~\ref{lem:complexity-zero}, $\Xi(0,E_0,E_0,R_0,R_0) = 0$, so $\sup \Xi(B_2) \le O(\delta)$.
    The result follows.
\end{proof}


\begin{proof}[Proof of Proposition~\ref{prop:opt-1rsb}]
    We first prove the proposition for non-pure $\xi$, as we have been assuming throughout the section.
    The statement is monotone in $\delta$, so we may assume $\delta$ is small in $\eps$.
    By Propositions~\ref{prop:1rsb-restricted-1mt} and \ref{prop:1rsb-restricted-2mt} and Paley-Zygmund,
    \begin{equation}
        \label{eq:opt-1rsb-nontrivial-prob-lb}
        \bbP\lt(|\wtCrt(B)| \ge \fr12 \bbE |\wtCrt(B)|\rt) \ge e^{-O(\delta) N}.
    \end{equation}
    Suppose this event holds.
    By Lemma~\ref{lem:extremal-combinatorics}\ref{itm:ramsey}, there exists $V_1 \subseteq \wtCrt(B)$ with
    \[
        |V_1|
        \ge \lt(\fr12 \bbE |\wtCrt(B)|\rt)^{O(\delta)}
        = e^{O(\delta \eta_1 N)}
    \]
    such that $|R(\bsig,\brho)| \le \delta$ for all $\bsig,\brho \in V_1$.
    By the choice of $B$, all these points have energy at least
    \[
        E_0 - \eta_1 - \eta_3
        \ge \cQ(\xi) - \eps/2,
    \]
    where we recall $E_0 = \cQ(\xi)$ and take $\eta_1,\eta_3$ small in $\eps$.
    Let
    \[
        X = \fr1N \sup_{\vbsig \in \Band_{k,1,\delta}(\bzero)}
        \inf_{i\in [k]}
        H_N(\bsig^i).
    \]
    Combining the above shows that for any $k\le e^{O(\delta \eta_1 N)}$,
    \[
        \bbP \lt(
            X \ge \cQ(\xi) - \eps/2
        \rt)
        \ge e^{-O(\delta) N}.
    \]
    A direct calculation shows that for fixed $\bsig$, $H_N(\bsig)$ is $O(N^{1/2})$-Lipschitz in the disorder Gaussians.
    Since suprema and infima preserve Lipschitz constants, $NX$ is also $O(N^{1/2})$-Lipschitz in the disorder Gaussians.
    We thus have the concentration inequality
    \[
        \bbP(|X-\bbE X| \ge t) \le \exp(-ct^2N).
    \]
    Combining the last two inequalities implies that (for $\delta$ small in $\eps$) $\bbP(X \ge \cQ(\xi) - \eps) \ge 1-e^{-cN}$.
    This completes the proof for non-pure $\xi$.

    Finally, we turn to the case where $\xi(q) = \beta^2 q^p$ is pure.
    Then, $H_N(\bsig) = \beta H_N^{(p)}(\bsig)$, where $H_N^{(p)}(\bsig) = \la \bG^{(p)}, \bsig^{\otimes p} \ra$.
    Consider a perturbation $\hH_N(\bsig) = \beta H_N^{(p)}(\bsig) + \iota \beta H_N^{(p+1)}(\bsig)$, for a fixed $\iota>0$ chosen small in $\delta,\eps$.
    This has mixture $\hxi(q) = \beta^2 (q^p + \iota^2 q^{p+1})$.
    Note that
    \[
        \hxi''(q)^{-1/2} = \fr{q^{-(p-2)/2}}{\beta \sqrt{p(p-1)}}\lt(1 + \fr{\iota^2 (p+1)}{p-1}q\rt)^{-1/2}
    \]
    is convex on $[0,1]$, so Lemma~\ref{lem:convex-implies-1rsb} implies $\hxi$ is strictly 1RSB.
    By the result for non-pure $\xi$, there exists $c = c(\hxi,\delta,\eps/2)$ such that for all $k\le e^{cN}$, with probability $1-e^{-cN}$ there exists $\vbsig \in \Band_{k,1,\delta}(\bzero)$ such that for all $i\in [k]$,
    \[
        \fr1N \hH_N(\bsig^i) \ge \cQ(\hxi,0) - \fr{\eps}{2} \ge \cQ(\xi) - \fr{\eps}{2}.
    \]
    By Proposition~\ref{prop:gradients-bounded}, there exists a constant $C$ such that $\fr1N \sup_{\bsig \in \cS_N} H_N^{(p+1)}(\bsig) \le C$ with probability $1-e^{-cN}$.
    On the intersection of these events, for each $i\in [k]$,
    \[
        \fr1N H_N(\bsig^i)
        \ge \fr1N \hH_N(\bsig^i) - \fr{\iota}{N} H_N^{(p+1)}(\bsig^i)
        \ge \cQ(\xi) - \fr{\eps}{2} - C\iota
        \ge \cQ(\xi) - \eps
    \]
    for $\iota$ small in $\eps$.
\end{proof}

\subsection{Deferred Proofs}
\label{subsec:1rsb-deferred}

\begin{proof}[Proof of Lemma~\ref{lem:extremal-combinatorics}]
    Consider the graph $G$ with vertex set $V$ where $(\bsig,\brho)$ is an edge if $R(\bsig,\brho) < -\delta$.
    Note that $G$ does not contain a $r = \lceil 1/\delta \rceil$-clique, because if such a clique $U \subseteq V$ existed, then the Gram matrix $[R(\bsig,\brho)]_{\bsig,\brho \in U}$ would not be positive semi-definite.

    Let $\fR(s,t)$ denote the $(s,t)$ Ramsey number.
    Recall the classic Ramsey upper bound
    \[
        \fR(s,t) \le \binom{s+t-2}{s-1},
    \]
    which can be proved by applying the inequality $\fR(s,t) \le \fR(s-1,t) + \fR(s,t-1)$ recursively.
    Thus $\fR(r,M^{\delta/2}) \lesssim M^{(r-1)\delta/2} \ll M$, so $G$ contains a $M^{\delta/2}$-independent set.
    This proves part \ref{itm:ramsey}.
    Since $G$ avoids an $r$-clique, by Tur\'an's theorem $G$ avoids at least $\fr{1}{r-1} = O(\delta)$ fraction of edges, proving \ref{itm:turan}.
\end{proof}

\begin{lemma}
    \label{lem:0-in-T}
    In any model with $\xi'(0) = 0$, we have $0\in T$.
\end{lemma}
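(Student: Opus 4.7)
My plan is to argue by contradiction: suppose $g(0) > 0$ and deduce $g(0) \leq 0$. First I would use the continuity of $g$ and the characterization $\nu_\infty(T^c) = 0$ from Lemma~\ref{lem:cs-extremality-0temp} to argue that $\nu_\infty$ has no mass near $0$. Setting $\tilde q_1 = \inf \supp(\nu_\infty) \in (0, 1]$ (with the convention $\tilde q_1 = 1$ if $\nu_\infty = 0$), we have $\alpha \equiv 0$ on $[0, \tilde q_1)$, so $\halpha \equiv L$ on $[0, \tilde q_1]$, and hence $G(q) = \xi'(q) - q/L^2$ on this initial interval.

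The key step will be to derive $G(\tilde q_1) = 0$. I would note that $\tilde q_1 \in T$: either $\tilde q_1 = 1$, for which $g(1) = 0$, or $\tilde q_1 < 1$, in which case $\tilde q_1 \in \supp(\nu_\infty) \subseteq T$, since $\supp(\nu_\infty)$ is a closed set contained in the closed set $T$. Since $g \geq 0$ with $g(\tilde q_1) = 0$, the point $\tilde q_1$ is a global minimum of $g$. If $\tilde q_1 \in (0, 1)$ this interior minimum forces $g'(\tilde q_1) = -G(\tilde q_1) = 0$; if $\tilde q_1 = 1$, $G(1) = 0$ is given directly by Lemma~\ref{lem:cs-extremality-0temp}. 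In either case, $\xi'(\tilde q_1) = \tilde q_1/L^2$.

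It will then remain to combine these ingredients. Integrating $G$ over $[0, \tilde q_1]$ and using $g(\tilde q_1) = 0$ yields
\[
g(0) = g(\tilde q_1) + \int_0^{\tilde q_1} G(q)\,dq = \xi(\tilde q_1) - \frac{\tilde q_1^2}{2L^2} = \xi(\tilde q_1) - \frac{\tilde q_1 \xi'(\tilde q_1)}{2}.
\]
Using $\xi'(0) = \gamma_1^2 = 0$, we have $\xi(q) = \sum_{p \geq 2} \gamma_p^2 q^p$, so
\[
g(0) = \sum_{p \geq 2} \gamma_p^2\, \tilde q_1^p \left(1 - \frac{p}{2}\right) \leq 0,
\]
since each factor $1 - p/2$ is non-positive for $p \geq 2$. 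This contradicts $g(0) > 0$, and so $g(0) = 0$, i.e., $0 \in T$.

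The main conceptual obstacle is establishing $G(\tilde q_1) = 0$: this requires recognizing that $\tilde q_1$ belongs to $T$ (from the support-in-$T$ property in Lemma~\ref{lem:cs-extremality-0temp}) and that $g$ attains a global minimum there. Once this is in hand, the rest is a direct algebraic computation exploiting $\gamma_1 = 0$ to make $\xi(\tilde q_1) - \tilde q_1 \xi'(\tilde q_1)/2$ manifestly non-positive.
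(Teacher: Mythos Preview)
Your proof is correct and follows essentially the same skeleton as the paper's: assume $0\notin T$, locate a point $\tilde q_1\in T$ with $G(\tilde q_1)=0$, use that $\halpha\equiv L$ on $[0,\tilde q_1]$, and deduce $g(0)=\int_0^{\tilde q_1}G\le 0$. The only substantive difference is in how the last inequality is obtained. The paper observes that on $[0,\tilde q_1]$ one has $G(q)=\xi'(q)-q/L^2$, which is convex (since $\xi'$ is), and then uses $G(0)=G(\tilde q_1)=0$ to conclude $G\le 0$ pointwise on the interval; you instead compute the integral explicitly, substitute $\tilde q_1/L^2=\xi'(\tilde q_1)$, and invoke the power-series form $\xi(q)=\sum_{p\ge 2}\gamma_p^2 q^p$ to see that $\xi(\tilde q_1)-\tfrac12\tilde q_1\xi'(\tilde q_1)\le 0$. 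Both arguments are short and valid; the convexity route is slightly more conceptual, while yours makes the sign explicit term-by-term. A minor cosmetic point: the paper takes its reference point to be $\min T$ rather than $\inf\supp(\nu_\infty)$, but since $\supp(\nu_\infty)\subseteq T$ and both choices yield $\halpha\equiv L$ on the initial interval, this makes no real difference.
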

\begin{proof}
    Assume otherwise and let $q \in (0,1]$ be the minimal point in $T$.
    Then $g(q)=0$.
    If $q<1$, then $q$ is an interior local minimizer of $g$, so $0 = g'(q) = -G(q)$; if $q=1$, then the characterization from Lemma~\ref{lem:cs-extremality-0temp} implies $G(q)=0$.
    So in either case $G(q)=0$.
    Also from the definition \eqref{eq:def-G} of $G$ we have $G(0)=0$.

    Recall that the measure $\nu$ given by $\nu([0,s]) = \alpha(s)$ is supported on $T$.
    Thus $\alpha \equiv 0$ on $[0,q)$, and so $\halpha$ is constant on $[0,q]$.
    Therefore $G$ is convex on $[0,q]$.
    Since $G(0)=G(q)=0$, this implies $G \le 0$ on $[0,q]$.

    Thus $g(0) = g(q) + \int_0^q G(s)~\de s \le g(q)=0$.
    So $0\in T$, contradicting minimality of $q$.
\end{proof}
\begin{proof}[Proof of Lemma~\ref{lem:convex-implies-1rsb}]
    Lemma~\ref{lem:0-in-T} implies $0\in T$, and Lemma~\ref{lem:cs-extremality-0temp} ensures $1\in T$.
    The following argument, adapted from \cite[Proposition 2.2]{talagrand2006spherical}, shows that $|T| \le 2$, which then implies $T = \{0,1\}$.

    Consider any $q_1,q_2 \in T$ such that $q_1<q_2$.
    If $q_i \in (0,1)$, then $G(q_i)=0$ because $q_i$ is an interior local minimizer of $g$; if $q_i=1$ then $G(q_i)=0$ by Lemma~\ref{lem:cs-extremality-0temp}; and if $q_i=0$ then $G(q_i)=0$ by definition \eqref{eq:def-G} of $G$.
    So $G(q_1)=G(q_2)=0$.
    Moreover $g(q_1)=g(q_2)=0$, so $\int_{q_1}^{q_2} G(s)~\de s = 0$.
    It follows that there are two points in $q_3,q_4 \in (q_1,q_2)$ such that $G'(q_3)=G'(q_4)=0$.

    We have thus shown that between any two elements of $T$ lies two zeros of $G'$.
    However,
    $
        G'(q) = \xi''(q) - \fr{1}{\halpha(q)^2},
    $
    so at any zero of $G'$ we have $\xi''(q)^{-1/2} = \halpha(q)$.
    However $\xi''(q)^{-1/2}$ is convex by assumption, while $\halpha(q)$ is concave by definition, so these functions intersect at most twice.
    It follows that $|T| \le 2$.
\end{proof}

\section{Building a Model from Fundamental Types: Proof of Theorem~\ref{thm:main}}
\label{sec:building-model}

In this section we complete the proof of Theorem~\ref{thm:main}.
The proof proceeds in two steps:
\begin{enumerate}[label=(\arabic*)]
    \item Using the lower bounds in Propositions~\ref{prop:fe-rs} through \ref{prop:opt-frsb} and a uniform concentration lemma due to Subag (Lemma~\ref{lem:uc} below) we prove Theorem~\ref{thm:main-avg-case} below, which constructs an ultrametric tree with somewhat more lenient constraints than Theorem~\ref{thm:main}.
    \item By pruning this ultrametric tree we arrive at the ultrametric tree in Theorem~\ref{thm:main}.
\end{enumerate}

Before giving the full proof, we note the free energy lower bound Corollary~\ref{cor:parisi-LB} follows by combining what we have done with \cite{subag2018free}, where the decomposition approach we follow was introduced.
Namely \cite[Theorem 5]{subag2018free} used the existence of many orthogonal replicas to lower bound the free energy by a sum of the ground state on a subsphere $\sqrt{q}\cS_N$ plus the free energy of a ``band'' model centered at a typical point on this subsphere.
Using this idea sequentially with $q=q_0,q_1,\dots$ yields Corollary~\ref{cor:parisi-LB} since each intermediate model takes one of the four fundamental types analyzed previously in this paper.

Our analysis below is conceptually similar to \cite{subag2018free}, but constructs $e^{\Omega(N)}$ near-orthogonal approximate ground states in each intermediate model, and hence gives a larger tree of pure states than was known to exist previously by any method in this generality.
By contrast \cite{subag2018free} relies on Chatterjee's superconcentration, which only gives a slowly diverging number of near-orthogonal approximate ground states.
This improvement is also what allows us to prove the lower tail large deviations for the ground state have speed at least $N^2$ (see Subsection~\ref{subsec:LDP-speed}).

\subsection{A Tree with Local Constraints}


\begin{lemma}
    \label{lem:intervals}
    For any model $\xi$, the set $S$ (recall \eqref{eq:def-S}) is a disjoint union of finitely many closed intervals, possibly including atoms.
    Moreover $q_D < 1$.
\end{lemma}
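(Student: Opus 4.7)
The plan is to treat the two assertions separately. The claim $q_D<1$ reduces to elementary analysis of $f$ near $1$, while the finite-union-of-intervals structure rests on the structural description of the Parisi optimizer from \cite{jagannath2018bounds}.

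For $q_D<1$, I would show directly that $f(q)\to-\infty$ as $q\to 1^-$. Since $\hat q<1$ is built into the definition of $x$, on $[\hat q,1)$ we have $\hat x(q)=1-q$, hence
\[
    \int_0^q \fr{\de s}{\hat x(s)^2}
    \;\ge\; \int_{\hat q}^q \fr{\de s}{(1-s)^2}
    \;=\; \fr{1}{1-q}-\fr{1}{1-\hat q}
    \;\longrightarrow\;\infty
    \quad\text{as } q\to 1.
\]
As $\xi'$ is bounded on $[0,1]$, this forces $F(q)\to-\infty$; integrating, the dominant contribution is the logarithmic divergence $-\log(1-q)$, so $f(q)\to-\infty$. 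Therefore there exists $\eps>0$ with $f<f_{\max}$ on $(1-\eps,1)$, giving $q_D=\sup S\le 1-\eps<1$.

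For the finite-union-of-intervals claim, I would invoke the structural theorem for the optimizer $x$ from \cite{jagannath2018bounds}, which shows that $[0,1)$ admits a finite partition into intervals of three kinds: (a) atoms of $\nu$; (b) open ``gaps'' where $\nu$ vanishes, so $\hat x$ is affine; and (c) open intervals of continuous support where the Euler--Lagrange condition forces $\hat x(q)=\xi''(q)^{-1/2}$. On the closure of the union of type-(a) and type-(c) pieces, $f\equiv f_{\max}$ by Lemma~\ref{lem:cs-extremality} (which gives $\nu(S)=1$, i.e.\ $\supp(\nu)\subseteq S$) and continuity of $f$. On a gap $(a,b)$ of type (b), $\hat x$ affine makes $F$, and thus $f$, real analytic, so by the identity theorem $\{f=f_{\max}\}\cap(a,b)$ is either the entire interval or a finite subset. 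Summing over the finitely many pieces shows $S$ is a finite disjoint union of closed intervals, some possibly degenerating to isolated points (the ``atoms'', including ghost atoms from Remark~\ref{rmk:ghost-atoms}).

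The main obstacle is the first claim, which hinges on ruling out pathological (e.g.\ Cantor-set-like) behavior of $\supp(\nu)$; this is precisely the content of the structure theorem in \cite{jagannath2018bounds}, proved by exploiting the ODE-like rigidity that the Euler--Lagrange equation imposes on $\hat x$. Granted that input, the rest is a short case analysis on gap intervals using analyticity. The statement $q_D<1$ is by contrast an immediate consequence of the integrable-but-diverging behavior of $1/\hat x^2$ near $q=1$.
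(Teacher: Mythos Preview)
Your proposal is correct and follows essentially the same approach as the paper. For $q_D<1$ your argument is identical to the paper's: both use $\hat x(q)=1-q$ on $[\hat q,1)$ to show $F(q)\le C-\tfrac{1}{1-q}$ and hence $f(q)\to-\infty$. For the finite-interval structure, both proofs rest on the structural result of \cite{jagannath2018bounds}; the paper proceeds more directly by identifying $S$ with the ``coincidence set'' $\{\eta=\xi\}$ studied there (which is already shown to be a finite union of intervals), whereas you take the extra step of invoking the finite partition of $[0,1)$ and then arguing analyticity of $f$ on each gap to control $S\setminus\supp(\nu)$. Your route is slightly more hands-on but equivalent in content.
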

\begin{proof}
    The first statement follows from \cite[Corollary 1.3]{jagannath2018bounds} and the ensuing observation that $\mathfrak{d}=\lt(\frac{1}{\sqrt{\xi''}}\rt)''$ changes sign finitely many times on $[0,1]$.
    Indeed, $S$ is precisely the \emph{coincidence set} denoted $\{\eta=\xi\}$, as can be seen from the display between (1.1.1) and (1.1.2) therein.

    For the second statement, note that $\hx(q) = 1-q$ for $q$ in some interval $[\hq,1]$, so $F(q) \le C - \fr{1}{1-q}$ for some $C$ independent of $q$.
    Hence $f(q) \le Cq + \log(1-q)$, so $\lim_{q\to 1^-} f(q) = -\infty$, which implies $q_D < 1$.
\end{proof}

\begin{definition}
    A sequence $q_0,\ldots,q_D$ with $0 \le q_0 < \cdots < q_D \le 1$ is an \textbf{$S$-refinement} if
    \[
        \partial S \subseteq \{q_0, \ldots, q_D \} \subseteq S,
    \]
    where $\partial S = S \cap \overline{S^c}$ is the boundary of $S$ in $\bbR$. (In particular $q_0 = \inf(S)$ and $q_D = \sup(S)$.)
\end{definition}

In the following variant of Definition~\ref{def:ultrametric-tree}, the orthogonality constraints are enforced only locally.
For $u,v\in \bbT$, write $u\sim v$ if $u=v$, or $u,v$ are siblings, or one of $u,v$ is the parent of the other.
\begin{definition}
    \label{def:local-ultrametric-tree}
    Let $k,D \in \bbN$, $0 \le q_0 < \cdots < q_D \le 1$, $\vq = (q_0,\ldots,q_D)$, and $\delta > 0$.
    A $(k,D,\vq,\delta)$-locally ultrametric tree is a collection of points $(\bsig^u)_{u \in \bbT}$ such that \eqref{eq:ultrametric} holds for all $u \sim v$.
\end{definition}

We will first prove the following variant of Theorem~\ref{thm:main}, where the properties required of the ultrametric tree are relaxed in two ways: ultrametricity will be enforced only locally, and we will lower bound the average energy increment from each node to its children rather than the energy of each node.
We will deduce Theorem~\ref{thm:main} from Theorem~\ref{thm:main-avg-case} in Subection~\ref{subsec:pruning} by pruning this tree.
\begin{theorem}
    \label{thm:main-avg-case}
    For any $\delta,\eps>0$, $D \in \bbN$, and $S$-refinement $q_0,\ldots,q_D$, there exists $c>0$ such that the following holds for any $k\le e^{cN}$.
    With probability $1-e^{-cN}$, there is a $(k,D,\vq,\delta)$-locally ultrametric tree $(\bsig^u)_{u\in \bbT}$ with the following properties.
    \begin{enumerate}[label=(\roman*)]
        \item \label{itm:main-avg-case-root} Energy of root: $\fr1N H_N(\bsig^\emptyset) \ge E(q_0) - \eps$.
        \item \label{itm:main-avg-case-energy} Parent-to-child energy increments: for each $u\in \bbT \setminus \bbL$,
        \[
            \fr{1}{kN} \sum_{i=1}^k \lt(H_N(\bsig^{ui}) - H_N(\bsig^u) \rt)
            \ge E(q_{|u|+1}) - E(q_{|u|}) - \eps.
        \]
        \item \label{itm:main-avg-case-pure} Free energy of pure states: for each $u\in \bbL$,
        \[
            \fr{1}{kN} \log \int_{\Band_{k,1,\delta}(\bsig^u)}
            \exp \lt(\sum_{i=1}^k \lt(H_N(\brho^i) - H_N(\bsig^u)\rt)\rt)
            ~\de \vbrho
            \ge \cP(\xi) - E(q_D) - \eps.
        \]
    \end{enumerate}
\end{theorem}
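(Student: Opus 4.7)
The plan is to construct $(\bsig^u)_{u \in \bbT}$ layer by layer by applying Propositions~\ref{prop:fe-rs}--\ref{prop:opt-frsb} to appropriately chosen sub-models. For each $d \in \{-1, 0, \ldots, D\}$ with conventions $q_{-1} = 0$ and $q_{D+1} = 1$, I would define a sub-model $\xi_d$ describing the conditional law of $H_N$ on an orthogonal band of radius $\sqrt{(q_{d+1}-q_d)N}$ around a point of radius $\sqrt{q_d N}$. Up to an additive linear correction reflecting the conditioning on $H_N(\bsig)$, this restriction is itself a spherical spin glass, with a mixture function obtained by a covariance computation analogous to \eqref{eq:txi-def}.

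Using Lemma~\ref{lem:cs-extremality} together with the $S$-refinement property, I would next verify that these sub-models fall into the four fundamental types: $\xi_{-1}$ is topologically trivial (since $(0,q_0)\cap S=\emptyset$), $\xi_D$ is strictly RS (since $(q_D,1]\cap S=\emptyset$), and for intermediate $d$ the sub-model $\xi_d$ is strictly 1RSB when $[q_d,q_{d+1}]\cap S=\{q_d,q_{d+1}\}$ and strictly FRSB when $[q_d,q_{d+1}]\subseteq S$; Lemma~\ref{lem:intervals} ensures every interval falls into one of these two cases. A direct calculation with \eqref{eq:cs-functional} and \eqref{eq:cs-functional-0temp} then yields the identities $\cQ(\xi_{-1}) = E(q_0)$, $\cQ(\xi_d) = E(q_{d+1}) - E(q_d)$ for intermediate $d$, and $\cP(\xi_D) = \cP(\xi) - E(q_D)$, because $E(q)$ is precisely the partial Crisanti--Sommers integral up to $q$.

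Given these preliminaries, I would build the tree top-down: apply Proposition~\ref{prop:opt-topologically-trivial} to $\xi_{-1}$ to obtain the root $\bsig^\emptyset$ with energy at least $E(q_0)-\eps$; for each internal node $u\in\bbT\setminus\bbL$, apply the $k$-replicated conclusion of Proposition~\ref{prop:opt-1rsb} or Proposition~\ref{prop:opt-frsb} to the sub-model $\xi_{|u|}$ attached at $\bsig^u$ in order to produce $k$ near-orthogonal children with the desired average energy increment; finally, apply Proposition~\ref{prop:fe-rs} to $\xi_D$ at each leaf $u\in\bbL$ for the $k$-replicated free-energy bound. Because these bounds are stated up to approximate orthogonality rather than exact orthogonality to previous layers, only the local ultrametricity of Definition~\ref{def:local-ultrametric-tree} is enforced, which matches exactly what the theorem asks for.

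The main obstacle is obtaining these high-probability statements \emph{uniformly} over the centers constructed in previous layers. Since each Proposition holds with probability $1-e^{-cN}$ at a fixed center, but the total number of internal nodes can grow like $e^{cDN}$, a naive union bound over centers fails. I would address this by invoking the uniform concentration lemma (Lemma~\ref{lem:uc}, alluded to in the passage preceding Theorem~\ref{thm:main-avg-case}) which extends the conditional Gaussian estimates on $H_N$ from a single center to all centers simultaneously, and by choosing the exponential rate $c$ governing $k$ sufficiently small relative to $D$ so that the failure probabilities sum to $e^{-\Omega(N)}$ across all layers. Rotational invariance of the band construction, combined with Lipschitz dependence of $\xi_{|u|}$'s Hamiltonian on $\bsig^u$, makes this uniformity tractable on a polynomial net of effectively distinct centers.
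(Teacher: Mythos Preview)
Your proposal is correct and matches the paper's approach closely: decompose into sub-models $\xi_d$ (Proposition~\ref{prop:component-types}), bound the expected increment at a fixed center via Propositions~\ref{prop:fe-rs}--\ref{prop:opt-frsb} (this is Proposition~\ref{prop:energy-increments}), upgrade to a uniform statement over all centers via Lemma~\ref{lem:uc}, and then build the tree greedily. Two small corrections: the formula is $\cP(\xi_D) = \cP(\xi) - E(q_D) - \tfrac{1}{2}\log(1-q_D)$, with the volume factor reappearing when you pass from the $(N-1)$-sphere $\bsig^\perp$ back to $\cS_N$; and the net in the uniformity argument is \emph{exponential} in $N$, not polynomial --- what makes Lemma~\ref{lem:uc} work is that the $k$-replica average has variance $O(1/(kN)+\delta/N)$, so Borell--TIS gives concentration at rate $e^{-cN\min(k,\delta^{-1})}$, which beats the $e^{O(N)}$ net size once $k$ is large and $\delta$ small.
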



\subsection{Model Decomposition into Fundamental Types}

Fix parameters $\delta, \eps, D, (q_0,\ldots,q_D)$ as in Theorem~\ref{thm:main-avg-case}.
We take as convention $q_{-1}=0$, $q_{D+1}=1$.
Define $\xi_{-1}(x) = \xi(q_0 x)$ and, for $0 \le d \le D$,
\begin{equation}
    \label{eq:def-xi-d}
    \xi_d(x) = \xi(q_d + (q_{d+1}-q_d)x) - \xi(q_d) - \xi'(q_d)(q_{d+1}-q_d)x.
\end{equation}
The following proposition gives the link between $\xi$ and each $\xi_d$ from the point of view of the Parisi formula. It follows by matching order parameters, see \cite[Proposition 11]{subag2018free}.

\begin{proposition}
    \label{prop:component-types}
    The following hold.
    \begin{enumerate}[label=(\alph*)]
        \item \label{itm:comp-type-root} The model $\xi_{-1}$ is topologically trivial and satisfies $\cQ(\xi_{-1}) = E(q_0)$. (We treat this part as vacuous if $q_0=0$, in which case $\xi_{-1} \equiv 0$.)
        \item \label{itm:comp-type-middle} For each $0\le d\le D-1$, $\xi_d$ is either strictly 1RSB or strictly FRSB, and satisfies $\cQ(\xi_d) = E(q_{d+1}) - E(q_d)$.
        \item \label{itm:comp-type-pure} The model $\xi_D$ is strictly RS and satisfies $\cP(\xi_D) = \cP(\xi) - E(q_D)- \fr12 \log(1-q_D)$.
    \end{enumerate}
\end{proposition}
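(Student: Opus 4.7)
The plan is to exhibit, for each sub-model, a candidate minimizer obtained by restricting and rescaling the Parisi minimizer $x$ of $\cP(\cdot;\xi)$ to the relevant sub-interval, then invoke the uniqueness characterization in Lemma~\ref{lem:cs-extremality} or Lemma~\ref{lem:cs-extremality-0temp} to simultaneously identify the minimizer, classify the sub-model's type, and compute its value. The numerical identities then emerge from direct changes of variables.

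The core case is part~(b). For $0\le d\le D-1$, set $\alpha_d(t)=x(q_d+(q_{d+1}-q_d)t)$ and $L_d=\hat x(q_d)/(q_{d+1}-q_d)$; a direct change of variables gives $\hat\alpha_d(t)=\hat x(q_d+(q_{d+1}-q_d)t)/(q_{d+1}-q_d)$, whence substituting into \eqref{eq:def-G} collapses to
\[
G_{\xi_d}(t)=(q_{d+1}-q_d)\bigl[F(q_d+(q_{d+1}-q_d)t)-F(q_d)\bigr].
\]
Since $q_d,q_{d+1}\in S$ and $F$ is continuous on $\{\hat x>0\}\supseteq[0,q_D]$, the Parisi optimality at $q_d,q_{d+1}$ forces $F(q_d)=F(q_{d+1})=0$ and $f(q_d)=f(q_{d+1})=f_{\max}$. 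Hence $G_{\xi_d}(1)=0$ and $g_{\xi_d}(s)=f_{\max}-f(q_d+(q_{d+1}-q_d)s)\ge 0$, with equality exactly when $q_d+(q_{d+1}-q_d)s\in S$. By the $S$-refinement hypothesis this equality set is either $[0,1]$ (when $[q_d,q_{d+1}]\subseteq S$, yielding strictly FRSB) or $\{0,1\}$ (when $(q_d,q_{d+1})\cap S=\emptyset$, yielding strictly 1RSB). In either subcase the support condition $\nu_{\infty,d}(T_{\xi_d}^c)=0$ is immediate, so Lemma~\ref{lem:cs-extremality-0temp} identifies $(L_d,\alpha_d)$ as the minimizer. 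A second change of variables in \eqref{eq:cs-functional-0temp} then collapses $\cQ(\xi_d)$ to $\tfrac12\int_{q_d}^{q_{d+1}}\bigl(\xi''(u)\hat x(u)+1/\hat x(u)\bigr)\,\de u=E(q_{d+1})-E(q_d)$.

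Parts~(a) and~(c) use degenerate limits of this template. For~(a) with $q_0>0$, the condition $q_0=\inf S$ forces $x\equiv 0$ on $[0,q_0)$, so $\alpha_{-1}\equiv 0$ and $\hat\alpha_{-1}\equiv L_{-1}=\hat x(q_0)/q_0$ is constant, with $L_{-1}=1/\sqrt{q_0\xi'(q_0)}$ from $F(q_0)=0$. Writing
\[
g_{\xi_{-1}}(s)=q_0\int_s^1\bigl[\xi'(q_0 v)-v\xi'(q_0)\bigr]\,\de v,
\]
using monotonicity of $\xi''$ on $[0,1]$ together with the one-sided bound $q_0\xi''(q_0)\le\xi'(q_0)$ (which follows from $F\ge 0$ on $[0,q_0]$ with $F(q_0)=0$, hence $F'(q_0^-)\le 0$), one sees the integrand is nonincreasing in $v$ and vanishes at $v=1$, so $g_{\xi_{-1}}>0$ on $[0,1)$. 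Thus $T_{\xi_{-1}}=\{1\}$, and Lemma~\ref{lem:type-characterization}\ref{itm:char-topologically-trivial} gives $\cQ(\xi_{-1})=\sqrt{q_0\xi'(q_0)}$, matching $E(q_0)$ by direct substitution using $\hat x(q_0)=\sqrt{q_0/\xi'(q_0)}$. For~(c), $q_D=\sup S$ and $x(1)=1$ force $x\equiv 1$ on $[q_D,1]$, so the finite-temperature candidate $x_D\equiv 1$ has $\hat x_D(t)=1-t$; the analogous change of variables gives $f_{\xi_D}(t)=f(q_D+(1-q_D)t)-f_{\max}<0$ for $t\in(0,1]$, verifying $S_{\xi_D}=\{0\}$. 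The identity $\cP(\xi_D)=\cP(\xi)-E(q_D)-\tfrac12\log(1-q_D)$ then reduces to integration by parts of $\int_{q_D}^1\xi''(q)(1-q)\,\de q$ inside $\cP(\xi)-E(q_D)$.

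The main obstacle is not any single hard estimate but careful bookkeeping of the boundary behavior of $F$, $\hat x$, and $x$ at each $q_d$, particularly when $x$ has jumps there (i.e.\ when $\nu$ has atoms at $q_d$, so $q_d$ is an isolated point of $S$ rather than an accumulation point). Verifying $F(q_d)=0$ and the requisite one-sided derivative inequalities uses only continuity of $F$ on $\{\hat x>0\}$ combined with the Parisi optimality at $q_d$; the interval decomposition of $S$ afforded by Lemma~\ref{lem:intervals} is precisely what ensures the $S$-refinement hypothesis covers all cases cleanly.
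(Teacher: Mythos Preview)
Your proposal is correct and follows essentially the same approach as the paper: exhibit the rescaled order parameters for each sub-model, compute $G_{\xi_d}$ (resp.\ $F_{\xi_D}$) as a rescaling of $F-F(q_d)$ via change of variables, and invoke the extremality characterizations in Lemma~\ref{lem:cs-extremality-0temp} (resp.\ Lemma~\ref{lem:cs-extremality}) to identify both the type and the value. You in fact give more detail for parts~(a) and~(c) than the paper, which merely states the candidate parameters and says the verification is ``similar''; your arguments there are correct, though the assertion $F\ge 0$ on $[0,q_0]$ is not immediate and should be justified by noting that $F$ is convex on $[0,q_0]$ (since $\hat x$ is constant there and $\xi''$ is non-decreasing) together with $f(q)<f(q_0)$ for $q\in[0,q_0)$, which rules out $F$ dipping below zero.
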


For $0\le d\le D-1$ and $\norm{\bsig}_2 = \sqrt{q_d N}$, define
\[
    F_{d,k}(\bsig) = \fr{1}{kN} \max_{\vbrho \in \Band_{k,q_{d+1},\delta}(\bsig)}
    \sum_{i=1}^k \lt(H_N(\brho^i) - H_N(\bsig))\rt),
\]
and for $\norm{\bsig}_2 = \sqrt{q_D N}$,
\[
    F_{D,k}(\bsig) = \fr{1}{kN} \log \int_{\Band_{k,1,\delta}(\bsig)}
    \exp \lt(\sum_{i=1}^k \lt(H_N(\brho^i) - H_N(\bsig)\rt)\rt)
    ~\de \vbrho.
\]
The following \textbf{uniform concentration} lemma was proved at finite temperature in \cite[Proposition 1]{subag2018free}, as consequence of the concentration of Lipschitz functions of Gaussians. Its proof at zero temperature is identical.

\begin{lemma}
    \label{lem:uc}
    For all $\eps>0$, there exists $\delta_0 = \delta_0(\xi,\eps)$ and $k_0 = k_0(\xi,\eps)$ such that for all $\delta \le \delta_0$, $k\ge k_0$ the following holds with probability $1-e^{-cN}$.
    For all $0\le d\le D$ and all $\norm{\bsig}_2 = \sqrt{q_d N}$, $|F_{d,k}(\bsig) - \bbE F_{d,k}(\bsig)| \le \eps$.
\end{lemma}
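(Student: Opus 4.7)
The plan is to combine pointwise Gaussian concentration of $F_{d,k}(\bsig)$ at a scale $\exp(-\Omega(\eps^2 kN))$ with a covering argument over the sphere of radius $\sqrt{q_d N}$ and a continuity estimate for $\bsig\mapsto F_{d,k}(\bsig)$. The essential ingredient is that $k$-replication over nearly orthogonal bands shrinks the Lipschitz constant of $F_{d,k}$ in the disorder by a factor of $\sqrt{k}$, allowing the union bound to beat the $e^{\Omega(N)}$ net size.

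For the pointwise concentration step, view $F_{d,k}(\bsig)$ as a function of the disorder Gaussians $\bg=(g_{i_1,\ldots,i_p})$. Using the identity $\la \nabla_\bg H_N(\brho), \nabla_\bg H_N(\brho')\ra = N\xi(R(\brho,\brho'))$, and Jensen's inequality in the log-integral case $d=D$ (where the gradient is a Gibbs average), one obtains the a.s.\ bound
\[
    \|\nabla_\bg F_{d,k}(\bsig)\|^2
    \leq
    \sup_{\vbrho \in \Band_{k,q_{d+1},\delta}(\bsig)} \fr{1}{k^2 N}
    \lt[\sum_{i,j}\xi(R(\brho^i,\brho^j)) - 2k\sum_i \xi(R(\brho^i,\bsig)) + k^2\xi(q_d)\rt].
\]
The band constraints give $R(\brho^i,\brho^j) = q_d + O(\delta)$ for $i\ne j$, $R(\brho^i,\brho^i) = q_{d+1}$, and $R(\brho^i,\bsig) = q_d + O(\delta)$. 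Substituting these into the bracketed expression, the leading $O(k^2)$ contributions involving $\xi(q_d)$ cancel, and the right-hand side simplifies to $\fr{\xi(q_{d+1}) - \xi(q_d)}{kN} + O(\delta/N)$. Applying Borell-TIS (for the supremum when $d<D$) or the Gaussian concentration of Lipschitz functions (for the log-integral when $d=D$) yields, for $\delta\leq \delta_0(\eps,\xi)$,
\[
    \bbP\lt(|F_{d,k}(\bsig) - \bbE F_{d,k}(\bsig)| \geq \eps/2\rt) \leq 2 \exp(-c \eps^2 k N).
\]

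To control continuity in $\bsig$, I work on the high-probability event $\{H_N \in K_N\}$ of Proposition~\ref{prop:gradients-bounded}, where $\|\nabla H_N(\bsig)\|_{\op} \leq C\sqrt{N}$ uniformly on $\cS_N$. For two centers $\bsig,\bsig'$ on the sphere of radius $\sqrt{q_d N}$ (the case $q_d=0$ is trivial as the sphere is a single point), choose a rotation $R\in O(N)$ with $R\bsig=\bsig'$ and $\|R-I\|_{\op}=O(\|\bsig-\bsig'\|/\sqrt{q_d N})$; then $R$ maps $\Band_{k,q_{d+1},\delta}(\bsig)$ bijectively to $\Band_{k,q_{d+1},\delta}(\bsig')$ while displacing each point by $O(\|\bsig-\bsig'\|)$. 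Combined with the Lipschitz bound on $H_N$, this yields $|F_{d,k}(\bsig) - F_{d,k}(\bsig')| = O(\|\bsig-\bsig'\|/\sqrt{N})$ on $K_N$. Taking an $\eps\sqrt{N}/C$-net $\cN_d$ of the sphere with $|\cN_d| \leq e^{C_\xi N}$ (where $C_\xi$ depends on $\xi$ and $\eps$), union bounding the pointwise estimate over $\cN_d$ and over $d\in\{0,\ldots,D\}$, and intersecting with $\{H_N\in K_N\}$, the failure probability is at most $(D+1) e^{C_\xi N} \cdot 2 e^{-c\eps^2 kN}+e^{-cN}$. Choosing $k_0 = k_0(\eps,\xi)$ so that $c\eps^2 k > 2 C_\xi$ for $k\geq k_0$ gives a final bound of $e^{-c'N}$.

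The main obstacle is the Lipschitz calculation in the first step: the $1/k$ improvement is essential, because without it one only obtains a Lipschitz constant of order $1/\sqrt{N}$ (from the $\xi(q_d)$ contribution in the raw self-variance) and pointwise concentration at the weaker rate $e^{-c\eps^2 N}$, which cannot beat the $e^{\Omega(N)}$ net size for small $\eps$. Extracting the improvement requires correctly identifying the geometric cancellation: subtracting $H_N(\bsig)$ from each $H_N(\brho^i)$ removes the common $q_d$-overlap direction, so only the incremental variance $\xi(q_{d+1}) - \xi(q_d)$ per replica survives, and this is in turn divided evenly among the $k$ nearly orthogonal replicas.
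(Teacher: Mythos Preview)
Your approach is essentially the same as the paper's: bound the variance of the replicated process, apply Borell--TIS (or Gaussian Lipschitz concentration) pointwise, then use Lipschitz continuity of $\bsig\mapsto F_{d,k}(\bsig)$ on $K_N$ together with a net. Your rotation argument for the Lipschitz step is a clean way to do what the paper simply asserts.

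There is one slip in the stated concentration rate. Your variance bound $\frac{\xi(q_{d+1})-\xi(q_d)}{kN}+O(\delta/N)$ is correct, but this does \emph{not} yield a tail of $\exp(-c\eps^2 kN)$: once $k$ is large the $O(\delta/N)$ term dominates, and the actual rate is $\exp\big(-c\eps^2 N/(k^{-1}+\delta)\big)$, i.e. of order $\exp(-c\eps^2 N\min(k,\delta^{-1}))$ as in the paper. Consequently your final step ``choose $k_0$ so that $c\eps^2 k>2C_\xi$'' is incomplete: you must also choose $\delta_0$ small enough that $c\eps^2\delta_0^{-1}>2C_\xi$. This is exactly why the lemma statement imposes both $\delta\le\delta_0$ and $k\ge k_0$; with your rate as written the $\delta_0$ condition would be vacuous. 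Once this is corrected the argument goes through.
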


\begin{proposition}
    \label{prop:energy-increments}
    There exists $c>0$ (depending on $\delta,k,D,q_0,\ldots,q_D)$ such that the following holds.
    \begin{enumerate}[label=(\alph*)]
        \item \label{itm:energy-incr-root} With probability $1-e^{-cN}$, there exists $\norm{\bsig}_2 = \sqrt{q_0 N}$ such that $\fr1N H_N(\bsig) \ge E(q_0) - \eps$.
        \item \label{itm:energy-incr-middle} For all $k\le e^{cN}$, $0\le d \le D-1$ and any fixed $\bsig$ with $\norm{\bsig}_2 = \sqrt{q_d N}$:
        \[
        \bbE F_{d,k}(\bsig) \ge E(q_{d+1}) - E(q_d) - \eps.
        \]
        \item \label{itm:energy-incr-pure} For all $k\le e^{cN}$ and any fixed $\bsig$ with $\norm{\bsig}_2 = \sqrt{q_D N}$:
        \[
        \bbE F_{D,k}(\bsig) \ge \cP(\xi) - E(q_D) - \eps.
        \]
    \end{enumerate}
\end{proposition}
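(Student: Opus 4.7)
The plan is to reduce each part to the corresponding fundamental-type lower bound from Subsection~\ref{subsec:fundamental-lower-bounds}, applied to the sub-model $\xi_{-1}$, $\xi_d$, or $\xi_D$, by exhibiting the localization of $H_N$ to the appropriate band above $\bsig$ as (up to an independent linear Gaussian term) a spherical spin glass with that sub-model's mixture.

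For part~\ref{itm:energy-incr-root}, the rescaled process $\btau \mapsto H_N(\sqrt{q_0}\,\btau)$ on $\btau \in \cS_N$ has covariance $N\xi(q_0 R(\btau^1,\btau^2)) = N\xi_{-1}(R(\btau^1,\btau^2))$, hence is a spherical spin glass with mixture $\xi_{-1}$. Since Proposition~\ref{prop:component-types}\ref{itm:comp-type-root} says $\xi_{-1}$ is topologically trivial with $\cQ(\xi_{-1}) = E(q_0)$, Proposition~\ref{prop:opt-topologically-trivial} supplies $\btau^* \in \cS_N$ with $H_N(\sqrt{q_0}\,\btau^*)/N \ge E(q_0) - \eps$ with probability $1 - e^{-cN}$; the point $\bsig := \sqrt{q_0}\,\btau^*$ then satisfies the claim.

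For parts~\ref{itm:energy-incr-middle} and~\ref{itm:energy-incr-pure}, the key step is an orthogonal decomposition around a fixed $\bsig$ with $\|\bsig\|_2^2 = q_d N$. Writing $\brho = \bsig + \sqrt{q_{d+1}-q_d}\,\tilde\btau$ with $\tilde\btau \in \bsig^\perp$ and $\|\tilde\btau\|_2^2 = N$, the covariance identity $\bbE H_N(\brho^1) H_N(\brho^2) = N\xi\bigl(q_d + (q_{d+1}-q_d) R(\tilde\btau^1,\tilde\btau^2)\bigr)$ combined with $\xi(q_d + (q_{d+1}-q_d)x) = \xi(q_d) + \xi'(q_d)(q_{d+1}-q_d)x + \xi_d(x)$ yields, via decomposition of the Gaussian process by Hermite (polynomial) degree, the orthogonal decomposition
\[
H_N(\bsig + \sqrt{q_{d+1}-q_d}\,\tilde\btau) - H_N(\bsig) = L(\tilde\btau) + M(\tilde\btau),
\]
where $L(\tilde\btau) = \sqrt{q_{d+1}-q_d}\,\la \bg, \tilde\btau\ra$ with $\bg \sim \cN(0, \xi'(q_d) P_{\bsig^\perp})$ is a linear Gaussian external field, and $M$ is an independent spherical spin glass on $\cS_N \cap \bsig^\perp$ with mixture $\xi_d$. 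For part~\ref{itm:energy-incr-middle}, I would apply Proposition~\ref{prop:opt-1rsb} or Proposition~\ref{prop:opt-frsb} (whichever applies by Proposition~\ref{prop:component-types}\ref{itm:comp-type-middle}) to $M$, producing approximately orthogonal $(\tilde\btau^{*,1},\ldots,\tilde\btau^{*,k})$ with $\tfrac{1}{kN}\sum_i M(\tilde\btau^{*,i}) \ge \cQ(\xi_d) - \eps/2$ with probability $1 - e^{-cN}$. Setting $\brho^{*,i} = \bsig + \sqrt{q_{d+1}-q_d}\,\tilde\btau^{*,i}$ places the tuple in $\Band_{k,q_{d+1},\delta}(\bsig)$, and since it is a measurable function of $M$ alone (independent of $L$) with $\bbE L(\tilde\btau) = 0$ pointwise, the linear contribution vanishes in expectation, yielding $\bbE F_{d,k}(\bsig) \ge \cQ(\xi_d) - \eps = E(q_{d+1}) - E(q_d) - \eps$.

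Part~\ref{itm:energy-incr-pure} follows the same template with $\xi_D$ and $1 - q_D$ in place of $\xi_d$ and $q_{d+1}-q_d$, but with maximization replaced by Jensen's inequality. Writing $\int_{\Band_{k,1,\delta}(\bsig)} e^{\sum(L+M)}\,d\vbrho = \mathrm{Vol}(\Band_{k,1,\delta}(\bsig)) \cdot \bbE_{\nu'}[e^{\sum L} e^{\sum M}]$, where $\nu'$ is the uniform probability measure on the band in $(\cS_N \cap \bsig^\perp)^k$, and introducing the Gibbs measure $\lambda \propto e^{\sum M}\,d\nu'$, Jensen gives $\log \bbE_\lambda[e^{\sum L}] \ge \bbE_\lambda[\sum L]$; independence of $\lambda$ from $L$ combined with $\bbE_L L \equiv 0$ then kills this term in expectation. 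Proposition~\ref{prop:fe-rs} applied to $M$ (strictly RS by Proposition~\ref{prop:component-types}\ref{itm:comp-type-pure}) on $\cS_N \cap \bsig^\perp$ contributes $\tfrac{1}{kN}\log \bbE_{\nu'}[e^{\sum M}] \ge \cP(\xi_D) - \eps/2$, and a direct computation of the density of $R(\bsig,\brho)$ under uniform $\brho \in \cS_N$ yields $\tfrac{1}{kN}\log \mathrm{Vol}(\Band_{k,1,\delta}(\bsig)) = \tfrac{1}{2}\log(1-q_D) + o(1)$. Invoking $\cP(\xi_D) = \cP(\xi) - E(q_D) - \tfrac{1}{2}\log(1-q_D)$ from Proposition~\ref{prop:component-types}\ref{itm:comp-type-pure}, the two volume contributions cancel cleanly, giving $\bbE F_{D,k}(\bsig) \ge \cP(\xi) - E(q_D) - \eps$. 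The main obstacle will be the careful bookkeeping in this last step, ensuring the log-volume contribution from the band precisely cancels the $-\tfrac{1}{2}\log(1-q_D)$ in the Proposition~\ref{prop:component-types}\ref{itm:comp-type-pure} identity; a minor technical point is that Propositions~\ref{prop:opt-1rsb}, \ref{prop:opt-frsb}, and~\ref{prop:fe-rs} are stated for spin glasses on $\cS_N$ whereas we apply them on $\cS_N \cap \bsig^\perp$ (an $(N-1)$-dimensional sphere), but this discrepancy is negligible at exponential scale.
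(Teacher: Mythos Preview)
Your proposal is correct and follows essentially the same approach as the paper: for each part you identify the sub-model Hamiltonian on the orthogonal band, split off the independent linear Gaussian piece, and invoke the corresponding fundamental-type lower bound (Propositions~\ref{prop:opt-topologically-trivial}, \ref{prop:opt-1rsb}/\ref{prop:opt-frsb}, \ref{prop:fe-rs}) together with Proposition~\ref{prop:component-types}. Parts~\ref{itm:energy-incr-root} and~\ref{itm:energy-incr-middle} are handled identically to the paper; for part~\ref{itm:energy-incr-pure} your explicit Jensen argument to kill the linear term in expectation is a clean variant of the paper's terser treatment (which passes through a high-probability statement on $K_N$ and then concentrates).

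One small point to tighten: the displayed identity $\int_{\Band_{k,1,\delta}(\bsig)} e^{\sum(L+M)}\,d\vbrho = \mathrm{Vol}(\Band_{k,1,\delta}(\bsig)) \cdot \bbE_{\nu'}[e^{\sum L} e^{\sum M}]$ conflates integration over the full $\delta$-band in $\cS_N^k$ with expectation over the exact $(N-1)$-dimensional orthogonal band (where $L,M$ are actually defined). To make this rigorous you need the Lipschitz control from $K_N$ to pass between the two, exactly as the paper does; your closing remark already flags this, so just be sure the write-up carries it out.
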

\begin{proof}
    Since $H_N^{(-1)}(\brho) \equiv H_N(\sqrt{q_0} \brho)$ has mixture $\xi_{-1}$, part \ref{itm:energy-incr-root} follows by Propositions~\ref{prop:component-types}\ref{itm:comp-type-root} and \ref{prop:opt-topologically-trivial}.

    Next we prove part~\ref{itm:energy-incr-middle}.
    For $0\le d\le D-1$, and fixed $\norm{\bsig}_2 = \sqrt{q_d N}$, the model
    \[
        H_N^{(d,\bsig)}(\brho) = H_N(\sqrt{q_{d+1}-q_d} \brho + \bsig) - H_N(\bsig) - \sqrt{q_{d+1}-q_d} \la \nabla H_N(\bsig), \brho \ra,
    \]
    restricted to the band $\bsig^\perp = \{\brho \in \cS_N : R(\brho, \bsig) = 0\}$ is a $(N-1)$-dimensional model with mixture $\xi_d$.
    By Proposition~\ref{prop:component-types}\ref{itm:comp-type-middle}, this model is either strictly 1RSB or strictly FRSB.
    By Propositions~\ref{prop:opt-1rsb} and \ref{prop:opt-frsb},
    combined with concentration via Borell-TIS, we thus find:
    \[
        \fr{1}{kN} \bbE \max_{\vbrho \in \Band_{k,1,\delta}(\bzero)\cap(\bsig^{\perp})^k}
        \sum_{i=1}^k H_N^{(d,\bsig)}(\brho^i)
        \ge E(q_{d+1}) - E(q_d) - \eps.
    \]
    Still with $\bsig$ fixed, let $\vbrho_\ast$ attain the maximum in the previous display.
    Then we have the inequality chain
    \begin{align*}
        \bbE F_{d,k}(\bsig)
        &\ge \fr{1}{kN} \bbE \max_{\vbrho \in \Band_{k,1,\delta}(\bzero)\cap(\bsig^{\perp})^k}
        \sum_{i=1}^k \lt(H_N(\sqrt{q_{d+1}-q_d} \brho^i + \bsig) - H_N(\bsig)\rt) \\
        &\ge \fr{1}{kN} \bbE
        \sum_{i=1}^k \lt(H_N(\sqrt{q_{d+1}-q_d} \brho^i_\ast + \bsig) - H_N(\bsig)\rt) \\
        &\stackrel{(\ast)}{=} \fr{1}{kN} \bbE
        \sum_{i=1}^k H_N^{(d,\bsig)}(\brho^i_\ast)
        \ge E(q_{d+1}) - E(q_d) - \eps,
    \end{align*}
    where the step $(\ast)$ uses that $H_N^{(d,\bsig)}$ is independent of $\nabla H_N(\bsig)$ as a process.
    This proves part \ref{itm:energy-incr-middle}.

    The proof of \ref{itm:energy-incr-pure} is similar.
    The model $H_N^{(D,\bsig)}$ restricted to $\bsig^\perp$ is a $(N-1)$-dimensional model with mixture $\xi_D$.
    By Proposition~\ref{prop:component-types}\ref{itm:comp-type-pure}, this model is strictly RS with respect to the normalized $N-2$ dimensional Hausdorff measure $\wt\cH_{N-2}$ on $\bsig^{\perp}$.
    By Propositions~\ref{prop:fe-rs} and \ref{prop:component-types}\ref{itm:comp-type-pure}, with probability $1-e^{-cN}$:
    \[
        \fr{1}{kN}
        \log
        \int_{\Band_{k,1,\delta^2}(\bzero)\cap(\bsig^{\perp})^k}
        \exp \lt(
        \sum_{i=1}^k H_N^{(D,\bsig)}(\brho^i)\rt)
        ~\de \wt\cH_{N-2}^k(\vbrho)
        \ge \cP(\xi) - E(q_D)- \fr12 \log(1-q_D) - \eps/3.
    \]
    Moreover since $H_N\in K_N$ with probability $1-e^{-cN}$, we easily find that with high probability, the restricted free energy with respect to the original uniform measure on $\cS_N$ obeys a similar bound:
    \[
    \fr{1}{kN}
        \log
        \int_{\Band_{k,1,\delta}(\bsig)}
        \exp \lt(\sum_{i=1}^k H_N(\brho^i)-H_N(\bsig)\rt)
        ~\de \vbrho
        \ge \cP(\xi) - E(q_D) - \eps/2.
    \]
    Here the term $\fr12 \log(1-q_D)$ disappeared from rescaling.
    By Lipschitz concentration of the left-hand side,
    \[
        \fr{1}{kN} \bbE \int_{\Band_{k,1,\delta}(\bsig)}
        \exp \lt(\sum_{i=1}^k H_N(\brho^i)-H_N(\bsig)\rt)
        ~\de \vbrho
        \ge \cP(\xi) - E(q_D) - 2\eps/3.
    \]
    This concludes the proof.
\end{proof}

\begin{proof}[Proof of Theorem~\ref{thm:main-avg-case}]
    It suffices to prove the theorem for $k=e^{cN}$ and $\delta \le \delta_0(\xi,\eps)$, as the statement is clearly monotone in $k$ and $\delta$.
    As $k$ is growing in $N$, $k\ge k_0(\xi,\eps)$ and Lemma~\ref{lem:uc} holds.

    By Proposition~\ref{prop:energy-increments}, for $d\le D-1$ and any fixed $\norm{\bsig}_2 = \sqrt{q_d N}$, $\norm{\bsig}_2 = \sqrt{q_D N}$, respectively,
    \[
        \bbE F_{d,k}(\bsig) \ge E(q_{d+1}) - E(q_d) - \eps/2, \qquad
        \bbE F_{D,k}(\bsig) \ge \cP(\xi) - E(q_D) - \eps/2.
    \]
    By Lemma~\ref{lem:uc}, with probability $1-e^{-cN}$, for \textbf{all} $\norm{\bsig}_2 = \sqrt{q_d N}$, $\norm{\bsig}_2 = \sqrt{q_D N}$, respectively,
    \begin{equation}
        \label{eq:good-energy-increments}
        F_{d,k}(\bsig) \ge E(q_{d+1}) - E(q_d) - \eps, \qquad
        F_{D,k}(\bsig) \ge \cP(\xi) - E(q_D) - \eps.
    \end{equation}
    By Proposition~\ref{prop:energy-increments}\ref{itm:energy-incr-root}, with probability $1-e^{-cN}$ there exists $\norm{\bsig^\emptyset}_2 = \sqrt{q_0 N}$ such that $\fr1N H_N(\bsig^\emptyset) \ge E(q_0) - \eps$.
    Starting from this point, we can construct the remaining $\bsig^u$ using \eqref{eq:good-energy-increments}.
\end{proof}

\subsection{Pruning the Relaxed Tree}
\label{subsec:pruning}

We apply Theorem~\ref{thm:main-avg-case} with parameters $(\delta^2/2D^4, \eps/2(D+1))$ in place of $(\delta,\eps)$.
Let $c>0$ be given by this theorem and $k = e^{cN}$.
Then, with probability $1-e^{-cN}$, there is a $(k,D,\vq,\delta)$-locally ultrametric tree $(\bsig^u)_{u\in \bbT}$, where $\bbT = \bbT(k,D)$, with properties \ref{itm:main-avg-case-root}, \ref{itm:main-avg-case-energy}, \ref{itm:main-avg-case-pure} (where $\delta,\eps$ are replaced by $\delta^2/2D^4, \eps/2(D+1)$).
Throughout this subsection, assume this event holds and $K_N$ from Proposition~\ref{prop:gradients-bounded} holds.

Let $c_0 = c/2D$ and $k'' = e^{c_0N}$.
We will show that for a subtree $\bbT'' \cong \bbT(k'',D)$ of $\bbT$, $(\bsig^u)_{u\in \bbT''}$ has the properties described in Theorem~\ref{thm:main}.
We obtain $\bbT''$ from $\bbT$ by two steps of pruning: we first ensure all energies are suitably large (Proposition~\ref{prop:pruning-intermediate-1}), and then that global overlap constraints are satisfied (Proposition~\ref{prop:pruning-intermediate-2}).

\begin{proposition}
    \label{prop:pruning-intermediate-1}
    For an absolute constant $C = C(\xi)$ and $k' = \fr{\eps}{CD} e^{cN}$, there exists a subtree $\bbT' \cong \bbT(k',D)$ of $\bbT$ such that the following holds.
    For each $u\in \bbT' \setminus \bbL$ and all $i\in [k]$ such that $ui \in \bbT'$,
    \begin{equation}
        \label{eq:pruning-intermediate-energy}
        \fr{1}{N} \lt(H_N(\bsig^{ui}) - H_N(\bsig^u) \rt)
        \ge E(q_{|u|+1}) - E(q_{|u|}) - \eps/(D+1).
    \end{equation}
\end{proposition}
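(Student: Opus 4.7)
The idea is an elementary Markov-type extraction. Theorem~\ref{thm:main-avg-case}\ref{itm:main-avg-case-energy} (applied, as in the opening of Subsection~\ref{subsec:pruning}, with parameters $(\delta^2/2D^4, \eps/2(D+1))$ in place of $(\delta,\eps)$) yields only an \emph{average} lower bound on the $k$ parent-to-child energy increments at each internal node of $\bbT$, but we now need a bound for each individual edge. Combining this average bound with a deterministic pointwise \emph{upper} bound on the increments (from the event $K_N$) will force a constant-in-$N$ fraction of children at each node to be ``good'', after which greedy top-down selection produces the subtree $\bbT'$.

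More precisely, we already assume $H_N \in K_N$, which by Proposition~\ref{prop:gradients-bounded}\ref{it:gradients-bounded} gives the deterministic bound $\sup_{\bsig\in\cS_N} |H_N(\bsig)|/N \le C_0 = C_0(\xi)$. Writing $\Delta_u := E(q_{|u|+1}) - E(q_{|u|})$, $\eps' := \eps/(2(D+1))$, and
\[
a_i^u := \tfrac1N\bigl(H_N(\bsig^{ui}) - H_N(\bsig^u)\bigr),
\]
we have $a_i^u \le 2C_0$ for every internal $u$ and every $i\in [k]$, while Theorem~\ref{thm:main-avg-case}\ref{itm:main-avg-case-energy} yields $\tfrac1k\sum_i a_i^u \ge \Delta_u - \eps'$. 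Letting $G_u := \{i\in [k] : a_i^u \ge \Delta_u - 2\eps'\}$, and noting that $a_i^u \ge \Delta_u - 2\eps'$ is precisely the desired inequality \eqref{eq:pruning-intermediate-energy}, the split
\[
\Delta_u - \eps' \;\le\; \tfrac{|G_u|}{k}\cdot 2C_0 \;+\; \Bigl(1-\tfrac{|G_u|}{k}\Bigr)(\Delta_u - 2\eps')
\]
rearranges into $|G_u|/k \ge \eps'/(2C_0 - \Delta_u + 2\eps')$. Since $q_D<1$ by Lemma~\ref{lem:intervals}, the function $E$ is bounded on $[0,q_D]$ by some $C_1 = C_1(\xi)$, so $|\Delta_u|\le C_1$; substituting gives $|G_u| \ge k\eps/(CD)$ for a suitable $C=C(\xi)$, i.e.\ $|G_u| \ge k'$.

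To build $\bbT'$, proceed greedily top-down: set the root of $\bbT'$ to be $\bsig^\emptyset$, and for each depth-$d$ node $u$ of $\bbT'$ already included ($d<D$), select any $k'$-element subset of $G_u$ and attach the corresponding points $\bsig^{ui}$ as children. This is possible at every step since $|G_u|\ge k'$. The resulting tree is isomorphic to $\bbT(k',D)$, sits inside $\bbT$ (so local ultrametricity is inherited, although not needed in this proposition), and every retained parent-child edge satisfies \eqref{eq:pruning-intermediate-energy} by the definition of $G_u$. There is no real obstacle here: the only genuine content is the Markov-type inequality of the preceding paragraph, and the selection is purely combinatorial; the value of this proposition lies in setting up the next pruning step for global overlap constraints.
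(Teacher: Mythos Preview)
Your proof is correct and takes essentially the same approach as the paper: both combine the average lower bound from Theorem~\ref{thm:main-avg-case}\ref{itm:main-avg-case-energy} with the deterministic upper bound $|H_N(\bsig)|/N \le C_0$ from $K_N$ to run a Markov-type argument showing a constant fraction of children are good at each node, then greedily select $k'$ of them top-down. One cosmetic point: the bound from $K_N$ in Proposition~\ref{prop:gradients-bounded} is over the ball $\{\|\bsig\|_2 \le \sqrt{N}\}$ rather than $\cS_N$, which is what you actually need since the tree nodes sit at radii $\sqrt{q_d N} < \sqrt{N}$.
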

\begin{proof}
    We will construct $\bbT'$ by breadth-first exploration starting from the root: at every non-leaf $u$ we encounter, we will find $k'$ children of it such that \eqref{eq:pruning-intermediate-energy} holds.

    Consider one such $u$, and abbreviate $\Delta_i = \fr{1}{N} \lt(H_N(\bsig^{ui}) - H_N(\bsig^u) \rt)$, $\Delta E_{|u|} = E(q_{|u|+1}) - E(q_{|u|})$.
    By property \ref{itm:main-avg-case-energy} of Theorem~\ref{thm:main-avg-case},
    \[
        \fr{1}{k} \sum_{i=1}^k \Delta_i
        \ge \Delta E_{|u|} - \eps/2(D+1).
    \]
    On event $K_N$, $\sup_{\norm{\bx}_2 \le \sqrt{N}} |H_N(\bsig)| \le C_0 N$, so deterministically $|\Delta_i| \le 2C_0$ for all $i\in [k]$.
    By Markov's inequality on $\mathrm{unif}([k])$,
    \begin{align*}
        \fr{1}{k} \lt|i\in [k]: \Delta_i \le \Delta E_{|u|} - \eps/(D+1) \rt|
        &= \fr{1}{k} \lt|i\in [k]: 2C_0 - \Delta_i \ge 2C_0 - \Delta E_{|u|} - \eps/(D+1) \rt| \\
        &\le \fr{2C_0 - \Delta E_{|u|} - \eps/(D+1)}{2C_0 - \Delta_i \le 2C_0 - \Delta E_{|u|} - \eps/2(D+1)},
    \end{align*}
    and thus
    \[
        \fr{1}{k} \lt|i\in [k]: \Delta_i \ge \Delta E_{|u|} - \eps/(D+1) \rt|
        \ge \fr{\eps/2D}{2C_0 - \Delta_i \le 2C_0 - \Delta E_{|u|} - \eps/2(D+1)}
        \ge \fr{\eps}{4C_0(D+1)}.
    \]
    Setting $C = 8C_0$, we conclude that we can find $k'$ children of $u$ such that \eqref{eq:pruning-intermediate-energy} holds.
\end{proof}

\begin{proposition}
    \label{prop:pruning-intermediate-2}
    There is a subtree $\bbT'' \cong \bbT(k'',D)$ of $\bbT'$ such that for any distinct parent-child pairs $(u,ui)$, $(v,vj)$ in $\bbT''$ (where possibly $u=v$), $|R(\bsig^{ui}-\bsig^u,\bsig^{vj}-\bsig^v)| \le \delta/D^2$.
\end{proposition}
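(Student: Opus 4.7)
\textit{Plan.} I construct $\bbT''$ by a random pruning of $\bbT'$: independently at each non-leaf $u \in \bbT'$, I select a uniformly random subset $C_u \subseteq [k']$ with $|C_u| = k''$, and keep in $\bbT''$ only those edges $(u,ui)$ with $i \in C_u$. The plan is to show via the probabilistic method that the resulting random subtree satisfies the required overlap bound with positive probability. Sibling pairs are handled for free: for $(u,ui),(u,uj)$ with $i \ne j$, the local ultrametricity of $\bbT'$ (built with parameter $\delta^2/(2D^4)$) directly gives $|R(\bw^{ui},\bw^{uj})| \le \delta^2/(2D^4) \le \delta/D^2$ as soon as $\delta \le 2D^2$. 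So only cross-parent bad pairs $u \ne v$ must be controlled probabilistically.

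For a cross-parent pair $(u,ui),(v,vj)$ with $d_1 = |u|$, $d_2 = |v|$ and $d^* = |u \wedge v|$, both edges survive the pruning with probability $(k''/k')^{d_1+d_2-d^*+2}$, since $d^*$ shared ancestor-choices, $(d_1-d^*)+(d_2-d^*)$ divergent ancestor-choices, and $2$ final child-choices must all be favorable. The number of parent configurations at fixed $(d_1,d_2,d^*)$ is $O(k'^{d_1+d_2-d^*})$. Writing $M$ for a uniform upper bound on the number of bad child pairs $(i,j) \in [k']^2$ per parent pair, summing over configurations gives
\[
  \bbE[X] \;\lesssim\; D^3 \cdot M \cdot \frac{k''^{2D}}{k'^2},
\]
where $X$ counts bad cross-parent pairs in $\bbT''$. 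This is less than $1$ precisely when $M \le k'^2/(D^3 k''^{2D})$. With $k' = \Theta(e^{cN})$ and $k'' = e^{c_0 N}$ for $c_0 = c/(2D)$, the required inequality is $M \le e^{cN}/\mathrm{poly}(D)$, in which case a valid $\bbT''$ exists by the probabilistic method.

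The main obstacle is proving the bound $M \le e^{cN}/\mathrm{poly}(D)$, which is roughly a factor of $k'$ better than the trivial $M \le k'^2$: it is equivalent to saying that for any fixed external edge $\bw^{vj}$, only $O(1)$ of the $k'$ sibling edges $\bw^{u1},\ldots,\bw^{uk'}$ are ``bad''. The natural tool is a Bessel-type inequality: writing $B_u = [\bw^{u1} \mid \cdots \mid \bw^{uk'}]$,
\[
  \sum_i R(\bw^{ui},\bw^{vj})^2 \;\le\; \lambda_{\max}(B_u B_u^\top) \cdot \|\bw^{vj}\|_2^2 / N^2,
\]
and summing over $j$ and applying Markov bounds $M$. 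The difficulty is that the naive Gershgorin estimate $\lambda_{\max}(B_u B_u^\top) \le (\alpha_u + k'\delta')N$ is useless in the regime $k'\delta' \gg 1$ of interest here (with $\delta'$ a constant and $k'$ exponential in $N$). What is needed is that the sibling-edge vectors $\bw^{u1},\ldots,\bw^{uk'}$ form an approximately tight frame on $\bsig^u{}^\perp$, so that $\lambda_{\max}(B_u B_u^\top)$ is close to the average eigenvalue $\Tr(B_u B_u^\top)/(N-1) \approx k'\alpha_u$. I expect this isotropy to be extracted from the explicit construction in Section~\ref{sec:moment} --- the truncated second-moment critical-point count of Proposition~\ref{prop:opt-1rsb} for 1RSB layers and Subag's randomized Hessian ascent of Proposition~\ref{prop:opt-frsb} for FRSB layers --- possibly after slightly randomizing the within-layer child selection so that measure concentration on high-dimensional spheres supplies the missing bound.
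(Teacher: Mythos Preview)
Your probabilistic-method reduction is correct: both your random pruning and the paper's deterministic breadth-first construction come down to the same key sub-claim, namely that for any fixed increment $\by=\bsig^{vj}-\bsig^v$ with $R(\by,\by)\le 1$, at most $O(D^4/\delta^2)$ of the sibling increments $\bx^{ui}=\bsig^{ui}-\bsig^u$ can satisfy $|R(\bx^{ui},\by)|>\delta/D^2$. Once this is known, either your random pruning with the expectation bound $\bbE[X]\lesssim D^3 M (k'')^{2D}/(k')^2<1$, or the paper's greedy exclusion (each of the at most $2(k'')^D$ previously-chosen increments rules out $O(D^4/\delta^2)$ children, leaving $k'-O((k'')^D)\gg k''$), finishes the proof.

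However, your proposed route to the key sub-claim through a Bessel/tight-frame inequality cannot work, even with perfect isotropy. With $B_u B_u^\top$ genuinely isotropic on an $(N-1)$-dimensional subspace, $\lambda_{\max}(B_uB_u^\top)\approx \Tr(B_uB_u^\top)/(N-1)\asymp k'$, so your inequality gives only $\sum_i R(\bx^{ui},\by)^2\lesssim k'/N$, which is exponentially large since $k'=\Theta(e^{cN})$. No amount of randomizing the child-selection in Section~\ref{sec:moment} can repair this: the obstruction is not lack of isotropy but the fact that $k'\gg N$.

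The correct argument is simpler and uses nothing beyond the local ultrametricity already in hand. Suppose $I^+=\{i: R(\bx^{ui},\by)>\delta/D^2\}$ and write $\bx^{ui}=\frac{R(\bx^{ui},\by)}{R(\by,\by)}\by+\btau^{ui}$ with $\btau^{ui}\perp\by$. For distinct $i,j\in I^+$,
\[
R(\btau^{ui},\btau^{uj})
= R(\bx^{ui},\bx^{uj})-\frac{R(\bx^{ui},\by)R(\bx^{uj},\by)}{R(\by,\by)}
\le \frac{\delta^2}{2D^4}-\frac{\delta^2}{D^4}
= -\frac{\delta^2}{2D^4},
\]
where the first term uses that $\bbT'$ is $(k',D,\vq,\delta^2/2D^4)$-locally ultrametric. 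Since $R(\btau^{ui},\btau^{ui})\le R(\bx^{ui},\bx^{ui})\le 1$, positive semidefiniteness of the Gram matrix of $\{\btau^{ui}\}_{i\in I^+}$ forces $|I^+|\le 2D^4/\delta^2$; the set $I^-$ is handled identically. This is exactly the $O(1)$-per-fixed-$\by$ bound you needed, obtained without any spectral or frame-theoretic input.
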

\begin{proof}
    We will construct $\bbT''$ by breadth-first exploration starting from the root.
    We will abbreviate $\bx^{ui} = \bsig^{ui} - \bsig^u$.
    We maintain a set
    \[
        \cC = \{
            \bx^{ui} :
            \text{$(u,ui)$ is a parent-child pair in $\bbT''$}
        \},
    \]
    and will maintain the invariant that $|R(\bx,\by)| \le \delta/D^2$ for any distinct $\bx,\by \in \cC$.
    At every non-leaf $u$ we encounter in the exploration, we will find $k''$ children of it such that, when the corresponding $k''$ parent-child pairs are added to $\bbT''$, this invariant continues to hold.
    Note that at all times,
    \[
        |\cC| \le k'' + (k'')^2 + \cdots + (k'')^D \le 2(k'')^D = 2e^{cN/2}.
    \]
    Consider the step in this procedure where we choose children for node $u$.
    Let $I_u = \{i\in [k] : ui \in \bbT'\}$, so $|I_u| = k'$.
    For $\by \in \cC$, let $I_u^+(\by),I_u^-(\by)$ be the sets of $i\in [k]$ such that $R(\by,\bx^{ui}) > \delta/D^2$ and $R(\by,\bx^{ui}) < -\delta/D^2$.
    We \textbf{claim} that at any such step, $|I_u^+(\by)| \le 2D^4/\delta^2$ and $|I_u^-(\by)| \le 2D^4 / \delta^2$.
    This claim suffices, since it implies
    \[
        \lt|I_u \setminus \bigcup_{\by \in \cC} (I_u^+(\by) \cup I_u^-(\by)) \rt|
        \ge k' - 2e^{cN/2} \cdot \fr{4D^4}{\delta^2}
        \gg k''.
    \]
    Indeed, we may choose any $k''$ elements $i$ in this set and add the corresponding $ui$ to $\bbT''$, which preserves the required invariant by definition.

    It remains to prove the above claim, which we do now.
    We bound only $|I_u^+(\by)|$ since the case of $|I_u^-(\by)|$ is analogous.
    For all $i\in I_u^+(\by)$, write
    \[
        \bx^{ui} = \fr{R(\by,\bx^i)}{R(\by,\by)}\by + \btau^{ui}
    \]
    for $\btau^{ui} \perp \by$.
    Because $\bbT$ is a $(k,D,\vq,\delta^2/2D^4)$-locally ultrametric tree, $|R(\bx^{ui},\bx^{uj})| \le \delta^2/2D^4$ for all $i\neq j$.
    Thus, for all distinct $i,j\in I_u^+(\by)$.
    \[
        \fr{\delta^2}{2D^4}
        \ge R(\bx^{ui},\bx^{uj})
        = \fr{R(\by,\bx^i)R(\by,\bx^j)}{R(\by,\by)} + R(\btau^{ui},\btau^{uj})
        \ge \fr{\delta^2}{D^4} + R(\btau^{ui},\btau^{uj}),
    \]
    where we use that $R(\by,\by) \le 1$.
    Thus $R(\btau^{ui},\btau^{uj}) \le -\delta^2/2D^4$.
    However, $R(\btau^{ui},\btau^{ui}) \le R(\bx^{ui},\bx^{ui}) \le 1$.
    Thus $|I_u^+(\by)| \le 2D^4/\delta^2$, as if not the Gram matrix of $(\btau^{ui})_{i\in I_u^+(\by)}$ would not be positive semi-definite.
\end{proof}

\begin{proof}[Proof of Theorem~\ref{thm:main}]
    We will show $\bbT''$ satisfies the desired properties.
    First, for any $u,v \in \bbT''$, let $|u| = d_1$, $|v| = d_2$, and let $(\emptyset=u_0,u_1,u_2,\ldots,u_{d_1}=u)$, $(\emptyset=v_0,v_1,\ldots,v_{d_2}=v)$ be the ancestor paths of $u,v$.
    Also let $\ell = u \wedge v$, so $u_\ell=v_\ell$ is the least common ancestor of $u,v$.
    Then
    \begin{equation}
        \label{eq:ancestor-incr-decomposition}
        R(\bsig^u,\bsig^v)
        = \sum_{i=0}^{d_1-1} \sum_{j=0}^{d_2-1}
        R(\bsig^{u_{i+1}}-\bsig^{u_i},\bsig^{v_{j+1}}-\bsig^{v_j}).
    \end{equation}
    The sub-sum corresponding to $0\le i=j<\ell$ equals
    \[
        \sum_{i=0}^{\ell-1}
        R(\bsig^{u_{i+1}}-\bsig^{u_i},\bsig^{u_{j+1}}-\bsig^{u_j})
        = \sum_{i=0}^{\ell-1}
        (q_{i+1}-q_i)
        = q_\ell,
    \]
    while the remaining terms of \eqref{eq:ancestor-incr-decomposition} are bounded by $\delta / D^2$ in absolute value by Proposition~\ref{prop:pruning-intermediate-2}.
    Thus
    \[
        |R(\bsig^u,\bsig^v) - q_{u\wedge v}| \le D^2 \cdot \delta/D^2 = \delta,
    \]
    so $(\bsig^u)_{u\in \bbT''}$ is a $(k'',D,\vq,\delta)$-ultrametric tree.
    By property \ref{itm:main-avg-case-root} of Theorem~\ref{thm:main-avg-case} and \eqref{eq:pruning-intermediate-energy}, for all $u\in \bbT''$ with $|u|=d$,
    \[
        \fr1N H_N(\bsig^u)
        = \fr1N H_N(\bsig^\emptyset)
        + \sum_{i=0}^{d-1} \fr1N \lt(H_N(\bsig^{u_{i+1}}) - H_N(\bsig^{u_i})\rt)
        \ge E(q_d) - (d+1)\eps/(D+1)
        \ge E(q_d) - \eps.
    \]
    Thus property \ref{itm:thm-main-energy} of Theorem~\ref{thm:main} holds.
    Property \ref{itm:thm-main-pure} of Theorem~\ref{thm:main} follows immediately from property \ref{itm:main-avg-case-pure} of Theorem~\ref{thm:main-avg-case}, as this property is monotone in $k'$.
\end{proof}

\begin{proof}[Proof of Corollary~\ref{cor:many-orthogonal}]
    The proof is essentially identical to Theorem~\ref{thm:main} but without the strictly RS part. Thus we just give an outline.
    Using \cite[Theorem 1.13]{jagannath2017low} and analyticity of $\xi$, it follows that Lemma~\ref{lem:intervals} also holds for $T$.
    With the obvious definition, let $q_0,\dots,q_D=1$ be a $T$-refinement.
    Proposition~\ref{prop:component-types} remains true with the same proof, except that $\xi_D=0$ is now trivial.
    The remainder of the proof is as before.
\end{proof}

\section{Large Deviations for the Ground State}
\label{sec:LDP}

Here we make a brief study of large deviations for the ground state energy $GS_N=\max_{\bsig\in\cS_N} H_N(\bsig)$.
\cite{fyodorov2023replica} recently investigated this problem using the replica method, obtaining very interesting but non-rigorous results.
It was predicted that for 1RSB models (without external field), the upper tail has rate function given by a natural Kac--Rice upper bound, referred to as ``replica-symmetric'' behavior therein.
We verify this prediction in Subsection~\ref{subsec:LDP-1RSB} by adapting the interpolation-enhanced truncation from Section~\ref{sec:moment}.
In Subsection~\ref{subsec:LDP-speed} we employ Corollary~\ref{cor:many-orthogonal} to show the lower tail speed transitions to $\Omega(N^2)$ below $\cQ(\xi-\gamma_1^2 t)$ for general mixtures.
As mentioned in Remark~\ref{rem:lower-tail-existing-results}, the super-linearity in the lower tail is closely connected to the Dotsenko--Franz--M{\'e}zard conjecture \cite{dotsenko1994partial,talagrand2007large,jagannath2017approximate}.

\subsection{Upper Tail for 1RSB Models}
\label{subsec:LDP-1RSB}

We assume in this subsection that $\gamma_1 = 0$ and $\xi$ is 1RSB, i.e. the minimizer $(L,\alpha)$ of \eqref{eq:cs-functional-0temp-inf} satisfies $\alpha \equiv u$.
Unlike Section~\ref{sec:moment}, we do not assume \textbf{strict} 1RSB, but parameters such as $y,z,E_0,R_0$ still retain the same definitions.
We will also assume throughout this subsection that $\xi$ is not pure.
Similarly to Proposition~\ref{prop:opt-1rsb}, pure $\xi$ can be handled by adding small perturbation terms to $\xi$, e.g. chosen small enough so the perturbation Hamiltonian has maximum absolute value at most $\delta N$ with probability $1-e^{N/\delta}$ (such perturbations have essentially no effect even in a large deviation sense).

The main computation is again encapsulated in controlling conditional band models as described in Lemma~\ref{lem:1rsb-conditional-law}. Note that the only term in \eqref{eq:1rsb-band-expectation} that depends on $(E,R)$ is
\[
N \lt\la
    v^q,
    \begin{bmatrix}
        E - E_0 \\
        R - R_0
    \end{bmatrix}
\rt\ra
\]
for $v^q=(v^q_E,v^q_R)$ defined in \eqref{eq:def-vq}.
\begin{proposition}
\label{prop:LDP-regression-coefs}
    For any $\xi,\eps$ with $\gamma_1=0$, there exists $\delta$ such that for all $q\in [\eps,1-\eps]$,
    \begin{align}
    \label{eq:v-q-E}
    v^q_E&\in [\delta, 1-\delta],
    \\
    \label{eq:v-q-R}
    v^q_R&\leq -\delta.
    \end{align}
\end{proposition}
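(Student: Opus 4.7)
The plan is to reduce each of the three desired bounds to positivity of an explicit polynomial in $q$ with nonnegative coefficients, then invoke continuity and compactness on $[\eps,1-\eps]$. By \eqref{eq:def-vq} and the matrix inverse formula, writing $D=\xi(1)(\xi'(1)+\xi''(1))-\xi'(1)^2$ (which is $>0$ by Cauchy--Schwarz and strictly so under the non-pure assumption stated in the subsection preamble, cf.\ Remark~\ref{rmk:not-pure}), one has
\[
v^q_E\cdot D = (\xi'(1)+\xi''(1))\xi(q)-\xi'(1)q\xi'(q), \qquad -v^q_R\cdot D = q\xi'(q)\xi(1)-\xi'(1)\xi(q),
\]
and $(1-v^q_E)\cdot D = (\xi(1)-\xi(q))(\xi'(1)+\xi''(1))-(\xi'(1)-q\xi'(q))\xi'(1)$.

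Using $\gamma_1=0$, I would expand each of these as a double sum over $p,p'\geq 2$ and symmetrize over $(p,p')$ to obtain
\begin{align*}
v^q_E\cdot D &= \sum_{p>p'\geq 2}\gamma_p^2\gamma_{p'}^2(p-p')(pq^{p'}-p'q^p), \\
-v^q_R\cdot D &= (1-q)\sum_{p>p'\geq 2}\gamma_p^2\gamma_{p'}^2 q^{p'}(p-p')(1+q+\cdots+q^{p-p'-1}), \\
(1-v^q_E)\cdot D &= (1-q)^2\sum_{p>p'\geq 2}\gamma_p^2\gamma_{p'}^2(p-p')\lt[\sum_{r=0}^{p'-1}(r+1)(p-p')q^r+\sum_{r=p'}^{p-2}p'(p-1-r)q^r\rt],
\end{align*}
the second and third identities being precisely those derived in the proof of Lemma~\ref{lem:vq-bds}. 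In each expression, every term weighted by $\gamma_p^2\gamma_{p'}^2$ is a polynomial in $q$ with nonnegative coefficients: for $v^q_E\cdot D$ this uses $pq^{p'}-p'q^p = q^{p'}(p-p'q^{p-p'})\geq q^{p'}(p-p')\geq 0$ on $[0,1]$, while for the other two it is manifest. Moreover each such polynomial is strictly positive at every $q\in(0,1)$: for $v^q_E\cdot D$ from $p-p'q^{p-p'}\geq p-p'>0$, for $(1-v^q_E)\cdot D$ from the $r=0$ term contributing $(p-p')$, and for $-v^q_R\cdot D$ from the product $(1-q)q^{p'}>0$.

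Since $\xi$ is not pure, at least one pair $p>p'\geq 2$ satisfies $\gamma_p\gamma_{p'}>0$. Hence $v^q_E\cdot D$, $(1-v^q_E)\cdot D$, and $-v^q_R\cdot D$ are all continuous, strictly positive functions of $q\in(0,1)$, and by compactness each attains a uniform positive minimum on $[\eps,1-\eps]$ depending only on $\xi$ and $\eps$. Dividing by $D>0$ yields \eqref{eq:v-q-E} and \eqref{eq:v-q-R} for some $\delta=\delta(\xi,\eps)>0$. The main technical point is just verifying the algebraic factorizations above, but these are direct expansions essentially contained in the proof of Lemma~\ref{lem:vq-bds}; no new ideas are required.
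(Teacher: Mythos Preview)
Your proof is correct. Your route differs from the paper's: the paper argues analytically, showing that $q\mapsto q\xi'(q)/\xi(q)$ is strictly increasing (via Cauchy--Schwarz) to get \eqref{eq:v-q-R} and the lower bound $v^q_E\ge \xi(q)/\xi(1)$, and then shows $v^q_E$ itself is strictly increasing (via monotonicity of $q\xi''(q)/\xi'(q)$) to get the upper bound $v^q_E<1$. You instead expand all three numerators as symmetrized double sums over $p>p'\ge 2$, exactly as in the proof of Lemma~\ref{lem:vq-bds}, and read off strict positivity on $(0,1)$ term by term. This is more algebraic and arguably more economical here, since two of the three expansions are literally quoted from Lemma~\ref{lem:vq-bds}; the only new computation is the expansion of $v^q_E\cdot D$, which you verify correctly. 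The paper's monotonicity argument gives a bit more information (that $v^q_E$ is increasing in $q$), but that extra information is not used downstream. Both approaches ultimately exploit the same underlying positivity structure of the power series coefficients.
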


\begin{proof}
    We easily compute
    \begin{align*}
    v^q_E
    =
    \frac{
    \xi(q)\xi'(1)+\xi(q)\xi''(1)-q\xi'(q)\xi'(1)
    }
    {\xi(1)\xi'(1)+\xi(1)\xi''(1)-\xi'(1)^2}
    ;
    \quad\quad\quad
    v^q_R
    =
    \frac{
    q\xi'(q)\xi(1)-\xi(q)\xi'(1)
    }
    {\xi(1)\xi'(1)+\xi(1)\xi''(1)-\xi'(1)^2}.
    \end{align*}
    (Recall from Remark~\ref{rmk:not-pure} that the denominators are strictly positive as long as $\xi$ is not pure.)
    Note that
    \begin{equation}
        \label{eq:xi-quotient-increasing}
        \fr{\de}{\de q} \fr{q\xi'(q)}{\xi(q)} = \fr{\xi(q)(\xi'(q)+q\xi''(q)) - q\xi'(q)^2}{\xi(q)^2} > 0
    \end{equation}
    by Cauchy--Schwarz (as in Remark~\ref{rmk:not-pure}), and so $q \mapsto \frac{q\xi'(q)}{\xi(q)}$ is strictly increasing.
    Thus $\fr{q\xi'(q)\xi(1)}{\xi(q)} - \xi'(1)$ is negative and bounded away from $0$ on $q\in [\eps,1-\eps]$.
    Since $\xi(q)$ is also bounded away from $0$ on this interval, this implies \eqref{eq:v-q-R}.
    Moreover, since $q\mapsto \frac{q\xi'(q)}{\xi(q)}$ is increasing,
    \[
    \xi(q)\xi'(1)+\xi(q)\xi''(1)-q\xi'(q)\xi'(1)
    \geq
    \xi(q)
    \Big(
    \xi'(1)+\xi''(1)
    -
    \frac{\xi'(1)^2}{\xi(1)}
    \Big)
    ,
    \]
    i.e. $v^q_E\geq \xi(q)/\xi(1)\geq \delta$.
    It now suffices  to show $v^q_E$ is strictly increasing in $q$.
    Differentiating and rearranging, it suffices to show
    \[
        \frac{\xi''(1)}{\xi'(1)}\stackrel{?}{>}\frac{q\xi''(q)}{\xi'(q)}.
    \]
    This holds because, by a calculation analogous to \eqref{eq:xi-quotient-increasing}, $q \mapsto \fr{q\xi''(q)}{\xi'(q)}$ is increasing.
\end{proof}

Given $\xi$, we let $\cR=\bbR\times [2\sqrt{\xi''(1)},\infty)$. Critical points with $(E,R)\in\cR$ will correspond to possible local maxima in Kac--Rice.
Recalling \eqref{eq:def-Theta}, it is easy to see that $\Theta$ is strictly concave and continuously differentiable on $\cR$. (Indeed the integral definition of $\kappa$ immediately implies strict concavity outside the support of $\rho$.)
For all $E\in\bbR$, define
\[
    R_*(E)
    =
    \argmax_{R\geq 2\sqrt{\xi''(1)}}
    \Theta(E,R),\quad\quad
    \Theta_*(E)=\Theta(E,R_*(E)).
\]
It is easy to see that both are finite, since the matrix $\Sigma$ in \eqref{eq:def-Theta} is positive definite.

\begin{lemma}
\label{lem:stationary-check}
    For 1RSB $\xi$, we have $\frac{\partial}{\partial R}\Theta(E_0,R_0)=0$.
\end{lemma}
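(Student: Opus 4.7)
The plan is to directly compute $\frac{\partial}{\partial R}\Theta(E,R)$ at $(E_0,R_0)$ and check that the two contributions cancel. From the definition \eqref{eq:def-Theta} of $\Theta$, differentiation gives
\[
    \frac{\partial}{\partial R}\Theta(E,R) = -\bigl(\Sigma^{-1}(E,R)\bigr)_2 + \frac{1}{\sqrt{\xi''(1)}}\,\kappa'\!\left(R/\sqrt{\xi''(1)}\right),
\]
where $(\,\cdot\,)_2$ denotes the second coordinate. Thus the proof reduces to evaluating each of these two pieces at $(E_0,R_0)$.

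For the first piece, identity \eqref{eq:E0R0-identity} already tells us
\[
    \Sigma^{-1}\begin{bmatrix}E_0\\ R_0\end{bmatrix}=\frac{1}{y}\begin{bmatrix}z\\ 1\end{bmatrix},
\]
so $(\Sigma^{-1}(E_0,R_0))_2 = 1/y$. For the second piece, I would use the formulas \eqref{eq:R0-identity-1} and \eqref{eq:R0-identity-2}, which say
\[
    \frac{R_0}{\sqrt{\xi''(1)}} = a + \frac{1}{a}, \qquad a \defeq \frac{y}{\sqrt{\xi''(1)}} \ge 1,
\]
where $a\ge 1$ by Lemma~\ref{lem:y-ineq}. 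The closed form \eqref{eq:kappa-deriv-formula} for $\kappa'$ yields the clean identity $\kappa'(a+1/a) = 1/a$ (as already noted just after \eqref{eq:kappa-deriv-formula}), so
\[
    \frac{1}{\sqrt{\xi''(1)}}\,\kappa'\!\left(R_0/\sqrt{\xi''(1)}\right) = \frac{1}{\sqrt{\xi''(1)}}\cdot \frac{\sqrt{\xi''(1)}}{y} = \frac{1}{y}.
\]

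Subtracting the two expressions gives $\frac{\partial}{\partial R}\Theta(E_0,R_0) = -\frac{1}{y}+\frac{1}{y} = 0$, as desired. There is no real obstacle here: once the stationarity condition is rephrased as the matching of $(\Sigma^{-1}(E_0,R_0))_2$ with $\kappa'(R_0/\sqrt{\xi''(1)})/\sqrt{\xi''(1)}$, both sides have already been computed elsewhere in the paper (the first in the proof of Lemma~\ref{lem:complexity-zero}, the second via the elementary identity for $\kappa'$), so the argument is essentially a bookkeeping check that uses only 1RSB through the definition of $y$ and $R_0$ and the inequality $y^2\ge \xi''(1)$.
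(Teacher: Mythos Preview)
Your proof is correct and follows essentially the same approach as the paper: both differentiate $\Theta$ in $R$, evaluate $\kappa'(R_0/\sqrt{\xi''(1)})$ via the identity $\kappa'(a+1/a)=1/a$ with $a=y/\sqrt{\xi''(1)}\ge 1$, and match against the second coordinate of $\Sigma^{-1}(E_0,R_0)$. The only difference is that you invoke the already-established identity \eqref{eq:E0R0-identity} to read off $(\Sigma^{-1}(E_0,R_0))_2=1/y$ directly, whereas the paper redoes this computation via Cramer's rule and the relation $y^2=(1+z)\xi'(1)$; your version is a slight streamlining of the same argument.
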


\begin{proof}
    Recall \eqref{eq:def-Theta} and \eqref{eq:kappa-deriv-formula}.
    Since $R_0=y+\frac{\xi''(1)}{y}$ and $y\geq \sqrt{\xi''(1)}$ by Lemma~\ref{lem:y-ineq}, we find:
    \begin{align*}
    \nabla\Theta(E_0,R_0)
    &=
    -
    \Sigma^{-1}
    \begin{bmatrix}
        E_0 \\
        R_0
    \end{bmatrix}
    +
    \begin{bmatrix}
        0 \\
        \xi''(1)^{-1/2}
        \kappa'(R_0/\sqrt{\xi''(1)})
    \end{bmatrix}
    \\
    &=
    -
    \Sigma^{-1}
    \begin{bmatrix}
        (\xi'(1)+z\xi(1))/y \\
        y+(\xi''(1)/y)
    \end{bmatrix}
    +
    \begin{bmatrix}
        0 \\
        1/y
    \end{bmatrix}
    .
    \end{align*}
    We would like to show the second entry in this vector vanishes, and (using Cramer's rule) it is given by
    \begin{align*}
    \frac{
    \xi'(1)^2+z\xi(1)\xi'(1)-\xi(1)y^2-\xi(1)\xi''(1)
    }{y\det(\Sigma)} + \frac{1}{y}\,.
    \end{align*}
    Recalling from Lemma~\ref{lem:1RSB-parameters-setup} that $y=\sqrt{(1+z)\xi'(1)}$, the conclusion follows.
\end{proof}

\begin{lemma}
\label{lem:R*-increasing}
    If $\xi$ is 1RSB, then $R_*(E)$ is continuous and strictly increasing on $[E_0,\infty)$ with $R_*(E_0)=R_0$.
    Moreover $\Theta_*(E)$ is continuous and strictly decreasing with $\Theta_*(E_0)=0$ and $\lim\limits_{E\to\infty}\Theta_*(E)=-\infty$.
\end{lemma}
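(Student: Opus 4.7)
The plan is to deduce both parts from strict concavity of $\Theta$ on $\cR$ (already noted in the paper), combined with short computations using \eqref{eq:def-Theta} and \eqref{eq:E0R0-identity}, plus Lemmas~\ref{lem:complexity-zero} and \ref{lem:stationary-check}.

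First I would handle the base case $E = E_0$ and continuity. By Lemma~\ref{lem:y-ineq} and AM-GM, $R_0 = y + \xi''(1)/y \geq 2\sqrt{\xi''(1)}$, so $R_0 \in \cR$. Since $\kappa$ is $C^1$ at $2$ (the formulas $\kappa'(x)=x/2$ on $|x|\le 2$ and $\kappa'(x)=\frac12(x-\sqrt{x^2-4})$ on $|x|\ge 2$ agree at $x=2$), the first-order condition $\partial_R \Theta(E_0, R_0) = 0$ supplied by Lemma~\ref{lem:stationary-check} remains meaningful even when $R_0$ lies exactly on the boundary, and strict concavity of $\Theta(E_0, \cdot)$ then identifies $R_*(E_0) = R_0$. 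Lemma~\ref{lem:complexity-zero} gives $\Theta_*(E_0) = 0$. Uniqueness of $R_*(E)$ follows from strict concavity in $R$, and continuity of $R_*$ and $\Theta_*$ then follows from Berge's theorem of the maximum.

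Second, for strict monotonicity of $R_*$, I would compute the (constant) mixed partial
\[
\partial_E \partial_R \Theta(E, R) = -(\Sigma^{-1})_{12} = \frac{\xi'(1)}{\det \Sigma} > 0,
\]
where strict positivity uses $\det\Sigma > 0$ from Remark~\ref{rmk:not-pure}. Hence for any $E_0 \leq E_1 < E_2$, the first-order condition $\partial_R \Theta(E_1, R_*(E_1)) = 0$ (which holds at $E_1 = E_0$ by Lemma~\ref{lem:stationary-check}, and at $E_1 > E_0$ because one-step monotonicity from $E_0$ already places $R_*(E_1)$ in the interior) yields $\partial_R \Theta(E_2, R_*(E_1)) = (E_2 - E_1)\,\xi'(1)/\det \Sigma > 0$. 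Strict concavity of $\Theta(E_2, \cdot)$ then forces $R_*(E_2) > R_*(E_1)$.

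Third, for $\Theta_*$: it is concave on $\bbR$ as the partial supremum over a convex set of the jointly concave function $\Theta$. The envelope theorem is applicable for $E > E_0$, giving $\Theta_*'(E) = \partial_E \Theta(E, R_*(E)) = -[\Sigma^{-1}(E, R_*(E))]_1$; taking $E \downarrow E_0$ and invoking \eqref{eq:E0R0-identity} gives $\Theta_*'(E_0^+) = -z/y$. The non-pureness of $\xi$ combined with $\gamma_1 = 0$ forces some $\gamma_p > 0$ with $p \geq 3$ (otherwise $\xi$ would be a pure $2$-spin), giving $\xi(1)/\xi'(1) < 1/2$, hence $z > 0$ by strict monotonicity of $\upsilon$. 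Thus $\Theta_*'(E_0^+) < 0$, and concavity of $\Theta_*$ yields $\Theta_*(E) \leq -z(E - E_0)/y$ for all $E \geq E_0$, delivering both strict decrease and $\Theta_*(E) \to -\infty$. The only mildly subtle point is the possibility that $R_0$ sits exactly at the boundary $2\sqrt{\xi''(1)}$ (when $y = \sqrt{\xi''(1)}$), which is entirely resolved by the $C^1$-smoothness of $\kappa$ at $2$.
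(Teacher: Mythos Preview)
Your argument is correct and takes a different route from the paper's. The paper decomposes $\Theta(E,R) = M(R) - K_1(E-K_2R)^2$ with $M(R) = \max_E \Theta(E,R)$, then characterizes $R_*(E)$ as the solution to $M'(R_*) = 2K_1K_2(K_2R_* - E)$; monotonicity of $R_*$ and $\Theta_*$ follow by inspecting this equation (a strictly decreasing function of $R_*$ meets an increasing affine one, and the intersection shifts right as $E$ grows), and the decay $\Theta_* \to -\infty$ is obtained from the crude bound $\Theta(E,R) \le -\eps(|E|+|R|)^2 + \log|R|$. You instead use supermodularity ($\partial_E\partial_R\Theta = \xi'(1)/\det\Sigma > 0$) to get monotone comparative statics for $R_*$, and then the envelope theorem together with the explicit value $\Theta_*'(E_0^+) = -z/y$ read off from \eqref{eq:E0R0-identity}, plus concavity of $\Theta_*$, to obtain the linear upper bound $\Theta_*(E) \le -z(E-E_0)/y$. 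Your route is more systematic (Berge's theorem, Topkis-type monotonicity, envelope theorem) and yields as a by-product the quantitative decay rate $z/y$; the paper's route is more hands-on and self-contained, sidestepping the (easy but extra) verification that $z > 0$ under the non-pure, $\gamma_1=0$ hypothesis.
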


\begin{proof}
    Let
    \[
        M(R) = \max_{E\in \bbR} \Theta(E,R).
    \]
    This is easily seen to be $C^1$ on $[2\sqrt{\xi''},\infty)$ and smooth on the interior, and inherits concavity from $\Theta$.
    Since $\Theta(E,R)$ is a strictly concave quadratic function of $(E,R)$ plus a strictly concave function of $R$, it can be written as
    \[
        \Theta(E,R)
        =
        M(R) - K_1(E-K_2R)^2,
    \]
    for $K_1,K_2$ depending only on $\xi$.
    Further, one easily finds that $-M(R)\asymp R^2$ for large $R$ since $\kappa$ grows sublinearly.
    Hence $M'(R)$ is a strictly decreasing, continuous function with $\lim_{R\to\infty} M'(R)=-\infty$.

    $R_*(E)$ is the unique solution in $[2\sqrt{\xi''},\infty)$ to
    \begin{equation}
    \label{eq:R*-equation}
    M'(R_*)
    =
    2K_1K_2 (K_2 R_*-E)
    ,
    \end{equation}
    assuming such a solution exists (if not, one would have the boundary solution $R_*=2\sqrt{\xi''(1)}$.)
    Lemma~\ref{lem:stationary-check} implies $R_0 = R_*(E_0)$.
    By monotonicity arguments (or inspecting a diagram), it follows that for all $E\geq E_0$, \eqref{eq:R*-equation} admits a solution $R_*(E)$ which is continuous and strictly decreasing in $E$.
    It similarly follows that $\Theta_*(E)$ is continuous and strictly decreasing.
    Finally to show $\lim\limits_{E\to\infty}\Theta_*(E)=-\infty$, note that since $\Sigma$ is positive definite one has $\Theta(E,R)\leq -\eps(|E|+|R|)^2 + \log(|R|)$ for some $\eps=\eps(\xi)>0$.
\end{proof}

Similarly to Section~\ref{sec:moment}, for $0<\eta_4\ll\eta_3\ll\eta_2\ll \eta_1\ll 1$ small depending on some fixed $E>E_0$ let
\[
\wt B(E)
=
[E-\eta_3,E+\eta_3]\times [R_*(E)-\eta_3,R_*(E)+\eta_3].
\]
Recalling \eqref{eq:crt-pts-A}, we say $\bsig\in \Crt(\wt B(E))$ is \textbf{large deviation typical} if $H_N$ has no critical points $\brho$ with $R(\bsig,\brho)\geq 1-\eta_1$ and
\begin{equation}
\label{eq:LDP-band-interpolation}
    \Psi(q;\bsig)
    =
    \sup_{\brho\in\Band_q(\bsig)}
    H_N(\brho)
    \leq
    E-\eta_3,\quad \forall  q\in [-\eta_4,1-\eta_1].
\end{equation}

\begin{lemma}
\label{lem:LDP-typical-points}
    For any $E>E_0$ and with small $0<\eta_3\ll\eta_2\ll \eta_1\ll 1$, given that $\bsig\in \Crt(\wt B(E))$, $\bsig$ is large deviation typical with conditional probability at least $1-e^{-cN}$.
\end{lemma}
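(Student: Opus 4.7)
The plan closely parallels the proof of Proposition~\ref{prop:1rsb-typical-whp}, adapted to the high-energy regime $E > E_0$, and verifies the two requirements of large deviation typicality separately.

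First, for the no-nearby-critical-points condition, I would reuse the Hessian-well argument from Proposition~\ref{prop:1rsb-typical-whp}. Given $\bsig \in \Crt(\wt B(E))$, the restricted Hessian $\nabla_{\sph}^2 H_N(\bsig)$ is a $\sqrt{\xi''(1)}$-scaled $GOE(N-1)$ shifted by $-R \cdot I$. By Lemma~\ref{lem:R*-increasing}, $R_*(E) > R_0 \geq 2\sqrt{\xi''(1)}$ strictly for $E > E_0$ (the first inequality by strict monotonicity of $R_*$, the second by AM-GM applied to $R_0 = y + \xi''(1)/y$). Hence with conditional probability $1 - e^{-cN}$ the top eigenvalue is at most $2\sqrt{\xi''(1)} - R + \eta_3 \leq -c$ for a constant $c > 0$. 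Combined with $\nabla_{\sph} H_N(\bsig) = 0$ and bounded $C^3$-norm on the event $K_N$ of Proposition~\ref{prop:gradients-bounded}, Taylor expansion along any unit-speed geodesic from $\bsig$ yields $H_N(\brho) \leq H_N(\bsig) - c(1-q)N$ for all $\brho \in \cS_N$ with $R(\bsig, \brho) = q$ in a macroscopic neighborhood $[1-\eta_1', 1]$ of $1$. For $\eta_1 \leq \eta_1'$ this rules out critical points with $R(\bsig, \brho) \geq 1 - \eta_1$, and it also verifies \eqref{eq:LDP-band-interpolation} on $q \in [1 - \eta_1', 1 - \eta_1]$ provided $\eta_3 \ll \eta_1$.

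Second, for the band energy bound on $q \in [-\eta_4, 1-\eta_1']$, I would apply Lemma~\ref{lem:1rsb-conditional-law} to decompose the conditional law of $H_N|_{\Band_q(\bsig)}$ into a deterministic shift plus a band spin glass with mixture $\txi_q$, then apply Proposition~\ref{prop:1RSB-interpolation} with the parameters \eqref{eq:1RSB-band-interpolation-parameters}. The key calculation in Proposition~\ref{prop:1rsb-band-interpolation-bash}, applied in its non-strict form (valid in general 1RSB), together with concentration via Proposition~\ref{prop:borell-tis}, yield with conditional probability $1-e^{-cN}$:
\[
\Psi(q; \bsig) \leq E_0 + \la v^q, (E - E_0, R - R_0)\ra + \eta_3/2.
\]
The core new task is then to verify $\la v^q, (E-E_0, R_*(E)-R_0)\ra \leq E - E_0 - 2\eta_3$ uniformly in $q$ in the specified range. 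Proposition~\ref{prop:LDP-regression-coefs} handles the middle range: for $q \in [\eps, 1-\eps]$ with $\eps$ small in $\eta_1'$, the bounds $v^q_E \leq 1 - \delta$ and $v^q_R \leq -\delta$ yield
\[
\la v^q, (E-E_0, R_*(E)-R_0)\ra \leq (1-\delta)(E-E_0) - \delta(R_*(E) - R_0),
\]
strictly less than $E - E_0$ by a constant margin since $R_*(E) > R_0$. On $q \in [-\eta_4, \eps]$, continuity of $v^q$ and $v^0 = 0$ (using $\gamma_1 = 0$, which forces $\xi(q)$ and $q\xi'(q)$ to be $O(q^2)$) make $|\la v^q, (E-E_0, R_*(E)-R_0)\ra|$ arbitrarily small, so the desired bound reduces to the strict gap $E_0 < E$. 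A standard $1/N$-net argument plus the $O(1)$-Lipschitz bound for $\Psi(\cdot; \bsig)$ on $K_N$ upgrades these pointwise-in-$q$ statements to uniform-in-$q$, absorbing the $O(N)$ union bound into $e^{-cN}$.

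The main obstacle is the delicate interplay between the interpolation and Hessian arguments near $q = 1$: both $v^q_E \to 1$ and $v^q_R \to 0$ there, so the interpolation-based bound loses its constant margin precisely where the well bound becomes effective. The two must be stitched together at an intermediate scale $\eta_1'$ chosen in terms of $\eta_1$ and the strict gap $E - E_0$. A secondary subtlety is that the $-3\eta_3$ slack in Proposition~\ref{prop:1rsb-band-interpolation-bash} relied on strict 1RSB; in the general 1RSB setting this slack vanishes, but the large-deviation margin from $v^q_E < 1$ amply compensates.
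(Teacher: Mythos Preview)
Your proposal is correct and follows essentially the same two-part strategy as the paper: the Hessian-well argument for the no-nearby-critical-points condition, and the interpolation bound combined with the regression-coefficient estimates of Proposition~\ref{prop:LDP-regression-coefs} for the band energies. The paper's execution is a bit cleaner, however: since $v^q_R \leq 0$ holds on all of $[0,1]$ and $v^q_E$ is increasing with $v^1_E = 1$, the single inequality
\[
\la v^q, (E_\bsig - E_0, R_\bsig - R_0)\ra \leq v^q_E (E_\bsig - E_0) \leq (1-\eta_2)(E_\bsig - E_0)
\]
already covers the entire range $q \in [0, 1-\eta_1]$, so no intermediate scale $\eta_1'$ or ``stitching'' with the well bound is needed. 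For $q \in [-\eta_4, 0]$ the paper uses Lipschitz continuity of $\Psi(\cdot;\bsig)$ directly (via Proposition~\ref{prop:gradients-bounded}) rather than continuity of $v^q$, which avoids the question of whether the interpolation parameters in \eqref{eq:1RSB-band-interpolation-parameters} remain admissible for negative $q$.
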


\begin{proof}
    Let $E_{\bsig}=H_N(\bsig)/N$ and $R_{\bsig}=\partial_{\rd} H_N(\bsig) / \sqrt{N}$.
    For $\eta_3$ small enough we must have $E_{\bsig}-E_0\geq (E-E_0)/2>0$ and $R_{\bsig}-R_0\geq (R-R_0)/2>0$ since $\bsig\in\Crt(\wt B(E))$.

    The former condition that $H_N$ has no critical points $\brho$ with $|R(\bsig,\brho)|\geq 1-\eta_1$ follows by Proposition~\ref{prop:gradients-bounded} applied to the conditionally random part of the Hamiltonian $\hH_N$, exactly as in the proof of Proposition~\ref{prop:1rsb-typical-whp}\ref{itm:1rsb-well}. (Since $R_{\bsig}>R_0=y+\frac{\xi''}{y}\geq 2\sqrt{\xi''}$, which was also the case in that proof.)

    Now we show \eqref{eq:LDP-band-interpolation}. Combining Proposition~\ref{prop:LDP-regression-coefs} and Lemma~\ref{lem:R*-increasing}, and using $(\eta_1,\eta_2)$ for $(\eps,\delta)$ in the former,
    \begin{equation}
    \label{eq:linear-regression-bound}
    \lt\la
    v^q,
    \begin{bmatrix}
        E_{\bsig} - E_0 \\
        R_{\bsig} - R_0
    \end{bmatrix}
    \rt\ra
    \leq
    v^q_E (E_{\bsig}-E_0)
    \leq
    (1-\eta_2)(E_{\bsig}-E_0),
    \quad
    \forall q\in [0,1-\eta_1].
    \end{equation}

    Next, note that if one replaces the $-2\eta_3$ term with $+\eta_3$, the proof of Proposition~\ref{prop:1rsb-band-interpolation-bash} goes through for (possibly non-strictly) 1RSB models and all $q\in [0,1-\eta_1]$.
    Moreover as usual it can be made simultaneous for all $q$ by using Proposition~\ref{prop:gradients-bounded} to union bound over a finite set of $q$.
    Thus conditional on $\bsig\in \Crt(\wt B(E))$ and the values $(E_{\bsig},R_{\bsig})$, with probability $1-e^{-cN}$ we have for all $q\in [0,1-\eta_1]$ simultaneously:
    \[
    \Psi(q;\bsig)
    \leq
    E_0
    +
    \lt\la
    v^q,
    \begin{bmatrix}
        E_{\bsig} - E_0 \\
        R_{\bsig} - R_0
    \end{bmatrix}
    \rt\ra
    +\eta_3
    \stackrel{\eqref{eq:linear-regression-bound}}{\leq}
    E_0
    +
    (1-\eta_2)(E_{\bsig}-E_0)
    +
    \eta_3
    \leq
    E_{\bsig}-2\eta_3
    .
    \]
    Finally Proposition~\ref{prop:gradients-bounded} implies $\Psi(q;\bsig)$ is $C(\xi,E_{\bsig},R_{\bsig})$-Lipschitz on $[-\eta_4,0]$ with probability $1-e^{-cN}$.
    Thus we find, as desired, that with conditional probability $1-e^{-cN}$,
    \[
    \Psi(q;\bsig)
    \leq
    E_{\bsig}-\eta_3,
    \quad\forall q\in [-\eta_4,1-\eta_1].
    \qedhere
    \]
\end{proof}

Recalling \eqref{eq:Crt-def}, let $\LMAX\subseteq \Crt$ denote the set of local maxima of $H_N$.

\begin{proposition}
\label{prop:local-maxima-kac-rice}
    For any $\xi$ and $\eps>0$, there exists $c(\eps)>0$ such that for $N$ large enough,
    \[
    \bbE\lt[\lt|
    \LMAX
    \cap
    \{
    \bsig~:~\partial_{\rd}H_N(\bsig)\leq 2\sqrt{\xi''(1)}-\eps
    \}
    \rt|\rt]
    \leq
    e^{-c(\eps)N^2}.
    \]
\end{proposition}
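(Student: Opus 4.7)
My plan is to apply the Kac--Rice formula \eqref{eq:kac-rice-main} to the event
\[
\cE = \{\nabla^2_\sph H_N(\bsig) \preceq 0\} \cap \{\partial_\rd H_N(\bsig) \le 2\sqrt{\xi''(1)} - \eps\},
\]
so that $\LMAX \cap \{\bsig : \partial_\rd H_N(\bsig) \le 2\sqrt{\xi''(1)} - \eps\} \subseteq \Crt(\cE)$. By Lemma~\ref{lem:derivative-laws}, conditional on $\partial_\rd H_N(\bsig)$, the Riemannian Hessian $\nabla^2_\sph H_N(\bsig) = \nabla^2_{\cT\times\cT} H_N(\bsig) - N^{-1/2} \partial_\rd H_N(\bsig) I_{\cT\times\cT}$ is independent of $\nabla_\sph H_N(\bsig)$, and conditional on $\partial_\rd H_N(\bsig) = R$ it is distributed as $\sqrt{\xi''(1)(1-1/N)}\, G - R I_{N-1}$, where $G$ is a $\mathrm{GOE}(N-1)$ matrix.

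The core observation is that on $\cE$, the negative semidefiniteness of $\nabla^2_\sph H_N(\bsig)$ forces
\[
\lambda_{\max}(G) \le \fr{R}{\sqrt{\xi''(1)(1-1/N)}} \le 2 - c_1(\eps),
\]
for some $c_1(\eps) > 0$ and all $N$ sufficiently large (absorbing the $1-1/N$ correction). This is a lower-tail event for the largest eigenvalue of GOE, which has speed $N^2$ by the Ben Arous--Guionnet large deviation principle for the empirical spectral measure (the same ``$N^2$ speed'' already invoked in the proof of Proposition~\ref{prop:opt-frsb}):
\[
\bbP\big[\lambda_{\max}(G) \le 2 - c_1(\eps)\big] \le e^{-c_2(\eps) N^2}.
\]

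To convert this probability bound into a bound on the expected determinant appearing in Kac--Rice, I would apply conditional Cauchy--Schwarz:
\[
\bbE\big[|\det \nabla^2_\sph H_N(\bsig)|\ind_\cE \,\big|\, \partial_\rd H_N(\bsig) = R, \nabla_\sph H_N(\bsig) = \bzero\big]
\le \sqrt{\bbE[|\det \nabla^2_\sph H_N(\bsig)|^2 \,|\, \partial_\rd H_N(\bsig) = R]}\cdot \sqrt{\bbP[\cE \mid R]}.
\]
By \cite[Theorem A.2]{ben2023exponential} (as applied in the proof of Corollary~\ref{cor:1rsb-typical-whp}), the conditional second moment of the determinant is $e^{O(N)}$ uniformly for $R$ in any fixed compact window, and combining with Gaussian tails for $R=\partial_\rd H_N(\bsig)$ keeps the integrated second moment at $e^{O(N)}$. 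Multiplying the two factors gives $e^{O(N) - c_2(\eps)N^2/2} \le e^{-c_3(\eps) N^2}$ for $N$ large. Finally, integrating over $\bsig \in \cS_N$ (volume $e^{O(N)}$) against the Gaussian density $\varphi_{\nabla_\sph H_N(\bsig)}(\bzero)$ (which is itself at most $e^{O(N)}$) absorbs all remaining exponential-in-$N$ factors, yielding the bound $e^{-c(\eps)N^2}$ as required.

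The one nontrivial ingredient is the speed-$N^2$ lower tail for $\lambda_{\max}(G)$; everything else is routine Kac--Rice bookkeeping, Cauchy--Schwarz, and Gaussian integration. The minor bookkeeping obstacle is handling very negative values of $R$, but for $R \le -C$ with $C$ large the Gaussian density $\varphi_{\partial_\rd H_N(\bsig)}(R)$ itself decays fast enough to contribute $O(1)$ after integration, so no additional effort is needed.
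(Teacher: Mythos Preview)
Your proposal is correct and follows essentially the same route as the paper: Kac--Rice combined with conditional Cauchy--Schwarz reduces the problem to bounding $\bbP[\nabla^2_\sph H_N(\bsig)\preceq 0 \mid \partial_\rd H_N(\bsig)=R]$ for $R\le 2\sqrt{\xi''(1)}-\eps$, and this conditional probability is $e^{-c'(\eps)N^2}$ by the speed-$N^2$ large deviation principle for the GOE spectrum. The paper's proof says exactly this in two sentences, citing Corollary~\ref{cor:1rsb-typical-whp} for the Cauchy--Schwarz step and \cite{arous1997large} for the GOE LDP; your write-up simply spells out the bookkeeping more carefully. One tiny caveat: your claim that the integrated second moment of the determinant stays $e^{O(N)}$ after integrating over $R$ is slightly optimistic (the contribution from $|R|\asymp\sqrt{N}$ can reach $e^{O(N\log N)}$), but this is harmless since it is still swamped by $e^{-c_2(\eps)N^2/2}$.
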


\begin{proof}
    Similarly to Corollary~\ref{cor:1rsb-typical-whp}, by combining Cauchy--Schwarz and the Kac--Rice formula it suffices to show that for all $R\leq 2\sqrt{\xi''(1)}-\eps$,
    \[
    \bbP\Big[ \bsig\in \LMAX~\big|~\big(\partial_{\rd}H_N(\bsig),\nabla_{\sph}H_N(\bsig)\big)=(R\sqrt{N},\bzero)\Big]
    \leq
    e^{-c'(\eps)N^2}.
    \]
    Recalling \eqref{eq:riemannian-hessian}, this follows easily by the large deviation principle for the bulk spectrum of a GOE matrix, which has speed $N^2$ \cite{arous1997large}.
\end{proof}

We are ready to determine the rate function for the upper tail of $GS_N$ in 1RSB models.

\begin{theorem}
\label{thm:1RSB-LDP}
    Assume $\xi$ is 1RSB.
    Then $\max(GS_N,E_0)$ obeys a large deviation principle on $[E_0,\infty)$ with speed $N$ and good rate function $-\Theta_*(E)$.
\end{theorem}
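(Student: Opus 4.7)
The plan is to prove matching speed-$N$ upper and lower bounds on $\bbP(GS_N\geq E)$ for every $E>E_0$, both with rate $\Theta_*(E)$, and then assemble these into the full LDP on $[E_0,\infty)$ using $GS_N \to E_0$ in probability (Theorem~\ref{thm:chen-sen-spherical}) to handle values at $E_0$. Goodness of $-\Theta_*$ is immediate from Lemma~\ref{lem:R*-increasing}: it is continuous, finite, and strictly decreasing to $-\infty$ on $[E_0,\infty)$, so its level sets are compact.

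For the upper bound, any global maximizer is a local maximum, so $\{GS_N\geq E\}$ implies $|\LMAX\cap\{H_N/N\geq E\}|\geq 1$. Proposition~\ref{prop:local-maxima-kac-rice} restricts to local maxima with $\partial_{\rd}H_N(\bsig)/\sqrt{N}\geq 2\sqrt{\xi''(1)}-\eps$ up to an $e^{-\Omega(N^2)}$ correction, and Proposition~\ref{prop:borell-tis} restricts the relevant energies to a bounded interval $[E,K(\xi)]$ up to an $e^{-\Omega(N)}$ correction. Markov's inequality together with the first-moment Kac--Rice formula (Lemma~\ref{lem:moment-rates}) and a covering argument in $E'$ then give $\bbP(GS_N\geq E)\leq \exp\{N\sup_{E'\geq E}\Theta_*(E')+o(N)\}$, and the supremum equals $\Theta_*(E)$ by the monotonicity in Lemma~\ref{lem:R*-increasing}.

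For the lower bound, I fix $E>E_0$ and a hierarchy $\eta_4\ll\eta_3\ll\eta_2\ll\eta_1$ as in Subsection~\ref{subsec:LDP-1RSB}. Adapting the Cauchy--Schwarz argument of Corollary~\ref{cor:1rsb-typical-whp} via Lemma~\ref{lem:LDP-typical-points} gives $\bbE|\wtCrt(\wt B(E))|\geq (1-e^{-cN})\bbE|\Crt(\wt B(E))|\geq \exp\{N\Theta_*(E)-o(N)\}$, using Lemma~\ref{lem:moment-rates} and continuity of $\Theta$ as the box widths shrink. The decisive new ingredient is a \emph{deterministic} count bound: for distinct $\bsig,\brho\in\wtCrt(\wt B(E))$, the overlap $R(\bsig,\brho)$ cannot lie in $[-\eta_4,1-\eta_1]$ (else \eqref{eq:LDP-band-interpolation} forces $H_N(\brho)/N\leq E-\eta_3$, contradicting $\brho\in\wt B(E)$ provided its height is strictly less than $\eta_3$) nor in $[1-\eta_1,1)$ (by the isolation clause of Lemma~\ref{lem:LDP-typical-points}), so every distinct pair has overlap strictly below $-\eta_4$. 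Lemma~\ref{lem:extremal-combinatorics}\ref{itm:turan} with $\delta=\eta_4$ then forces $|\wtCrt(\wt B(E))|\leq C/\eta_4$ almost surely (else Tur\'an would produce $\Omega(|\wtCrt|^2\eta_4)$ pairs of nonnegative overlap while the diagonal gives only $|\wtCrt|$ such pairs). Hence $\bbE|\wtCrt|^2\leq (C/\eta_4)\bbE|\wtCrt|$ and Paley--Zygmund yields
\[
\bbP\big(\wtCrt(\wt B(E))\neq\emptyset\big)\geq \frac{\eta_4}{C}\,\bbE|\wtCrt|\geq \exp\{N\Theta_*(E)-o(N)\}.
\]
Since this event entails $GS_N\geq E-\eta_3$, sending $\eta_3\to 0$ and using continuity of $\Theta_*$ completes the lower bound.

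The main obstacle is arranging the deterministic count bound: the height of $\wt B(E)$ must be strictly less than the typicality slack $\eta_3$ in \eqref{eq:LDP-band-interpolation}, which is fixable by shrinking $\wt B(E)$ slightly (e.g.\ to half-height $\eta_3/2$) without affecting the first-moment asymptotic. Once this is set up, the LDP assembles from the tail estimates in the standard way -- matching lower bounds on open sets and upper bounds on closed sets follow by taking differences of tail probabilities and invoking monotonicity and continuity of $\Theta_*$ -- and the proof reduces to a transparent combination of the Kac--Rice first moment (Lemma~\ref{lem:moment-rates}), the typicality estimate (Lemma~\ref{lem:LDP-typical-points}), and the Tur\'an-type extremal bound (Lemma~\ref{lem:extremal-combinatorics}), with no new interpolation computations beyond those already carried out in Subsection~\ref{subsec:LDP-1RSB}.
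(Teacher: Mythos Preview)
Your proposal is correct and follows essentially the same route as the paper's proof. The upper bound argument is identical. For the lower bound, both arguments hinge on the same key observation: distinct large-deviation-typical critical points in $\wt B(E)$ must have pairwise overlap below $-\eta_4$, which forces their count to be deterministically $O(\eta_4^{-1})$, so that $\bbP(|\Crt_{typ}|\ge 1)\gtrsim \eta_4\,\bbE|\Crt_{typ}|$. The only difference is tooling: you obtain the deterministic count bound via the Tur\'an-type Lemma~\ref{lem:extremal-combinatorics}\ref{itm:turan} and convert via Paley--Zygmund, whereas the paper argues directly from positive semi-definiteness of the Gram matrix (if $M$ points have all pairwise overlaps $\le -\eta_4$, then $\|\sum_i\bsig^i\|^2\ge 0$ forces $M\le O(\eta_4^{-1})$) and uses the simpler bound $\bbP(X\ge 1)\ge \bbE X/\sup X$. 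Your concern about the box half-width matching the typicality slack $\eta_3$ is valid but harmless (it is a measure-zero coincidence, or as you say one can shrink the box); the paper glosses over this. One minor slip: where you write ``pairs of nonnegative overlap'' you mean ``pairs with overlap $\ge -\eta_4$''.
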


\begin{proof}
    Since $\Theta_*(E)$ decreases continuously from $\Theta_*(E_0)=0$ to $-\infty$, and exponential tightness is clear by e.g. Borell--TIS, it suffices\footnote{Given exponential tightness, \cite[Theorem D.4 and Corollary D.6]{Guionnet} show \eqref{eq:LDP-suffices-to-show} implies a large deviation principle on $[E_0+\eps,\infty)$ for any $\eps>0$. The large deviation principle easily extends to $[E_0,\infty)$ due to the aforementioned properties of $\Theta_*$.} to show
    \begin{equation}
    \label{eq:LDP-suffices-to-show}
    \lim_{\eta\downarrow 0}
    \lim_{N\to\infty}
    \fr1N
    \log \bbP[GS_N\in [E-\eta,E+\eta]]
    \stackrel{?}{=}
    \Theta_*(E),\quad\forall E>E_0.
    \end{equation}
    Thus, fix $E>E_0$ and let
    \[
    \Crt_{typ}(\wt B(E))\sqcup\Crt_{atyp}(\wt B(E))=\Crt(\wt B(E))
    \]
    respectively denote the large deviation typical and atypical critical points of $H_N$.
    For the large deviation upper bound, recall from Lemma~\ref{lem:R*-increasing} that on $[E,\infty)\times [2\sqrt{\xi''(1)},\infty)$ the function $\Theta$ is maximized at $(E,R_*(E))$ with value $\Theta_*(E)$.
    We claim that the expected number of local maxima $\bsig\in\LMAX$ satisfying $H_N(\bsig)/N\geq [E,\bar E]$ is at most $\exp(N\Theta_*(E)+o(N))$ for any $\bar E<\infty$ independent of $N$.
    Indeed, Proposition~\ref{prop:local-maxima-kac-rice} shows that points $R\leq 2\sqrt{\xi''(1)}-\eps$ contribute a negligible amount.
    Sending $\eps\to 0$ slowly with $N$ and applying Lemma~\ref{lem:moment-rates} (and continuity of $\Theta$) yields the claim.
    Since the global maximum of $H_N$ is of course a local maximum, this together with exponential tightness of the ground state yields the upper bound.


    For the lower bound,
    exactly as in Proposition~\ref{cor:1rsb-typical-whp} and its use in proving Proposition~\ref{prop:1rsb-restricted-1mt},
    we may deduce from Lemma~\ref{lem:LDP-typical-points} that
    \[
    \bbE|\Crt_{atyp}(\wt B(E))|
    \leq
    e^{-cN/3}
    \bbE|\Crt(\wt B(E))|
    .
    \]
    In particular
    \[
    \bbE|\Crt_{typ}(\wt B(E))|
    \geq
    \bbE|\Crt(\wt B(E))|/2
    \geq
    \exp(-N\Theta_*(E)\pm o(N))/2
    .
    \]
    By definition, any two distinct large deviation typical points have overlap at most $-\eta_4$. Hence there are almost surely at most $2\eta_4^{-2}$ large deviation typical points in total, for any $H_N$ (because their Gram matrix of overlaps must be positive semi-definite).
    Therefore
    \begin{align*}
    \bbP[|\Crt_{typ}(\wt B(E))|\geq 1]
    &\geq
    \eta_4^2 \bbE|\Crt_{typ}(\wt B(E))|/2
    \geq
    \eta_4^2\exp(-N\Theta_*(E)\pm o(N))/4
    \\
    &\geq
    \exp(-N\Theta_*(E)\pm o(N)).
    \end{align*}
    By definition, if $|\Crt_{typ}(\wt B(E))|\geq 1$ then $GS_N\geq E-\eta_3$.
    Since $\eta_3$ was arbitrarily small, we obtain
    \[
    \liminf_{\eta\downarrow 0}
    \lim_{N\to\infty}
    \fr1N
    \log \bbP[GS_N\geq E-\eta]
    \geq
    \Theta_*(E),\quad\forall E>E_0.
    \]
    Since we already established the large deviation upper bound with strictly increasing rate function $-\Theta_*$ as well as exponential tightness, the previous display implies \eqref{eq:LDP-suffices-to-show} as desired.
\end{proof}

\subsection{Transition to Quadratic Speed}
\label{subsec:LDP-speed}

In this subsection, $\xi$ is a general model, not necessarily 1RSB.
We show in Theorem~\ref{thm:LDP-speed} that the large deviations of $GS_N$ are of speed $O(N)$ above $\cQ(\xi-\gamma_1^2 t)$ and $\Omega(N^2)$ below.
The following exact orthogonalization lemma is crucial to show the latter result.
\begin{lemma}
\label{lem:exact-orthogonalization}
    Fix small constants $c,\delta>0$. Suppose $\bsig^1,\dots,\bsig^k\in\cS_N$ for $k=e^{cN}$ satisfy $|R(\bsig^i,\bsig^j)|\leq \delta$ for all $1\leq i<j\leq k$.
    Then for $c'>0$ depending only on $c,\delta$ and for $N$ large enough, there exists a subset $A\subseteq [k]$ of size $|A|\geq c'N$ and points $\{\wt\bsig^a\}_{a\in A}$ such that:
    \begin{equation}
    \label{eq:many-exactly-orthogonal}
    \begin{aligned}
    R(\wt\bsig^a,\wt\bsig^{a'})&=0,\quad \forall a\neq a'\in A,
    \\
    \|\wt\bsig^a-\bsig^a\|_2 &\leq \delta^{0.01}\sqrt{N}.
    \end{aligned}
    \end{equation}
\end{lemma}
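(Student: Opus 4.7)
The plan is to apply L\"owdin (polar) orthogonalization to a well-chosen principal submatrix of the Gram matrix. Let $G\in\bbR^{k\times k}$ denote the rescaled Gram matrix with entries $G_{ij}=\langle\bsig^i,\bsig^j\rangle/N$; this is positive semidefinite with $G_{ii}=1$, $|G_{ij}|\le\delta$ for $i\ne j$, and $\rank(G)\le N$.

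Grant for the moment the existence of a subset $A\subseteq[k]$ with $|A|=c'N$ and $\|G_A-I\|_\op\le\delta^{0.01}$. Then set
\[
\wt\bsig^a \defeq \sum_{b\in A}(G_A^{-1/2})_{ab}\,\bsig^b,\qquad a\in A.
\]
The identity $G_A^{-1/2}G_AG_A^{-1/2}=I$ yields $\langle\wt\bsig^a,\wt\bsig^{a'}\rangle=N\delta_{aa'}$, so the $\wt\bsig^a$ lie on $\cS_N$ and are pairwise orthogonal. A direct computation then gives
\[
\|\wt\bsig^a-\bsig^a\|_2^2 \;=\; N\bigl[(I-G_A^{1/2})^2\bigr]_{aa} \;\le\; N\|I-G_A^{1/2}\|_\op^2 \;\le\; N\|G_A-I\|_\op^2 \;\le\; N\delta^{0.02},
\]
using $|1-\sqrt{1+t}|\le|t|$ for $t>-1$; this is the desired $\|\wt\bsig^a-\bsig^a\|_2\le\delta^{0.01}\sqrt N$.

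The hard part is thus the selection step. The trace identity $\Tr(G^2)=k+\sum_{i\ne j}G_{ij}^2\le k+k^2\delta^2$ controls the average off-diagonal mass, and a uniformly random $A$ of size $m=c'N$ obeys $\bbE\|G_A-I\|_F^2\le m^2\delta^2$. Unfortunately the Frobenius bound alone only gives $\|G_A-I\|_\op\le m\delta$, too weak once $m=\Omega(N)$. To upgrade to the required operator-norm bound I would combine (i) a noncommutative Khintchine / matrix Bernstein estimate for the random principal submatrix $G_A-I$, using that individual entries are bounded by $\delta$, with (ii) the rank constraint $\rank(G-I)\le N+1$, which confines the large spectral components of $G-I$ to an $O(N)$-dimensional subspace of $\bbR^k$. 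Alternatively, one can invoke a Bourgain--Tzafriri / restricted-invertibility argument applied to $\Sigma=[\bsig^1|\cdots|\bsig^k]/\sqrt N$ and iteratively peel off a small ``bad'' set of indices carrying the top eigenvalues of $G$. Either route should produce $A$ of size $c'(c,\delta)\cdot N$ with $\|G_A-I\|_\op\le\delta^{0.01}$; this selection step is the technical crux of the proof.
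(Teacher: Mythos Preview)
Your L\"owdin orthogonalization step is fine, but the selection step you defer is not merely unproved---it is false as stated. Take $\bsig^i=\sqrt{\delta/2}\,\by+\sqrt{1-\delta/2}\,\btau^i$ for a fixed $\by\in\cS_N$ and $\btau^i$ i.i.d.\ uniform on the unit sphere in $\by^\perp$; for $c$ small relative to $\delta$ this produces $e^{cN}$ points with all pairwise overlaps in $(0,\delta]$. For \emph{every} subset $A$ of size $m=c'N$, testing against the all-ones vector and using $\sum_{a\ne b\in A}R(\btau^a,\btau^b)=\|\sum_{a}\btau^a\|^2/N-m\ge -m$ gives
\[
\mathbf{1}^\top G_A\mathbf{1}
= m+m(m-1)\tfrac{\delta}{2}+(1-\tfrac{\delta}{2})\sum_{a\ne b}R(\btau^a,\btau^b)
\;\ge\; \tfrac{\delta}{2}\,m^2,
\]
so $\lambda_{\max}(G_A)\ge m\delta/2$ and $\|G_A-I\|_\op\ge c'N\delta/2-1\to\infty$. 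No restricted-invertibility or matrix-concentration argument can locate such an $A$, because none exists. The underlying obstruction is structural: L\"owdin forces $\wt\bsig^a\in\Span\{\bsig^b:b\in A\}$, and every such span contains the shared component $\sqrt{\delta/2}\,\by$, which you can never orthogonalize away within that span.

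The paper's argument avoids this by not constraining $\wt\bsig^a$ to lie in any span of the $\bsig^b$. It takes $A$ \emph{maximal} for which mutually orthogonal $\wt\bsig^a$ with $\|\wt\bsig^a-\bsig^a\|\le\delta^{0.01}\sqrt N$ exist, and argues by contradiction: if $|A|<c'N$, maximality forces every remaining $\bsig^i$ to project onto the $|A|$-dimensional space $\Span\{\wt\bsig^a\}$ with norm at least $\delta^{0.1}\sqrt N$ (else $\bsig^i$ minus its projection could be adjoined). A covering of the relevant shell in this low-dimensional space by $e^{O(c'N\log(1/\delta))}\ll e^{cN}$ balls and pigeonhole then yields many $\bsig^i$ with large inner product against a common vector, contradicting near-orthogonality via Cauchy--Schwarz. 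In the example above this greedy/Gram--Schmidt mechanism subtracts off the shared $\by$-component within the first few steps, exactly what your L\"owdin scheme cannot do.
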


\begin{proof}
    Let $A\subseteq [k]$ be any maximal subset such that there exist $\{\wt\bsig^a\}_{a\in A}$ obeying \eqref{eq:many-exactly-orthogonal}, and assume for sake of contradiction that $|A|<c'N$.
    For $i\in [k]$, let $\wh\bsig_i$ be the projection of $\bsig_i$ onto $\Span\big(\{\wt\bsig^a\}_{a\in A}\big)$.
    By maximality of $A$, we have $\|\wh\bsig_i\|_2 \geq \delta^{0.1}\sqrt{N}$ for all $i$.
    With $\cB(A)$ the radius $\sqrt{N}$ ball in $\Span\big(\{\wt\bsig^a\}_{a\in A}\big)$, let $\hat\cB=\cB(A)\backslash \delta^{0.1}\cB(A)$.
    Then, for a universal $C>0$, $\hat\cB(A)$ admits a covering by $\exp(C|A|\log(1/\delta))$ radius $\delta\sqrt{N}$ balls, whose centers are disjoint from $\delta^{0.1}\cB(A)/2$.
    Since we assumed $|A|<c'N$, we have (for small enough $c'$)
    \[
    \exp(C|A|\log(1/\delta))
    \leq
    \exp(cN/3).
    \]
    Hence by the pigeonhole principle, there exists $J\subseteq [k]\backslash A$ with $|J|\geq\exp(cN/3)$ such that the points $\{\wh\bsig^j\}_{j\in J}$ in $\wh\cB$ are all contained in a radius $\delta\sqrt{N}$ ball centered at some $\wh\bsig^J\in \Span\big(\{\wt\bsig^a\}_{a\in A}\big)$ with $\|\wh\bsig^J\|\geq \delta^{0.1}\sqrt{N}/2$.
    Therefore
    \[
    \frac{1}{|J|}
    \sum_{j\in J}
    \la \bsig^j,\wh\bsig^J\ra
    =
    \frac{1}{|J|}
    \sum_{j\in J}
    \la \wh\bsig^j,\wh\bsig^J\ra
    \geq
    \frac{1}{|J|}
    \sum_{j\in J}
    \big(\la \wh\bsig^J,\wh\bsig^J\ra
    -
    \|\wh\bsig^j-\wh\bsig^J\|_2\cdot \|\wh\bsig^J\|_2
    \big)
    \geq
    \delta^{0.3} N
    .
    \]
    On the other hand, since we assumed $|R(\bsig^i,\bsig^j)|\leq \delta$, we have
    \begin{align*}
    \lt\|\frac{1}{|J|}
    \sum_{j\in J}
    \bsig^j\rt\|_2^2
    =
    \frac{1}{|J|^2}
    \sum_{j_1,j_2\in J}
    \la \bsig^{j_1},\bsig^{j_2}\ra
    \leq
    \frac{1}{|J|^2}\lt(|J|+|J|^2\delta N\rt)
    \leq
    2\delta N.
    \end{align*}
    Since $\|\wh\bsig^J\|_2\leq \sqrt{N}$, Cauchy--Schwarz gives the desired contradiction.
\end{proof}
Recall $\gamma_1$ is the weight of the degree-$1$ interactions in \eqref{eq:def-HN}, and $\xi'(0) = \gamma_1^2$.
For $h\ge 0$, let
\[
    \xi^{\gamma_1 \leftarrow h}(t)
    = \xi(t) - \gamma_1^2 t + h^2 t
\]
denote $\xi$ with this interaction weight replaced by $h$.
In particular $\xi^{\gamma_1 \leftarrow 0}(t) =\xi(t)-\gamma_1^2 t$.

\begin{theorem}
    \label{thm:LDP-speed}
    For any $E>\cQ(\xi^{\gamma_1\leftarrow 0})$,
    \begin{equation}
    \label{eq:linear-speed}
    \limsup_{\eps\to 0}
    \limsup_{N\to\infty}
    -\fr{1}{N}
    \log\bbP[GS_N \in [E-\eps,E+\eps]]
    \leq
    C_1(\xi,E).
    \end{equation}
    On the other hand, for any $\eps>0$,
    \begin{equation}
    \label{eq:quadratic-speed}
    \liminf_{N\to\infty}
    -\fr{1}{N^2} \log\bbP[GS_N \leq \cQ(\xi^{\gamma_1 \leftarrow 0})-\eps]
    \geq
    C_2(\xi,\eps).
    \end{equation}
\end{theorem}

\begin{proof}
    We first prove \eqref{eq:linear-speed}, assuming further that $\xi'(0) > 0$.
    By Theorem~\ref{thm:chen-sen-spherical} and Proposition~\ref{prop:gradients-bounded}, $\cQ(\xi^{\gamma_1 \leftarrow h})$ is uniformly Lipschitz in $h$.
    Moreover clearly $\lim_{h\to\infty}\cQ(\xi^{\gamma_1 \leftarrow h})=\infty$.
    Thus $E=\cQ(\xi^{\gamma_1 \leftarrow h})$ for some $h>0$.
    Let $\bg = (g_1,\ldots,g_N)$ be the vector of degree $1$ disorder coefficients in \eqref{eq:def-HN}.
    For small $\eta > 0$,
    \[
        \fr{\gamma_1 \norm{\bg}_2}{\sqrt{N}} \in [h-\eta,h+\eta]
    \]
    occurs with probability at least $e^{-C_1(\xi,E) N}$.
    Conditional on this event, $\bbP[GS_N \in [E-\eps,E+\eps]] \ge 1/2$.
    This proves \eqref{eq:linear-speed} if $\xi'(0) > 0$.

    Next, suppose $\xi'(0) = 0$.
    Assuming $\gamma_p>0$, we consider the conditional behavior of $H_N$ on the large deviation event $E_{N,x}$ that the order $p$ coefficient $g_{1,1,\dots, 1}$ (as in \eqref{eq:def-HN}) satisfies
    \[
    g_{1,1,\dots, 1}=x\sqrt{N}.
    \]
    Using Proposition~\ref{prop:gradients-bounded} to discretize $S_N$ into bands with fixed first coordinate, and applying the zero temperature Parisi formula to each band, it easily follows that, with $\txi_q$ as in \eqref{eq:txi-def},
    \[
    GS(x)\equiv
    \lim_{N\to\infty}
    \bbE[GS_N~|~E_{N,x}]
    =
    \sup_{0\leq q\leq 1}
    \lt\{
    \cQ(\txi_q)+ q^p x
    \rt\}.
    \]
    Moreover this limit holds locally uniformly in $x$.
    In particular $GS(x)$ is continuous and strictly increasing, and (since $\gamma_1=0$) $GS(0)=\cQ(\xi)$. Hence for some $x_*(E),\delta>0$ we have $GS(x)\in [E-\eps/2,E+\eps/2]$ for all $x\in [x_*(E)-\delta,x_*(E)+\delta]$.
    Then by Borell-TIS, for $N$ sufficiently large, we conclude \eqref{eq:linear-speed} from:
    \[
    \bbP[GS_N\in E-\eps,E+\eps]
    \geq
    \bbP[N^{-1/2} g_{1,1,\dots, 1}\in [x_*(E)-\delta,x_*(E)+\delta]]/2
    \geq
    e^{-C(\xi,E)N}
    .
    \]


    We turn to the proof of \eqref{eq:quadratic-speed}.
    We start from Corollary~\ref{cor:many-orthogonal}, applied to
    \[
        H_{N,\ge 2}(\bsig) = H_N(\bsig) - \gamma_1 \la \bg, \bsig \ra,
    \]
    which is a Hamiltonian with mixture $\xi^{\gamma_1 \leftarrow 0}$.
    This implies the high-probability existence of $\bsig^1,\dots,\bsig^k\in\cS_N$ for $k=e^{cN}$ such that $|R(\bsig^i,\bsig^j)|\leq \delta\ll\eps$ for all $1\leq i<j\leq k$ and $H_{N,\ge2}(\bsig^i)\geq \cQ(\xi^{\gamma_1 \leftarrow 0})-\frac{\eps}{4}$.
    On this event, Lemma~\ref{lem:exact-orthogonalization} ensures the existence of $A\subseteq [k]$ with $|A|\geq c'N$ and points $\{\wt\bsig^a\}_{a\in A}$ obeying \eqref{eq:many-exactly-orthogonal}.
    On the event of Proposition~\ref{prop:gradients-bounded} we then have
    \[
    \frac{1}{c'N^2}
    \max_{\vbrho\in\Band_{c'N,1,0}(\bzero)}
    \sum_{i=1}^{c'N}
    H_{N,\ge2}(\brho^i)
    \geq
    \min_{\wt\bsig^a}
    H_{N,\ge2}(\wt\bsig^a)/N
    \ge
    \cQ(\xi^{\gamma_1 \leftarrow 0})-\frac{\eps}{2}.
    \]
    Similarly to \cite[Proposition 1]{subag2018free}, the left-hand side above is sub-Gaussian with standard deviation proxy $O(N^{-2})$.
    With $c''=c''(\xi,c,c',\delta)>0$ a small constant, we find that  with probability $1-e^{-c'' N^2}$,
    \[
    \frac{1}{c'N^2}
    \max_{\vbrho\in\Band_{c'N,1,0}(\bzero)}
    \sum_{i=1}^{c'N}
    H_{N,\ge2}(\brho^i)
    \geq
    \cQ(\xi^{\gamma_1 \leftarrow 0})-\frac{3\eps}{4}.
    \]
    Let $(\wh\brho^1,\dots,\wh\brho^{c'N})$ attain the maximum on the left-hand side (and depend measurably on $H_{N,\ge 2}$).


    Finally we add back in the external field $\gamma_1 \la \bg, \bsig \ra$.
    Note that $\{\wt\bsig^a\}_{a\in A}$ can be chosen (in some measurable way) depending only on $H_{N,\ge 2}$, so we may take $\bg$ independent of $\{\wh\brho^i\}_{1\leq i\leq c'N}$. Then $\sum_{i\leq c'N}\gamma_1 \la \bg,\wh\brho^i\ra$
    is conditionally a centered Gaussian with variance $c'N^2$, so it has absolute value smaller than $\eps c' N^2/4$ with probability $1-e^{-c''N^2}$.
    On this event and that of the preceding display, we find as desired:
    \begin{align*}
    \max_{\bsig\in\cS_N}
    H_N(\bsig)/N
    &\geq
    \frac{1}{c'N^2}
    \max_{\vbrho\in\Band_{c'N,1,0}(\bzero)}
    \sum_{i=1}^{c'N}
    H_N(\brho^i)
    \geq
    \Big(\frac{1}{c'N^2}
    \sum_{i=1}^{c'N}
    H_{N,\ge 2}(\wh\brho^i)\Big)
    -\frac{\eps}{4}
    \geq
    \cQ(\xi^{\gamma_1 \leftarrow 0})-\eps.
    \qedhere
    \end{align*}
\end{proof}

\begin{remark}
\label{rem:lower-tail-existing-results}
Given a weaker version of Corollary~\ref{cor:many-orthogonal} with $k_N\leq e^{o(N)}$, one finds $|A|\geq\Omega(\log k_N)$ in Lemma~\ref{lem:exact-orthogonalization}, which implies speed $\Omega(N\log k_N)$.
In particular, Chatterjee's ``multiple peaks'' property \cite{chatterjee2014superconcentration} suffices to obtain super-linear speed for the lower tail.
At positive temperature, the super-linearity in the lower tail was essentially predicted by Dotsenko-Franz-M{\'e}zard in \cite{dotsenko1994partial} and proved in \cite{talagrand2007large,jagannath2017approximate} using a similar ``orthogonal structures'' idea; see also \cite{chen2023gaussian} which derives it from superconcentration.
The main new feature of Theorem~\ref{thm:LDP-speed} is the quadratic rate, which is best possible when $\gamma_2\neq 0$.
It is natural to conjecture that $N^{p_{\min}}$ is the correct lower tail speed for $p_{\min}=\min\{p~:~\gamma_p>0\}$.
\end{remark}

\subsubsection*{Acknowledgement}
We thank Wei-Kuo Chen and Peter Mottishaw for helpful comments.
B.H. was supported by a Google PhD Fellowship, NSF awards CCF-1940205 and DMS-1940092, and NSF-Simons grant DMS-2031883.

\footnotesize
\conditionalpagebreak

\bibliographystyle{alpha}
\bibliography{bib}

\end{document}